\subjclass[2020]{11F46, 11F67, 11F80}
\keywords{Harder's conjecture, lifting, congruence for the Klingen-Eisenstein lift}
\newlength{\margins}
\begin{document}
\title [Harder's conjecture I] { Harder's conjecture I}  

\author[H. Atobe]{Hiraku ATOBE}
\address{Hiraku  Atobe, Department of Mathematics, Hokkaido University,
Kita 10, Nishi 8, Kita-Ku, Sapporo, Hokkaido, 060-0810, Japan}
\email{atobe@math.sci.hokudai.ac.jp}
\author[M. Chida]{ Masataka CHIDA}
\address{Masataka Chida, School of Science and Technology for Future Life, Tokyo Denki University,
5 Senju Asahi-cho, Adachi-ku, Tokyo 120-8551, Japan}
\email{chida@mail.dendai.ac.jp}
\author [T. Ibukiyama] {Tomoyoshi IBUKIYAMA} 
\address{Tomoyoshi Ibukiyama, Department of Mathematics, Graduate School of Science, Osaka University, 
Machikaneyama 1-1, Toyonaka, Osaka, 560-0043 Japan}
\email{ibukiyam@math.sci.osaka-u.ac.jp}
\author[H. Katsurada]{Hidenori KATSURADA}
\address{Hidenori Katsurada, Muroran Institute of Technology, Mizumoto 27-1, Muroran, 050-8585  Japan}
\email{hidenori@mmm.muroran-it.ac.jp}
\author[T. Yamauchi]{Takuya YAMAUCHI}
\address{Takuya Yamauchi, Mathematical Institute, Tohoku University,  6-3,Aoba, Aramaki, Aoba-Ku, Sendai 980-8578, JAPAN}
\email{takuya.yamauchi.c3@tohoku.ac.jp}

\thanks{Atobe was supported by JSPS KAKENHI Grant Number 19K14494. Chida was supported by JSPS KAKENHI Grant Number JP18K03202. Ibukiyama was supported by JSPS KAKENHI Grant Number JP19K03424. Katsurada was  supported by KAKENHI Grant Number 16H03919.  Yamauchi was supported by JSPS KAKENHI Grant Number .19H01778. }

\date{April 27, 2022}

\maketitle
\newcommand{\Bell}{\ensuremath{\boldsymbol\ell}}


\newcommand{\alp}{\alpha}
\newcommand{\bet}{\beta}
\newcommand{\gam}{\gamma}
\newcommand{\del}{\delta}
\newcommand{\eps}{\epsilon}
\newcommand{\zet}{\zeta}
\newcommand{\tht}{\theta}
\newcommand{\iot}{\iota}
\newcommand{\kap}{\kappa}
\newcommand{\lam}{\lambda}
\newcommand{\sig}{\sigma}
\newcommand{\ups}{\upsilon}
\newcommand{\ome}{\omega}
\newcommand{\vep}{\varepsilon}
\newcommand{\vth}{\vartheta}
\newcommand{\vpi}{\varpi}
\newcommand{\vrh}{\varrho}
\newcommand{\vsi}{\varsigma}
\newcommand{\vph}{\varphi}
\newcommand{\Gam}{\Gamma}
\newcommand{\Del}{\Delta}
\newcommand{\Tht}{\Theta}
\newcommand{\Lam}{\Lambda}
\newcommand{\Sig}{\Sigma}
\newcommand{\Ups}{\Upsilon}
\newcommand{\Ome}{\Omega}

\def\depth{{\rm depth}}
\def\bbD{{\Bbb D}}
\def\p{{\partial}}

\newcommand{\frka}{{\mathfrak a}}    \newcommand{\frkA}{{\mathfrak A}}
\newcommand{\frkb}{{\mathfrak b}}    \newcommand{\frkB}{{\mathfrak B}}
\newcommand{\frkc}{{\mathfrak c}}    \newcommand{\frkC}{{\mathfrak C}}
\newcommand{\frkd}{{\mathfrak d}}    \newcommand{\frkD}{{\mathfrak D}}
\newcommand{\frke}{{\mathfrak e}}    \newcommand{\frkE}{{\mathfrak E}}
\newcommand{\frkf}{{\mathfrak f}}    \newcommand{\frkF}{{\mathfrak F}}
\newcommand{\frkg}{{\mathfrak g}}    \newcommand{\frkG}{{\mathfrak G}}
\newcommand{\frkh}{{\mathfrak h}}    \newcommand{\frkH}{{\mathfrak H}}
\newcommand{\frki}{{\mathfrak i}}    \newcommand{\frkI}{{\mathfrak I}}
\newcommand{\frkj}{{\mathfrak j}}    \newcommand{\frkJ}{{\mathfrak J}}
\newcommand{\frkk}{{\mathfrak k}}    \newcommand{\frkK}{{\mathfrak K}}
\newcommand{\frkl}{{\mathfrak l}}    \newcommand{\frkL}{{\mathfrak L}}
\newcommand{\frkm}{{\mathfrak m}}    \newcommand{\frkM}{{\mathfrak M}}
\newcommand{\frkn}{{\mathfrak n}}    \newcommand{\frkN}{{\mathfrak N}}
\newcommand{\frko}{{\mathfrak o}}    \newcommand{\frkO}{{\mathfrak O}}
\newcommand{\frkp}{{\mathfrak p}}    \newcommand{\frkP}{{\mathfrak P}}
\newcommand{\frkq}{{\mathfrak q}}    \newcommand{\frkQ}{{\mathfrak Q}}
\newcommand{\frkr}{{\mathfrak r}}    \newcommand{\frkR}{{\mathfrak R}}
\newcommand{\frks}{{\mathfrak s}}    \newcommand{\frkS}{{\mathfrak S}}
\newcommand{\frkt}{{\mathfrak t}}    \newcommand{\frkT}{{\mathfrak T}}
\newcommand{\frku}{{\mathfrak u}}    \newcommand{\frkU}{{\mathfrak U}}
\newcommand{\frkv}{{\mathfrak v}}    \newcommand{\frkV}{{\mathfrak V}}
\newcommand{\frkw}{{\mathfrak w}}    \newcommand{\frkW}{{\mathfrak W}}
\newcommand{\frkx}{{\mathfrak x}}    \newcommand{\frkX}{{\mathfrak X}}
\newcommand{\frky}{{\mathfrak y}}    \newcommand{\frkY}{{\mathfrak Y}}
\newcommand{\frkz}{{\mathfrak z}}    \newcommand{\frkZ}{{\mathfrak Z}}


\newcommand{\bfa}{{\mathbf a}}    \newcommand{\bfA}{{\mathbf A}}
\newcommand{\bfb}{{\mathbf b}}    \newcommand{\bfB}{{\mathbf B}}
\newcommand{\bfc}{{\mathbf c}}    \newcommand{\bfC}{{\mathbf C}}
\newcommand{\bfd}{{\mathbf d}}    \newcommand{\bfD}{{\mathbf D}}
\newcommand{\bfe}{{\mathbf e}}    \newcommand{\bfE}{{\mathbf E}}
\newcommand{\bff}{{\mathbf f}}    \newcommand{\bfF}{{\mathbf F}}
\newcommand{\bfg}{{\mathbf g}}    \newcommand{\bfG}{{\mathbf G}}
\newcommand{\bfh}{{\mathbf h}}    \newcommand{\bfH}{{\mathbf H}}
\newcommand{\bfi}{{\mathbf i}}    \newcommand{\bfI}{{\mathbf I}}
\newcommand{\bfj}{{\mathbf j}}    \newcommand{\bfJ}{{\mathbf J}}
\newcommand{\bfk}{{\mathbf k}}    \newcommand{\bfK}{{\mathbf K}}
\newcommand{\bfl}{{\mathbf l}}    \newcommand{\bfL}{{\mathbf L}}
\newcommand{\bfm}{{\mathbf m}}    \newcommand{\bfM}{{\mathbf M}}
\newcommand{\bfn}{{\mathbf n}}    \newcommand{\bfN}{{\mathbf N}}
\newcommand{\bfo}{{\mathbf o}}    \newcommand{\bfO}{{\mathbf O}}
\newcommand{\bfp}{{\mathbf p}}    \newcommand{\bfP}{{\mathbf P}}
\newcommand{\bfq}{{\mathbf q}}    \newcommand{\bfQ}{{\mathbf Q}}
\newcommand{\bfr}{{\mathbf r}}    \newcommand{\bfR}{{\mathbf R}}
\newcommand{\bfs}{{\mathbf s}}    \newcommand{\bfS}{{\mathbf S}}
\newcommand{\bft}{{\mathbf t}}    \newcommand{\bfT}{{\mathbf T}}
\newcommand{\bfu}{{\mathbf u}}    \newcommand{\bfU}{{\mathbf U}}
\newcommand{\bfv}{{\mathbf v}}    \newcommand{\bfV}{{\mathbf V}}
\newcommand{\bfw}{{\mathbf w}}    \newcommand{\bfW}{{\mathbf W}}
\newcommand{\bfx}{{\mathbf x}}    \newcommand{\bfX}{{\mathbf X}}
\newcommand{\bfy}{{\mathbf y}}    \newcommand{\bfY}{{\mathbf Y}}
\newcommand{\bfz}{{\mathbf z}}    \newcommand{\bfZ}{{\mathbf Z}}


\newcommand{\cala}{{\mathcal A}}
\newcommand{\calb}{{\mathcal B}}
\newcommand{\calc}{{\mathcal C}}
\newcommand{\cald}{{\mathcal D}}
\newcommand{\cale}{{\mathcal E}}
\newcommand{\calf}{{\mathcal F}}
\newcommand{\calg}{{\mathcal G}}
\newcommand{\calh}{{\mathcal H}}
\newcommand{\cali}{{\mathcal I}}
\newcommand{\calj}{{\mathcal J}}
\newcommand{\calk}{{\mathcal K}}
\newcommand{\call}{{\mathcal L}}
\newcommand{\calm}{{\mathcal M}}
\newcommand{\caln}{{\mathcal N}}
\newcommand{\calo}{{\mathcal O}}
\newcommand{\calp}{{\mathcal P}}
\newcommand{\calq}{{\mathcal Q}}
\newcommand{\calr}{{\mathcal R}}
\newcommand{\cals}{{\mathcal S}}
\newcommand{\calt}{{\mathcal T}}
\newcommand{\calu}{{\mathcal U}}
\newcommand{\calv}{{\mathcal V}}
\newcommand{\calw}{{\mathcal W}}
\newcommand{\calx}{{\mathcal X}}
\newcommand{\caly}{{\mathcal Y}}
\newcommand{\calz}{{\mathcal Z}}


\newcommand{\scra}{{\mathscr A}}
\newcommand{\scrb}{{\mathscr B}}
\newcommand{\scrc}{{\mathscr C}}
\newcommand{\scrd}{{\mathscr D}}
\newcommand{\scre}{{\mathscr E}}
\newcommand{\scrf}{{\mathscr F}}
\newcommand{\scrg}{{\mathscr G}}
\newcommand{\scrh}{{\mathscr H}}
\newcommand{\scri}{{\mathscr I}}
\newcommand{\scrj}{{\mathscr J}}
\newcommand{\scrk}{{\mathscr K}}
\newcommand{\scrl}{{\mathscr L}}
\newcommand{\scrm}{{\mathscr M}}
\newcommand{\scrn}{{\mathscr N}}
\newcommand{\scro}{{\mathscr O}}
\newcommand{\scrp}{{\mathscr P}}
\newcommand{\scrq}{{\mathscr Q}}
\newcommand{\scrr}{{\mathscr R}}
\newcommand{\scrs}{{\mathscr S}}
\newcommand{\scrt}{{\mathscr T}}
\newcommand{\scru}{{\mathscr U}}
\newcommand{\scrv}{{\mathscr V}}
\newcommand{\scrw}{{\mathscr W}}
\newcommand{\scrx}{{\mathscr X}}
\newcommand{\scry}{{\mathscr Y}}
\newcommand{\scrz}{{\mathscr Z}}


\newcommand{\AAA}{{\mathbb A}} 
\newcommand{\BB}{{\mathbb B}}
\newcommand{\CC}{{\mathbb C}}
\newcommand{\DD}{{\mathbb D}}
\newcommand{\EE}{{\mathbb E}}
\newcommand{\FF}{{\mathbb F}}
\newcommand{\GG}{{\mathbb G}}
\newcommand{\HH}{{\mathbb H}}
\newcommand{\II}{{\mathbb I}}
\newcommand{\JJ}{{\mathbb J}}
\newcommand{\KK}{{\mathbb K}}
\newcommand{\LL}{{\mathbb L}}
\newcommand{\MM}{{\mathbb M}}
\newcommand{\NN}{{\mathbb N}}
\newcommand{\OO}{{\mathbb O}}
\newcommand{\PP}{{\mathbb P}}
\newcommand{\QQ}{{\mathbb Q}}
\newcommand{\RR}{{\mathbb R}}
\newcommand{\SSS}{{\mathbb S}} 
\newcommand{\TT}{{\mathbb T}}
\newcommand{\UU}{{\mathbb U}}
\newcommand{\VV}{{\mathbb V}}
\newcommand{\WW}{{\mathbb W}}
\newcommand{\XX}{{\mathbb X}}
\newcommand{\YY}{{\mathbb Y}}
\newcommand{\ZZ}{{\mathbb Z}}


\newcommand{\tta}{\hbox{\tt a}}    \newcommand{\ttA}{\hbox{\tt A}}
\newcommand{\ttb}{\hbox{\tt b}}    \newcommand{\ttB}{\hbox{\tt B}}
\newcommand{\ttc}{\hbox{\tt c}}    \newcommand{\ttC}{\hbox{\tt C}}
\newcommand{\ttd}{\hbox{\tt d}}    \newcommand{\ttD}{\hbox{\tt D}}
\newcommand{\tte}{\hbox{\tt e}}    \newcommand{\ttE}{\hbox{\tt E}}
\newcommand{\ttf}{\hbox{\tt f}}    \newcommand{\ttF}{\hbox{\tt F}}
\newcommand{\ttg}{\hbox{\tt g}}    \newcommand{\ttG}{\hbox{\tt G}}
\newcommand{\tth}{\hbox{\tt h}}    \newcommand{\ttH}{\hbox{\tt H}}
\newcommand{\tti}{\hbox{\tt i}}    \newcommand{\ttI}{\hbox{\tt I}}
\newcommand{\ttj}{\hbox{\tt j}}    \newcommand{\ttJ}{\hbox{\tt J}}
\newcommand{\ttk}{\hbox{\tt k}}    \newcommand{\ttK}{\hbox{\tt K}}
\newcommand{\ttl}{\hbox{\tt l}}    \newcommand{\ttL}{\hbox{\tt L}}
\newcommand{\ttm}{\hbox{\tt m}}    \newcommand{\ttM}{\hbox{\tt M}}
\newcommand{\ttn}{\hbox{\tt n}}    \newcommand{\ttN}{\hbox{\tt N}}
\newcommand{\tto}{\hbox{\tt o}}    \newcommand{\ttO}{\hbox{\tt O}}
\newcommand{\ttp}{\hbox{\tt p}}    \newcommand{\ttP}{\hbox{\tt P}}
\newcommand{\ttq}{\hbox{\tt q}}    \newcommand{\ttQ}{\hbox{\tt Q}}
\newcommand{\ttr}{\hbox{\tt r}}    \newcommand{\ttR}{\hbox{\tt R}}
\newcommand{\tts}{\hbox{\tt s}}    \newcommand{\ttS}{\hbox{\tt S}}
\newcommand{\ttt}{\hbox{\tt t}}    \newcommand{\ttT}{\hbox{\tt T}}
\newcommand{\ttu}{\hbox{\tt u}}    \newcommand{\ttU}{\hbox{\tt U}}
\newcommand{\ttv}{\hbox{\tt v}}    \newcommand{\ttV}{\hbox{\tt V}}
\newcommand{\ttw}{\hbox{\tt w}}    \newcommand{\ttW}{\hbox{\tt W}}
\newcommand{\ttx}{\hbox{\tt x}}    \newcommand{\ttX}{\hbox{\tt X}}
\newcommand{\tty}{\hbox{\tt y}}    \newcommand{\ttY}{\hbox{\tt Y}}
\newcommand{\ttz}{\hbox{\tt z}}    \newcommand{\ttZ}{\hbox{\tt Z}}

\newcommand{\rank}{\mathrm{rank}}

\newcommand{\phm}{\phantom}
\newcommand{\ds}{\displaystyle }
\newcommand{\smallstrut}{\vphantom{\vrule height 3pt }}
\def\bdm #1#2#3#4{\left(
\begin{array} {c|c}{\ds{#1}}
 & {\ds{#2}} \\ \hline
{\ds{#3}\vphantom{\ds{#3}^1}} &  {\ds{#4}}
\end{array}
\right)}
\newcommand{\wtd}{\widetilde }
\newcommand{\bsl}{\backslash }
\newcommand{\GL}{{\mathrm{GL}}}
\newcommand{\SL}{{\mathrm{SL}}}
\newcommand{\GSp}{{\mathrm{GSp}}}
\newcommand{\PGSp}{{\mathrm{PGSp}}}
\newcommand{\SP}{{\mathrm{Sp}}}
\newcommand{\SO}{{\mathrm{SO}}}
\newcommand{\SU}{{\mathrm{SU}}}
\newcommand{\Ind}{\mathrm{Ind}}
\newcommand{\Hom}{{\mathrm{Hom}}}
\newcommand{\Ad}{{\mathrm{Ad}}}
\newcommand{\Sym}{{\mathrm{Sym}}}
\newcommand{\Mat}{\mathrm{M}}
\newcommand{\sgn}{\mathrm{sgn}}
\newcommand{\trs}{\,^t\!}
\newcommand{\iu}{\sqrt{-1}}
\newcommand{\oo}{\hbox{\bf 0}}
\newcommand{\ono}{\hbox{\bf 1}}
\newcommand{\smallcirc}{\lower .3em \hbox{\rm\char'27}\!}
\newcommand{\bAf}{\bA_{\hbox{\eightrm f}}}
\newcommand{\thalf}{{\textstyle{\frac12}}}
\newcommand{\shp}{\hbox{\rm\char'43}}
\newcommand{\Gal}{\operatorname{Gal}}
\newcommand{\ev}{\mathrm{ev}}

\newcommand{\bdel}{{\boldsymbol{\delta}}}
\newcommand{\bchi}{{\boldsymbol{\chi}}}
\newcommand{\bgam}{{\boldsymbol{\gamma}}}
\newcommand{\bome}{{\boldsymbol{\omega}}}
\newcommand{\bpsi}{{\boldsymbol{\psi}}}
\newcommand{\blam}{{\boldsymbol{\lambda}}}
\newcommand{\GK}{\mathrm{GK}}
\newcommand{\EGK}{\mathrm{EGK}}
\newcommand{\MGK}{\mathrm{MGK}}
\newcommand{\ord}{\mathrm{ord}}
\newcommand{\nd}{\mathrm{nd}}
\newcommand{\diag}{\mathrm{diag}}
\newcommand{\ua}{{\underline{a}}}
\newcommand{\ub}{{\underline{b}}}
\newcommand{\uc}{{\underline{c}}}
\newcommand{\ud}{{\underline{d}}}
\newcommand{\ue}{{\underline{e}}}
\newcommand{\uf}{{\underline{f}}}
\newcommand{\ug}{{\underline{g}}}
\newcommand{\uh}{{\underline{h}}}
\newcommand{\ui}{{\underline{i}}}
\newcommand{\uj}{{\underline{j}}}
\newcommand{\uk}{{\underline{k}}}
\newcommand{\ul}{{\underline{l}}}
\newcommand{\um}{{\underline{m}}}
\newcommand{\un}{{\underline{n}}}
\newcommand{\ZZn}{\ZZ_{\geq 0}^n}
\newcommand{\uzet}{{\underline{\zeta}}}
\newcommand{\St}{\mathrm{St}}
\newcommand{\Sp}{\mathrm{Sp}}
\newcommand{\Spin}{\mathrm{Spin}}
\newcommand{\alg}{\mathrm{alg}}

\newtheorem{theorem}{Theorem}[section]
\newtheorem{lemma}[theorem]{Lemma}
\newtheorem{proposition}[theorem]{Proposition}
\newtheorem{corollary}[theorem]{Corollary}
\newtheorem{conjecture}[theorem]{Conjecture}
\newtheorem{definition}[theorem]{Definition}
\newtheorem{remark}[theorem]{{\bf Remark}}

\theoremstyle{plain}
\theoremstyle{definition}
\theoremstyle{remark}%
\newtheorem{condition}{Condition}
%

\begin{abstract}
Let $f$ be a primitive form with respect to $\SL_2(\ZZ)$. 
Then, we propose a conjecture on  the congruence between the Klingen-Eisenstein lift of the Duke-Imamoglu-Ikeda lift of $f$ and a certain lift of a vector valued Hecke eigenform with respect to  $\Sp_2(\ZZ)$. This conjecture implies  Harder's conjecture.
We prove the above conjecture in some cases.
\end{abstract} 

\tableofcontents
                                    
\section{Introduction}
Harder's conjecture is one of the most fascinating conjectures in the arithmetic of automorphic forms. It plays an important role in constructing nontrivial elements of the Bloch-Kato Selmer group attached to a modular form (cf. \cite{Dummigan-Ibukiyama-Katsurada11}). Harder's conjecture predicts that the Hecke eigenvalues of a primitive form for $\SL_2(\ZZ)$ are related with those of a certain Hecke eigenform for $\Sp_2(\ZZ)$ modulo some prime ideal. We explain it more precisely. For a non-increasing sequence  ${\bf k}=(k_1,\ldots,k_n)$ of non-negative integers we denote by $M_{\bf k}(\Sp_n(\ZZ))$ and $S_{\bf k}(\Sp_n(\ZZ))$ the spaces of modular forms and cusp forms of weight ${\bf k}$ (or, weight $k$, if ${\bf k}=(\overbrace{k,\ldots,k}^n)$) for $\Sp_n(\ZZ)$, respectively. (For the definition of modular forms, see Section 2). Let $f(z)=\sum_{m=1}^{\infty}a(m,f)\exp(2\pi \sqrt{-1}mz)$ be a primitive form in $S_{2k+j-2}(\SL_2(\ZZ))$, and suppose that a `big prime' $\frkp$ divides the algebraic part of $L(k+j,f)$.  Then, Harder \cite{Harder03} conjectured that there exists a Hecke eigenform $F$ in $S_{(k+j,k)}(\Sp_2(\ZZ))$ such that
\[\lambda_F(T(p)) \equiv a(p,f)+p^{k-2}+p^{j+k-1} \pmod {\frkp'} \]
for any prime number $p$, where $\lambda_F(T(p))$ is the eigenvalue of the Hecke operator $T(p)$ on $F$, and $\frkp'$ is a prime ideal of $\QQ(f) \cdot \QQ(F)$ lying above $\frkp$. One of main difficulties in treating this congruence  arises from the fact that this is not concerning the congruence between Hecke eigenvalues of two Hecke eigenforms of the same weight. Indeed, the right-hand side of the above is not the Hecke eigenvalue of a Hecke eigenform if $j>0$. Several attempts have been made to overcome this obstacle.  Ibukiyama \cite{Ibukiyama08}, \cite{Ibukiyama14} proposed a half-integral weight version of Harder's conjecture given as  
congruences of Hecke eigenforms and related it to the original Harder's conjecture 
through his conjectural Shimura type correspondence for vector valued Siegel modular forms of degree two
(and this Shimura type conjecture was now proved by H. Ishimoto \cite{Ishimoto20}). 
This explains the Harder conjecture for odd $k$ (\cite{ibuoberwolfach}) and
the proved example of 
congruence in \cite{Ibukiyama14} means the Harder conjecture for $(k,j)=(5,18)$. 
In \cite{Bergstrom-Dummigan16}, Bergstr\"om and Dummigan, among other things, reformulated Harder's conjecture as congruence between a certain induced representation of $\pi_f$ and 
a cuspidal automorphic representation of $\mathrm {GSp}(2)$.
In \cite{Chenevier-Lannes19}, Chenevier and Lannes gave several congruences between theta series of even unimodular lattices, and  using Arthur's endoscopic classification and Galois representation theoretic method, they, among other things, proved Harder's conjecture for $(k,j)=(10,4)$.
In this paper we consider a conjecture concerning the congruence between two liftings to higher degree 
of Hecke eigenforms (of integral weight) of degree two.  More precisely, for the $f$ above, let $\scri_n(f)$ be the Duke-Imamoglu-Ikeda lift of $f$  to the space of cusp forms of weight ${j \over 2}+k+{n \over 2}-1$ for $\Sp_n(\ZZ)$ with 
$n$ even. For a sequence 
\begin{align*}
& {\bf k}=\Bigl(\overbrace{{j \over 2}+k+{n \over 2}-1,\ldots,{ j \over 2}+k+{n \over 2}-1}^n,\overbrace{{j \over 2}+{3n \over 2}+1,\ldots,{j \over 2}+{3n \over 2}+1}^n\Bigr)
\end{align*}
with $k \ge n+2$, let  $[\scri_n(f)]^{{\bf k}}$ be the Klingen-Eisenstein lift of $\scri_n(f)$ to $M_{{\bf k}}(\Sp_{2n}(\ZZ))$. 
 Then, we propose the  following conjecture:
\begin{conjecture} (Conjecture \ref{conj.main-conjecture})
Let $k,j$ and ${\bf k}$ be as above. Let $f(z) \in S_{2k+j-2}(\SL_2(\ZZ))$ be a primitive form and  $\frkp$  a prime ideal of $\QQ(f)$. Then under  certain assumptions,
 there exists a Hecke eigenform $F$ in $S_{(k+j,k)}(\Sp_2(\ZZ))$ such that
\[\lambda_{\scra^{(I)}_{2n}(F)}(T)  \equiv \lambda_{[\scri_n(f)]^{{\bf k}}}(T) \pmod {\frkp'}\]
for any integral Hecke operator $T$. Here, $\scra_{2n}^{(I)}(F)$ is the lift of $F$ to $S_{\bf k}(\Sp_{2n}(\ZZ))$, called the lift of type $\scra^{(I)}$, which will be defined in Theorem \ref{th.atobe1}. (As for the definition of integral Hecke operators, see Section 3.)
\end{conjecture}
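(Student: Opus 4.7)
The strategy is to realize the desired congruence as an Eisenstein-type congruence in $S_{{\bf k}}(Sp_{2n}(\ZZ))$, driven by the hypothesized $\frkp$-divisibility of $L(k+j,f)$, and then to recognize the resulting cusp form as a lift of type $\scra^{(I)}$ via Theorem \ref{th.atobe1}. A crucial preliminary observation is that $[\scri_n(f)]^{{\bf k}}$ is not itself a cusp form; the eigenvalue $\lambda_{[\scri_n(f)]^{{\bf k}}}(T)$ is the Hecke eigenvalue that a Klingen-Eisenstein series inherits from $\scri_n(f)$, and is computable in closed form from the Satake parameters of $f$ through the Duke-Imamoglu-Ikeda lift and the Klingen induction recipe. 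Thus both sides of the desired congruence are, in principle, explicit polynomial expressions in $a(p,f)$ and powers of $p$.

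First I would compute the normalized Siegel $\Phi^j$-operator images of $[\scri_n(f)]^{{\bf k}}$ for $0\le j<2n$ and identify $L(k+j,f)$ (together with a product of Riemann zeta values and symmetric square $L$-factors attached to $f$) as one of its arithmetic factors, after dividing out an appropriate power of $\pi$ and the Petersson norm of $f$ so as to work in a $\frkp$-integral normalization. The assumption that $\frkp$ divides the algebraic part of $L(k+j,f)$ then forces all of these boundary terms to vanish modulo $\frkp'$, which is the standard input for producing a cuspidal congruence.

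Next I would invoke an Eisenstein congruence argument to upgrade this boundary vanishing to the existence of a genuine Hecke eigenform $G\in S_{{\bf k}}(Sp_{2n}(\ZZ))$ satisfying $\lambda_G(T)\equiv \lambda_{[\scri_n(f)]^{{\bf k}}}(T)\pmod{\frkp'}$ for every integral Hecke operator $T$. Comparing the resulting mod-$\frkp'$ Satake parameters of $G$ with the recipe imposed by Theorem \ref{th.atobe1} for lifts of type $\scra^{(I)}_{2n}$, one reads off the existence of a Hecke eigenform $F\in S_{(k+j,k)}(Sp_2(\ZZ))$ with $\scra^{(I)}_{2n}(F)\equiv [\scri_n(f)]^{{\bf k}}\pmod{\frkp'}$, from which Harder's congruence for $F$ follows by applying the eigenvalue formula of that theorem in the reverse direction.

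The main obstacle will be the Eisenstein congruence step in the vector-valued degree-$2n$ setting. Two issues make it delicate. First, verifying that $L(k+j,f)$ genuinely occurs in a $\frkp$-integral denominator of a suitable normalization of the Klingen-Eisenstein lift requires a careful computation of its pullback or Fourier-Jacobi expansion, including bookkeeping for the extra $L$-factors that appear alongside $L(k+j,f)$ and ensuring that their $\frkp$-adic valuations do not cancel the divisibility coming from $L(k+j,f)$. Second, one must rule out that the congruence produced is absorbed by a Hecke eigensystem other than that of $\scra^{(I)}_{2n}(F)$; controlling this requires information about the multiplicities of the possible Arthur parameters in $S_{{\bf k}}(Sp_{2n}(\ZZ))$ and is presumably the reason the conjecture is established only in restricted cases in the present paper.
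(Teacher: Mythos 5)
The statement you are addressing is a \emph{conjecture} (Conjecture \ref{conj.main-conjecture}), which the paper establishes only in the three cases $(k,j)=(10,4),(14,4),(4,24)$, not in general; your ``proof plan'' is therefore compared with the argument the authors actually carry out for those cases. Your overall two-step strategy does match theirs: first produce an Eisenstein-type congruence giving a Hecke eigenform $G\in\widetilde M_{\bf k}(\varGamma^{(2n)})$ with $G\equiv_{\ev}[\scri_n(f)]^{\bf k}\pmod{\frkp'}$ (Theorems \ref{th.congruence-Klingen2} and \ref{th.main-congruence}), and then identify $G$ with a lift $\scra^{(I)}_{2n}(F)$ (Section 8). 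You also correctly name both obstacles --- showing the $L$-value genuinely governs a $\frkp$-denominator, and excluding extraneous eigensystems --- and correctly observe that the second is what confines the paper to special cases.

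There is, however, a concrete gap in Step 1: the Siegel $\Phi$-operator images of $[\scri_n(f)]^{\bf k}$ do not see $L(k+j,f)$. For a Klingen--Eisenstein series attached to a cusp form $\scri_n(f)$, the $\Phi$-image down to degree $n$ is $\scri_n(f)$ itself and the image to degree $0$ (the Siegel cusp) is $0$; no critical $L$-value of $f$ appears as a boundary coefficient, unlike the $\zeta(1-k)$ that shows up in the constant term of a genus-one Eisenstein series. The paper's mechanism is the pullback/doubling method (Sections 5--6): the vector-valued differential operator $\DD_{\lambda,n_1,n_2}$ applied to a normalized Siegel Eisenstein series $\widetilde E_{n_1+n_2,l}$ decomposes, via Theorem \ref{th.weak-pullback}, into a sum over Hecke eigenforms $f_{2,j}$ weighted by $\calc_{n_1+n_2,l}(f_{2,j})$, a quantity proportional to the normalized standard $L$-value ${\bf L}(l-2,f_{2,j},\St)$. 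The relevance of $L(k+j,f)$ then comes from the factorization
\[
L(s,\scri_n(f),\St)=\zeta(s)\prod_{i=1}^n L\Bigl(s+\tfrac{j}{2}+k+\tfrac{n}{2}-1-i,\,f\Bigr),
\]
together with the Kohnen--Zagier and Kohnen--Skoruppa formulas (Proposition \ref{prop.congruence-Klingen-Ikeda}) that turn the hypothesis $\frkp\mid {\bf L}(k+j,f)/{\bf L}(k_j,f)$ into $\frkp\mid |a(N,\scri_2(f))|^2{\bf L}(l-2,\scri_2(f),\St)$. It is this divisibility, against the $\frkp$-integrality of the Fourier coefficients of the pullback $\cale_{k,l,n_1,n_2}$ (Proposition \ref{prop.rationality1}), that produces the congruence; it does not come from boundary vanishing. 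A second, smaller caveat: in your Step 3 the phrase ``one reads off the existence of $F$'' hides all the work of Section 8, where the authors use Taibi's dimension formula, Arthur's classification (Theorem \ref{MF}), and explicit tables of Hecke eigenvalues to enumerate the possible Arthur parameters and match $G$ with $\scra^{(I)}_{2n}(F)$; there is no general mechanism in the paper that reads off the $\scra^{(I)}$ shape from a mod-$\frkp$ eigensystem.
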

This conjecture implies Harder's conjecture (cf. Theorem \ref{th.enhanced-Harder}).

The advantage of this formulation is that one can compare the  Hecke eigenvalues of two Hecke eigenforms. Indeed, by using the same argument as in Katsurada-Mizumoto \cite{Katsurada-Mizumoto12}, under the above assumption, we can prove that there exists a Hecke eigenform $G \in M_{{\bf k}}(\Sp_{2n}(\ZZ))$ such that $G$ is not a constant multiple of $[\scri_n(f)]^{{\bf k}}$ and 
\[\lambda_G(T) \equiv \lambda_{[\scri_n(f)]^{{\bf k}}}(T)\ \pmod {\frkp'}\]
for any integral Hecke operator $T$.  Therefore, to prove the above conjecture, it suffices to show that $G$ is a lift of type $\scra^{(I)}$.
Here we expect that $G$ can be taken as $\scra^{(I)}$ and indeed  we will see that 
in the cases $(k,j)=(10,4),(14,4)$ and $(4,24)$ using the dimension formula due to  Taibi \verb+ https://otaibi.perso.math.cnrs.fr/dimtrace+ and the numerical tables of Hecke eigenvalues due to Poor-Ryan-Yuen \cite{Poor-Ryan-Yuen09} and  Ibukiyama-Katsurada-Poor-Yuen \cite{Ib-Kat-P-Y14}. As a result, we prove Conjecture \ref{conj.main-conjecture} and so Harder's conjecture in those cases.

This paper is organized as follows. In Section 2, we give a brief  summary of  Siegel modular forms, especially about their $\QQ$-structures or $\ZZ$-structures. In Section 3, after giving a summary  of several $L$-values, we state Harder's conjecture. In Section 4,  we introduce several lifts, and among other things define the lift of $\scra^{(I)}$-type of a vector valued modular form in $S_{(k+j,k)}(\Sp_2(\ZZ))$, and propose a conjecture and explain how this conjecture implies Harder's conjecture. 
In Section 5, we consider the pullback formula of the Siegel Eisenstein series with differential operators. In Section 6, we consider the congruence for vector valued Klingen-Eisenstein series, which is a generalization of  \cite{Katsurada-Mizumoto12}, and explain how the assumption that $\frkp$ divides the algebraic part of $L(k+j,f)$ for $f \in S_{2k+j-2}(\SL_2(\ZZ))$ gives the congruence between  $[\scri_n(f)]^{{\bf k}}$ and another Hecke eigenform in $M_{{\bf k}}(\Sp_{2n}(\ZZ))$.  In Section 7, we give a formula for the Fourier coefficients of the Klingen-Eisenstein series, from which we can confirm some assumption 
 in our main results. In Section 9, we state our main results,  which confirm our conjecture, and so  Harder's.  

In a subsequent paper, we will prove Conjecture \ref{conj.main-conjecture} and so Harder's in more general setting, that is, in the case $k$ is even and $j \equiv 0 \text{ mod } 4$, that is, we will prove these conjectures without using the dimension formula or the computation of Hecke eigenvalues of Siegel modular forms  
(cf. \cite{A-C-I-K-Y20}).

\noindent\textbf{Acknowledgments.} The authors thank S. B\"ocherer, G. Chenevier, N. Dummigan, G. Harder, T. Ikeda, N. Kozima and S. Sugiyama for valuable comments. They also thank the referee for a careful and intelligent 
reading of their paper and for the numerous helpful suggestions to improve the exposition.

\indent
{\sc Notation.}  
Let $R$ be a commutative ring. We denote by $R^{\times}$ the unit group of $R$. 
We denote by $M_{mn}(R)$ the set of
$m \times n$-matrices with entries in $R.$ In particular put $M_n(R)=M_{nn}(R).$   Put $\GL_m(R) = \{A \in M_m(R) \ | \ \det A \in R^\times \},$ where $\det
A$ denotes the determinant of a square matrix $A$. For an $m \times n$-matrix $X$ and an $m \times m$-matrix
$A$, we write $A[X] = {}^t\! X A X,$ where $^t\! X$ denotes the
transpose of $X$. Let $\Sym_n(R)$ 
denote
the set of symmetric matrices of degree $n$ with entries in
$R.$ Furthermore, if $R$ is an integral domain of characteristic different from $2,$ let  $\calh_n(R)$ denote the set of half-integral matrices of degree $n$ over $R$, that is, $\calh_n(R)$ is the subset of symmetric
matrices of degree $n$ with entries in the field of fractions of $R$ whose $(i,j)$-component belongs to
$R$ or ${1 \over 2}R$ according as $i=j$ or not.  
 We say that an element $A$ of $M_n(R)$ is non-degenerate if $\det A \not=0$. For a subset $S$ of $M_n(R)$ we denote by $S^{\nd}$ the subset of $S$
consisting of non-degenerate matrices. If $S$ is a subset of $\Sym_n(\RR)$ with $\RR$ the field of real numbers, we denote by $S_{>0}$ (resp. $S_{\ge 0}$) the subset of $S$
consisting of positive definite (resp. semi-positive definite) matrices. The group  
$\GL_n(R)$ acts on the set $\Sym_n(R)$ by 

\hskip 2cm $\GL_n(R) \times \Sym_n(R) \ni (g,A) \longmapsto A[g] \in \Sym_n(R).
$ 

\noindent 
Let $G$ be a subgroup of $\GL_n(R).$ For a $G$-stable subset ${\mathcal B}$ of $\Sym_n(R)$  we denote by $\calb/G$ the set of equivalence classes of $\calb$ under the action of  $G.$ We sometimes use the same symbol $\calb/G$ to denote a complete set of representatives of $\calb/G.$ We abbreviate $\calb/\GL_n(R)$ as $\calb/\!\!\sim$ if there is no fear of confusion. Let $R'$ be a subring of $R$. Then two symmetric matrices $A$ and $A'$ with
entries in $R$ are said to be equivalent over $R'$ with each
other and write $A \sim_{R'} A'$ if there is
an element $X$ of $\GL_n(R')$ such that $A'=A[X].$ We also write $A \sim A'$ if there is no fear of confusion. 
For square matrices $X$ and $Y$ we write $X \bot Y = \begin{pmatrix} X &O \\ O & Y \end{pmatrix}$.

For an integer $D \in \ZZ$ such that $D \equiv 0$ or $\equiv 1 \ {\rm mod} \ 4,$ let ${\textfrak d}_D$ be the discriminant of $\QQ(\sqrt{D}),$ and put ${\textfrak f}_D= \sqrt{{ D \over {\textfrak d}_D}}.$ We call an integer $D$ a fundamental discriminant if it is the discriminant of some quadratic extension of $\QQ$ or $1.$ For a fundamental discriminant $D,$ let $\left({\displaystyle
D \over \displaystyle * }\right)$ be the character corresponding to $\QQ(\sqrt{D})/\QQ.$ Here we make the convention that  $\left({\displaystyle
 D  \over \displaystyle * } \right)=1$ if $D=1.$ For an integer $D$ such that 
 $D \equiv 0$ or $\equiv 1 \ {\rm mod} \ 4,$ we define $\left({\displaystyle
 D  \over \displaystyle * } \right)=\left({\displaystyle
 \frkd_D  \over \displaystyle * } \right)$.  
We put ${\bf e}(x)=\exp(2 \pi \sqrt{-1} x)$ for $x \in \CC,$ and for a prime number $p$ we denote by ${\bf e}_p(*)$ the continuous additive character of $\QQ_p$ such that ${\bf e}_p(x)= {\bf e}(x)$ for $x \in \ZZ[p^{-1}].$

Let $K$ be an algebraic number field, and $\frkO=\frkO_K$ the ring of integers 
in $K$. For a prime ideal $\frkp$ we denote by $K_{\frkp}$ and $\frkO_{\frkp}$ the $\frkp$-adic completion of $K$ and $\frkO$, respectively, and put $\frkO_{(\frkp)}=\frkO_{\frkp} \cap K$.
For a prime ideal number $\frkp$ of $\frkO$, we denote by $\ord_{\frkp}(*)$ the additive valuation of $K_{\frkp}$ normalized so that $\ord_{\frkp}(\vpi)=1$ for a prime element $\vpi$ of $K_{\frkp}$. 
Moreover for any element $a, b \in \frkO_{\frkp}$
we write $b \equiv a \pmod {\frkp}$ if $\ord_{\frkp}(a-b) >0$.

\section{Siegel modular forms}
We denote by $\HH_{n}$ the Siegel upper half 
space of degree $n$, i.e.,
\[
\HH_n=\{Z\in M_{n}(\CC) \ | \ Z=\,^{t}Z=X+\sqrt{-1}Y,\ X,Y\in M_{n}(\RR),Y>0\}.
\]  
For any ring $R$ and any natural integer $n$, we define the
group $\mathrm{GSp}_n(R)$ by
\[
\mathrm{GSp}_n(R)=\{g\in M_{2n}(R) \ | \ gJ_{n}\,^{t}g=\nu(g) J_{n} \text{ with some } \nu(g) \in R^{\times}\},
\]
where $J_{n}=\left(\begin{smallmatrix} 0_{n} & -1_{n} \\ 1_{n} & 0_{n} 
\end{smallmatrix}\right)$.  We call $\nu(g)$ the symplectic 
similitude of $g$. 
We also define the  symplectic group of degree $n$ over $R$ by 
\[
\Sp_n(R)=\{g \in \mathrm{GSp}_n(R) \ | \  \nu(g)=1 \}.
\]
In particular, if $R$ is a subfield of $\RR$,
we define 
\[\mathrm{GSp}_n(R)^+=\{g \in \mathrm{GSp}_n(R) \ | \  \nu(g)>0 \}.
\]
 We put $\varGamma^{(n)}=\Sp_n(\ZZ)$ for the sake of simplicity. 
Now we define vector valued Siegel modular forms of $\varGamma^{(n)}$. 
Let  $(\rho,V)$ be a polynomial representation of $\GL_n(\CC)$ 
on a finite dimensional complex vector space $V$. We fix a Hermitian inner
product $\langle *,\ * \rangle$ on $V$ such that
\begin{align*} \langle \rho(g)v,w\rangle=\langle v,\rho({}^t\bar g)w \rangle \quad \text{ for } g \in \GL_n(\CC), v,w \in V.  \tag{H}
\end{align*}
For  any $V$-valued function $F$ on $\HH_{n}$, 
and for any $g=\left(\begin{smallmatrix} A & B \\ C & D \end{smallmatrix}
 \right)
\in \mathrm{GSp}_n(\RR)^+$, we put $J(g,Z)=CZ+D$ and 
\[
F|_{\rho}[g]=\rho(J(g,Z))^{-1}F(gZ).
\]
For a positive integer $N$, we define the principal congruence subgroup  $\varGamma^{(n)}(N)$  of $\varGamma^{(n)}$ of level $N$  by
\[\varGamma^{(n)}(N)=\{\left(\begin{smallmatrix}A & B \\ C & D \end{smallmatrix}\right) \in \varGamma^{(n)} \ | \ A \equiv D \equiv 1_n, B \equiv C \equiv O_n \text{ mod } N \}.\]
A subgroup $\varGamma$ of $\varGamma^{(n)}$ is said to be  a congruence subgroup if $\varGamma$ contains $\varGamma^{(n)}(N)$ with some $N$.
By definition, $\varGamma^{(n)}(N)$ is a congruence subgroup.
Another example of congruence subgroup is the group $\varGamma^{(n)}_0(N)$ defined by
\[\varGamma^{(n)}_0(N)=\{\left(\begin{smallmatrix}A & B \\ C & D \end{smallmatrix}\right) \in \varGamma^{(n)} \ | \ C \equiv O_n \text{ mod } N \}.\]
Let $\varGamma$ be a congruence subgroup of $\varGamma^{(n)}$. 
We say that $F$ is a holomorphic Siegel modular form of weight $\rho$ 
with respect to $\varGamma$ if $F$ is holomorphic on $\HH$ 
and $F|_{\rho}[\gamma]=F$ for any $\gamma\in \varGamma$ 
(with the extra condition of holomorphy at all the cusps  if $n=1$). 
We denote by $M_{\rho}(\varGamma)$ the space of 
modular forms of weight $\rho$ with respect to $\varGamma$, and  by $S_{\rho}(\varGamma)$ its subspace consisting of cusp forms.  

A modular form $F \in M_{\rho}(\varGamma)$ has the following Fourier expansion
\[F(Z)=\sum_{T \in S_n(\QQ)_{\ge 0}}a(T,F){\bf e}(\mathrm{tr}(TZ)) \quad \text{ with } a(T,F) \in V,\]
where $\mathrm{tr}(T)$ is the trace of a matrix $T$, and 
in particular if $\varGamma=\varGamma^{(n)}$, 
we have 
\[F(Z)=\sum_{T \in \calh_n(\ZZ)_{\ge 0}}a(T,F){\bf e}(\mathrm{tr}(TZ)),\]
and  we have $F \in S_{\rho}(\varGamma^{(n)})$ if and only if we have
$a(T,F)=0$ unless $T$ is positive definite. 
For $F, G \in M_{\rho}(\varGamma)$ the Petersson inner product is defined by
\begin{align*}
&(F,G)_\varGamma= \int_{\varGamma \backslash \HH_n} \langle \rho(\sqrt{Y})F(Z),\rho(\sqrt{Y})G(Z)\rangle \det (Y)^{-n-1} dZ, \tag{P}
\end{align*}
where $Y=\mathrm{\mathrm{Im}}(Z)$ and $\sqrt{Y}$ is a positive definite symmetric matrix such that $\sqrt{Y}^2=Y$.
This integral converges if either $F$ or $G$ belongs to $S_{\rho}(\varGamma)$.
We also define $(F,G)$ as
\[
(F,G)=[\Gamma^{(n)}:\varGamma]^{-1}(F,G)_\varGamma.
\]
Let $\lambda=(k_1,k_2,\ldots)$ be a finite or an  infinite sequence of non-negative integers 
such that $k_i\geq k_{i+1}$ for all $i$ and $k_m=0$ for some $m$.
We call this a dominant integral weight (or the Young diagram). We call the biggest integer $m$ such that 
$k_m \neq 0$ a depth of $\lambda$ and write it by $\depth(\lambda)$. It is well known that 
the set of dominant integral weights 
$\lambda$ with $\depth(\lambda)\leq n$ corresponds bijectively 
to the set of irreducible polynomial representations of the $\GL_n(\CC)$. We denote this representation by $(\rho_{n,\lambda},V_{n,\lambda})$. We also denote it by $(\rho_{\bf k},V_{\bf k})$ with  ${\bf k}=(k_1,\ldots,k_n)$ and call it the  irreducible  polynomial representation of $\GL_n(\CC)$ of highest weight ${\bf k}$. We then set $M_{\bf k}(\varGamma)=M_{\rho_{\bf k}}(\varGamma)$ and 
$S_{\bf k}(\varGamma)=S_{\rho_{\bf k}}(\varGamma)$. We say $F$ is a modular form of weight ${\bf k}$ if it is a modular form of weight $\rho_{\bf k}$. If ${\bf k}=(\overbrace{k,\ldots,k}^n)$, we simply write $M_k(\varGamma)=M_{{\bf k}}(\varGamma)$ and 
$S_k(\varGamma)=S_{{\bf k}}(\varGamma)$. We note that 
\[M_{(k+j,k)}(\varGamma^{(2)})=M_{\det^k \otimes Sym^j}(\varGamma^{(2)}) \quad \text {and }  S_{(k+j,k)}(\varGamma^{(2)})=S_{\det^k \otimes Sym^j}(\varGamma^{(2)}),\]
where $Sym^j$ is the $j$-th symmetric tensor representation of $\GL_2(\CC)$. 
In general, for 
the ${\bf k}=(k_1,\ldots,k_n)$ above, we write  ${\bf k}'
=(k_1-k_n,\ldots,k_{n-1}-k_n,0)$. Then, we have  $\rho_{\bf k} \cong \det^{k_n} \otimes \rho_{{\bf k}'}$ with $(\rho_{{\bf k}'},V_{ {\bf k}'})$ an irreducible polynomial representation of highest weight ${\bf k}'$. Here we understand that $(\rho_{{\bf k}'},V_{{\bf k}'})$ is the trivial representation on $\CC$ if $k_1=\cdots=k_{n-1}=k_n$. Moreover, we may regard an element $F \in M_{\bf k}(\varGamma)$ as a $V_{{\bf k}'}$-valued holomorphic  function on $\HH$  such that 
\[F|_{\det^{k_n} \otimes \rho_{{\bf k}'}}[\gamma]=F\]
 for any $\gamma\in \varGamma$ 
(with the extra condition of holomorphy at all the cusps  if $n=1$). 
For a representation $(\rho,V)$ of $\GL_n(\CC)$, we denote by $\frkF(\HH_n,V)$ the set of Fourier series $F(Z)$ on $\HH_n$ with values  in $V$ of the following form:
\[
F(Z)=\sum_{A \in \calh_n(\ZZ)_{\ge 0}} a(A,F) {\bf e}(\mathrm{tr}(AZ)) \quad (Z \in \HH_n, \ a(A,F) \in V).
\]
For $F(Z) \in \frkF(\HH_n,V)$ and a positive integer $r \le n$ we define $\Phi(F)(Z_1)=\Phi_r^n(F)(Z_1) \ (Z_1 \in \HH_r)$ as
\[\Phi(F)(Z_1)=\lim_{\lambda \longrightarrow \infty} F\Bigl(\begin{pmatrix} Z_1 & O \\ O & \sqrt{-1} \lambda 1_{n-r} \end{pmatrix}\Bigr).\]
We make the convention that $\frkF(\HH_0,V)=V$ and  $\Phi_0^n(F)=a(O_n,F)$. Then, $\Phi(F)$ belongs to $\frkF(\HH_r,V)$. For a representation $(\rho,V)$ of $\GL_n(\CC)$, we  denote by $\widetilde \frkF(\HH_n,V)=\widetilde \frkF(\HH_n,(\rho,V))$ the subset of $\frkF(\HH_n,V)$ consisting of elements $F(Z)$
such that the following condition is satisfied:
\begin{align*} a(A[g],F)=\rho(g)a(A,F) \text{ for any } g \in \GL_n(\CC). \tag{K0} \end{align*}

Now let ${\Bell}=(l_1,\ldots,l_n)$ be a dominant integral weight of length $n$ of depth $m$. Then  we realize  the representation space $V_{\Bell}$ in terms of  bideterminants (cf. \cite{Ibukiyama-Takemori19}).
Let $U=(u_{ij})$ be an $m \times n$ matrix of variables. For a positive integer $a \leq m$ let $\mathcal{SI}_{n,a}$ denote the set of strictly increasing sequences of positive integers not greater than $n$ of length $a$. For each $J=(j_1,\ldots,j_a) \in  \mathcal{SI}_{n,a}$ we define $U_J$ as
\[\begin{vmatrix} u_{1,j_1} & \ldots & u_{1,j_a} \\
\vdots & \ddots & \vdots \\
u_{a,j_1} &\ldots & u_{a,j_a}
\end{vmatrix}.\]
Then we say that a polynomial $P(U)$ in $U$ is a bideterminant of weight $ {\Bell}$ if
$P(U)$ is of the following form:
\[P(U)=\prod_{i=1}^{m} \prod_{j=1}^{l_i-l_{i+1}} U_{J_{ij}},\]
where $(J_{i1},\ldots,J_{i,l_i-l_{i+1}}) \in \mathcal{SI}_{n,i}^{l_i-l_{i+1}}$. 
Here we understand that $\prod_{j=1}^{l_i-l_{i+1}} U_{J_{ij}}=1$ if $l_i=l_{i+1}$.
\noindent
 Let $\mathcal {BD}_{{\Bell}}$ be the set of all bideterminants of weight ${\Bell}$. Here we make the convention that 
$\mathcal {BD}_{{\Bell}}=\{1 \}$ if ${\Bell}=(0,\ldots,0)$. 
For a commutative ring  $R$ and an $R$-algebra $S$ let $S[U]_{{\Bell}}$ denote the $R$-module of   all $S$-linear combinations of $P(U)$ for $P(U) \in  \mathcal {BD}_{{\Bell}}$. 
Then we can define an action of $\GL_n(\CC)$ on $\CC[U]_{\Bell}$ as
\[\GL_n(\CC) \times \CC[U]_{\Bell}\ni (g,P(U)) \mapsto P(U  g) \in \CC[U]_{\Bell},\]
and we can take the $\CC$-vector space $\CC[U]_{\Bell}$ as  a representation space  $V_{\Bell}$ of $\rho_{\Bell}$ under this action. 
Let $m \le n-1$ be a non-negative integer and $U=(u_{ij})$ be an $m \times n$ matrix of variables. Let ${\bf k}=(k_1,\ldots,k_n)$ with $k_1 \ge \cdots \ge k_m > k_{m+1}=\cdots=k_n$ and ${\bf k}'=(k_1-k_{m+1},\ldots,k_m-k_{m+1},\overbrace{0,\ldots,0}^{n-m})$. Here we make the convention that ${\bf k}=(k_1,\ldots,k_1)$ and ${\bf k}'=(0,\ldots,0)$ if $m=0$. Then under this notation and convention,
 $M_{{\bf k}}(\varGamma^{(n)})$ can be regarded as a $\CC$-sub-vector space of $\mathrm{Hol}(\HH_n)[U]_{{\bf k}'}$, where $\mathrm{Hol}(\HH_n)$ denotes the ring of holomorphic functions on $\HH_n$. 
 We sometimes write $F(Z)(U)$ for $F(Z) \in M_{{\bf k}}(\varGamma^{(n)})$.
Moreover, the Fourier expansion of $F(Z) \in M_{{\bf k}}(\varGamma^{(n)})$ can be expressed as
\[F(Z)=\sum_{A \in \calh_n(\ZZ)_{\ge 0}} a(A,F){\bf e}(\mathrm{tr}(AZ)),\]
where $a(A,F)=a(A,F)(U) \in \CC[U]_{{\bf k}'}$. 

Let $r$ be an integer such that $m \le r \le n$ and let ${\bf l}=(k_1,\ldots,k_{r-1},k_r)$ and ${\bf l}'=(k_1-k_{m+1},\ldots,k_r-k_{m+1},\overbrace{0,\ldots,0}^{r-m})$. 
For the $m \times n$ matrix $U$, let $U^{(r)}=(u_{ij})_{1 \le i \le m, 1 \le j \le r}$ and put $W'=\CC[U^{(r)}]_{{\bf l}'}$. Then  we can define a representation $(\tau',W')$ of $\GL_r(\CC)$.   
The representations $(\rho_{{\bf k}'},V_{{\bf k}'}) $ and $(\tau',W')$
satisfy the following conditions:
\begin{itemize}
\item[(K1)] $W' \subset V_{{\bf k}'}$.
\item[(K2)] $\rho_{{\bf k}'}\Bigl( \left(\begin{smallmatrix} g_1 & g_2 \\ O & g_4 \end{smallmatrix} \right)\Bigr)v=\tau'(g_1)v$ 
for $\left(\begin{smallmatrix} g_1& g_2 \\ O & g_4 \end{smallmatrix} \right) \in \GL_n(\CC)$ with $g_1 \in \GL_r(\CC)$ and $v \in W'$.
\item[(K3)] If $v \in V_{{\bf k}'}$ satisfies the condition
\[\rho_{{\bf k}'}\Bigl(\left(\begin{smallmatrix}1_r & O \\ O & h\end{smallmatrix}\right)\Bigr)v=v \text{ for any } h \in \GL_{n-r}(\CC),\]
then $v$ belongs to $W'$.
\end{itemize}
Let $F(Z)=\sum_{A \in \calh_n(\ZZ)_{\ge 0}} a(A,F){\bf e}(\mathrm{tr}(AZ)) \in \frkF(\HH_n,V_{{\bf k}'})$
Then, in a way similar to \cite[(2.3.29)]{Andrianov87},
 we have 
\begin{align*}
F(Z)=\sum_{A_1 \in \calh_r(\ZZ)_{\ge 0}} a\Big(\begin{pmatrix} A_1 & O \\ O & O \end{pmatrix},F\Big){\bf e}(\mathrm{tr}(A_1Z_1)) \ (Z_1 \in  \HH_r).
\end{align*}
Suppose that $F(Z)$ belongs to $\widetilde \frkF(\HH_n,V_{{\bf k}'})$.
Then, by (K0), 
\[\rho_{{\bf k}'}\Bigl(\left(\begin{smallmatrix}1_r & O \\ O & h\end{smallmatrix}\right)\Bigr)\Bigl(a(\left(\begin{smallmatrix} A_1 & O \\ O & O \end{smallmatrix}\right),F)\Bigr)=a(\left(\begin{smallmatrix} A_1 & O \\ O & O \end{smallmatrix}\right),F) \ \text{ for any } h \in \GL_{n-r}(\CC).\]
Hence, by (K3), $a(\left(\begin{smallmatrix} A_1 & O \\ O & O \end{smallmatrix}\right),F)$ belongs to $W'$ for any $A_1 \in \calh_r(\ZZ)_{\ge 0}$. This implies that $\Phi_r^n(F)$ belongs to $\frkF(\HH_r,W')$. We easily see that $\Phi_r^n(F)$ belongs to  $\widetilde \frkF(\HH_r,W')$, and therefore $\Phi_r^n$ sends  $\widetilde \frkF(\HH_n,V_{{\bf k}'})$  to $\widetilde \frkF(\HH_r,W')$. It is easily seen that it induces a mapping from
$M_{\rho}(\varGamma^{(n)})$ to $M_{\tau}(\varGamma^{(r)})$, where $\rho=\det^{k_n} \otimes \rho_{{\bf k}'}$ and $\tau=\det^{k_n} \otimes \tau'$.  Let ${\Delta}_{n,r}$
be the subgroup of ${\varGamma}^{(n)}$ defined by
$${\Delta}_{n,r} := \left\{ \, \begin{pmatrix} * & * \\ 
O_{(n-r,n+r)} & * \\ \end{pmatrix} \in {\varGamma}^{(n)} \, \right\} . $$
For $F \in S_{\tau}(\varGamma^{(r)})$ the Klingen-Eisenstein series $[F]_{\tau}^{\rho}(Z,s)$ of $F$ 
associated to $\rho$ is defined by
\begin{align*} [F]_{\tau}^{\rho}(Z,s):= \sum_{\gamma \in {\Delta}_{n,r}\backslash {\varGamma}^{(n)}} \Bigl({\det \mathrm{Im}(Z) \over \det \mathrm{Im} (\mathrm{pr}_r^n(Z))}\Bigr)^s F(\mathrm{pr}_r^n(Z))|_\rho \gamma .
\end{align*}
Here $\mathrm{pr}_r^n(Z)=Z_1$ for 
$Z=\begin{pmatrix} Z_1&Z_2 \\ {}^tZ_2&Z_4 \end{pmatrix} \in \HH_n$
with $Z_1 \in \HH_r, Z_4 \in \HH_{n-r}, Z_2 \in M_{r,n-r}(\CC)$.
We also write $[F]_{\tau}^{\rho}(Z,s)$ as $[F]_{\bf l}^{\bf k}(Z,s)$ or $[F]^{\bf k}(Z,s)$.

Suppose that $k_n$ is even and $2\mathrm{Re}(s) +k_n >n+r+1$. Then,  $[F]_\tau^\rho(Z,s)$
converges absolutely and
uniformly on $\HH_n$. This is proved by  \cite{Klingen67} in the scalar valued case, and can be proved similarly in general case. If  $[F]^{{\bf k}}(Z,s)$ can be continued holomorphically in the neighborhood of $0$ as a function of $s$, we  put $[F]_\tau^{\rho}(Z)=[F]_\tau^\rho(Z,0)$.
If $[F]_\tau^{\rho}(Z)$ is holomorphic as a function of $Z$, it belongs to 
 $M_{{\bf k}}({\varGamma}^{(n)})$, and we say that it is the Klingen-Eisenstein lift of $F$ to $M_{{\bf k}}({\varGamma}^{(n)})$. 
In particular, if $k_n >n+r+1$, then $[F]_\tau^{\rho}(Z,s)$ is holomorphic at $s=0$ as a function of $s$, and $[F]_\tau^{\rho}(Z,0)$ belongs to $M_{{\bf k}}({\varGamma}^{(n)})$, and $\Phi_{\tau}^{\rho} ([F]_{\tau}^{\rho})=F$.
 We note that $[F]_\tau^{\rho}(Z)$ is not necessarily a holomorphic as a function of $Z$ if $k_n \le n+r+1$.  
 
We define  $E_{n,{\bf k}}(Z,s)$ as
\[E_{n,{\bf k}}(Z,s)=\sum_{\gamma \in \Delta_{n,0} \backslash \varGamma^{(n)}} (\det \mathrm{Im}(Z))^s|_{\rho}\gamma\]
and call it the Siegel-Eisenstein series of weight ${\bf k}$ with respect to $\varGamma^{(n)}$. In particular, if ${\bf k}=(\overbrace{k,\ldots,k}^n)$  with $k$ even we write $E_{n,k}(Z,s)$ for $E_{n,{\bf k}}(Z,s)$.
If $k >0$, then $E_{n,k}(Z,s)$ can be continued meromorphically to the whole $s$-plane as a function of $s$.
Let  ${\bf k}=(\overbrace{k+l,\ldots,k+l}^m,\overbrace{k,\ldots,k}^{n-m})$ such that $k, l \ge 0$, and put $\rho=\det ^k \otimes \rho_{{\bf k}'}$ and
$\tau=\det^k \otimes \rho_{{\bf l}'}$ with ${\bf k}'=(\overbrace{l,\ldots,l}^m,0,\ldots,0)$ and ${\bf l}'=(\overbrace{l,\ldots,l}^m)$.
Then, for $F \in S_{\tau}(\varGamma^{(m)})$ 
we can define the Klingen-Eisenstein series  $[F]_\tau^{\rho_{\bf k}}(Z,s)$ of $F$ associated to $\rho_{\bf k}$ if $k$ is even and $2\mathrm{Re}(s) +k>n+m+1$. We note that 
 $\CC[U^{(m)}]_{{\bf l}'}$ is a subspace of  $\CC[U]_{{\bf k}'}$ spanned by $(\det U^{(m)})^l$,  and hence we have a natural isomorphism
\[
\iota:S_{k+l}(\varGamma^{(m)}) \ni f \mapsto \widetilde f:=f (\det U^{(m)})^l \in 
S_{\tau}(\varGamma^{(m)}). 
\]
We sometimes write $[f]^{\rho_{\bf k}}$ or $[f]^{{\bf k}}$ instead of $[\widetilde f ]_\tau^{\rho_{\bf k}}$ for $f \in S_{k+l}(\varGamma^{(m)})$. 
We state the holomorphy of the Klingen-Eisenstein series.
\begin{proposition} \label{prop.holomorphy-Klingen-Eisenstein}
Let $k$ be an even integer.
\begin{itemize}
\item [(1)] Suppose that $k \ge  (n+1)/2$ and that neither $k=(n+2)/2 \equiv 2 \text{ mod } 4$ nor  $k=(n+3)/2 \equiv 2 \text{ mod } 4$. Then $E_{n,k}(Z)$ belongs to $M_k(\varGamma^{(n)})$.
\item[(2)] Let ${\bf k}=(\overbrace{k+l,\ldots,k+l}^m,\overbrace{k,\ldots,k}^{n-m})$ such that $l \ge 0$ 
and $k > 3m/2+1$ and 
let $f$ be a Hecke eigenform in $S_{k+l}(\varGamma^{(m)})$. 
Then $[f]^{{\bf k}}(Z,s)$ can be continued meromorphically to the whole $s$-plane as a function of $s$, and holomorphic at $s=0$. Moreover suppose that  
 $k > (n+m+3)/2$.
Then $[f]^{{\bf k}}(Z)$ belongs to $M_{{\bf k}}(\varGamma^{(n)})$. 
\end{itemize}
\end{proposition}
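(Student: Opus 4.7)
The plan is to deduce both parts from Shimura's analysis of the scalar-valued Siegel Eisenstein series, reducing (2) to (1) via a pullback identity of Garrett--B\"ocherer type.

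For (1), I would work from Shimura's explicit Fourier expansion of $E_{n,k}(Z,s)$. Each non-constant Fourier coefficient at a half-integral $T>0$ factors as an archimedean confluent hypergeometric function of matrix argument, which is entire in $s$, times a finite product of local Siegel series, which are polynomials in $p^{-s}$. The only potential obstruction to holomorphy at $s=0$ therefore lives in the normalizing product $\zeta(1-k+2s)\prod_{i=1}^{[n/2]}\zeta(1-2k+4s+2i)$ that appears in the constant term. A direct pole/zero analysis under the hypothesis $k\ge (n+1)/2$ isolates exactly the two borderline weights $k=(n+2)/2\equiv 2\pmod 4$ and $k=(n+3)/2\equiv 2\pmod 4$ as the only cases in which an uncancelled pole of this product survives at $s=0$; outside these, $E_{n,k}(Z,0)$ is a well-defined holomorphic element of $M_k(\varGamma^{(n)})$.

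For (2), I would invoke a B\"ocherer-type pullback formula adapted to the weight $\rho_{\bf k}$. Using the diagonal embedding $(Z,W)\mapsto \mathrm{diag}(Z,W)$ of $\HH_n\times\HH_m$ into $\HH_{n+m}$ and the isomorphism $\iota\colon f\mapsto \widetilde f=f\cdot(\det U^{(m)})^l$ from $S_{k+l}(\varGamma^{(m)})$ onto $S_\tau(\varGamma^{(m)})$, one expects an identity of the schematic shape
\[
\int_{\varGamma^{(m)}\backslash\HH_m} E_{n+m,k}(\mathrm{diag}(Z,W),s)\,\overline{f(W)}\,(\det\mathrm{Im}(W))^{k+l-m-1}\,dW \;=\; c(s)\,L(2s+k,f,\St)\,[\widetilde f\,]_\tau^{\rho_{\bf k}}(Z,s'),
\]
with $s'$ an affine shift of $s$. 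By (1) the left-hand side extends meromorphically in $s$, and the analytic behaviour of the standard $L$-function $L(s,f,\St)$ is well understood, so dividing through yields the meromorphic continuation of $[f]^{\bf k}(Z,s)$ whenever $k>3m/2+1$. The stronger assumption $k>(n+m+3)/2$ then simultaneously avoids the borderline weights appearing in (1) and keeps us clear of any residual poles of the normalizing factors, forcing holomorphy at $s=0$ and in $Z$, so that $[f]^{\bf k}(Z)=[f]^{\bf k}(Z,0)$ lies in $M_{\bf k}(\varGamma^{(n)})$.

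The main technical obstacle is the vector-valued refinement of the pullback identity. Because the highest weight $(k+l,\ldots,k+l,k,\ldots,k)$ restricts on the Levi block $GL_m$ to the one-parameter character $\det^{k+l}$, the relevant sub-representation of $V_{{\bf k}'}$ is generated by the single bideterminant $(\det U^{(m)})^l$, so the projection onto this component is in principle straightforward; nevertheless one must verify that the scalar pullback formula transfers verbatim, i.e., that the archimedean differential operator (or Schur projector) extracting the $(\det U^{(m)})^l$-component is compatible with the analytic information supplied by (1) and does not introduce new poles. Once this bookkeeping is carried out, both parts reduce to standard results on Shimura's $E_{n,k}(Z,s)$ and on the standard $L$-function of $f$.
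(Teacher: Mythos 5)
Your plan matches the paper's own proof: part (1) is cited directly from Shimura's Theorem 17.7 (whose content is the Fourier/zeta-factor analysis you sketch), and part (2) is proved in Section 5 by precisely the pullback-to-(1) reduction you outline, using Theorem \ref{th.pullback} (the vector-valued pullback with the Ibukiyama--Kozima differential operator $\DD_{\lambda,m,n}$ of Proposition \ref{prop.generalpol}) together with a comparison to the B\"ocherer--Schmidt formula to show that the archimedean factor $c(s,\rho_m)$ is meromorphic, and holomorphic and nonvanishing at $s=0$. One caveat: for $l>0$ the integral you write against $E_{n+m,k}(\mathrm{diag}(Z,W),s)$ is not weight-compatible as stated, since $f$ has weight $k+l$ in $W$ while the restricted scalar Eisenstein series has weight $k$; one must apply $\DD_{\lambda,m,n}$ to $E_{n+m,k}$ before restricting to the diagonal and then pair with $f$ -- but you correctly flag this as the technical obstacle, and the paper resolves it exactly along the lines you anticipate.
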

\begin{proof} The assertion (1) follows from  \cite[Theorem 17.7]{Shimura00}. The assertion (2) has been proved in the case $l=0$ (cf. \cite{Langlands76}, \cite{Shimura00}).
The case $l>0$ will be proved  in Section 5.  
\end{proof}
Let ${\Bell}=(l_1,\ldots,l_n)$ be a dominant integral weight of length $n$ of depth $m$. 
Let $\widetilde V=\widetilde V_{\Bell}=\QQ[U]_{\Bell}$. Then, $(\rho_{\Bell}|\GL_n(\QQ),\widetilde V)$  is a representation of $\GL_n(\QQ)$, and 
$\widetilde V \otimes \CC=V_{\Bell}$.
We consider a $\ZZ$ structure of $V_{\Bell}$. To do this,  we fix a basis $\cals=\cals_{\Bell}=\{ P \}$ of  $\ZZ[U]_{\Bell}$.
We note here that the bideterminants are not linearly independent over $\ZZ$ and even over $\CC$ in general, so the set $\mathcal {BD}_{\Bell}$ is not necessarily 
a basis of $\ZZ[U]_{{\Bell}}$. Let $R$ be a subring of $\CC$. Since the set $\cals$ is also linearly independent over $\CC$, 
 an element $a$ of $R[U]_{\Bell}$ is uniquely written as
\[a=\sum_{P \in \cals}a_P P \text{ with } a_P \in R.\]
Let $K$ be a number field, and $\frkO$ the ring of integers in $K$.
 For  a prime ideal  $\frkp$ of $\frkO$ and $a=a(U)=\sum_{P \in \cals} a_P P \in K[U]_{\Bell}$
 with $a_P \in K$, define 
\[\ord_{\frkp}(a)= \min_{P \in \cals} \ord_{\frkp}(a_P).\]
We say that $\frkp$ divides $a$ if $\ord_\frkp(a) >0$. 
\begin{remark}
\begin{itemize}
\item[(1)]The definition of $\ord_{\frkp}(*)$ does not depend on the choice of a basis of $L$. We note that $\frkp$ does not divide  $a=a(U)$ if  $\frkp$ does not divide $a(U_0)$ for some 
element $U_0$ of $M_{m,n}(\frkO)$.
\item[(2)] There is no canonical choice of a basis of $\widetilde V$. But several standard choices are known. One of them is a basis associated with the semi-standard Young tableaux (cf. \cite{Green80}). We note that that it is also a basis of $L$. This can be proved by 
 a careful analysis of the proof of \cite[(4.5a)]{Green80} combined with \cite[(4.6a)]{Green80}.
\end{itemize}
\end{remark}
For a subring $R$ of $\CC$, we denote by  $M_{{\bf k}}(\varGamma^{(n)})(R)$ the $R$-submodule of $M_{{\bf k}}(\varGamma^{(n)})$  consisting of all modular forms $F$ such that $a(T,F) \in R[U]_{{\bf k}'}$  for all $T \in \calh_n(\ZZ)_{\ge 0}$.  

We consider tensor products of modular forms, which will be used on and  after Section 5.
Let $n_1$ and $n_2$ be positive integers. Let ${\bf k}_1=(k_1,\ldots,k_m,k_{m+1},\ldots,k_{n_1})$ and ${\bf k}_{2}=(k_1,\ldots,k_m,k_{m+1},\ldots,k_{n_2})$ be non-increasing sequences of integers such that $k_{m} >k_{m+1}=\cdots= k_{n_i}=l$ for $i=1,2$.
Then 
$(\rho_{{\bf k}_1} \otimes \rho_{{\bf k}_2}, V_{1} \otimes V_{2})$ is a representation of $\GL_{n_1}(\CC) \times \GL_{n_2}(\CC)$. Put ${\bf k}_1'=(k_1-l,\ldots,k_m-l,\overbrace{0,\ldots,0}^{n_1-m})$ and
${\bf k}_2'=(k_1-l,\ldots,k_m-l,\overbrace{0,\ldots,0}^{n_2-m})$.
Then, $\rho_{{\bf k}_1} \otimes \rho_{{\bf k}_2}=
(\det^l \otimes \rho_{{\bf k}_1'}) \otimes (\det^l \otimes \rho_{{\bf k}_2'})$
with $(\rho_{\bf {k}_i'},V_{i}')$ a polynomial representation of highest weight ${\bf k}_i'$ for $i=1,2$.
To make our formulation smooth, we sometimes regard 
a modular form of scalar weight $k$ for $\varGamma^{(n)}$ as a function with values in the one-dimensional vector space spanned by $\det U^l$ with a non-negative integer $l \le k$, where $U$ is an $n \times n$ matrix of variables. 
 Let $U_1$ and $U_2$ be $m \times n_1$ and $m \times n_2$ matrices, respectively, of variables and for a commutative ring $R$ and an $R$-algebra $S$ let
\begin{align*}
S[U_1,U_2]_{{\bf k}_1',{\bf k}_2'}&=\Bigl\{\sum_j P_{j}(U_1)P_{j}(U_2) 
\quad (\text{ finite sum }) \text{ with } P_{j} (U_i) \in S[U_i]_{{\bf k}_i'} \ (i=1,2) \Bigr\}.
\end{align*}
Here we make the convention that $P_{j}(U_{i}) \in \langle (\det U_{i})^{k_1-l} \rangle_{\CC}$ if $n_i=m$ and $k_1=\cdots=k_m$ as stated above. 
Then, as a representation space $W=W_{{\bf k}_1',{\bf k}_2'}$ of 
$\rho_{{\bf k}_1'} \otimes \rho_{{\bf k}_2'}$ we can take  $\CC[U_1,U_2]_{{\bf k}_1',{\bf k}_2'}$. 
Let 
\[\widetilde W=\widetilde W_{{\bf k}_1',{\bf k}_2'}=\QQ[U_1,U_2]_{{\bf k}_1',{\bf k}_2'}. \]
Then $\widetilde W \cong \widetilde V_{1}' \otimes \widetilde V_{2}'$ and  $\widetilde W \otimes_{\QQ} \CC=W$.
Let 
\[M=M_{{\bf k}_1',{\bf k}_2'}=\ZZ[U_1,U_2]_{{\bf k}_1',{\bf k}_2'}. \]
We note that 
\[M=\left\{\sum_{P_{\tau_1} \in \mathcal {S}_{{\bf k}_1'}, P_{\tau_2} \in \mathcal{S}_{{\bf k}_2'}}a_{\tau_1,\tau_2}P_{\tau_1}(U_1)P_{\tau_2}(U_2) 
\; \middle| \; a_{\tau_1, \tau_2} \in \ZZ
\right\}.\]
Here we make the convention that $P_{\tau_i}(U_2)=
(\det U_i)^{k_1-l}$ if
$n_i=m$ and $k_1=\cdots=k_m$. Therefore, $M$ is a lattice of $\widetilde W$ and $M \cong L_1 \otimes L_2$ with 
$L_i=\ZZ[U_i]_{{\bf k}_i'} \ (i=1,2).$
Thus  $(\rho_{{\bf k}_1} \otimes \rho_{{\bf k}_2}, V_1 \otimes V_2 )$ has also a $\QQ$-structure and $\ZZ$-structure and we can define $\ord_{\frkp}(a \otimes b)$ for $a \otimes b \in \widetilde  W_K$.  If $\dim_{\CC} V_1=1,$ then we identify $V_1,\widetilde V_1$ and $L_1$ with $\CC,\QQ$ and $\ZZ$, respectively, and for $a, b \in V_1$ and $w \in V_2$, we write
$a \otimes b$ and $a \otimes w$ as $ab$ and  $aw$, respectively through the identifications $V_1 \otimes V_1 \cong V_1$ and $V_1 \otimes V_2 \cong V_2 \otimes V_1 \cong V_2$. 
The tensor product $M_{{\bf k}_1}(\varGamma^{(n_1)}) \otimes M_{{\bf k}_2}(\varGamma^{(n_2)})$ is regarded as a $\CC$-subspace of
$(\mathrm{Hol}(\HH_{n_1}) \otimes \mathrm{Hol}(\HH_{n_2}))[U_1,U_2]_{{\bf k}_1',{\bf k}_2'}$.

\section{Harder's conjecture}
In this section we review several arithmetical properties of 
Hecke eigenvalues and $L$ values of modular forms, then state the original Harder's conjecture in \cite{Harder03}.
In the later section, we will treat a generalized version of the conjecture.
Let ${\bf L}_n={\bf L}(\varGamma^{(n)},\mathrm{GSp}_n^+(\QQ) \cap M_{2n}(\ZZ))$ be the Hecke algebra over $\ZZ$ associated to the Hecke pair $(\varGamma^{(n)},\mathrm{GSp}_n^+(\QQ) \cap M_{2n}(\ZZ))$ and for a subring $R$ of $\CC$ put ${\bf L}_n(R)={\bf L}_n \otimes _{\ZZ}R$. 
For an element  $T=\varGamma^{(n)} g \varGamma^{(n)} \in {\bf L}_n(\CC)$, let 
\[T=\bigsqcup_{i=1} ^r \varGamma^{(n)} g_i\]
be the coset decomposition. Then, for a modular form $F \in M_{{\bf k}}(\varGamma^{(n)})$  we define $F|T$ as
\[F|T=\nu(g)^{k_1+\cdots+k_n-n(n+1)/2}\sum_{i=1}^r F|_{\rho_{\bf k}}g_i.\]
This defines an action of the Hecke algebra ${\bf L}_n(\CC)$ on $M_{{\bf k}}$. The operator $F \mapsto F|T$ with $T \in {\bf L}_n(\CC)$ is called the Hecke operator. 
We say that $F$ is a Hecke eigenform if $F$ is a common eigenfunction of all Hecke operators $T \in {\bf L}_n(\CC)$.
Then we have 
\[F|T =\lambda_F(T) F \text{ with } \lambda_F(T) \in \CC  \text{ for any } T \in {\bf L}_n(\CC).\]
We call $\lambda_F(T)$ the Hecke eigenvalue of $T$ with respect to $F$. 
For a Hecke eigenform $F$ in $M_{{\bf k}}(\varGamma^{(n)})$, we denote by  $\QQ(F)$ the field generated over $\QQ$ by all the Hecke eigenvalues $\lambda_F(T)$ with $T \in {\bf L}_n(\QQ)$ and call it the  Hecke field of $F$. For two Hecke eigenforms $F$ and $G$ we sometimes write $\QQ(F,G)=\QQ(F)\QQ(G)$.
We say that an element $T \in {\bf L}_n(\QQ)$ is integral with respect to  $M_{{\bf k}}(\varGamma^{(n)})$ if $F|T \in M_{{\bf k}}(\varGamma^{(n)})(\ZZ)$ for any $F \in  M_{{\bf k}}(\varGamma^{(n)})(\ZZ)$. 
We denote by ${\bf L}_n^{({\bf k})}$ the subset of ${\bf L}_n(\QQ)$  consisting of all integral elements with respect to  $M_{{\bf k}}(\varGamma^{(n)})$. 
The following proposition can be proved in the same manner as
Proposition 4.2 of \cite{Katsurada08}. 
\begin{proposition} \label{prop.Hecke-stable}
We have  ${\bf L}_n \subset {\bf L}_n^{({\bf k})}$ for any ${\bf k}=(k_1,\ldots,k_n)$ with $k_n \ge n+1$.
\end{proposition}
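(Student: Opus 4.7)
The plan is to adapt to the vector-valued setting the explicit Fourier-expansion argument of Proposition 4.2 of \cite{Katsurada08}, which treats the scalar-weight case. By linearity it suffices to consider a single double coset $T = \varGamma^{(n)} g \varGamma^{(n)}$ for some $g \in GSp_n^+(\QQ) \cap M_{2n}(\ZZ)$ of similitude $\nu := \nu(g)$. Choose integral coset representatives $g_i = \left(\begin{smallmatrix} A_i & B_i \\ 0 & D_i \end{smallmatrix}\right) \in M_{2n}(\ZZ)$ in the usual Iwasawa-type upper-triangular block form, so that $A_i\,{}^t D_i = \nu\,1_n$ and $A_i\,{}^t B_i$ is symmetric. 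From $D_i^{-1} = \nu^{-1}\,{}^t A_i$ and the fact that $\rho_{\bf k}$ is a polynomial representation of total degree $\sum_j k_j$ we obtain $\rho_{\bf k}(D_i)^{-1} = \nu^{-\sum_j k_j}\rho_{\bf k}({}^t A_i)$, and a direct Fourier-expansion computation combined with the change of variables $A \mapsto \nu B[A_i^{-1}]$ in the inner sum yields
\begin{align*}
a(B,\,F|T) \;=\; \nu^{-n(n+1)/2} \sum_i \rho_{\bf k}({}^t A_i)\,a(\nu B[A_i^{-1}], F)\,{\bf e}\bigl(\nu^{-1}\mathrm{tr}(B\,{}^t D_i\,B_i)\bigr),
\end{align*}
where the index $i$ runs over those coset representatives for which $\nu B[A_i^{-1}] \in \calh_n(\ZZ)_{\geq 0}$.

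Each summand on the right already lies in $\ZZ[\omega][U]_{{\bf k}'}$ for a suitable root of unity $\omega$: by hypothesis $a(\nu B[A_i^{-1}], F) \in \ZZ[U]_{{\bf k}'}$; by Proposition \ref{prop.SSYT-basis} the operator $\rho_{\bf k}({}^t A_i)$ has integer matrix entries in the SSYT basis (since $A_i \in M_n(\ZZ)$ and $\rho_{\bf k}$ is polynomial, so each factor of a bideterminant in $U\,{}^t A_i$ unfolds, via Cauchy--Binet, into a $\ZZ$-linear combination of SSYT basis elements in $U$); and the exponential factor is a root of unity of order dividing $2\nu$. The only remaining step is therefore to show that the global factor $\nu^{-n(n+1)/2}$ is absorbed by the summation, yielding an element of $\ZZ[U]_{{\bf k}'}$.

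This absorption is carried out componentwise in the SSYT basis $\{P_\tau(U)\}_\tau$ of $\widetilde V_{{\bf k}'}$. Writing $\rho_{\bf k}({}^t A_i)\,a(\nu B[A_i^{-1}], F) = \sum_\tau c_\tau^{(i)}(B, F) P_\tau(U)$ with $c_\tau^{(i)}(B, F) \in \ZZ$, each $\tau$-component of $a(B, F|T)$ takes the shape of exactly the normalized Hecke sum treated in \cite[Prop. 4.2]{Katsurada08}, so the scalar-weight integrality conclusion applies separately to each $\tau$ and gives $a(B, F|T) \in \ZZ[U]_{{\bf k}'}$. The main technical point---the only step genuinely beyond the scalar case---is to verify that the mixing of SSYT basis vectors introduced by $\rho_{\bf k}({}^t A_i)$ does not spoil the cancellation of $\nu^{n(n+1)/2}$ on each component; this is established by a local analysis at the primes dividing $\nu$, using that the SSYT basis provides a $\ZZ$-structure on $\widetilde V_{{\bf k}'}$ preserved by the $\rho_{\bf k}$-action of integer matrices.
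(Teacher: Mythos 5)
Your setup is sound and matches the general strategy one would expect from \cite{Katsurada08}: choose upper-triangular integral coset representatives, compute the Fourier expansion of $F|T$, and observe that Proposition \ref{prop.SSYT-basis} gives a $\ZZ$-lattice $\ZZ[U]_{{\bf k}'}$ that is stable under $\rho_{\bf k}({}^t A_i)$ for $A_i \in M_n(\ZZ)$. The formula for $a(B, F|T)$ is correctly derived. Up to that point, the argument is a faithful vector-valued transcription of the scalar-weight computation.

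The gap is in the last two sentences, which is precisely where the proof is supposed to be. You assert that, after writing $\rho_{\bf k}({}^t A_i)\,a(\nu B[A_i^{-1}], F) = \sum_\tau c_\tau^{(i)} P_\tau(U)$, ``each $\tau$-component of $a(B,F|T)$ takes the shape of exactly the normalized Hecke sum treated in \cite[Prop.~4.2]{Katsurada08}, so the scalar-weight integrality conclusion applies separately to each $\tau$.'' This is not true. In the scalar case the $i$-th summand is $(\det A_i)^k\,a(\nu B[A_i^{-1}],F_{\mathrm{sc}})\,{\bf e}(\cdots)$, and the divisibility by $\nu^{n(n+1)/2}$ is a consequence of the interplay between the exponential sums, the size of the unipotent fibers, and the fact that the $a(\cdot, F_{\mathrm{sc}})$'s are the Fourier coefficients of a single scalar form. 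Your $\tau$-component is $\nu^{-n(n+1)/2}\sum_i\left(\sum_{\tau'} [\rho_{\bf k}({}^t A_i)]_{\tau\tau'} a_{\tau'}(\nu B[A_i^{-1}],F)\right){\bf e}(\cdots)$: the $A_i$-dependent matrix of $\rho_{\bf k}({}^t A_i)$ mixes the SSYT components in a way that does not reproduce the scalar structure, and the collection $\{c_\tau^{(i)}\}_i$ for a fixed $\tau$ is not the sequence of Fourier coefficients of any weight-$k_n$ form. The scalar theorem therefore does not apply to the $\tau$-component as a black box. Nor does the cancellation happen term-by-term: already for $n=2$, $T=T(p)$, $A=\mathrm{diag}(1,p)$ one has $(\det A)^{k_n}|\cals_A| = p^{k_n+1}$, which is not divisible by $\nu^{n(n+1)/2}=p^3$ when $k_n$ is small, so the divisibility genuinely relies on relations among the different $A$-classes that your decomposition discards. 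The remaining sentence (``established by a local analysis at the primes dividing $\nu$, using that the SSYT basis provides a $\ZZ$-structure preserved by the $\rho_{\bf k}$-action'') names the ingredients but does not carry out the analysis. Compare with the paper's own proof of Proposition \ref{prop.special-Hecke}(2), where the analogous integrality for the single operator $\widetilde{\bf r}_{n,1}(p)$ requires a detailed estimate of $\ord_p\bigl(P(UL^{-1})\bigr)$ over bideterminants $P$ and representatives $L \in \Lambda_n D_{n-1,i}\Lambda_n$; a proof of the present proposition along your lines would have to supply a corresponding valuation estimate for an arbitrary double coset, and that estimate is the actual content of the statement.
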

For  a non-zero rational number $a$, we define an element 
$[a]=[a]_n$ of ${\bf L}_n$ by $[a]_n=\varGamma^{(n)}(a 1_{n})\varGamma^{(n)}.$
For each integer $m$ define an element $T(m)$ of ${\bf L}_n$
by 
$$T(m)=\sum_{d_1,\ldots,d_n,e_1,\ldots,e_n}\varGamma^{(n)}(d_1 \bot \cdots \bot d_n \bot e_1 \bot \cdots \bot e_n)\varGamma^{(n)},$$ where
$d_1,\ldots,d_n,e_1,\ldots,e_n$ run over all positive integer satisfying
$$d_i|d_{i+1}, \ e_{i+1}|e_i \ (i=1,\ldots,n-1), d_n|e_n,d_ie_i=m \
(i=1,\ldots,n).$$ Furthermore, for $i=1,\ldots,n$ and a prime number $p$
 put
$$T_i(p^2)=\varGamma^{(n)}(1_{n-i} \bot p1_i \bot p^21_{n-i} \bot p1_i)\varGamma^{(n)}.$$
As is well known, ${\bf L}_n(\QQ)$ is generated over $\QQ$ by $T(p),
T_i(p^2) \ (i=1,\ldots,n),$ and $[p^{-1}]_n$ for all $p$. We note that 
$T_n(p^2)=[p]_n$. We note that  ${\bf L}_n$ is generated over $\ZZ$ by  $T(p)$ and
$T_i(p^2) \ (i=1,\ldots,n)$ for all $p$. 
Let $F$ be a Hecke eigenform in $M_{{\bf k}}(\varGamma^{(n)})$.  As is well known, $\QQ(F)$ is a totally real algebraic number field of finite degree.  Now, first we consider the integrality of the eigenvalues of Hecke operators. For an algebraic number field $K,$ let $\frkO_K$ denote the ring of integers in $K.$ The following assertion can be proved in the same manner as in  \cite{Mizumoto91}. (See also \cite{Katsurada08}.) 
 \begin{proposition} \label{prop.Hecke-integrality} 
Let $k_n \ge n+1.$ Let $F$ be a Hecke eigenform in  $S_{{\bf k}}(\varGamma^{(n)})$. Then $\lambda_F(T)$ belongs to $\frkO_{\QQ(F)}$ for any $T \in {\bf L}_n^{({\bf k})}.$ 
\end{proposition}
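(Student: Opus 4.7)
The strategy is to show that $L := S_{\bf k}(\varGamma^{(n)})(\ZZ)$ is a $\ZZ$-lattice of rank $d := \dim_{\CC} S_{\bf k}(\varGamma^{(n)})$ in $S_{\bf k}(\varGamma^{(n)})$. Since any $T \in {\bf L}_n^{({\bf k})}$ preserves $L$ by definition, the characteristic polynomial of the induced endomorphism of $L$ lies in $\ZZ[X]$, and the eigenvalue $\lambda_F(T)$, being a root of this polynomial and lying in $\QQ(F)$, therefore belongs to $\frkO_{\QQ(F)}$.

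To see that $L$ is finitely generated as a $\ZZ$-module, I would choose a finite subset $\{B_\alpha\}_\alpha \subset \calh_n(\ZZ)_{>0}$ such that the Fourier-coefficient map
\[ \Psi: S_{\bf k}(\varGamma^{(n)}) \longrightarrow \bigoplus_\alpha V_{{\bf k}'}, \qquad f \longmapsto (a(B_\alpha, f))_\alpha, \]
is injective; this is possible because $S_{\bf k}(\varGamma^{(n)})$ is finite-dimensional and a cusp form is determined by its Fourier expansion. Then $\Psi$ restricts to an injection of $L$ into the free $\ZZ$-module $\bigoplus_\alpha \ZZ[U]_{{\bf k}'}$ of finite rank, so $L$ is a free $\ZZ$-module of rank at most $d$.

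The harder direction, showing that the rank of $L$ equals $d$, amounts to producing a basis of $S_{\bf k}(\varGamma^{(n)})$ all of whose elements lie in $L$; equivalently, that $S_{\bf k}(\varGamma^{(n)})(\QQ) \otimes_{\QQ} \CC = S_{\bf k}(\varGamma^{(n)})$. Under the hypothesis $k_n \ge n+1$ this rests on the classical Shimura-type construction of a $\QQ$-rational basis from vector-valued Siegel Eisenstein series, from theta series attached to positive definite integral quadratic forms (weighted by harmonic polynomials indexed by the SSYT basis of $\widetilde V_{{\bf k}'}$), and from their differentials. The weight bound ensures the holomorphy of the building-block Eisenstein series at the critical point (cf.\ Proposition \ref{prop.holomorphy-Klingen-Eisenstein}(1)), so their Fourier coefficients land in $\QQ[U]_{{\bf k}'}$; clearing common denominators then places them in $L$.

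The principal obstacle is this last rationality step for a general vector-valued weight $\bf k$: one must verify that the Fourier coefficients of the Eisenstein and theta building blocks are $\QQ$-rational with respect to the SSYT basis introduced in Section 2, which is a nontrivial compatibility between that combinatorial basis and the analytic constructions. Once this compatibility is in place (as carried out by Mizumoto and Katsurada in the analogous scalar and related vector-valued settings), the remainder of the argument reduces to the linear-algebraic observation sketched in the first paragraph.
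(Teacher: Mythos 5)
Your outline is the standard one and matches what the paper invokes by citing Mizumoto and Katsurada: exhibit $L=S_{\bf k}(\varGamma^{(n)})(\ZZ)$ as a full-rank Hecke-stable $\ZZ$-lattice, observe that the characteristic polynomial of $T|_L$ lies in $\ZZ[X]$, and conclude that each eigenvalue lying in $\QQ(F)$ is in $\frkO_{\QQ(F)}$. Two small imprecisions, neither fatal: first, stability of $L$ under $T\in{\bf L}_n^{({\bf k})}$ is not literally ``by definition'' (the definition of integrality concerns $M_{\bf k}(\varGamma^{(n)})(\ZZ)$) but requires the additional standard fact that Hecke operators preserve cuspidality, so that $T$ maps $M_{\bf k}(\varGamma^{(n)})(\ZZ)\cap S_{\bf k}(\varGamma^{(n)})$ into itself; second, Eisenstein series are not cuspidal and so cannot contribute to a $\QQ$-rational basis of $S_{\bf k}(\varGamma^{(n)})$ — the rational structure on the cuspidal subspace comes from theta or Poincar\'e series and arithmeticity results of Shimura and Mizumoto, and the bound $k_n\ge n+1$ serves that construction rather than Eisenstein-series holomorphy (Proposition~\ref{prop.holomorphy-Klingen-Eisenstein}(1) already gives holomorphy for $k\ge(n+1)/2$). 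With those points adjusted, the reduction is sound and the same as the paper's.
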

   Let ${\bf L}_{n,p}={\bf L}( \varGamma^{(n)}, \mathrm{GSp}_n({\QQ})^+  \cap \GL_{2n}({\ZZ}[p^{-1}]))$ be the Hecke algebra associated with the pair $(\varGamma^{(n)},\mathrm{GSp}_n({\QQ})^+  \cap \GL_{2n}({\ZZ}[p^{-1}])).$  ${\bf
L}_{n,p}$ can be considered as a subalgebra of ${\bf L}_{n},$ and is generated over ${\QQ}$ by $T(p)$ and $T_i(p^2) \ (i=1,2,\ldots, n),$ and $[p^{-1}]_n.$ We now review the Satake $p$-parameters of ${\bf L}_{n,p};$ let ${\bf P}_n={\QQ}[X_0^{\pm},X_1^{\pm},\ldots,X_n^{\pm}]$ be the ring of Laurent polynomials in $X_0,X_1,\ldots,X_n$ over ${\QQ}.$ Let ${\bf W}_n$ be the group of ${\QQ}$-automorphisms of ${\bf P}_n$ generated by all permutations in variables $X_1,\ldots,X_n$ and by the automorphisms $\tau_1,\ldots,\tau_n$ defined by
$$\tau_i(X_0)=X_0X_i,\tau_i(X_i)=X_i^{-1},\tau_i(X_j)=X_j \ (j\not=i).$$
Moreover, a group $\tilde {\bf W}_n$ isomorphic to ${\bf W}_n$ acts on the set $T_n=({\CC}^{\times})^{n+1}$ in a way similar to the above. 
Then there exists a  ${\QQ}$-algebra isomorphism $\Phi_{n,p}$, called the Satake isomorphism, from  ${\bf L}_{n,p}$ to the ${\bf W}_n$-invariant subring ${\bf P}_n^{{\bf W}_n}$ of ${\bf P}_n.$  Then for a ${\QQ}$-algebra homomorphism $\lambda$ from ${\bf L}_{n,p}$ to ${\CC},$   there exists an element $(\alpha_0(p,\lambda),\alpha_1(p,\lambda),\ldots,\alpha_n(p,\lambda))$ of ${\bf T}_n$ satisfying
$$\lambda(\Phi_{n,p}^{-1}(F(X_0,X_1,\ldots,X_n)))=F(\alpha_0(p,\lambda),\alpha_1(p,\lambda),\ldots,\alpha_n(p,\lambda))$$
for $F \in  {\bf P}_n^{{\bf W}_n}.$ The equivalence class of 
$(\alpha_0(p,\lambda),\alpha_1(p,\lambda),\ldots,\alpha_n(p,\lambda))$ under the action of $\tilde {\bf W}_n$ is uniquely determined  by $\lambda.$ We call this the Satake parameters of ${\bf L}_{n,p}$ determined by $\lambda.$
 Now let 
 $F$ be a Hecke eigenform in  $M_{{\bf k}}(\varGamma^{(n)})$.  Then for each prime number $p,$ $F$ defines a ${\QQ}$-algebra homomorphism $\lambda_{F,p}$ from ${\bf L}_{n,p}$ to ${\CC}$ in a usual way, and we denote by $\alpha_0(p),\alpha_1(p),\ldots,\alpha_n(p)$ the Satake parameters of ${\bf L}_{n,p}$ determined by
$F.$ 
For later purpose, we consider special elements in ${\bf L}_{n,p};$ the polynomials $r_n(X_1,\ldots,X_n)=\sum_{i=1}^n (X_i+X_i^{-1})$ and $\rho_n(X_0,X_1,\ldots,X_n)=X_0^2X_1X_2 \cdots X_n r_n(X_1,\ldots,X_n)$ are elements of ${\bf P}_n^{{\bf W}_n},$ and thus we can define elements $\Phi_{n,p}^{-1}(r_n(X_1,\ldots,X_n))$ and $\Phi_{n,p}^{-1}(\rho_n(X_0,X_1,\ldots,X_n))$ of ${\bf L}_{n,p},$ which are  denoted by ${\bf r}_{n,1}(p)$ and $\widetilde \rho_{n,1}$, respectively. 
\begin{proposition}  \label{prop.special-Hecke} 
\begin{itemize}
\item[(1)]  We have
$$\widetilde \rho_{n,1}(p)=p^{n(n+1)/2}[p]_n\cdot {\bf r}_{n,1}(p),$$ and in particular
$$\lambda_F(\widetilde \rho_{n,1}(p))=p^{\sum_{i=1}^n k_i-n(n+1)/2 }\sum_{i=1}^n (\alpha_i(p)+\alpha_i(p)^{-1}).$$
\item[(2)] Let ${\bf k}=(k_1,\ldots,k_n)$ with $k_1 \ge \cdots \ge k_n \ge 0$. 
Suppose that $k_n \ge (n+1)/2$. 
Then
$\widetilde{\bf r}_{n,1}(p):=p^{k_1-1} {\bf r}_{n,1}(p)$ belongs to ${\bf L}_{n}^{({\bf k})}$.
\end{itemize}
\end{proposition}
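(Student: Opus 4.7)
The plan is to prove (1) via a direct application of the Satake isomorphism, and to approach (2) by following Proposition~4.2 of \cite{Katsurada08}, adapted to the vector-valued setting.

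For (1), apply $\Phi_{n,p}$. By definition $\Phi_{n,p}(\widetilde\rho_{n,1}(p))=\rho_n(X_0,\ldots,X_n)$ and $\Phi_{n,p}({\bf r}_{n,1}(p))=r_n(X_1,\ldots,X_n)$. A standard computation of the Satake image of the central element $p\cdot 1_{2n}$ gives $\Phi_{n,p}([p]_n)=p^{-n(n+1)/2}X_0^2X_1\cdots X_n$. The tautological factorization $\rho_n=(X_0^2X_1\cdots X_n)\cdot r_n$ together with the injectivity of $\Phi_{n,p}$ then yields the identity $\widetilde\rho_{n,1}(p)=p^{n(n+1)/2}[p]_n\cdot{\bf r}_{n,1}(p)$. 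The eigenvalue formula follows by evaluating both sides at $(\alpha_0(p),\ldots,\alpha_n(p))$ and invoking the classical relation $\alpha_0(p)^2\alpha_1(p)\cdots\alpha_n(p)=p^{\sum_i k_i-n(n+1)/2}$; this relation itself comes from equating $\lambda_F([p]_n)=p^{\sum_i k_i-n(n+1)}$, read off from the direct central action $F|[p]_n=p^{\sum_i k_i-n(n+1)}F$, with its Satake evaluation $p^{-n(n+1)/2}\alpha_0(p)^2\alpha_1(p)\cdots\alpha_n(p)$.

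For (2), the strategy is to write $p^{k_1-1}{\bf r}_{n,1}(p)$ explicitly as a $\QQ$-linear combination of double cosets $\varGamma^{(n)}g\varGamma^{(n)}$ with $g\in GSp_n^+(\QQ)\cap GL_{2n}(\ZZ[p^{-1}])$ by inverting the Satake map, and then to verify coset by coset that the induced action on the Fourier coefficients of a weight-${\bf k}$ form with $\ZZ$-integral bideterminant coefficients again yields $\ZZ$-integral bideterminant coefficients. The $X_i$-monomials of $r_n$ pull back to integer-matrix double cosets directly, while the $X_i^{-1}$-monomials introduce factors of $[p^{-1}]_n$ together with fractional similitudes. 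For each resulting coset with decomposition $\varGamma^{(n)}g\varGamma^{(n)}=\bigsqcup_j\varGamma^{(n)}g_j$, the Hecke action on $a(A,F)$ is a sum of terms $\rho_{\bf k}(J(g_j,Z))^{-1}a(A',F)$ weighted by $\nu(g)^{\sum_i k_i-n(n+1)/2}$, which on the SSYT basis of $V_{{\bf k}'}$ a priori lies in $p^{-N}\ZZ$ for some $N$ depending on the tableau and the coset.

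The main obstacle is establishing a uniform denominator bound: across all tableaux $\calt\in\mathcal{SY}_{{\bf k}'}$ and all cosets appearing in the decomposition of ${\bf r}_{n,1}(p)$, one needs $\ord_\frkp\ge-(k_1-1)$ on each bideterminant coefficient of the resulting form. Evaluating $\rho_{{\bf k}'}$ at matrices of the form $p^{-1}g_0$ with $g_0$ integral produces values $P_{\calt}(p^{-1}g_0U)$ whose $p$-adic valuation depends on $\calt$; the required estimate is a combinatorial valuation bound on each such bideterminant, verified using the inequality $k_n\ge(n+1)/2$ to control the bottom-row factors in each $P_{\calt}$. With this uniform estimate in place, the coset-by-coset argument of \cite{Katsurada08}, originally carried out in the scalar case, extends via the bideterminant realization of $V_{{\bf k}'}$ to the vector-weight setting, and $\widetilde{\bf r}_{n,1}(p)\in{\bf L}_n^{({\bf k})}$ follows.
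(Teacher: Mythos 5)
Your proof of part (1) is correct and matches the paper's approach: both rest on the single observation $\Phi_{n,p}(p^{n(n+1)/2}[p]_n)=X_0^2X_1\cdots X_n$, the tautological factorization $\rho_n=(X_0^2X_1\cdots X_n)\,r_n$, and the normalization $\alpha_0(p)^2\alpha_1(p)\cdots\alpha_n(p)=p^{\sum_i k_i-n(n+1)/2}$ (which you correctly re-derive from the central action $F|[p]_n=p^{\sum_ik_i-n(n+1)}F$). No issues there.

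For part (2), however, what you have written is an outline of the right strategy rather than a proof. You correctly identify that the problem reduces to a coset-by-coset valuation estimate on the bideterminant coefficients, and you correctly flag that $k_n\ge(n+1)/2$ is what makes the estimate close. But you then assert the estimate without proving it, and that estimate \emph{is} the entire content of the proposition. Concretely, the paper avoids ``inverting the Satake map'' in the abstract: it writes $\widetilde\rho_{n,1}(p)$ explicitly in the parabolic (degenerate) Hecke algebra ${\bf L}_{n,\infty}$ as $p^{(n-1)n/2}\bigl(\beta(D_{n-1,0})^{-1}\Pi_{n-1,0}+p^{n+1}\beta(D_{n-1,1})^{-1}\Pi_{n-1,1}\bigr)$ with $D_{n-1,0}=1\perp p1_{n-1}$ and $D_{n-1,1}=p1_{n-1}\perp p^2$ (a closed formula from Andrianov, p.~160), then divides by $p^{n(n+1)/2}[p]_n$. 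This leaves, after multiplying by $p^{k_1-1}$, the two explicit prefactors $p^{2k_1+k_2+\cdots+k_n-n-1}\cdot p^{-k_n(n-1+2i)+(n+1)i}$ for $i=0,1$. Against these one must bound $\mathrm{ord}_p P(UL^{-1})$ for $L\in\Lambda_nD_{n-1,i}\Lambda_n$ and each bideterminant $P(U)=\prod_{i}\prod_{j}U_{J_{ij}}$; writing $L=D_{n-1,i}=\mathrm{diag}(p^{d_1},\ldots,p^{d_n})$, a minor of size $r$ picks up valuation $-\sum_{a=1}^r d_{j_a}$, which is $\ge -(r+1)$ since at most one $d_j=2$. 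Telescoping $\sum_i(l_i-l_{i+1})(i+1)=2l_1+\sum_{i\ge2}l_i$ with $l_i=k_i-k_n$ gives the bound $2k_1+k_2+\cdots+k_n-(n+1)k_n$ for $D_{n-1,1}$ (matching the prefactor exactly) and $\sum_{i\le n-1}k_i-(n-1)k_n$ for $D_{n-1,0}$, which together with the prefactor reduces to $k_1+k_n\ge n+1$ — and this is where $k_n\ge(n+1)/2$ (with $k_1\ge k_n$) actually enters. None of this appears in your proposal; ``control the bottom-row factors'' also does not describe where the hypothesis is used (it enters through the scalar prefactor's dependence on $k_n$, not through the bideterminants, which live in $\ZZ[U]_{{\bf k}'}$ and do not see $k_n$ directly). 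As it stands, part (2) has a genuine gap: the decisive combinatorial estimate is asserted, not proved.
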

\begin{proof}  The assertion (1) can easily be checked remarking that
$\Phi_{n,p}(p^{n(n+1)/2}[p]_n)=X_0^2X_1\cdots X_n$ (cf. \cite[Lemma 3.3.34]{Andrianov87}).
We will prove the assertion (2). 
Put 
\[\mathrm{GSp}_n(\QQ)^+_{\infty}=\biggl\{ \begin{pmatrix} A & B \\ O & D \end{pmatrix} \in \mathrm{GSp}_n(\QQ)^+ \biggr\},\] 
$\varGamma_{\infty}^{(n)}=\varGamma^{(n)} \cap \mathrm{GSp}_n(\QQ)^+_{\infty},$ and let
${\bf L}_{n,\infty}={\bf L}(\varGamma_{\infty}^{(n)},\mathrm{GSp}_n(\QQ)^+_{\infty} \cap M_{2n}(\ZZ))$ be the Hecke algebra associated to the Hecke pair 
$(\varGamma_{\infty}^{(n)},\mathrm{GSp}_n(\QQ)^+_{\infty} \cap M_{2n}(\ZZ))$. Then there is a natural injection from ${\bf L}_n$ into
${\bf L}_{n,\infty}$. 
For an element $D \in M_n(\ZZ)^{\rm nd}$ put $D^*={}^tD^{-1}$ and for non-negative  integers $a,b$ such that $a+b \le n$, put $D_{a,b}=D_{a,b}(p)=1_{n-a-b} \bot p1_a \bot p^2 1_b$, $\widetilde U(D_{a,b})=
 (p^2D_{a,b}^* \bot D_{a,b})$ and $\Pi_{a,b}=\varGamma^{(n)}_{\infty}\widetilde U(D_{a,b})\varGamma^{(n)}_{\infty}$.  For an  element $\begin{pmatrix} A & B \\ O & D \end{pmatrix} \in \varGamma^{(n)}_{\infty}$ with $D \in  M_n(\ZZ)^{\rm nd}$ and  $L \in \GL_n(\ZZ) D \GL_n(\ZZ)$, 
we define the set $B(L,M)$ as
\[B(L,M) =\left\{ N \in M_n(\ZZ) \  \middle| \ \begin{pmatrix} \nu L^* & N \\ O & L \end{pmatrix} \in \varGamma^{(n)}_{\infty} M \varGamma^{(n)}_{\infty} \right\},\]
where $\nu=\nu(M)$. Then $\#\bigl(B(L,M)/L\bigr)$ does not depend on the choice of $L$, and is uniquely determined by $M$, which will be denoted by $\alpha(M)$, and in particular put $\beta(D_{a,b})=\alpha(p^2D_{a,b}^* \bot D_{a,b})$.
Then, by \cite[p.160]{Andrianov87}, as an element of ${\bf L}_{n,\infty}$,  $\widetilde \rho_{n,1}(p)$ is expressed as have
\begin{align*}
 \widetilde \rho_{n,1}(p)&=p^{(n-1)n/2} \bigl( \beta(D_{n-1,0})^{-1} \Pi_{n-1,0} +p^{n+1} \beta( D_{n-1,1})^{-1} \Pi_{n-1,1}\bigr).
\end{align*}
Let ${\bf k'}=(k_1-k_n,\ldots,k_{n-1}-k_n,0)$ and put $m=\mathrm{depth}({\bf k}')$.
Let $F(Z)$ be an element of $M_{\bf k}(\varGamma^{(n)})(\ZZ)$. Then we have
\[F(Z)=\sum_{T \in \calh_n(\ZZ)_{\ge 0}} a(T,F)(U) {\bf e}({\rm tr}(TZ) )\] 
 with $a(T,F)(U) \in \ZZ[U]_{{\bf k}'}$. 
Then, we have
\begin{align*}
&F|\widetilde \rho_{n,1}(p)(Z)=p^{2(k_1+\cdots +k_n)-n(n+1)+(n-1)n/2}\\
& \times \sum_{T} \Bigl\{\beta(D_{n-1,0})^{-1} (\det D_{n-1,0})^{-k_n} \\
&\times \sum_{L \in \Lambda_n \backslash \Lambda_n D_{n-1,0} \Lambda_n}  a(T,F)(UL^{-1})\sum_{N \in B(L,\widetilde U(D_{n-1,0}))/L} {\bf e}\Bigl({\rm tr}\bigl( T(p^2\ {}^t \!L^{-1}Z +N)L^{-1}\bigl)\Bigr)\\
&+\beta(D_{n-1,1})^{-1} (\det D_{n-1,1})^{-k_n} \\
&\times \sum_{L \in \Lambda_n \backslash \Lambda_n D_{n-1,1} \Lambda_n}  a(T,F)(UL^{-1})\sum_{N \in B(L,\widetilde U(D_{n-1,1}))/L} {\bf e}\Bigl({\rm tr}\bigl( T(p^2\ {}^t\!L^{-1}Z +N)L^{-1}\bigl)\Bigr) \Bigl\},\\
\end{align*}
where $\Lambda_n=\GL_n(\ZZ)$.
For $i=0,1$ we have
\begin{align*}
& \sum_{N \in B(L,\widetilde U(D_{n-1,i}))/L} {\bf e}\Bigl({\rm tr}\bigl( T\ (p^2\ {}^t\! L^{-1}Z +N)L^{-1}\bigr)\Bigr)
\\ &={\bf e}({\rm tr}(p^2T[L^{-1}]Z)) \sum_{N \in B(L,\widetilde U(D_{n-1,i}))/L} {\bf e}({\rm tr}( L^{-1}TN)).
\end{align*}
We have 
\begin{align*}
&\sum_{N \in B(L,\widetilde U(D_{n-1,i}))/L} {\bf e}({\rm tr}(L^{-1}TN)) =\begin{cases} \beta(D_{n-1,i}) & \text{ if } L^{-1}T \in M_n(\ZZ), \\
0 & \text{ otherwise.} \end{cases}
\end{align*}
We note that $L^{-1}T \in M_n(\ZZ)$ if and only if $p^2T[L^{-1}] \in \calh_n(\ZZ)$. Hence we have
\begin{align*}
&F|\widetilde \rho_{n,1}(p)(Z)=p^{2(k_1+\cdots +k_n)-n(n+1)+(n-1)n/2}\\
& \times \sum_{A} {\bf e}({\rm tr}(AZ)) \Bigl\{p^{-k_n(n-1)}  \sum_{L \in \Lambda_n \backslash \Lambda_n D_{n-1,0} \Lambda_n}  a(A[L],F)(UL^{-1})\\
&+p^{-(n+1)k_n+n+1} \sum_{L \in \Lambda_n \backslash \Lambda_n D_{n-1,1} \Lambda_n}  a(A[L],F)(U  L^{-1})\Bigl\},
\end{align*}
and therefore 
\begin{align*}
F|{\bf r}_{n,1}(p)(Z) &=p^{k_1+\cdots +k_n-n} \times \sum_{A} {\bf e}({\rm tr}(AZ)) \Bigl\{p^{-k_n(n-1)}  \sum_{L \in \Lambda_n \backslash \Lambda_n D_{n-1,0} \Lambda_n}  a(A[L],F)(U L^{-1})\\
&+p^{-(n+1)k_n+n+1} \sum_{L \in \Lambda_n \backslash \Lambda_n D_{n-1,1} \Lambda_n}  a(A[L],F)(UL^{-1})\Bigl\}.
\end{align*}
We note that $a(T,F)(U)$ is expressed as a $\ZZ$-linear combination of polynomials of the following form:
\[P(U)=\prod_{i=1}^{m} \prod_{j=1}^{l_i-l_{i+1}} U_{J_{ij}}\]
with $l_i=k_i-k_n$, where $(J_{i1},\ldots,J_{i,l_i-l_{i+1}}) \in \mathcal{SI}_{n,i}^{l_i-l_{i+1}}$. Therefore, to prove the assertion (2), it suffices to show that 
\begin{align*}
p^{2k_1+k_2+\cdots+k_n-n-1}p^{-k_n(n-1+2i)+(n+1)i}P(U L^{-1}) \in \ZZ[U]_{{\bf k'}} \tag{I}
\end{align*}
 for any $L \in \Lambda_n D_{n-1,i} \Lambda_n$ with $i=0,1$.
We may suppose $L=D_{n-1,i}$ with $i=0,1$.
First write $D_{n-1,1}=p^{d_1} \bot \cdots \bot p^{d_n}$ with 
$d_1=\cdots=d_{n-1}=1$ and $d_n=2$.
Then we have
\[P(UD_{n-1,1}^{-1})=p^{-\sum_{i=1}^{m} \sum_{j=1}^{l_i-l_{i+1}}\sum_{a=1}^{i} d_{J_{a+j}}}P(U),\]
where $\{ J_{a+j} \}_{1 \le i \le m,1 \le j \le l_i-l_{i+1
}, 1 \le a \le i} $ is a set of  integers such that $1 \le J_{a+j} \le n$ and $J_{a+j} \not=J_{a'+j'}$ if $a+j \not=a'+j'$.
Then we have $\sum_{a=1}^{i} d_{J_{a+j}} \le i+1$ for any $i$. Hence we have
\begin{align*}
\sum_{i=1}^{m} \sum_{j=1}^{l_i-l_{i+1}}\sum_{a=1}^{i} d_{J_{a+j}}
& \le \sum_{i=1}^{m} (l_i-l_{i+1})(i+1) 
 =2l_1+\sum_{i=2}^{m} l_i\\
&=2k_1+\sum_{i=2}^{m}k_i -(m+1)k_n
=2k_1+\sum_{i=2}^{n}k_i -(n+1)k_{n}.
\end{align*}
Hence (I) holds 
 for any $L \in \Lambda_n D_{n-1,1} \Lambda_n$.
Similarly, we have
 \[P(UD_{n-1,0}^{-1})=p^{-\gamma_{k,n}}P(U)\]
with $\gamma_{k,n}$ an integer such that 
\[\gamma_{k,n} \le \sum_{i=1}^{n-1}k_i-(n-1)k_n.\]
By assumption, we have $k_1+k_n \ge n+1$, and hence
(I) holds for  any $L \in \Lambda_n D_{n-1,0} \Lambda_n$. This proves the assertion (2).
\end{proof}

We write  
$\Gamma_{\CC}(s)=2(2\pi)^{-s}\Gamma(s)$ and write $\Gamma_{\RR}(s)=\pi^{-s/2}\Gamma(s/2)$ as usual. 
Let 
$$f(z)=\sum_{m=1}^{\infty} a(m,f){\bf e}(mz)$$
 be a primitive form in $S_k(\SL_2(\ZZ))$, that is 
let $f$ be a Hecke eigenform whose first coefficient is $1$. 
For a prime number $p$ let $\beta_{1,p}(f)$ and $\beta_{2,p}(f)$ be complex numbers such that $\beta_{1,p}(f)+\beta_{2,p}(f)=a(p,f)$ and
$\beta_{1,p}(f)\beta_{2,p}(f)=p^{k-1}$.
Then for a Dirichlet character $\chi$ we define  Hecke's L function $L(s,f)$ twisted by $\chi$ as
$$L(s,f,\chi)=\prod_p\bigl((1-\beta_{1,p}(f)\chi(p)p^{-s})(1-\beta_{2,p}(f)\chi(p)p^{-s})\bigr)^{-1}.$$
We write $L(s,f,\chi)=L(s,f)$ if $\chi$ is the principal character.

 Let $\{f_1,\ldots,f_d \}$ be a basis of  $S_{k}(\varGamma^{(1)})$ consisting of primitive forms. Let $K$ be an algebraic number field containing ${\QQ}(f_1)\cdots {\QQ}(f_d),$ and $\frkO$ the ring of integers in $K$.
Let $f$ be a primitive form in ${S}_{k}(\SL_2(\ZZ)).$  Then Shimura \cite{Shimura77} showed that there exist two complex numbers 
$c_{\pm}(f)$, uniquely determined up to ${\QQ}(f)^{\times}$ multiple such that the following property holds:

\noindent
(AL) \quad The value $\displaystyle {\Gamma_{\CC}(l) \sqrt{-1}^l L(l,f,\chi) \over \tau(\chi)c_s(f)}$
 belongs to  ${\QQ}(f)(\chi)$  for any positive integer $l \le k-1$ and a Dirichlet character $\chi,$ where $\tau(\chi)$ is the Gauss sum of $\chi$, and $s=s(l,\chi)=+$ or $-$ according as $\chi(-1)=(-1)^l$ or $(-1)^{l-1}.$ 
 
\bigskip
We note that the above value belongs to $K(\chi)$.

For short, we write
\[{\bf L}(l,f,\chi;c_s(f))={\Gamma_{\CC}(l) \sqrt{-1}^lL(l,f,\chi) \over \tau(\chi)c_s(f)}.\] 
We sometimes write $c_{s(l,\chi)}(f)=c_{s(l)}(f)$ and ${\bf L}(l,f,\chi;c_{s(l,\chi)}(f))={\bf L}(l,f;c_{s(l)}(f))$ if $\chi$ is the principal character.
We note that the value ${\bf L}(l,f,\chi;c_s(f))$ depends on the choice of $c_{s}(f)$, but if $(\chi \eta)(-1)=(-1)^{l+m}$, then  $s:=s(l,\chi)=s(m,\eta)$ and, the ratio
$\displaystyle {{\bf L}(l,f,\chi;c_s(f)) \over {\bf L}(m,f,\eta ;c_s(f)}$ does not depend on $c_s(f)$, which will be denoted by $\displaystyle {{\bf L}(l,f,\chi) \over {\bf L}(m,f,\eta)}$.
For two positive integers $l_1,l_2 \le k-1$ and Dirichlet characters $\chi_1,\chi_2$ such that $\chi_1(-1)\chi_2(-1)=(-1)^{l_1+l_2+1},$ the value 
$${\Gamma_{\CC}(l_1)\Gamma_{\CC}(l_2)L(l_1,f,\chi_1)L(l_2,f,\chi_2) \over \sqrt{-1}^{l_1+l_2+1}\tau((\chi_1 \chi_2)_0) (f , f )}$$
belongs to $\QQ(f)(\chi_1, \chi_2)$, where
$(\chi_1\chi_2)_0$ is the primitive character associated with $\chi_1\chi_2$ 
 (cf. \cite{Shimura76}). We denote this value by 
\[{\bf L}(l_1,l_2;f;\chi_1,\chi_2).\]
 In particular, we put
\[{\bf L}(l_1,l_2;f)={\bf L}(l_1,l_2;f;\chi_1,\chi_2)\]
if $\chi_1$ and $\chi_2$ are  the principal characters. This value does not depend upon the choice of ${c_{\pm}(f)}$. 
Let $f$ be a primitive form in $S_k(\SL_2(\ZZ))$. Let $f_1,\ldots,f_d$ be a basis of $S_k(\SL_2(\ZZ))$ consisting of primitive forms with $f_1=f$ and 
let $\frkD_f$ be the ideal of $\QQ(f)$ generated by all $\prod_{i=2}^d (\lambda_{f_i}(T(m))-\lambda_f(T(m)))$'s ($m \in \ZZ_{>0}$).
  For a prime ideal $\frkp$ of an algebraic number field, let $p_{\frkp}$ be the prime number such that $(p_{\frkp})=\ZZ \cap \frkp$. 
The following proposition is due to \cite{Katsurada21}, Theorem 5.4.

\begin{proposition} \label{prop.p-integrality}
Let $f$ be a primitive form in $S_k(\SL_2(\ZZ))$. 
Let  $\chi_1$ and $\chi_2$ be primitive characters  with conductors $N_1$ and $N_2$, respectively, and let $l_1,l_2$ be  positive integers such that  $k-l_1+1 \le l_2  \le l_1-1 \le k-2$
Let $\frkp$ be a  prime ideal  of $\QQ(f)(\chi_1,\chi_2)$ with $p_{\frkp} >k$.
Suppose that   $\frkp$ divides neither $\frkD_fN_1N_2$ nor  $\zeta(1-k)$.Then ${\bf L}(l_1,l_2;f;\chi_1,\chi_2)$ is $\frkp$-integral.
\end{proposition}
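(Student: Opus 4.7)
The plan is to realize ${\bf L}(l_1,l_2;f;\chi_1,\chi_2)$ as the $f\otimes\bar f$-isotypic projection of the pullback of a degree-two Siegel Eisenstein series with character twists, and to control $\frkp$-integrality at each step.

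First, following Shimura and B\"ocherer, the product $\Gamma_{\bf C}(l_1)\Gamma_{\bf C}(l_2) L(l_1,f,\chi_1) L(l_2,f,\chi_2)/(f,f)$ can be written, up to explicit elementary algebraic factors including $\tau((\chi_1\chi_2)_0)$ and $\sqrt{-1}^{l_1+l_2+1}$, as the Petersson pairing against $f(z_1)\otimes\overline{f(-\bar z_2)}$ of the restriction to the diagonal $\HH_1\times\HH_1 \hookrightarrow \HH_2$ of a suitably normalized Siegel Eisenstein series $E(Z,s;\chi_1,\chi_2)$ of degree two, evaluated at the critical point $s_0$ determined by $l_1,l_2$. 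The hypothesis $k-l_1+1\le l_2\le l_1-1\le k-2$ places $(l_1,l_2)$ in Shimura's critical range, where this Eisenstein series is holomorphic and the pullback is a cuspidal object in two variables.

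Second, one verifies that with this normalization the Fourier coefficients of the restriction $E(Z,s_0;\chi_1,\chi_2)|_{\HH_1\times\HH_1}$ lie in $\frkO_{\QQ(\chi_1,\chi_2),(\frkp)}$. The classical explicit formulas for Siegel Eisenstein series express these Fourier coefficients as products of elementary algebraic quantities and confluent hypergeometric values, carrying a denominator essentially of $\zeta(1-k)$ together with factors involving the conductors $N_1,N_2$ from the twists. The combined hypotheses $p_\frkp>k$, $\frkp\nmid N_1N_2$, and $\frkp\nmid\zeta(1-k)$ kill all such denominators at $\frkp$.

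Third, to extract the $f\otimes\bar f$-component one applies a Hecke projector. Choose integers $m_2,\ldots,m_d$ so that the operator
\[
\pi_f:=\prod_{i=2}^{d}\frac{T(m_i)-\lambda_{f_i}(T(m_i))}{\lambda_f(T(m_i))-\lambda_{f_i}(T(m_i))}
\]
projects $S_k(SL_2(\ZZ))$ onto $\CC f$, and apply $\pi_f\otimes\overline{\pi_f}$ to the pullback. By Proposition~\ref{prop.Hecke-stable} the numerator preserves $\frkp$-integrality of Fourier coefficients, while the product of denominators is, up to a $\frkp$-unit, a generator of the congruence ideal $\frkD_f$; the assumption $\frkp\nmid\frkD_f$ makes this denominator a $\frkp$-adic unit, and the $\frkp$-integrality of ${\bf L}(l_1,l_2;f;\chi_1,\chi_2)$ follows. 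The main obstacle is Step 3: cleanly controlling the denominators introduced by projecting onto the $f$-isotypic component, which is precisely the role of the congruence-ideal hypothesis. The delicate algebraic bookkeeping in Step 1, matching the Archimedean normalization of the pullback with Shimura's period $(f,f)$ and identifying the exact twisted Eisenstein series whose pullback isolates the product $L(l_1,f,\chi_1)L(l_2,f,\chi_2)$, is the computational core of the argument and is carried out in \cite{Katsurada21}.
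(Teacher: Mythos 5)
The paper does not prove Proposition~\ref{prop.p-integrality}: it states only that the result ``is due to \cite{Katsurada21}, Theorem 5.4'' and gives no argument. Your sketch is therefore being compared against an external reference rather than an in-paper proof, and you yourself flag that the computational core is carried out in \cite{Katsurada21}. The strategy you outline --- realize the product $L(l_1,f,\chi_1)L(l_2,f,\chi_2)/(f,f)$ as a diagonal pullback of a twisted degree-two Siegel Eisenstein series, control the $\frkp$-integrality of its Fourier coefficients via the hypotheses on $\zeta(1-k)$, $N_1$, $N_2$, $p_\frkp > k$, and then extract the $f$-isotypic component by a $\frkp$-integral Hecke projector whose denominators are handled by $\frkp\nmid\frkD_f$ --- is the standard B\"ocherer--Katsurada--Mizumoto mechanism and is consistent with what one expects \cite{Katsurada21} to do.

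One point in Step~3 is stated imprecisely and should be fixed. You claim that the product of denominators $\prod_{i\ge 2}(\lambda_f(T(m_i))-\lambda_{f_i}(T(m_i)))$ is ``up to a $\frkp$-unit, a generator of the congruence ideal $\frkD_f$''. By the definition in Section~3, $\frkD_f$ is generated by the \emph{single-$m$} products $\prod_{i=2}^d(\lambda_{f_i}(T(m))-\lambda_f(T(m)))$ as $m$ ranges over $\ZZ_{>0}$; a product over \emph{distinct} $m_i$ need not lie in $\frkD_f$, nor need it generate it. What the hypothesis $\frkp\nmid\frkD_f$ actually provides is the existence of a single integer $m$ for which $\prod_{i=2}^d(\lambda_{f_i}(T(m))-\lambda_f(T(m)))$ is a $\frkp$-unit. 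Taking $m_2=\cdots=m_d=m$ in your projector (which still annihilates each $f_i$ with $i\ge 2$ as long as $\lambda_f(T(m))\ne\lambda_{f_i}(T(m))$, which is forced by that $m$ being a $\frkp$-unit and in particular nonzero) then gives a projector with $\frkp$-integral coefficients, and the conclusion follows. The Gauss sum $\tau((\chi_1\chi_2)_0)$ in the denominator of ${\bf L}(l_1,l_2;f;\chi_1,\chi_2)$ is also where the $\frkp\nmid N_1N_2$ hypothesis enters, not only through the Fourier coefficients of the twisted Eisenstein series, and this should be tracked explicitly if you flesh out Step~1.
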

For two primitive forms $f_1 \in S_{k_1}(\mathrm{SL}_2(\ZZ))$ and $f_2 \in S_{k_2}(\mathrm{SL}_2(\ZZ))$ we define the Rankin-Selberg $L$ function $L(s,f_1 \otimes f_2)$ as 
\begin{align*}
&L(s,f_1 \otimes f_2)=\prod_p\bigl(\prod_{i=1}^2 \prod_{j=1}^2(1-\beta_{i,p}(f_1)\beta_{j,p}(f_2)p^{-s})\bigr)^{-1}.
\end{align*}
 Let $F$ be  a Hecke eigenform in $M_{(k_1,\ldots,k_n)}(\mathrm  {Sp}_{n}(\ZZ))$ with respect to  ${\bf L}_n,$ and for a prime number $p$ we take the $p$-Satake parameters 
 $\alpha_0(p)$, $\alpha_1(p),\ldots,\alpha_n(p)$  of $F$ so that 
$$\alpha_0(p)^2\alpha_1(p) \cdots \alpha_n(p)=p^{k_1+\cdots +k_n-n(n+1)/2}.$$
We define  the polynomial $L_p(X,F,{\rm Sp})$ by
$$L_p(X,F,{\rm Sp})=(1-\alpha_0(p)X)\prod_{r=1}^n\prod_{1 \le i_1<\cdots<i_r}(1-\alpha_0(p)\alpha_{i_1}(p)\cdots \alpha_{i_r}(p)X)$$
and the spinor $L$ function $L(s,F,\mathrm {Sp})$ by
$$L(s,F,\mathrm {Sp})=\prod_p L_p(p^{-s},F,{\rm Sp})^{-1}.$$
We note that  $L(s,f,{\rm Sp})$ is Hecke's L function $L(s,f)$
if $f$ is a primitive form. In this case we write $L_p(s,f)$ for $L_p(s,f,{\rm Sp})$.
We also define the polynomial $L_p(X,F,{\rm St})$ by
$$(1-X)\prod_{i=1}^n (1-\alpha_i(p)X)(1-\alpha_i(p)^{-1}X)$$
and 
the standard $L$-function $L(s,F,{\rm St})$ by 
$$L(s,F,{\rm St})=\prod_p L_p(p^{-s},F,{\rm St})^{-1}.$$
For a Hecke eigenform $F \in S_k(\varGamma^{(r)})$ put
\[{\bf L}(s,F,\St)=\Gamma_{\CC}(s)\prod_{i=1}^r \Gamma_{\CC}(s+k-i){ L(s,F,\St) \over (F,\ F)}.\]
\begin{remark}\label{rem.integrality-gamma-factor}
We note that for a positive integer $m \le k-r$
\[{\bf L}(m,F,\St)=A_{r,k,m} {L(m,F,\St) \over \pi^{r(k+m)+m-r(r+1)/2} (F, \ F)}\]
with an element $A_{r,k,m} \in \ZZ[2^{-1}]$ such that 
$\ord_p(A_{r,k,m})=0$ for any prime number $p \ge 2k-r-1$.
\end{remark}
\begin{proposition} \label{prop.algebraicity-standard-L}
Let $F$ be a Hecke eigenform in $S_k(\varGamma^{(r)})$.
We define $n_0=3$ if $r\geq 5$ with $r\equiv 1\bmod 4$ and $n_0=1$ otherwise. 
Let $m$ be a positive integer $n_0  \le m \le k-r$ such that $m \equiv r \pmod 2$. 
Then, 
$a(A,F)\overline{a(B,F)}{\bf L}(m,F,\St)$ belongs to $\QQ(F)$
for any $A, B \in \calh_r(\ZZ)_{>0}$.
\end{proposition}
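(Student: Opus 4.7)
The plan is to use the doubling (pullback) method for the Siegel--Eisenstein series $E_{2r,k}(Z,s)$ on $\HH_{2r}$ restricted to the block-diagonal embedding $\HH_r\times\HH_r\hookrightarrow\HH_{2r}$. By the Garrett--B\"ocherer--Shimura pullback formula, one has a spectral decomposition
\[
E_{2r,k}\Bigl(\begin{pmatrix} Z_1 & O \\ O & -\bar Z_2\end{pmatrix}, s\Bigr) = \sum_{F_i}\Lambda_{r,k}(s)\,\frac{L(2s+k-r, F_i, \St)}{(F_i,F_i)}\,F_i(Z_1)\,\overline{F_i(Z_2)} + (\text{non-cuspidal terms}),
\]
where $\{F_i\}$ runs over an orthogonal basis of Hecke eigenforms in $S_k(\varGamma^{(r)})$ and $\Lambda_{r,k}(s)$ is an explicit product of $\Gamma_\CC$-factors matching those that define ${\bf L}(s,F,\St)$.

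I would specialize at $s=s_0$ with $2s_0+k-r=m$. The parity assumption $m\equiv r\pmod 2$ makes $s_0\in\ZZ$; the bound $m\le k-r$ places $s_0\le 0$, which is a critical point of $L(\cdot,F,\St)$; and the lower bound $m\ge n_0$ combined with Proposition~\ref{prop.holomorphy-Klingen-Eisenstein}(1) applied to the degree-$2r$ Eisenstein series guarantees holomorphy of $E_{2r,k}(\cdot,s_0)$. The sharper threshold $n_0=3$ for $r\ge 5$ with $r\equiv 1\bmod 4$ is precisely what avoids the exceptional mod-$4$ cases of $k$ excluded in Proposition~\ref{prop.holomorphy-Klingen-Eisenstein}(1). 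At such a critical $s_0$, Shimura's rationality theorem for Fourier coefficients of holomorphic Siegel--Eisenstein series asserts that $a(T,E_{2r,k}(\cdot,s_0))\in\pi^{N}\QQ$ for every $T\in\calh_{2r}(\ZZ)_{\ge 0}$, with an explicit exponent $N$.

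To conclude, I would extract the $(A,B)$-Fourier coefficient (i.e.\ the ${\bf e}(\mathrm{tr}(AZ_1)+\mathrm{tr}(B(-\bar Z_2)))$-coefficient) from both sides of the pullback identity, and project onto the $F$-isotypic component by means of a suitable Hecke-operator combination (as in Proposition~\ref{prop.Hecke-integrality}) that kills the contributions of all $F_i\ne F$ and of the non-cuspidal summand. The left-hand side then lies in $\pi^{N}\QQ(F)$, while the right-hand side equals $a(A,F)\overline{a(B,F)}\,\Lambda_{r,k}(s_0)\,L(m,F,\St)/(F,F)$ up to $\QQ(F)$-multiples. By construction of $\Lambda_{r,k}(s_0)$, the $\pi$-powers in $\Lambda_{r,k}(s_0)$ together with $\pi^{r(k+m)+m-r(r+1)/2}$ exactly absorb $\pi^{N}$, so that the identity becomes $a(A,F)\overline{a(B,F)}\,{\bf L}(m,F,\St)\in\QQ(F)$, as required. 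The main obstacle is the precise bookkeeping of $\Gamma_\CC$-factors and $\pi$-powers across the three inputs (the pullback formula's $\Lambda_{r,k}$, Shimura's rationality at $s_0$, and the normalization of ${\bf L}(m,F,\St)$), together with the verification that $s_0$ is neither a pole of $E_{2r,k}(\cdot,s)$ nor a point at which the spectral identity degenerates --- exactly what the case distinction defining $n_0$ is designed to rule out.
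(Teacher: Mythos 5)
Your proposal takes a genuinely different route from the paper's proof. The paper's argument is a one-liner: it observes that $a(A,F)\overline{a(B,F)}\,{\bf L}(m,F,\St)$ is invariant under rescaling $F\mapsto\gamma F$ (since the $|\gamma|^2$ from $(F,F)$ cancels against $\gamma\bar\gamma$ from $a(A,F)\overline{a(B,F)}$), uses the multiplicity-one theorem (Theorem~\ref{MF}(3), Remark~\ref{remarkMF}(2)) to normalize so that $\gamma F\in S_k(\varGamma^{(r)})(\QQ(F))$, and then directly cites Mizumoto~\cite{Mizumoto91}, Appendix~A, for ${\bf L}(m,\gamma F,\St)\in\QQ(F)$. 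You instead re-derive the algebraicity from scratch via the doubling method — which is essentially the mechanism underlying Mizumoto's theorem — so your argument buys self-containedness at the cost of re-doing substantial analytic work, while the paper's buys brevity by leaning on the literature.

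That said, your argument as literally written has a real gap at the holomorphy step. You invoke Proposition~\ref{prop.holomorphy-Klingen-Eisenstein}(1) to claim that $E_{2r,k}(\cdot,s_0)$ is holomorphic, but that proposition concerns $E_{n,k}(Z)=E_{n,k}(Z,0)$ at $s=0$ only. At your specialization $s_0=(m-k+r)/2\le 0$, absolute convergence of $E_{2r,k}(Z,s)$ requires $2\mathrm{Re}(s)+k>2r+1$, i.e.\ $m>r+1$; for the small values of $m$ the proposition allows (down to $n_0=1$ or $3$), the series must be analytically continued, and at $s_0\neq 0$ the resulting object is in general only \emph{nearly} holomorphic (it carries nontrivial $\det(\mathrm{Im}\,Z)$ factors), not holomorphic. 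To make your route work one needs either the differential-operator technique (lower the weight and raise it back with $\frkD$-type operators, as B\"ocherer and Mizumoto do) or Shimura's theory of nearly holomorphic forms together with arithmetic holomorphic projection; neither is supplied by Proposition~\ref{prop.holomorphy-Klingen-Eisenstein}(1). Relatedly, your reading of the threshold $n_0=3$ as avoiding "the exceptional mod-$4$ cases" of that proposition is not quite right: for $n=2r$ even the exceptional condition there is about the weight $k$ of the Eisenstein series (namely $k=r+1\equiv 2\bmod 4$), whereas $n_0=3$ for $r\equiv1\bmod4$, $r\geq5$ is a condition on the critical point $m$ inherited from Mizumoto's hypotheses (it excludes $m=1$ in exactly those degrees). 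Your overall plan and the $\pi$-power bookkeeping you flag are sound in spirit, but this holomorphy/near-holomorphy issue is where the actual content lies, and it is not a routine verification.
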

\begin{proof} We note that the value $a(A,F)\overline{a(B,F)}{\bf L}(m,F,\St)$ remains unchanged if we replace by $F$ by $\gamma F$ with any $\gamma \in \CC^{\times}$. By the multiplicity one theorem for Hecke eigenforms (cf. 
Theorem \ref{remarkMF} (3) and Remark \ref{remarkMF} (2)), we can take some non-zero complex number $\gamma$ such that $\gamma F \in S_k(\varGamma^{(r)})(\QQ(F))$. 
For this $\gamma$, we see
${\bf L}(m,\gamma F,\St)\in \QQ(F)$ by \cite{Mizumoto91}, Appendix A.
This proves the assertion.
\end{proof}
Let $R$ be a commutative ring, and $\frka$ an ideal of $R$. For two polynomials $P(X)=\sum_{i=1}^n a_i X^i$ and $Q(X)=\sum_{i=1}^n b_i X^i$, we write 
\[P(X) \equiv Q(X) \pmod{\frka}\]
 if $a_i \equiv b_i \pmod{\frka}$ for any $1 \le i \le m$.
Now we will state Harder's conjecture. 
\begin{conjecture}(\cite{Harder03})
\label{conj.Harder}
Let $k$ and $j$ be non-negative integers such that $j$ is even and $k \ge 3$. 
Let $f=\sum a(n,f){\bf e}(nz) \in S_{2k+j-2}(\SL_2(\ZZ))$ be a primitive   form, and  suppose that  a ``large'' prime   $\frkp$ of $\QQ(f)$ 
divides ${\bf L}(k+j,f;c_{s(k+j)})$. Then, there exists a Hecke eigenform
$F \in S_{(k+j,k)}(\varGamma^{(2)})$, and a prime ideal  $\frkp' \mid\frkp$ in (any
field containing) $\QQ(f)\QQ(F)$ such that, for all primes $p$
\begin{align*} L_p(X,F,{\rm Sp}) \equiv L_p(X,f)(1-p^{k-2}X)(1-p^{j+k-1}X)
\pmod{\frkp'}. \end{align*}
In particular, 
$$\lambda_F(T(p))\equiv p^{k-2}+p^{j+k-1}+a(p,f) \pmod{\frkp'}.$$
\end{conjecture}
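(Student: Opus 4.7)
The plan is to deduce Harder's conjecture from the stronger Conjecture \ref{conj.main-conjecture} formulated in the introduction, which is a congruence between two genuine Hecke eigenforms of the same (vector-valued) weight ${\bf k}$ on $\varGamma^{(2n)}$ for an even integer $n \ge 2$. The key advantage is that both sides are Hecke eigenforms of the same weight, so the standard congruence machinery applies. First I would form the Duke--Imamoglu--Ikeda lift $\scri_n(f)\in S_{j/2+k+n/2-1}(\varGamma^{(n)})$ of $f$, and then its Klingen-Eisenstein lift $[\scri_n(f)]^{\bf k}$ to $M_{\bf k}(\varGamma^{(2n)})$, where ${\bf k}$ is the block sequence defined in the introduction. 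The Satake parameters of $[\scri_n(f)]^{\bf k}$ at each prime $p$ are explicit in terms of $\beta_{1,p}(f),\beta_{2,p}(f)$ together with powers of $p$ arising from the Ikeda and Klingen constructions.

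Second, I would apply the vector-valued extension of the Katsurada--Mizumoto congruence argument developed in Section 6. The hypothesis that a large prime $\frkp$ of $\QQ(f)$ divides ${\bf L}(k+j,f;c_{s(k+j)})$ enters through the pullback formula (Section 5) for a Siegel-Eisenstein series of degree $4n$ restricted to $\HH_{2n}\times\HH_{2n}$ with appropriate differential operators, where this critical $L$-value appears as a factor in the explicit formula. Together with Proposition \ref{prop.algebraicity-standard-L} and the $\frkp$-integrality result of Proposition \ref{prop.p-integrality}, this produces a Hecke eigenform $G \in M_{\bf k}(\varGamma^{(2n)})$, not a constant multiple of $[\scri_n(f)]^{\bf k}$, such that
$$\lambda_G(T) \equiv \lambda_{[\scri_n(f)]^{\bf k}}(T)\pmod{\frkp'}$$
for every integral Hecke operator $T\in {\bf L}_{2n}$.

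Third, I would identify $G$ with the lift $\scra_{2n}^{(I)}(F)$ of some Hecke eigenform $F\in S_{(k+j,k)}(\varGamma^{(2)})$ of type $\scra^{(I)}$ (Theorem \ref{th.atobe1}). Granting this, the defining relation between the Satake parameters of $\scra_{2n}^{(I)}(F)$ and those of $F$, combined with the known Satake parameters of $[\scri_n(f)]^{\bf k}$, collapses the congruence modulo $\frkp'$ prime-by-prime into the claimed factorization
$$L_p(X,F,\mathrm{Sp}) \equiv L_p(X,f)(1-p^{k-2}X)(1-p^{j+k-1}X) \pmod{\frkp'}.$$
Specializing via Proposition \ref{prop.special-Hecke} and reading off the coefficient of $X$ gives the congruence for $\lambda_F(T(p))$ displayed in the conjecture.

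The main obstacle is the third step: ruling out other possible congruence partners and showing that $G$ is actually of type $\scra^{(I)}$. In the present paper this is handled in the specific cases $(k,j)=(10,4),(14,4),(4,24)$ by pinning down the small space $S_{(k+j,k)}(\varGamma^{(2)})$ via Taibi's dimension formula and then matching Hecke eigenvalues against the numerical tables of Poor--Ryan--Yuen \cite{Poor-Ryan-Yuen09} and Ibukiyama--Katsurada--Poor--Yuen \cite{Ib-Kat-P-Y14}, which forces $G$ to coincide with $\scra_{2n}^{(I)}(F)$ for an explicit $F$. A uniform treatment avoiding case-by-case numerics (for $k$ even and $j\equiv 0\pmod 4$) is deferred to the sequel \cite{A-C-I-K-Y20}.
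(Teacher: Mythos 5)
You are correct that the paper never proves Conjecture \ref{conj.Harder} as literally stated (with its unspecified ``large prime'' and periods $c_s(f)$); what the paper does is isolate a precise version (Conjecture \ref{conj.modified-Harder}), show it follows from Conjecture \ref{conj.main-conjecture} (Theorem \ref{th.enhanced-Harder}), and verify the latter for $(k,j)=(10,4),(14,4),(4,24)$. Your three-step outline matches that architecture: the Ikeda lift and its Klingen--Eisenstein extension, the Katsurada--Mizumoto-style congruence producing $G$, and the case-by-case identification of $G$ as a lift of type $\scra^{(I)}$ via Taibi's dimension formula and the Poor--Ryan--Yuen / Ibukiyama--Katsurada--Poor--Yuen tables. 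So far so good.

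There is a genuine gap in your step three. Once you have $\scra^{(I)}_{2n}(F)\equiv_{\ev}[\scri_n(f)]^{\bf k}\pmod{\frkp'}$, the Hecke eigenvalue congruence translates into a congruence between \emph{standard} $L$-Euler factors, which by Theorem \ref{th.atobe1}(1) and Proposition \ref{prop.standard-L-Klingen} are \emph{products} of shifted spin factors: you get
\begin{align*}
\prod_{i=1}^n L_p(p^{i-1}X,F,\Sp) \equiv \prod_{i=1}^n L_p(p^{i-1}X,f)(1-p^{i-1+k-2}X)(1-p^{i-1+j+k-1}X)\pmod{\frkp'},
\end{align*}
not the single-factor congruence that Harder's conjecture asserts. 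You cannot simply ``read off'' the factorization: a priori the mod-$\frkp'$ factors could recombine in other ways. The paper resolves this by passing to mod $\frkp'$ Galois representations via Theorem \ref{th.galspin}, rewriting the product congruence as an identity $(1+\bar\chi^{-1})\bar\rho_F=(1+\bar\chi^{-1})(\bar\rho_f+\bar\chi^{2-k}+\bar\chi^{-j-k+1})$ (or the analogous one with the extra $(1+\bar\chi^{-2})$ for $n=4$) in the Grothendieck group $\cala_{\frkP}$, and then invoking the divisibility Lemma \ref{lem.Chenevier-Lannes} of Chenevier--Lannes to conclude $\bar\rho_F=\bar\rho_f+\bar\chi^{2-k}+\bar\chi^{-j-k+1}$. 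This is the step that actually ``collapses'' the product, and it also explains why the hypothesis $p_{\frkp}>2k+j-2$ (large prime) is needed. A further small point: that Galois argument handles only primes $p\neq p_{\frkp}$; the paper treats $p=p_{\frkp}$ separately using the integral Hecke operator $\widetilde{\bf r}_{2n,1}(p)$ from Proposition \ref{prop.special-Hecke} to recover the $T(p)$-eigenvalue congruence, and observes the higher coefficients all vanish mod $\frkP$. Your proposal should incorporate both of these ingredients to close the gap.
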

\begin{remark} \label{rem.Harder}
\begin{itemize}
\item[(1)] The original version of Harder's conjecture did not mention what ^^ ^^ largeness" of $\frkp$ means.
\item[(2)] To formulate Harder's conjecture we must choose the periods $c_s(f)$ in an appropriate way. The original 
version of Harder's conjecture  did not specify them. After that, Harder suggested assuming  another type of divisibility condition  instead of the divisibility of $L(k+j,f;c_s(f))$ in his conjecture (cf. \cite{Harder11}). However, it does not seem so easy to confirm such a  condition.
\item[(3)] The original version of Harder's conjecture, which  states only the last congruence on $\lambda_F(T(p))$, is naturally included in the above Euler factor
version since we have
\begin{align*}
L_p(X,F,\Sp)=& 1-\lambda_F(T(p))X+\lambda_F(\widetilde \rho_{2,1}(p))X^2\\
&-\lambda_F(T(p))p^{2k+j-3}X^3+p^{4k+2j-6}X^4,\end{align*}
and
\[L_p(X,f)=1-a(f,p)X+p^{2k-j-3}X^2.\]
 \item[(4)] The above congruence is trivial  in the case $k$ is even and $j=0$. Indeed, for the Saito-Kurokawa lift $F$ of $f$, 
we have
\[L_p(X,F,{\rm Sp}) = L_p(X,f)(1-p^{k-2}X)(1-p^{k-1}X),
\]
so we have equality, not only congruence.
\end{itemize}
\end{remark}
To avoid the ambiguity in (1) and (2) of Remark \ref{rem.Harder}, we propose the following conjecture, which we also call Harder's conjecture.
\begin{conjecture}
\label{conj.modified-Harder}
Let $k$ and $j$ be non-negative integers such that $j$ is even and $k \ge 3, j \ge 4$. Let $f$ be as that in Conjecture \ref{conj.Harder}. Suppose that a prime ideal $\frkp$ of $\QQ(f)$ satisfies $p_\frkp >2k+j-2$ and that $\frkp$ divides ${\displaystyle {\bf L}(k+j,f) \over \displaystyle {\bf L}(k_j,f)}$, where $k_j=k+j/2$ or $k+j/2+1$ according as $j \equiv 0 \pmod 4$ or $j \equiv 2 \pmod 4$. Then the same assertion as Conjecture \ref{conj.Harder} holds.
\end{conjecture}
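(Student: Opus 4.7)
The plan is to derive Conjecture \ref{conj.modified-Harder} from the main conjecture (Conjecture \ref{conj.main-conjecture}) on the degree-$2n$ level, and to prove the latter in appropriate weight ranges by combining an explicit congruence construction with dimension and Hecke-eigenvalue data. Concretely, I fix an even $n$ (taking $n=2$ for a first reduction) and consider the Klingen-Eisenstein lift $[\scri_n(f)]^{\bf k}$ of the Duke-Imamoglu-Ikeda lift $\scri_n(f)$ to $M_{\bf k}(Sp_{2n}(\ZZ))$. Once I produce a Hecke eigenform $F \in S_{(k+j,k)}(\varGamma^{(2)})$ together with a congruence
\[
\lambda_{\scra^{(I)}_{2n}(F)}(T) \equiv \lambda_{[\scri_n(f)]^{\bf k}}(T) \pmod{\frkp'}
\quad \text{for every } T \in {\bf L}_{2n}^{({\bf k})},
\]
I specialize $T = \widetilde{\bf r}_{2n,1}(p)$ from Proposition \ref{prop.special-Hecke}(2), which lies in ${\bf L}_{2n}^{({\bf k})}$ under the weight hypothesis, to pass from eigenvalue congruences to a congruence of $p$-Satake parameter sums; unraveling the explicit Satake parameters of the Ikeda and Klingen-Eisenstein lifts then delivers the Euler-factor congruence of Conjecture \ref{conj.Harder}. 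This last passage is the content of the expected Theorem \ref{th.enhanced-Harder}.

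The heart of the argument is to produce the required congruent eigenform. Following the strategy of Katsurada-Mizumoto \cite{Katsurada-Mizumoto12}, I would compute Fourier coefficients of the Klingen-Eisenstein lift $[\scri_n(f)]^{\bf k}$ via a pullback formula for vector-valued Siegel Eisenstein series with differential operators; this is Section 5 of the paper in its vector-valued form. The Fourier coefficients carry, as a global factor, a ratio of critical values of $L(s,f)$ whose $\frkp$-adic valuation is controlled precisely by the hypothesis that $\frkp$ divides ${\bf L}(k+j,f)/{\bf L}(k_j,f)$. Applying Proposition \ref{prop.p-integrality} (together with the hypothesis $p_\frkp > 2k+j-2$) keeps the denominator $\frkp$-integral, so the divisibility of the numerator forces a congruence between $[\scri_n(f)]^{\bf k}$ and some other Hecke eigenform $G \in M_{\bf k}(Sp_{2n}(\ZZ))$ modulo a suitable prime $\frkp'$ above $\frkp$. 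The integrality statements of Proposition \ref{prop.Hecke-integrality} and the stability statement of Proposition \ref{prop.Hecke-stable} ensure this congruence extends to all integral Hecke operators.

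The main obstacle, and the reason the argument is carried out only in specific weights here, is identifying the congruent partner $G$ as a lift of type $\scra^{(I)}$. A priori, $G$ could be any eigenform in the complement of $[\scri_n(f)]^{\bf k}$ inside $M_{\bf k}(Sp_{2n}(\ZZ))$: another Klingen-Eisenstein series, a different Ikeda-type contribution, or a genuinely new form. My plan is to use Taibi's dimension formula to enumerate the eigenform classes contributing to $M_{\bf k}(Sp_{2n}(\ZZ))$ for the weights in question, then cross-reference with the numerical tables of degree-two Hecke eigenvalues from \cite{Poor-Ryan-Yuen09} and \cite{Ib-Kat-P-Y14}. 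In the three weight pairs $(k,j)=(10,4),(14,4),(4,24)$ the bookkeeping is tight enough to force $G = \scra^{(I)}_{2n}(F)$ for a unique $F \in S_{(k+j,k)}(\varGamma^{(2)})$, which is the eigenform needed in the conjecture. In more general weights this identification is not available by numerical means, which explains why the more uniform treatment for even $k$ with $j \equiv 0 \pmod 4$ is deferred to \cite{A-C-I-K-Y20}.
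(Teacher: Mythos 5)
Your overall architecture matches the paper's: reduce Conjecture~\ref{conj.modified-Harder} to Conjecture~\ref{conj.main-conjecture}, and establish the latter in the weights $(10,4)$, $(14,4)$, $(4,24)$ via the pullback formula, the Klingen--Eisenstein congruence mechanism of Katsurada--Mizumoto, Taibi's dimension data, and the degree-$2$ eigenvalue tables. However, your account of the reduction step (the content of Theorem~\ref{th.enhanced-Harder}) has a genuine gap. The congruence $\scra^{(I)}_{2n}(F) \equiv_{\ev} [\scri_n(f)]^{\bf k} \pmod{\frkp'}$ is a congruence of eigenvalues of integral Hecke operators, hence of \emph{standard} Euler factors of degree-$2n$ forms. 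By Theorem~\ref{th.atobe1}(1) and Proposition~\ref{prop.standard-L-Klingen}, these standard $L$-factors factor as $\prod_{i=1}^n L_p(p^{i-1}X,F,\Sp)$ (times $\zeta_p$) and $\prod_{i=1}^n L_p(p^{i-1}X,f)(1-p^{i-1+k-2}X)(1-p^{i-1+j+k-1}X)$ (times $\zeta_p$) respectively, so what you actually obtain is a congruence between \emph{products} of twisted spinor Euler factors. Extracting the single Euler-factor congruence $L_p(X,F,\Sp) \equiv L_p(X,f)(1-p^{k-2}X)(1-p^{j+k-1}X) \pmod{\frkp'}$ from such a product congruence is not a formal Satake-parameter manipulation: it requires passing to the semisimplifications of the mod-$\frkP$ Galois representations (Theorem~\ref{th.galspin}) and invoking the cancellation Lemma~\ref{lem.Chenevier-Lannes} (the Chenevier--Lannes norm argument) to divide by $(1+\bar\chi^{-1})$ or $(1+\bar\chi^{-1})(1+\bar\chi^{-2})$ in the Grothendieck group. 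Your proposal has no substitute for this step, and ``unraveling Satake parameter sums'' by itself at most recovers the linear coefficient $\lambda_F(T(p))$, not the full degree-$4$ Euler factor.

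Two secondary points. First, the specialization to $T = \widetilde{\bf r}_{2n,1}(p)$ from Proposition~\ref{prop.special-Hecke}(2) is not the engine of the Euler-factor comparison; in the paper it is used only for the single prime $p = p_\frkp$, where the Galois representations are ramified and the Grothendieck-group argument does not apply, to recover the congruence of $T(p)$-eigenvalues directly. Second, the parity constraint $j/2 + n/2 \equiv 1 \pmod 2$ in Conjecture~\ref{conj.main-conjecture} forces $n=2$ when $j \equiv 0 \pmod 4$ and $n=4$ when $j \equiv 2 \pmod 4$; your phrase ``taking $n=2$ for a first reduction'' glosses over the fact that for $j \equiv 2 \pmod 4$ one must in fact work on $Sp_8$, and Lemma~\ref{lem.Chenevier-Lannes} is then applied with the quadratic factor $(1+\bar\chi^{-1})(1+\bar\chi^{-2})$. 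You should state this bifurcation explicitly, since it determines which of the two cases of Theorem~\ref{th.enhanced-Harder} is in force and which $k_j$ appears in the divisibility hypothesis.
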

\begin{remark} There is no ambiguity in the assumptions of Conjecture \ref{conj.modified-Harder}. Moreover, since we can compute ${\displaystyle {\bf L}(k+j,f) \over \displaystyle {\bf L}(k_j,f)}$ rigorously, we can easily check the  assumption on $\frkp$.
\end{remark}

\section{An enhanced version of Harder's conjecture}

Conjectures \ref{conj.Harder} and \ref{conj.modified-Harder} are not concerning the congruence between the Hecke eigenvalues of two Hecke eigenforms in the same space, 
and  this is one of the reasons that it is not easy to confirm it.
  To treat the conjecture more accessibly, we reformulate it 
in the case $k$ is even.  
(For odd $k$, see \cite{Ibukiyama08}, \cite{Ibukiyama14}.)

 To do so,  first, we review several results, on the  Galois representations attached to automorphic forms, and on liftings.
Let $R$ be a locally compact topological ring, and $M$ a free $R$-module of finite rank. For a  profinite group $G$, let $\rho:G \longrightarrow \mathrm{Aut}_R(M)$ be a continuous representation of $G$.
When we fix a basis of $M$ with $\mathrm{rank}_R M=n$, we write $\rho:G\longrightarrow \GL_n(R)$.  
The following result is due to Deligne \cite{Deligne68/69} in the case $n=1$, and due to Weissauer \cite{Weissauer05} in the case $n=2$.

\begin{theorem} \label{th.galspin}
Let $F$ be a Hecke eigenform in $S_{{\bf k}_n}(\varGamma^{(n)})$  with $n \le 2$, where 
${\bf k}_n= k$ or $(k+j,k)$ according as $n=1$ or $n=2$.
Let $K$ be a number field containing $\QQ(F)$ and $\frkp$ be a 
prime ideal  of $K$. Then there exists a semi-simple Galois representation $\rho_F=\rho_{F,\frkp}:\Gal(\bar \QQ/\QQ) \longrightarrow \GL_{2^n}(K_{\frkp})$ such that 
  $\rho_{F,\frkp}$ is unramified at any prime number $p \not=p_{\frkp}$ and 
\[\det (1_{2^n}-\rho_{F,\frkp}(\mathrm{Frob}_p^{-1})X)=L_p(X,F,{\rm Sp}),\]
where $\mathrm{Frob}_p$ is the arithmetic Frobenius at $p$.
\end{theorem}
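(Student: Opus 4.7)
The plan is to separate the two cases $n=1$ and $n=2$, since they rest on quite different geometric inputs, and then to reduce each to the existence of a suitable Galois module inside $\ell$-adic étale cohomology on which the Hecke algebra and $\Gal(\bar\QQ/\QQ)$ act compatibly.

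For $n=1$, the form $F$ is a classical holomorphic cuspidal Hecke eigenform of weight $k$ on $\mathrm{SL}_2(\ZZ)$. Following Eichler--Shimura--Deligne, I would consider the $\ell$-adic étale cohomology $H^1_{\mathrm{et}}(Y(1)_{\bar\QQ},\,\mathrm{Sym}^{k-2}R^1\pi_{*}\QQ_{\ell})$, where $\pi:E\to Y(1)$ is the universal elliptic curve over the open modular curve. The Hecke correspondences $T(p)$ act on this cohomology, and because these correspondences are defined over $\QQ$, the action commutes with the $\Gal(\bar\QQ/\QQ)$-action. Extending scalars to $K_\frkp$ and passing to the $F$-isotypic component cuts out a two-dimensional $K_{\frkp}$-subspace, on which Galois acts continuously. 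The Eichler--Shimura congruence relation, applied at a prime $p\ne p_\frkp$ of good reduction (i.e.\ all $p\ne \ell$ since we are at level one), identifies the characteristic polynomial of $\mathrm{Frob}_p^{-1}$ with $L_p(X,F,\mathrm{Sp}) = 1 - a(p,f)X + p^{k-1}X^2$, which gives the statement with $n=1$.

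For $n=2$, one works on the Siegel modular threefold $\mathcal{A}_2 = \varGamma^{(2)}\backslash\HH_2$ and its smooth toroidal compactification. The key object is the $\ell$-adic intersection cohomology $IH^{*}_{\mathrm{et}}(\mathcal{A}_2^{\ast},\,\mathbb{V}_{(j,0)}\otimes K_\frkp)$ of the Baily--Borel compactification with coefficients in the local system attached to the algebraic representation of highest weight $(j,0)$ of $\mathrm{Sp}_4$; the twist by $\det^k$ enters through the normalization. Matsushima's formula identifies the holomorphic part of this cohomology in middle degree with the space of vector-valued cusp forms of weight $(k+j,k)$, and Hecke operators and Galois act compatibly on the $\ell$-adic side. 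Projecting to the $F$-isotypic component yields a Galois representation whose dimension is four provided $F$ contributes only to the holomorphic (antiholomorphic) piece and not to endoscopic constituents. The compatibility of Frobenius eigenvalues with Satake parameters is then read off from the topological trace formula and a comparison with the Arthur--Selberg trace formula on $\mathrm{GSp}_4$, normalized so that $\alpha_0^2\alpha_1\alpha_2 = p^{2k+j-3}$.

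The main obstacle lies entirely in the case $n=2$: one must show that the $F$-isotypic contribution to intersection cohomology is exactly four-dimensional and that the Frobenius characteristic polynomial matches $L_p(X,F,\mathrm{Sp})$ with the correct normalization. Endoscopic, CAP and Saito--Kurokawa lifts produce additional components of intersection cohomology that must be controlled; for the generic holomorphic $F$ of weight $(k+j,k)$ with $k\geq 3$ and $j>0$, Weissauer handles this by combining the topological trace formula on the Siegel threefold with weak and strong multiplicity-one style arguments and an analysis of the discrete spectrum of $\mathrm{GSp}_4$. At that point the Chebotarev density theorem together with the semisimplicity of $\rho_{F,\frkp}$ uniquely characterises $\rho_{F,\frkp}$ up to isomorphism, which is the form in which the theorem is stated.
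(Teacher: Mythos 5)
The paper offers no proof of this statement. Immediately above the theorem it writes that the result is due to Deligne \cite{Deligne68/69} for $n=1$ and Weissauer \cite{Weissauer05} for $n=2$, and then uses it as a black box. Your sketch is therefore not an alternative to an argument in the paper; it is a summary of the cited sources, and as such it is broadly faithful to what Eichler--Shimura--Deligne and Weissauer actually do.

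One substantive point should be emphasised, though. The theorem is asserted for \emph{every} Hecke eigenform $F\in S_{(k+j,k)}(\varGamma^{(2)})$, not just generic ones, and the word "semi-simple" in the statement is doing real work: for CAP and endoscopic eigenforms the $F$-isotypic piece of middle-degree intersection cohomology of $\mathcal{A}_2^\ast$ is not a single irreducible four-dimensional Galois module, and Weissauer builds $\rho_{F,\frkp}$ in those cases as a direct sum of two-dimensional modular Galois representations and Tate twists. Your sketch treats these constituents as "an obstacle" to be excluded and then argues only for generic $F$ of weight $(k+j,k)$; to establish the theorem as stated you must instead handle them and exhibit the (possibly reducible) semi-simple $\rho_{F,\frkp}$ explicitly. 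As a secondary remark, the local system you should take on the Siegel threefold is the one with highest weight $(k+j-3,\,k-3)$ for $\mathrm{Sp}_4$ in the usual normalisation; the $\det$-power twist you invoke is off by a shift from the one that actually makes the Hodge/weight bookkeeping come out, but this is a normalisation issue rather than a conceptual one.
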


\begin{theorem} \label{th.atobe1}
\begin{itemize}
\item[(1)] Let ${\bf k}=(k_1,\ldots,k_n) \in \ZZ^n$ with $k_1 \ge \cdots \ge k_n >n$, and $G$ be a Hecke eigenform in $S_{\bf k}(\varGamma^{(n)})$.  Let $k \ge 4$ and $j > 0$ and $d>0$. Assume that
\begin{itemize}
\item[(a)] $ k \equiv n \pmod 2, \ j \equiv 0 \pmod 2$;
\item[(b)] $k>2d+1$ and $j > 2d-1$; 
\item[(c)] ${j \over 2}+d < k_i-i < {j \over 2}+k-d-1$ for $i=1,\ldots,n$.
\end{itemize}
Define ${\bf k}'=(k_1',\ldots,k_{n+4d}') \in \ZZ^{n+4d}$ so that 
$k_1' \ge \ldots \ge k_{n+4d}'$ and
\begin{align*}
&\{k_1'-1,k_2'-2,\ldots,k_{n+4d}'-n-4d\} =\{k_1-1,k_2-2,\ldots,k_n-n \}\\
&\cup \bigl\{{j \over 2}+k+d-2,{j \over 2}+k+d-3, \ldots,{j \over 2}+k-d-1 \bigr\} \\
&\cup \bigl\{{j \over 2}+d,{j \over 2}+d-1,\ldots,{j \over 2}-d+1 \bigr\}.
\end{align*}
Then, for any Hecke eigenform $F \in S_{(k+j,k)}(\varGamma^{(2)})$ there exists a Hecke eigenform
$\scra_{n,d,{\bf k}}^{(I),{\bf k}'}(F,G) \in S_{{\bf k'}}(\varGamma^{(n+4d)})$ such that 
\begin{align*}
L(s,\scra_{n,d,{\bf k}}^{(I),{\bf k'}}(F,G),\St)
&=L(s,G,\St)\prod_{i=1}^{2d}L(s+d+\frac{j}{2}+k-1-i,F,\Sp).
\end{align*}
Here we make the convention that
$L(s,G,\St)=\zeta(s)$ if $n=0$.
\item[(2)] Let $k$ and $n$ be positive even integers such that $k > n > 2$. Let $f$ be a primitive form in $S_{2k-n}(\SL_2(\ZZ))$ and $G$ be a Hecke eigenform in $S_{(k,k-n+2)}(\varGamma^{(2)})$.  Then, there exists a Hecke eigenform $\scra^{(II)}_n(f,G) \in S_k(\varGamma^{(n)})$ such that
\[L(s,\scra^{(II)}_n(f,G),\St)=L(s,G,\St)\prod_{i=1}^{n-2}L(s+k-1-i,f).\]
\end{itemize}
\end{theorem}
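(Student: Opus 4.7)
The plan is to apply Arthur's endoscopic classification of discrete automorphic representations of symplectic groups, producing $\scra^{(I)}$ and $\scra^{(II)}$ as the holomorphic cuspidal members of global $A$-packets whose $A$-parameters are built as direct sums from the parameters of the given inputs. For Part (1), let $\phi_G$ denote the $(2n+1)$-dimensional orthogonal Arthur parameter of $G$ (attached to $L(s,G,\St)$) and let $\phi_F$ denote the $4$-dimensional symplectic Arthur parameter of $F$ (attached to the spinor $L$-function, existing by Theorem \ref{th.galspin} applied at each prime). Consider the formal parameter
\[
\Psi \;=\; (\phi_G \boxtimes [1]) \;\boxplus\; (\phi_F \boxtimes [2d])
\]
for $\Sp_{n+4d}$, where $[m]$ denotes the $m$-dimensional irreducible representation of the Arthur $\SL_2$. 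Since $\phi_F$ is symplectic and $[2d]$ is symplectic (as $2d$ is even), the tensor $\phi_F \boxtimes [2d]$ is orthogonal self-dual of dimension $8d$, while $\phi_G \boxtimes [1]$ is orthogonal self-dual of dimension $2n+1$. Hence $\Psi$ is an orthogonal parameter of dimension $2(n+4d)+1$ with trivial determinant, valued in ${}^L\!\Sp_{n+4d}=\SO_{2(n+4d)+1}(\CC)$, with the correct parity of each summand (odd Arthur-$\SL_2$-dimension matched with orthogonal, even with symplectic). Conditions (b) and (c) together force the Hodge exponents contributed by $\phi_G$ and by $\phi_F \boxtimes [2d]$ to be pairwise disjoint, so $\Psi$ is discrete and regular.

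By Arthur's multiplicity formula there is a discrete automorphic representation $\pi$ of $\Sp_{n+4d}(\AAA)$ with global $A$-parameter $\Psi$, provided the archimedean constituent lies in the local Adams--Johnson $A$-packet at infinity and the associated product of local characters on the component group $S_\Psi$ is trivial. Granting this, one computes
\begin{align*}
L(s,\pi,\St) &= L(s,\phi_G)\cdot L(s,\phi_F \boxtimes [2d]) \\
&= L(s,G,\St)\prod_{i=1}^{2d} L\bigl(s+d+\tfrac{j}{2}+k-1-i,\,F,\,\Sp\bigr),
\end{align*}
where the $2d$ shifts arise from combining the Arthur-$\SL_2$ eigenvalues $\{(2d+1)/2-i\}_{i=1}^{2d}$ with the motivic normalization $(2k+j-3)/2$ of $\phi_F$. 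The weight $\bfk'$ in the statement is exactly what is read off from the archimedean infinitesimal character of $\Psi$: the set $\{k_i - i\}_{i=1}^n$ is the contribution of $\phi_G$, while the two length-$2d$ progressions $\{j/2+k+d-2,\ldots,j/2+k-d-1\}$ and $\{j/2+d,\ldots,j/2-d+1\}$ come from tensoring the positive unitary Hodge exponents $(2k+j-3)/2$ and $(j+1)/2$ of $\phi_F$ with the weights $\{(2d-1)/2,\ldots,-(2d-1)/2\}$ of $[2d]$.

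The hard part is the archimedean identification: one must verify that the holomorphic discrete series of highest weight $\bfk'$ actually belongs to the Adams--Johnson $A$-packet attached to $\Psi_\infty$, and that Arthur's sign character on the component group $S_\Psi$ (an elementary abelian $2$-group whose rank equals the number of distinct direct summands of $\Psi$) evaluates trivially on this member so that it really contributes a non-zero cusp form rather than a residual form. Conditions (a) and (b) are precisely what is needed at this step: (a) guarantees the correct central-character and parity so that $\Psi$ is of the form required by Arthur, while (b) ensures that the two shifted copies of $\phi_F$ stay strictly inside the positive regular range and do not cross the two boundary Hodge exponents of $\phi_G$, so the holomorphic chamber is accessible. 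This archimedean identification and sign verification is the content of Atobe's refinement of Arthur's classification for holomorphic Siegel cusp forms, and it supplies the lift $\scra^{(I)}_{n,d,\bfk}(F,G)$. Part (2) follows by the same scheme applied to the parameter $\Psi = (\phi_G \boxtimes [1]) \boxplus (\phi_f \boxtimes [n-2])$ of total dimension $5 + 2(n-2) = 2n+1$, where $\phi_f$ is Deligne's $2$-dimensional symplectic parameter of $f$ and $[n-2]$ is symplectic since $n-2$ is even; the hypothesis $k>n>2$ plays the combined role of conditions (a)--(c), ensuring regularity of $\Psi_\infty$ and matching the scalar holomorphic weight $k$ on $\Sp_{2n}(\RR)$.
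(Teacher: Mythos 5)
Your proposal adopts exactly the same strategy the paper follows in Appendix A: form the formal $A$-parameter $\Psi=\psi_G\boxplus\pi_F[2d]$ for $\Sp_{n+4d}$ and invoke Arthur's endoscopic classification (in the form made explicit by Chenevier--Lannes, Theorem \ref{MF} here). The dimension count, the orthogonality/symplecticity of the summands, and the reading of ${\bf k}'$ from the infinitesimal character data are all correct and match the paper.

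The gap is that you declare the essential step without carrying it out. You write that ``the associated product of local characters on the component group $S_\Psi$ is trivial'' must be verified, call this ``the content of Atobe's refinement,'' and let conditions (a) and (b) do this by assertion. In the paper this is condition (f) of Theorem \ref{MF}, and its verification is the entire non-trivial content of the proof. The paper computes it explicitly: for every constituent $\pi_i[d_i]$ of $\psi_G$ with $i\ne i_0$ one checks via the root-number formula of Remark \ref{remarkMF} (4)--(5) that $\varepsilon(\pi_i\times\pi_F)^{\min\{d_i,2d\}}=1$ (using that $d_i$ odd forces $n_i\equiv 0\bmod 4$, and that condition (c) puts every positive eigenvalue $2w_i=2(k_\ell-\ell)$ strictly between $j+1$ and $j+2k-3$, so the $\max$'s in the product collapse uniformly), and then computes
\[
\prod_{j=1}^t\varepsilon(\pi_F\times\pi_j)^{\min\{2d,d_j\}}=\varepsilon(\pi_F\times\pi_{i_0})^{\min\{2d,d_{i_0}\}}=(-1)^{n+k},
\]
so the sign condition reduces exactly to $k\equiv n\bmod 2$, which is hypothesis (a). Your proposal never reaches this computation; in particular condition (c) is used only to assert regularity and disjointness of Hodge exponents, whereas its crucial function in the paper is to collapse the Rankin--Selberg root numbers of $\pi_i\times\pi_F$, and condition (a) is cited only vaguely for ``central character and parity'' rather than being recognized as the precise outcome of the sign computation. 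Without this verification the proof is incomplete: nothing in your argument distinguishes the case $k\equiv n$ from $k\not\equiv n$, even though the theorem fails in the latter case. (Two minor points: the transfer of $F$ to a cuspidal representation $\pi_F$ of $PGL_4$ is given by Proposition \ref{accidental}(2), not by the Galois-theoretic Theorem \ref{th.galspin}; and the archimedean packet membership is handled in this paper by Chenevier--Lannes plus Arancibia--M{\oe}glin--Renard, not by a separate refinement.)

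Part (2) has the same shape and the same omission: the paper first argues that $\psi_G$ must be irreducible of the form $\pi_G[1]$ with $\pi_G$ on $PGL_5$ (otherwise condition (f) already fails for $\psi_G$ by a $|K_1|=1$ obstruction), and then explicitly evaluates $\varepsilon(\pi_f\times\pi_G)=(-1)^{k+\frac{n-2}{2}}$ to see that the sign condition for $\psi'=\psi_G\boxplus\pi_f[n-2]$ holds if and only if $k$ is even. Your proposal does not address the claim that $\psi_G$ is necessarily a single cuspidal block, nor does it carry out the root-number evaluation.
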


\begin{theorem} \label{th.atobe2}
Let $G$ be a Hecke eigenform in $S_{\bf k}(\varGamma^{(n)})$ for a fixed 
${\bf k}=(k_1,\ldots,k_n) \in \ZZ^n$ with $k_1 \ge \cdots \ge k_n >n$.
For positive integers $k$ and $d$ with $k >d$, we assume one of the following conditions:
\begin{itemize}
\item[(1)] $k-d >k_1-1$ and $k \equiv d \pmod 2$.
\item[(2)] $k+d-1<k_n-n,  \ k>d$ and $k \equiv d+n \pmod 2$.
\end{itemize}
Define ${\bf k}'=(k_1',\ldots,k_{n+2d}') \in \ZZ^{n+2d}$ so that $k_1' \ge \cdots \ge k_{n+2d}'$ and
\begin{align*}
&\{k_1'-1,k_2'-2,\ldots,k_{n+2d}'-(n+2d)\} \\
&=\{k_1-1,k_2-2,\ldots,k_n-n\} \cup \{k+d-1,k+d-2,\ldots,k-d\}.
\end{align*}
Then, for any Hecke eigenform $f \in S_{2k}(\SL_2(\ZZ))$, there exists a Hecke eigenform $\scrm_{n,d,{\bf k}}^{{\bf k}'}(f,G) \in S_{\bf k'}(\varGamma^{(n+2d)})$ such that
\[L(s,\scrm_{n,d,{\bf k}}^{{\bf k}'}(f,G),\St)=L(s,G,\St)\prod_{i=1}^{2d}L(s+k+d-i,f).\]
Here we make the convention that
$L(s,G,\St)=\zeta(s)$ if $n=0$.
\end{theorem}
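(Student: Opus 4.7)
My plan is to construct $\scrm_{n,d,{\bf k}}^{{\bf k}'}(f,G)$ via Arthur's endoscopic classification of the discrete spectrum of $\SP_{n+2d}$ at level one, following the same template used to obtain the $\scra^{(I)}$-lifts of Theorem~\ref{th.atobe1}. First, I would associate to $f$ its self-dual cuspidal automorphic representation $\pi_f$ of $\GL_2(\AAA)$, and to $G$ an elliptic Arthur parameter $\psi_G$ for $\SP_n$ whose standard $L$-function matches $L(s,G,\St)$. Then I would form the formal global parameter
\[
\psi \;=\; \psi_G \,\boxplus\, (\pi_f \boxtimes [2d])
\]
for $\SP_{n+2d}$; by construction, the standard $L$-function attached to $\psi$ is $L(s,G,\St)\prod_{i=1}^{2d}L(s+k+d-i,f)$, and the archimedean infinitesimal character of $\psi$ is the union $\{k_i - i\}_{i=1}^{n} \cup \{k+d-1,k+d-2,\ldots,k-d\}$, i.e.\ precisely the infinitesimal character of the desired holomorphic Siegel form of weight ${\bf k}'$.

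The next step is to verify the inputs needed to invoke Arthur's multiplicity formula and extract a Hecke eigenform in $S_{{\bf k}'}(\varGamma^{(n+2d)})$ from $\psi$: ellipticity of $\psi$, regularity of its archimedean infinitesimal character, and triviality of the associated sign character on the global component group. Ellipticity and regularity both follow directly from the inequalities in hypothesis (1) or (2): the shifts $\{k+d-1,\ldots,k-d\}$ are forced to lie entirely above the spectral set $\{k_i-i\}$ of $G$ in case (1), or entirely below it in case (2), so the union is strictly decreasing and $\psi$ has no repeated summands. The parity hypothesis $k \equiv d \pmod 2$ in case (1), or $k \equiv d+n \pmod 2$ in case (2), is precisely what ensures via Adams-Johnson that the archimedean $A$-packet of $\psi_\infty$ contains the holomorphic discrete series of Harish-Chandra parameter prescribed by ${\bf k}'$, rather than only non-holomorphic members.

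Granted these inputs, Arthur's multiplicity formula should produce a non-zero discrete automorphic representation $\Pi$ of $\SP_{n+2d}(\AAA)$ in the packet of $\psi$, unramified at every finite place and with archimedean component the required holomorphic discrete series. Passing from $\Pi$ to a highest-weight vector with spherical components at all finite primes then yields the desired Hecke eigenform $\scrm_{n,d,{\bf k}}^{{\bf k}'}(f,G) \in S_{{\bf k}'}(\varGamma^{(n+2d)})$, and the prescribed equality of standard $L$-functions is automatic. The main obstacle will be two-fold: first, confirming rigorously via Adams-Johnson's description of cohomological $A$-packets that the parity condition actually picks out the holomorphic discrete series at infinity (rather than another large discrete series in the same packet); and second, checking that Arthur's global sign character $\varepsilon_\psi$ on $\mathcal{S}_\psi$ is trivial on the element contributed by $\pi_f \boxtimes [2d]$, so that the multiplicity is positive. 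The latter reduces to a computation of symplectic root numbers of $L(s,\pi_f)$ at the half-integral points $k+d-1,\ldots,k-d$, and although these are all $+1$ in the level-one setting by the functional equation for $\pi_f$ of trivial central character, organising the sign calculation cleanly for the general parameter $\psi$ is the real technical content of the proof.
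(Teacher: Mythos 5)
Your overall strategy matches the paper's exactly: form $\psi = \psi_G \boxplus \pi_f[2d]$, verify the conditions of the explicit multiplicity formula (the paper works with the Chenevier--Lannes reformulation, Theorem~\ref{MF}, so the Adams--Johnson packet analysis is already packaged into condition (f)), and extract a Hecke eigenform with the stated standard $L$-function. The existence and self-duality of $\pi_f$, and the regularity and ellipticity of the parameter you describe, are handled correctly.

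There is, however, a concrete gap in your sign analysis, and it is the only genuinely non-trivial point of the proof. Your attribution of the parity hypotheses $k \equiv d$ (case 1) and $k \equiv d+n$ (case 2) mod $2$ to Adams--Johnson ``picking out the holomorphic discrete series'' is misplaced: once the archimedean infinitesimal character is regular, integral and compatible with the weight ${\bf k}'$ (which is forced by the inequalities in hypothesis (1) or (2)), the holomorphic discrete series is automatically a member of the archimedean $A$-packet; what the parity governs is whether its local sign matches Arthur's global sign character, i.e.\ condition (f) of Theorem~\ref{MF}. More importantly, your claim that the relevant root numbers ``are all $+1$ in the level-one setting by the functional equation for $\pi_f$'' is false and would leave the parity hypotheses unexplained. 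The quantity to compute is
\[
\prod_{j \neq i}\varepsilon(\pi_f\times\pi_j)^{\min(2d,\,d_j)},
\]
the product of Rankin--Selberg root numbers over the constituents $\pi_j[d_j]$ of $\psi_G$, computed from the archimedean parameters via Clozel's purity (Remark~\ref{remarkMF}(4)); condition (f) requires this to equal $(-1)^d$ since $n_id_i/4 = d$ for $\pi_f[2d]$. The paper's proof shows the factor is $1$ for $j\neq i_0$ (this is where the hypothesis that $\{k+d-1,\ldots,k-d\}$ lies entirely above or entirely below $\{k_i-i\}$ is used), but is $(-1)^{k}$ in case (1) and $(-1)^{k+n}$ in case (2) when $j=i_0$. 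Equating the total to $(-1)^d$ is precisely the parity hypothesis. Without carrying out this computation your argument does not close.
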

Theorem \ref{th.atobe1} (1) for the case $n=0$, and Theorem \ref{th.atobe1} (2) have been proved in \cite[Proposition 5.3]{Dummigan17} and \cite[Proposition 5.2]{Dummigan17}, respectively. (These results  were proved under a certain assumption. But such an assumption was proved  by Arancibia, M{\oe}glin and Renard \cite{AMR}(cf.\ Remark \ref{remarkMF}), and they are now unconditional results.) A general case of Theorem \ref{th.atobe1} (1) and Theorem \ref{th.atobe2} may be proved similarly.
But, for readers' convenience we will give their proofs in Appendix A.
 Theorem \ref{th.atobe1} was conjectured by Ibukiyama \cite{Ibukiyama12} in  special cases with numerical examples.
We say that  the lifts in (1) and (2) are the lifts of types $\scra^{(I)}$ and $\scra^{(II)}$, respectively.
 If $n=0$ and  
\begin{align*}
& {\bf k}'=\Bigl(\overbrace{{j \over 2}+k+d-1,\ldots,{ j \over 2}+k+d-1}^{2d},\overbrace{{j \over 2}+3d+1,\ldots,{j \over 2}+3d+1}^{2d}\Bigr),
\end{align*}
we simply write  $\scra^{(I)}_{4d}(F)$ instead of $\scra^{(I),{\bf k'}}_{0,d,{\bf k}}(F,G)$ because ${\bf k}$ and ${\bf k}'$ are  determined by $F$ and $d$.
Theorem \ref{th.atobe2} was conjectured by Miyawaki \cite{Miyawaki92} with numerical examples. We also note that 
$\scrm_{n,d,{\bf k}}^{{\bf k}'}(f,G)$ was constructed by Ikeda \cite{Ikeda06} in the case (2) under the assumption $k_1=\cdots=k_n$ and the non-vanishing condition. In particular in the case $n=0$, it was constructed by Ikeda  \cite{Ikeda01}, and we write it 
 as $\scri_{2d}(f)$. The following proposition is more or less well known.
\begin{proposition} \label{prop.standard-L-Klingen}
Let ${\bf k}=(k_1,\ldots,k_m,\ldots,k_n)$  and ${\bf l}=(k_1,\ldots,k_m)$ with $k_1 \ge \cdots \ge k_m \ge \cdots \ge k_n.$
Let $F$ be a Hecke eigenform in $S_{\bf l}(\varGamma^{(m)})$, and suppose that  $[F]^{\bf k}=[F]^{\bf k}(Z,0)$ belongs to $M_{\bf k}(\varGamma^{(n)})$. Then, $[F]^{\bf k}$ is a Hecke eigenform and 
\[L(s,[F]^{\bf k},\St)=L(s,F,\St)\prod_{i=m+1}^{n}\bigl(\zeta(s+k_i-i)\zeta(s-k_i+i)\bigr).\]
\end{proposition}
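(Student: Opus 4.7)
The plan has two main steps: first, show that $[F]^{\bf k}$ is a Hecke eigenform and determine its Satake parameters at each unramified prime $p$ in terms of those of $F$; second, compute the standard $L$-factor directly from the parameters.

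For the first step, I would interpret the Klingen-Eisenstein series $[F]^{\bf k}(Z,s)$ adelically as an Eisenstein series on $\mathrm{Sp}_n(\AAA)$ induced from the Klingen parabolic $\Delta_{n,m}$, whose Levi factor is $\mathrm{GL}_{n-m} \times \mathrm{Sp}_m$. The inducing section is built from $F$ on the $\mathrm{Sp}_m$ factor and, on the $\mathrm{GL}_{n-m}$ factor, from an unramified character determined by the weights $k_{m+1}, \ldots, k_n$ and the parameter $s$; at $s=0$ this character encodes precisely the contribution of $\rho_{{\bf k}'}$ under the decomposition $\rho_{\bf k} = \det^{k_n} \otimes \rho_{{\bf k}'}$ restricted to $\Delta_{n,m}$. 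Since $F$ is a cuspidal Hecke eigenform, the local Hecke algebra ${\bf L}_{n,p}$ acts on the unramified constituent of the induced representation by scalars, and Satake theory yields that the Satake parameters of $[F]^{\bf k}$ at $p$ are obtained from those of $F$, say $(\alpha_0(F,p), \alpha_1(F,p), \ldots, \alpha_m(F,p))$, by adjoining the $n-m$ parameters $p^{k_{m+1}-(m+1)}, p^{k_{m+2}-(m+2)}, \ldots, p^{k_n-n}$ and rescaling $\alpha_0$ so that the normalization $\alpha_0^2 \alpha_1 \cdots \alpha_n = p^{\sum_i k_i - n(n+1)/2}$ is preserved.

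For the second step, once these parameters are in hand, the $L$-factor calculation is mechanical. By definition, $L_p(p^{-s}, [F]^{\bf k}, \St)^{-1}$ equals $(1-p^{-s}) \prod_{i=1}^n (1-\alpha_i([F]^{\bf k},p) p^{-s})(1-\alpha_i([F]^{\bf k},p)^{-1} p^{-s})$. The factor $(1-p^{-s})$ together with the contributions from $i=1,\ldots,m$ reproduces $L_p(p^{-s}, F, \St)^{-1}$, while each extra parameter $p^{k_i - i}$ for $m+1 \le i \le n$ contributes the pair $(1-p^{k_i-i-s})(1-p^{i-k_i-s})$, which is the inverse of the local Euler factor at $p$ of $\zeta(s-k_i+i)\zeta(s+k_i-i)$. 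Multiplying over all primes yields the claimed identity.

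The principal obstacle is the first step: justifying the Satake parameter description in the vector-valued setting. In the scalar weight case this goes back to Langlands and Arakawa, and for vector weights one must carefully track how $\rho_{{\bf k}'}$ restricts along the Levi projection and how the induced character at $s=0$ matches the weight data. An alternative, purely classical approach would compute the action of the generators $T(p)$ and $T_i(p^2)$ on the Klingen-Eisenstein series directly via coset manipulations on $\Delta_{n,m} \backslash \varGamma^{(n)} g \varGamma^{(n)}$ together with the Hecke eigenproperty of $F$, which is straightforward but somewhat tedious.
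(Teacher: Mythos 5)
Your proposal is correct in outline and arrives at the right answer, but your primary route is genuinely different from the one the paper uses. The paper disposes of this proposition by citing Andrianov's \emph{Quadratic Forms and Hecke Operators}, Exercise~4.3.24, for the scalar-weight case $k_1 = \cdots = k_n$, and asserts that the general vector-valued case ``can be proved by the same argument'' --- i.e., the purely classical computation you describe only as an ``alternative'': act with the generators $T(p)$, $T_i(p^2)$ on the Klingen-Eisenstein series by manipulating cosets in $\Delta_{n,m} \backslash \varGamma^{(n)} g \varGamma^{(n)}$, extract the eigenvalue in terms of that of $F$, and assemble the Satake parameters from there. Your primary route --- interpreting $[F]^{\bf k}$ adelically as the unramified constituent of $\Ind_P^{\mathrm{Sp}_n}(\pi_F \otimes \chi)$ from the Klingen parabolic and reading off the new Satake entries $p^{k_i - i}$, $m+1 \le i \le n$, from the inducing character --- is cleaner conceptually and lines up with how the rest of the paper (Appendix~A especially) thinks about these objects via Arthur parameters and infinitesimal characters; your remark that the added eigenvalues $k_i - i$ of the infinitesimal character force exactly these Satake parameters is the right sanity check. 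What you lose relative to the classical computation is self-containedness at the level of the Hecke-operator action the paper uses elsewhere (e.g.\ Lemma~\ref{lem.recusion-formula-for-FC}), but what you gain is that the vector-valued case requires no additional bookkeeping over the scalar case, which the Andrianov route leaves to the reader. The second half of your argument --- the Euler-factor bookkeeping that $(1 - p^{k_i - i - s})(1 - p^{i - k_i - s})$ is the inverse local factor of $\zeta(s - k_i + i)\zeta(s + k_i - i)$ --- is straightforward and correct, and your note that rescaling $\alpha_0$ is irrelevant to the standard $L$-function is accurate since $\alpha_0$ does not enter $L_p(X, \cdot, \St)$.
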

\begin{proof}
The assertion is well known in the case $k_1=\cdots=k_m=\cdots=k_n$ (cf. \cite[Exercise 4.3.24]{Andrianov87}, and a general case can also be proved by the same argument.
\end{proof}

Let $F$ and $G$ be Hecke eigenforms in $M_{{\bf k}}(\varGamma^{(n)})$ and $\frkp$ a prime ideal of $\QQ(F)$. We say that $F$ is Hecke congruent to $G$ modulo $\frkp$ if there is a prime ideal $\frkp'$ of $\QQ(F)\cdot\QQ(G)$ lying above $\frkp$ such that
\[\lambda_G(T) \equiv \lambda_F(T) \pmod \frkp \text{ for any } T \in {\bf L}_n^{({\bf k})}.\]
We denote this property by
\[G \equiv_{\ev} F \pmod \frkp.\] 

\begin{conjecture}
\label{conj.main-conjecture}
Let $k,  j $ and $n$ be positive integers. Suppose  that
\begin{itemize}
\item[(a)] $n \equiv k \equiv j \equiv 0 \text{ mod } 2$ and $j/2+n/2  \equiv 1 \text{ mod } 2$.
\item[(b)] $k>n+1$ and $j > n-1$.
\end{itemize}
Put 
\begin{align*}
& {\bf k}=\Bigl(\overbrace{{j \over 2}+k+{n \over 2}-1,\ldots,{ j \over 2}+k+{n \over 2}-1}^n,\overbrace{{j \over 2}+{3n \over 2}+1,\ldots,{j \over 2}+{3n \over 2}+1}^n\Bigr).
\end{align*}
Let $f(z)=\sum a(l,f) {\bf e}(lz) \in S_{2k+j-2}(\SL_2(\ZZ))$ be a primitive form.  Let $\frkp$ be a prime ideal of $\QQ(f)$ such that $p_{\frkp} > 2k+j-2$ and suppose that $\frkp$
divides ${\displaystyle {\bf L}(k+j,f) \over \displaystyle {\bf L}(j/2+k+n/2-1,f)}$. Then, there exists a Hecke eigenform
$F \in S_{(k+j,k)}(\varGamma^{(2)})$ such that 
\[\scra^{(I)}_{2n}(F) \equiv_{\ev} [\scri_n(f)]^{{\bf k}}  \pmod{\frkp}.\]

\end{conjecture}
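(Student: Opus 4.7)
The plan is to follow the two-step strategy sketched in the introduction: first produce, from the divisibility hypothesis on the $L$-value ratio, a Hecke eigenform $G \in M_{\bf k}(\varGamma^{(2n)})$ that is Hecke-congruent to $[\scri_n(f)]^{\bf k}$ modulo some prime $\frkp'$ above $\frkp$ but is not a scalar multiple of it; then identify $G$ with a lift of type $\scra^{(I)}$ of some Hecke eigenform $F \in S_{(k+j,k)}(\varGamma^{(2)})$.

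For the first step, I would combine the pullback formula of Section 5 for the Siegel Eisenstein series of $\varGamma^{(4n)}$ restricted to $\HH_{2n}\times\HH_{2n}$ (twisted by a vector-valued differential operator realising the weight $\bf k$) with Ikeda's formula for $(\scri_n(f),\scri_n(f))$ in terms of symmetric-square $L$-values. This expresses
\[\frac{([\scri_n(f)]^{\bf k},[\scri_n(f)]^{\bf k})}{(\scri_n(f),\scri_n(f))}\]
as an explicit product of critical $L$-values of $f$. Adapting the Hecke-algebra congruence argument of Katsurada-Mizumoto \cite{Katsurada-Mizumoto12} to the vector-valued setting then shows that if $\frkp$ divides ${\bf L}(k+j,f)/{\bf L}(j/2+k+n/2-1,f)$, the above ratio is $\frkp'$-small, and a Deligne-Serre-type pigeonhole produces a Hecke eigenform $G \in M_{\bf k}(\varGamma^{(2n)})$, linearly independent of $[\scri_n(f)]^{\bf k}$, with $G\equiv_{\ev}[\scri_n(f)]^{\bf k}\pmod{\frkp'}$. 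The hypothesis $p_{\frkp} > 2k+j-2$, together with Propositions \ref{prop.Hecke-integrality} and \ref{prop.p-integrality}, guarantees the $\frkp$-integrality needed to run this argument.

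For the second step, I would compare Hecke eigensystems via standard $L$-functions. By Proposition \ref{prop.standard-L-Klingen},
\[L(s,[\scri_n(f)]^{\bf k},\St)=L(s,\scri_n(f),\St)\prod_{i=n+1}^{2n}\zeta(s+k_i-i)\zeta(s-k_i+i),\]
which by the Euler factorisation of $L(s,\scri_n(f),\St)$ unfolds into the degree-$(4n+1)$ product predicted for $\scra^{(I)}_{2n}(F)$ by Theorem \ref{th.atobe1}(1), provided the spinor factor of $F$ satisfies the Harder-type congruence $L_p(X,F,\Sp)\equiv L_p(X,f)(1-p^{k-2}X)(1-p^{j+k-1}X)\pmod{\frkp'}$. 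Combining this with the Galois representations of Theorem \ref{th.galspin} (for $F$) and the Arthur/Chenevier-Lannes classification of cuspidal eigensystems on $\mathrm{Sp}_{2n}$ used in the proof of Theorem \ref{th.atobe1}, the eigensystem of $G$ is forced to be of type $\scra^{(I)}$ for some eigenform $F\in S_{(k+j,k)}(\varGamma^{(2)})$, and Harder's congruence for $F$ follows as a byproduct.

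The hardest step will be ensuring that the congruent eigenform $G$ produced in the first step actually has Arthur parameter of type $\scra^{(I)}$ rather than some other endoscopic or Klingen-type parameter whose Satake data coincide with those of $[\scri_n(f)]^{\bf k}$ only modulo $\frkp'$. In full generality this requires an extra multiplicity and nonvanishing input for the $\scra^{(I)}$ lift, which is deferred to \cite{A-C-I-K-Y20}. In the present paper I would circumvent this for the three specific weights $(k,j)=(10,4),(14,4),(4,24)$: Taibi's dimension formula pins down $\dim S_{\bf k}(\varGamma^{(2n)})$ and the dimension of the Klingen-Eisenstein subspace, the explicit Fourier-coefficient formula of Section 7 ensures $\scra^{(I)}_{2n}(F)\neq 0$, and the Hecke eigenvalue tables of \cite{Poor-Ryan-Yuen09} and \cite{Ib-Kat-P-Y14} exhibit a congruent eigenform $F\in S_{(k+j,k)}(\varGamma^{(2)})$ by direct inspection. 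Together these suffice to identify $G$ with $\scra^{(I)}_{2n}(F)$, confirming Conjecture \ref{conj.main-conjecture} and hence Harder's conjecture in these cases.
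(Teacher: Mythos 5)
This statement is a conjecture, not a theorem: the paper establishes it only for $(k,j)\in\{(10,4),(14,4),(4,24)\}$ with $n=2$, and you have the two-step architecture and the status of the general case right. Two corrections on your first step, though. The paper does not invoke Ikeda's Petersson-norm formula or the ratio $([\scri_n(f)]^{{\bf k}},[\scri_n(f)]^{{\bf k}})/(\scri_n(f),\scri_n(f))$; instead Proposition \ref{prop.congruence-Klingen-Ikeda} shows directly that $\frkp$ divides $|a(N,\scri_2(f))|^2\,{\bf L}(l-2,\scri_2(f),\St)$ by transporting the hypothesis on ${\bf L}(k+j,f)/{\bf L}(j/2+k+n/2-1,f)$ through the Kohnen--Zagier and Kohnen--Skoruppa formulas for the Saito--Kurokawa lift. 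More seriously, divisibility alone does not trigger the pigeonhole: Lemma \ref{lem.congruence} needs the coefficient of $[\scri_n(f)]^{{\bf k}}$ in the pullback expansion to have \emph{negative} $\frkp$-valuation, and in Theorem \ref{th.main-congruence} this is achieved by pairing the divisibility just mentioned with the \emph{non}-divisibility hypothesis (4) on $\calc_{8,l}(\scri_2(f))a(A_1,\scri_2(f))\overline{a(A,[\scri_2(f)]^{{\bf k}})}$. Section 7's explicit Fourier-coefficient formula exists precisely to check that non-divisibility; it is not used to show $\scra^{(I)}_{2n}(F)\neq 0$ as you assert. Without this non-divisibility input there is no guarantee of a second eigenform $G$ distinct from $[\scri_n(f)]^{{\bf k}}$. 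Your second step matches the paper: in the three cases treated, $G$ is identified with an $\scra^{(I)}$-lift via Taibi's dimension data and the explicit Hecke eigenvalue tables, and the general identification is deferred to the sequel.
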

\begin{remark} \label{rem.holomorphy-of_Klingen-Eisenstein-series}
Since we have $j+3n/2+1 >3n/2+1$,
$[\scri_n(f)]^{\bf k}$ belongs to $M_{\bf k}(\varGamma^{(2n)})$ by Proposition \ref{prop.holomorphy-Klingen-Eisenstein} (2).
\end{remark}

Let $\frkO$ be the ring of integers in an algebraic number field $K$, and $\frkP$ a maximal ideal of $\frkO$.
Let $\cala_{\frkP}$ be  the Grothendieck group of finite dimensional  Galois representations of $\Gal(\bar \QQ/\QQ)$ with coefficients $\frkO_{\frkP}/\frkP$ unramified outside $\frkP$.
 Let $\scrs$ be the set of isomorphism classes of irreducible representations  in $\cala_{\frkP.}$
Write an element $H$ of $\cala_{\frkP}$ as
\[H=\sum_{S \in \scrs} n_S S \text{ with } n_S \in \ZZ\]
and set
\[\|H\|=\sum_{S \in \scrs} |n_S| \dim S. \]
\begin{lemma} \label{lem.Chenevier-Lannes}
Let $\frkP$ be as above.
 Let $H$ be an element of $\cala_{\frkP}$. 
Suppose that $(\bar \chi^i+1)H=0$ with $i=1,2$, where 
$\bar \chi$ is the mod $\frkP$ representation of the cyclotomic character 
$\chi:\Gal(\bar \QQ/\QQ) \longrightarrow \GL_1(K_\frkP)$. Then $\|H\|$ is divisible by $(p_{\frkP}-1)/i$.
\end{lemma}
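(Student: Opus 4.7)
The plan is to exploit the permutation action of tensor-product-by-$\bar\chi^i$ on the set $\scrs$ of isomorphism classes of irreducible representations in $\cala_\frkP$. Since $\chi$ is the $p$-adic cyclotomic character and its reduction $\bar\chi$ takes values in $\FF_p^\times$, the character $\bar\chi$ has order exactly $p-1$, so $\bar\chi^i$ has order $m := (p-1)/\gcd(i,p-1)$, which equals $(p-1)/i$ for $i \in \{1,2\}$ (with $p$ odd). I would first partition $\scrs$ into orbits under $S \mapsto \bar\chi^i \otimes S$: the relation $(\bar\chi^i+1)H = 0$ in $\cala_\frkP$ translates to $n_{\bar\chi^i \otimes S} = -n_S$ for every irreducible $S$, so iterating around an orbit of size $r$ with representative $S_0$ forces $n_{(\bar\chi^i)^j \otimes S_0} = (-1)^j n_{S_0}$, and consistency at $j = r$ requires $r$ to be even (or $n_{S_0} = 0$). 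Since tensoring with a character preserves dimension, the orbit contributes exactly $r |n_{S_0}| \dim S_0$ to $\|H\|$.

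The next step is to bound the orbit sizes from below. The orbit size $r$ divides $m$. If $r < m$, then the stabilizer of $S_0$ in $\langle\bar\chi^i\rangle$ is a nontrivial cyclic subgroup of order $m/r$, generated by a character $\psi := \bar\chi^{ir}$ whose order $m/r$ divides $p-1$ and is thus coprime to $p$. The crucial input is a Clifford-theoretic statement in the modular setting: if $\psi$ is a character of order $d$ coprime to the residue characteristic with $S_0 \otimes \psi \cong S_0$, then $d \mid \dim S_0$. This follows by restricting $S_0$ to the open subgroup $\ker\psi$ of index $d$: because $d$ is prime to $p$ the restriction is semisimple, it decomposes into conjugate pieces that are cyclically permuted by $\Gal(\bar\QQ/\QQ)/\ker\psi$, and the action of Galois on these isotypic components has a single orbit of length $d$. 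Combining the two facts, each orbit with nonzero coefficient contributes an amount divisible by $r \cdot (m/r) = m = (p-1)/i$, and summing over orbits yields the claim.

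The main obstacle is the Clifford-style dimension bound; the alternating-sign/parity argument along orbits is a direct computation, but transplanting the classical Clifford theorem to mod-$\frkP$ Galois representations requires checking that $S_0|_{\ker\psi}$ is semisimple (which holds because $\mathrm{ord}(\psi) \mid p-1$ is invertible in $\frkO_\frkP/\frkP$) and that the given isomorphism $S_0 \cong S_0 \otimes \psi$ realizes the cyclic permutation of the $\psi$-isotypic components of $S_0|_{\ker\psi}$. Once this is in place, the conclusion $(p-1)/i \mid \|H\|$ for each $i \in \{1,2\}$ follows cleanly orbit by orbit.
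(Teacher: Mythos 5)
Your overall strategy matches the argument behind the paper's citation of Chenevier--Lannes, Prop.\ 10.4.6: partition the irreducibles in $\cala_\frkP$ into orbits under $S\mapsto\bar\chi^i\otimes S$, use the alternating relation $n_{\bar\chi^i\otimes S}=-n_S$ along each orbit, and reduce to the divisibility statement $d\mid\dim S_0$ whenever $S_0\otimes\psi\cong S_0$ for a character $\psi$ of order $d$ prime to $p$. The gap is in the justification of that key divisibility. Clifford's theorem yields only that $\Gal(\bar\QQ/\QQ)/\ker\psi$ permutes the $\ker\psi$-isotypic components of $S_0|_{\ker\psi}$ \emph{transitively}; it does not say the action is free, so a priori the orbit has length $e$ with $e$ a proper divisor of $d$, giving only $e\mid\dim S_0$. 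You yourself flag the needed freeness (``the given isomorphism $\ldots$ realizes the cyclic permutation''), but that assertion is exactly the substance of the lemma, not a routine verification. As a side remark, semisimplicity of $S_0|_{\ker\psi}$ is automatic from Clifford's theorem in any characteristic; coprimality of $d$ to $p$ is needed for a different reason, explained below.

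The cleanest way to close the gap is to argue with the twisting intertwiner rather than with Clifford isotypic components. Choose a nonzero $\phi\in\Hom_{\Gal(\bar\QQ/\QQ)}(S_0,S_0\otimes\psi)$ and regard it as an invertible endomorphism of the underlying space satisfying $\phi(gv)=\psi(g)\,g\,\phi(v)$ for all $g$. Then $\phi^d$ is $\Gal(\bar\QQ/\QQ)$-equivariant, hence by Schur's lemma a nonzero scalar $c$. Since $p\nmid d$, the polynomial $X^d-c$ is separable, so $\phi$ is diagonalizable with eigenvalue set a full coset $\lambda_0\mu_d$ of the group of $d$-th roots of unity. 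The covariance identity gives $g\cdot V_\lambda=V_{\psi(g)\lambda}$ for the eigenspaces $V_\lambda$, and $\psi$ surjects onto $\mu_d$, so the $d$ eigenspaces are permuted simply transitively; they are therefore all nonzero of equal dimension, and $d\mid\dim S_0$. With this lemma in hand, your orbit-by-orbit accounting is correct and the proof closes.
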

\begin{proof}
The assertion for $i=1$ has been proved in Proposition 10.4.6 of Chenevier and Lannes \cite{Chenevier-Lannes19}, and the other assertion can also be proved by using the same argument as there.
\end{proof}

\begin{theorem}\label{th.enhanced-Harder}
Let the notation be as in Conjecture \ref{conj.main-conjecture}.
\begin{itemize} 
\item[(1)] Conjecture \ref{conj.modified-Harder} holds for the case $j \equiv 0 \pmod 4$
 if Conjecture \ref{conj.main-conjecture} holds for $n=2$.
\item[(2)] Suppose that $2k+j-2 \ge 20$. Then Conjecture \ref{conj.modified-Harder} holds for the case $j \equiv 2 \pmod 4$
 if Conjecture \ref{conj.main-conjecture} holds for $n=4$.
\end{itemize}
\end{theorem}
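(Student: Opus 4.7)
The plan is to descend from the congruence of standard $L$-functions of the two lifts to Harder's spinor-$L$ congruence via the Chenevier--Lannes machinery of the Grothendieck group of Galois representations. Assuming Conjecture \ref{conj.main-conjecture} for $n \in \{2,4\}$, we obtain a Hecke eigenform $F \in S_{(k+j,k)}(\varGamma^{(2)})$ and a prime $\frkp'$ of $\QQ(f,F)$ above $\frkp$ with $\lambda_{\scra^{(I)}_{2n}(F)}(T) \equiv \lambda_{[\scri_n(f)]^{\bf k}}(T) \pmod{\frkp'}$ for every integral Hecke operator $T$. Combining Theorem \ref{th.atobe1}(1) (applied with the auxiliary form $G$ absent and $d = n/2$), Proposition \ref{prop.standard-L-Klingen}, and the classical formula $L(s,\scri_n(f),\St) = \zeta(s)\prod_{i=1}^n L(s+\alpha_i,f)$ for the Ikeda lift, this Hecke congruence yields the congruence of Euler factors
\[
\prod_{i=1}^{n} L(s+\alpha_i,F,\Sp) \equiv \prod_{i=1}^{n} L(s+\alpha_i,f) \cdot \prod_{\ell}\zeta(s+\ell)\zeta(s-\ell) \pmod{\frkp'}
\]
at every prime, where $\alpha_i = j/2 + k + n/2 - 1 - i$ and $\ell$ ranges over $\{j/2 - n/2 + 1,\ldots,j/2 + n/2\}$.

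Via Theorem \ref{th.galspin} I attach mod-$\frkp'$ semisimple Galois representations $\bar{\rho}_F$ (dimension $4$) and $\bar{\rho}_f$ (dimension $2$); by Chebotarev density, the above congruence is equivalent to an equality of semisimple Galois representations in the Grothendieck group $\cala_{\frkp'}$. A short twist-and-regroup computation (tensor by $\bar{\chi}^{-\alpha_n}$ and use $\alpha_i - \alpha_n = n-i$, $\ell - \alpha_n \in \{-(k-2),\ldots,n+1-k\}$, $-\ell - \alpha_n \in \{-(j+k-1),\ldots,-(j+k-n)\}$) shows that these are intervals of $n$ consecutive integers, so the right-hand side refactors as $(\bar{\chi}^{-(k-2)} + \bar{\chi}^{-(j+k-1)}) \cdot \sum_{i=0}^{n-1} \bar{\chi}^i$. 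The identity therefore collapses to
\[
H \cdot \sum_{i=0}^{n-1} \bar{\chi}^i = 0 \quad \text{in } \cala_{\frkp'}, \qquad H := \bar{\rho}_F - \bar{\rho}_f - \bar{\chi}^{-(k-2)} - \bar{\chi}^{-(j+k-1)}.
\]

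It remains to show $H = 0$. For $n = 2$ the multiplier is $1+\bar{\chi}$, so Lemma \ref{lem.Chenevier-Lannes} (with parameter $i = 1$) gives $(p-1) \mid \|H\|$; the crude bound $\|H\| \le \|\bar{\rho}_F\| + \|\bar{\rho}_f\| + 2 \le 8$ combined with $p_\frkp > 2k+j-2 \ge 10$ forces $H = 0$. For $n = 4$, I factor $\sum_{i=0}^{3}\bar{\chi}^i = (1+\bar{\chi})(1+\bar{\chi}^2)$ and apply the lemma twice: first to $(1+\bar{\chi}^2)H$ with parameter $i=1$ to conclude $(1+\bar{\chi}^2)H = 0$, then to $H$ itself with parameter $i = 2$. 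Vanishing of $H$ produces $\bar{\rho}_F \cong \bar{\rho}_f \oplus \bar{\chi}^{-(k-2)} \oplus \bar{\chi}^{-(j+k-1)}$, which at each prime $p$ reads exactly $L_p(X,F,\Sp) \equiv L_p(X,f)(1-p^{k-2}X)(1-p^{j+k-1}X) \pmod{\frkp'}$, the statement of Conjecture \ref{conj.modified-Harder}. The main obstacle lies in the norm bookkeeping of the $n = 4$ case: the intermediate crude estimate $\|(1+\bar{\chi}^2)H\| \le 2\|H\| \le 16$ sits extremely close to the threshold $p-1$ supplied by $p_\frkp > 2k+j-2$ in the borderline low-weight regime, so the argument must be completed with a sharper overlap analysis of Jordan--H\"older constituents of $\bar{\rho}_F$, $\bar{\rho}_f$ and the characters $\bar{\chi}^c$, drawing on the parity hypotheses of Conjecture \ref{conj.main-conjecture} to close the gap at the boundary.
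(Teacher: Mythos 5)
Your proof follows the paper's route exactly: translate the eigenvalue congruence of Conjecture \ref{conj.main-conjecture} into a congruence of standard $L$-factors using Theorem \ref{th.atobe1}(1) and Proposition \ref{prop.standard-L-Klingen}, regroup into the identity $\bigl(\sum_{i=0}^{n-1}\bar\chi^i\bigr)H=0$ in the Grothendieck group $\cala_\frkP$, and invoke Lemma \ref{lem.Chenevier-Lannes}. Up to a twist by a power of $\bar\chi$, your display is the same as the paper's $(1+\bar\chi^{-1})\bar\rho_F=(1+\bar\chi^{-1})(\bar\rho_f+\bar\chi^{2-k}+\bar\chi^{-j-k+1})$ for $n=2$ and its two-factor variant for $n=4$.

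Your worry about the $n=4$ norm estimate is well placed, and in fact the paper elides it entirely --- it simply writes ``Hence by Lemma \ref{lem.Chenevier-Lannes}'' and asserts $\bar\rho_F=\bar\rho_f+\bar\chi^{2-k}+\bar\chi^{-j-k+1}$ with no explicit bookkeeping. In the $n=4$ range, $k\ge 6$ even and $j\ge 6$ with $j\equiv 2\bmod 4$ give $2k+j-2\ge 16$, so $p_\frkp\ge 17$; the estimate $\|(1+\bar\chi^{\pm2})H\|\le 2\|H\|\le 16$ is tight precisely when $(k,j,p_\frkp)=(6,6,17)$. To conclude $H=0$ there one needs structural input beyond the crude bound: matching the constituent-dimension profiles of the two sides forces $\bar\rho_F$ to decompose as $[2,1,1]$ (against $\bar\rho_f$ irreducible) or $[1,1,1,1]$ (against $\bar\rho_f$ reducible); the first possibility dies by comparing $\det\bar\rho_f=\bar\chi^{-(2k+j-3)}$ with the length of the $\bar\chi$-orbit of $\bar\rho_f$, and the second because the fixed exponents $2-k\equiv 12$ and $-j-k+1\equiv 5\bmod(p-1)$ are not congruent modulo $4$, which makes the required arc decomposition of $\ZZ/16\ZZ$ impossible. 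Neither your writeup nor the paper carries out this finer orbit analysis; flagging the issue is the most substantive thing your proposal does.

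Finally, the Chebotarev step gives the spinor congruence only at primes $p\ne p_\frkp$ since the Galois representations are unramified only away from $p_\frkp$. The paper completes the argument at $p=p_\frkp$ separately, using the integral operator $\widetilde{\bf r}_{2n,1}(p)$ of Proposition \ref{prop.special-Hecke} to obtain $\lambda_F(T(p))\equiv a(p,f)\bmod\frkP$ and noting that the remaining coefficients on both sides of $(\mathrm{C}_p)$ vanish modulo $\frkP$ because $p^{k-2}\equiv p^{j+k-1}\equiv 0$. You should add this step.
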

\begin{proof} 
Let $f$ be a primitive form in Conjecture \ref{conj.modified-Harder}, and suppose that a prime ideal $\frkp$ of $\QQ(f)$ satisfies the assumptions in  Conjecture \ref{conj.modified-Harder}. Then, by Conjecture \ref{conj.main-conjecture}, there exists a Hecke eigenform $F$ in $S_{(k+j,k)}(\varGamma^{(2)})$ satisfying the conditions in Conjecture \ref{conj.main-conjecture}.
Let $K=\QQ(f)\cdot\QQ(F)$ and $\frkO$ the ring of integers in $K$. Take a prime ideal $\frkP$ of $\frkO$ lying above $\frkp$.
Then it suffices to show that 
\begin{align}
\label{C}
&L_p(X,F,{\rm Sp})  \tag{$\mathrm {C}_p$}\equiv L_p(X,f)(1-p^{k-2}X)(1-p^{j+k-1}X) \pmod \frkP \notag
\end{align}
for any prime number $p$. By Proposition \ref{prop.standard-L-Klingen}, we have 
\begin{align*}
&L(s,[\scri_n(f)]^{{\bf k}},\mathrm{St})=L(s,\scri_n(f),\mathrm{St}) \times \prod_{i=1}^n \zeta(s+\frac{j}{2}+\frac{n}{2}+1-i)\zeta(s-\frac{j}{2}-\frac{n}{2}-1+i)
\end{align*}
and
\[L(s,\scri_n(f),\mathrm{St})=\zeta(s)\prod_{i=1}^n L(s+\frac{j}{2}+k+\frac{n}{2}-1-i,f).\]
Hence 
\begin{align*}
&L(s,[\scri_n(f)]^{{\bf k}},\mathrm{St})\\
&=\zeta(s)\prod_{i=1}^n L(s+\frac{j}{2}+k+\frac{n}{2}-1-i,f)
\times \prod_{i=1}^n \zeta(s+\frac{j}{2}+\frac{n}{2}+1-i)\zeta(s-\frac{j}{2}-\frac{n}{2}-1+i)
\\ &=\zeta(s)\prod_{i=1}^n \Bigl( L(s+\frac{j}{2}+k+\frac{n}{2}-1-i,f)\zeta(s+\frac{j}{2}+
\frac{n}{2}+1-i)\zeta(s-\frac{j}{2}+\frac{n}{2}-i)\Bigr).
\end{align*}
 Then, by (1) of Theorem \ref{th.atobe1}, we have
\begin{align*}
& L(s,\scra_{2n}^{(I)}(F), \mathrm{St})=\zeta(s)\prod_{i=1}^n L(s+j/2+k+n/2-1-i,F,\mathrm{Sp}).
\end{align*}
By \cite[(3.3.52), (3.3.53), Theorem 3.3.30, Lemma 3.3.34]{Andrianov87},  for any prime number $p$,  the $i$-th  coefficient of $L_p(X,[\scri_n(f)]^{\bf k},\St)$ and $L_p(X,\scra^{(I)}_{2n}(F),\St)$ are of the form  $p^{n_i}\lambda_{[\scri_n(f)]^{\bf k}}(T_i)$ and $p^{n_i}\lambda_{\scra^{(I)}_{2n}(F)}(T_i)$ with  $n_i \in \ZZ_{\le 0}$ and  $T_i \in {\bf L}_n^{{\bf k}}$, respectively.
 Therefore,  for any prime number $p \not=p_{\frkp}$,  they belong to $\frkO_{\frkP}$, and by the assumption, we have
\[L_p(X,\scra^{(I)}_{2n}(F),\St) \equiv L_p(X,[\scri_n(f)]^{\bf k},\St) \pmod{\frkP}.\]
Hence we have 
\begin{align*}
\prod_{i=1}^n L_p(p^{i-1}X,F,{\rm Sp}) 
&\equiv \prod_{i=1}^n  L_p(p^{i-1}X,f)(1-p^{i-1}p^{k-2}X)(1-p^{i-1}p^{j+k-1}X) \pmod {\frkP} \tag{$\mathrm{D}_p$}
\end{align*}
for any prime number $p \not=p_{\frkp}$.
Let $\rho_{F}:\Gal(\bar \QQ/\QQ) \longrightarrow \GL_4(K_{\frkP})$ and $\rho_f:\Gal(\bar \QQ/\QQ) \longrightarrow \GL_2(K_{\frkP})$ be the Galois representation attached to the spin $L$ functions of $F$ and $f$, respectively.  For $\rho=\rho_F,\rho_f$ let $\bar \rho$ be the mod $\frkP$ representation of $\rho$. Then, by ($\mathrm{D}_p$),
 in the Grothendieck ring $\cala_{\frkP}$,
\[(1+\bar \chi^{-1}) \bar \rho_F=(1+ \bar \chi^{-1} )(\bar\rho_f+\bar \chi^{2-k} +\bar \chi^{-j-k+1})\]
or 
\[(1+\bar \chi^{-1})(1 +\bar \chi^{-2}) \bar \rho_F=(1+ \bar \chi^{-1})(1+\bar \chi^{-2})(\bar\rho_f+\bar \chi^{2-k} +\bar \chi^{-j-k+1})\]
according as $n=2$ or $4$.
Define an element $H$ of $\cala_{\frkP}$ as
\[H=\bar \rho_F-(\bar \rho_f+\bar \chi^{2-k} +\bar \chi^{-j-k+1}).\]
Then we have  $||H|| \le 8$. Let $n=2$. Then, we have
\[(1+\bar \chi^{-1})H=0.\]
Since we have  $2k+j-2 \equiv 2 \text{ mod } 4$, by assumption we have  $p_\frkP=p_\frkp \ge 2k+j-2 \ge 18$. Hence, by Lemma \ref{lem.Chenevier-Lannes}, we have $H=0$. Let $n=4$. Then,
\[(1+\bar \chi^{-1})(1 +\bar \chi^{-2})H=0.\]
Since we have $p_\frkP=p_\frkp >2k+j-2 \ge 20$, using Lemma \ref{lem.Chenevier-Lannes} repeatedly, we also have $H=0$. Hence, in $\cala_{\frkP}$  we have
\[\bar \rho_F=\bar \rho_f+\bar \chi^{2-k} +\bar \chi^{-j-k+1}.\]
This implies that the congruence relation (\ref{C}) holds for any prime number $p \not=p_\frkp$.

 Let $p=p_\frkp$. Then we have
\begin{align*}
\lambda_{\scra_{2n}^{(I)}(F)}({\bf r}_{2n,1}(p)) =p^{-j/2-k-n/2+1}\lambda_F(T(p))\sum_{i=1}^n p^i,
\end{align*}
and
\begin{align*}
\lambda_{[\scri_{n}(f)]^{\bf k}}({\bf r}_{2n,1}(p))=p^{-j/2-k-n/2+1}a(p;f)\sum_{i=1}^n p^i+p^{-j/2-n/2-1}\sum_{i=1}^n p^i+p^{j+n/2+1}\sum_{i=1}^n p^{-i}.
\end{align*}
Since the Hecke operator $\widetilde {\bf r}_{2n,1}(p)=p^{k+j/2+n/2-2}{\bf r}_{2n,1}(p)$ belongs to ${\bf L}_{2n}^{{\bf k}}$ by Proposition \ref{prop.special-Hecke},  we have
\begin{align*}
&\lambda_F(T(p)) \equiv \lambda_{\scra_{2n}^{(I)}(F)}(\widetilde {\bf r}_{2n,1}(p)) \equiv \lambda_{[\scri_{n}(f)]^{\bf k}}(\widetilde {\bf r}_{2n,1}(p))
\equiv a(p;f) \pmod{\frkP}.
\end{align*}
Moreover, all the coefficients of $X^m$ with $m \ge 2$ of the both polynomials $L_p(X,F,{\rm Sp})$ and $L(X,f)(1-p^{k-2}X)(1-p^{j+k-1}X)$ are congruent to $0$ mod $\frkP$. This proves the assertion.
\end{proof}
\begin{remark} \label{rem.commnet-on-main-conjecture}
\begin{itemize}
\item[(1)] Our conjecture is stronger than Conjecture \ref{conj.Harder} in the case $k$ is even.
\item[(2)] The above  conjecture tells nothing about the case $k$ is odd. However, we can propose a similar conjecture in the case $k$ is odd. 
\end{itemize}
\end{remark}

\section{Pullback formula}
\subsection{Differential operators with automorphic property}
\allowdisplaybreaks[4]
In this section, we explain some explicit differential operators that are used in 
the pullback formula. 

\subsubsection{Setting}
 
Now for an integer $n\geq 2$, fix a partition $(n_1,n_2)$ with $n=n_1+n_2$ with $n_i\geq 1$.
Let $\lambda$ be a dominant integral weight with $\depth(\lambda)\leq \min(n_1,n_2)$. For $i=1,2$, let $(\rho_{n_i,\lambda},V_{n_i,\lambda})$ be the representation of $\GL_{n_i}(\CC)$ defined in Section 1.
Put $V_{\lambda,n_1,n_2}=V_{n_1,\lambda}\otimes V_{n_2,\lambda}$. 
We regard $\HH_{n_1}\times \HH_{n_2}$ as a subset of $\HH_n$ by the diagonal embedding. 
We consider $V_{\lambda,n_1,n_2}$-valued differential operators $\DD$  
on scalar valued functions of $\HH_{n}$, satisfying  Condition \ref{condition} below 
on automorphy: 
We fix $\lambda$, $n_1$, $n_2$ as above. 
For variables $Z_i \in \HH_{n_i}$, irreducible representations $(\rho_i,V_i)$ of $\GL_{n_i}(\CC)$ for $i=1$, $2$, 
a $V_1\otimes V_2$ valued function $f(Z_1,Z_2)$ on $\HH_{n_1}\times \HH_{n_2}$, 
and $g_i=\begin{pmatrix}
A_i & B_i \\ C_i & D_i \end{pmatrix}\in \Sp_{n_i}(\RR)$, we write 
\[
(f|_{\rho_1,\rho_2}[g_1,g_2])(Z_1,Z_2)
=
\rho_1(C_1Z_1+D_1)^{-1}\otimes \rho_2(C_2Z_2+D_2)^{-1}f(g_1Z_1,g_2Z_2).
\]
We regard $\Sp_{n_1}(\RR)\times \Sp_{n_2}(\RR)$ as a subgroup of $\Sp_n(\RR)$ by 
\[
\iota(g_1,g_2)=\begin{pmatrix} A_1 & 0 & B_1 & 0 \\ 0 & A_2 & 0 & B_2 \\
C_1 & 0 & D_1 & 0 \\
0 & C_2 & 0 & D_2 \end{pmatrix} \qquad (g_i \in \Sp_{n_i}(\RR) \text{ for } i=1,2).
\]
For $Z=(z_{ij})\in \HH_n$, we denote by $\p_Z$ the following $n\times n$ symmetric matrix of partial derivations 
\[
\p_Z=\left(\frac{1+\delta_{ij}}{2}\frac{\p}{\p z_{ij}}\right)_{1\leq i,j\leq n}.
\]
For a $V_{\lambda,n_1,n_2}$ valued polynomial $P(T)$ in components of $n\times n$ symmetric matrix $T$, 
we put $\DD_P=P(\p_Z)$. 
 
\begin{condition}\label{condition}
We fix $k$ and $\lambda$. Let $\DD=P(\p_Z)$ as above.
For any holomorphic function $F$ on $\HH_n$ and any $(g_1,g_2)\in \Sp_{n_1}(\RR)\times \Sp_{n_2}(\RR)$,
the operator $\DD$ satisfies   
\[
\mathrm{Res}(\DD(F|_{k}[\iota(g_1,g_2)])=(\mathrm{Res}\ \DD(F))|_{\det^k\rho_{n_1,\lambda},\det^k\rho_{n_2,\lambda}}[g_1,g_2],
\]
where $\mathrm{Res}$ means the restriction of a function on $\HH_{n}$ to $\HH_{n_1}\times \HH_{n_2}$. 
\end{condition}
For  $Z=\begin{pmatrix} Z_1 & Z_{12} \\{}^t Z_{12} & Z_2 \end{pmatrix} \in \HH_n$ with $Z_1 \in \HH_{n_1}, Z_2 \in \HH_{n_2}, Z_{12} \in M_{n_1,n_2}(\CC)$, we sometimes write $\DD(F)\begin{pmatrix} Z_1 & O \\ O & Z_2 \end{pmatrix}$ instead of $\mathrm{Res} \ \DD(F(Z))$. This condition on $\DD$ is, roughly speaking, the condition that, 
if $F$ is a Siegel modular form of degree $n$ of weight $k$,
then $\mathrm{Res}(\DD(F))$ is a Siegel modular form of weight $\det^k \rho_{n_i,\lambda}$ for 
each variable $Z_i$ for $i=1$, $2$. Here, if $2k\geq n$, 
the condition that $\rho_1$ and $\rho_2$ correspond to the same
$\lambda$ is a necessary and sufficient condition for the existence of $\DD$
(\cite{Ibukiyama99}). A characterization for $P$ is given in \cite{Ibukiyama99}. 
We review it since we need it later.
For an $m\times 2k$ matrix $X=(x_{i\nu})$ of variables and for any $(i,j)$ with $1\leq i,j\leq m$, 
we put 
\[
\Delta_{ij}(X)=\sum_{\nu=1}^{2k}\frac{\p^2}{\p x_{i\nu}\p x_{j\nu}}.
\]
We say that a polynomial $\widetilde{P}(X)$ in $x_{i\nu}$ is pluri-harmonic if 
\[
\Delta_{ij}(X)\widetilde{P}(X)=0
\]
for any $i$, $j$ with $1\leq i,j \leq m$. 

\begin{theorem}[\cite{Ibukiyama99}]
We assume that $2k\geq n$. Notation and assumptions being as above,
the operator $\DD=P(\p_Z)$ satisfies Condition \ref{condition} if and only if 
the $V_{\lambda,n_1,n_2}$ valued polynomial $P$ satisfies the following two conditions.
\\
(1) For $i=1$, $2$, let $X_i$ be an $n_i\times 2k$ matrix of variables.
Then the polynomial
\[
\widetilde{P}(X_1,X_2):=P\begin{pmatrix} X_1\,^{t}\!X_1 & X_1\,^{t}\!X_2 \\
X_2\,^{t}\!X_1 & X_2\,^{t}\!X_2
\end{pmatrix}
\]
is pluri-harmonic for each $X_1$, $X_2$, that is,  
$\Delta_{ij}(X_1)\widetilde{P}=\Delta_{ij}(X_2)\widetilde{P}=0$, regarding 
that the variables in $X_1$ and in $X_2$ are independent. 
\\
(2) For any $A_1\in \GL_{n_1}(\CC)$ and $A_2\in \GL_{n_2}(\CC)$, we put 
\[
A=\begin{pmatrix} A_1 & 0 \\ 0 & A_2 \end{pmatrix}.
\]
Then we have 
\[
P(AT\,^{t}A)=\rho_{n_1,\lambda}(A_1)\otimes \rho_{n_2,\lambda}(A_2)P(T).
\]
Besides, for any fixed $k$ with $2k\geq n=n_1+n_2$ and $\lambda$,
the polynomial $P(T)$ satisfying (1) and (2) exists and is unique up to constant.
\end{theorem}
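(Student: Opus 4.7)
The plan is to test Condition~\ref{condition} against the family of Gaussian kernels $F_T(Z)=\mathbf{e}(\mathrm{tr}(ZT))$ for symmetric $T$, and to reduce the full $\Sp$-automorphy to algebraic conditions on $P(T)$ by running through a set of generators of $\Sp_{n_1}(\RR)\times \Sp_{n_2}(\RR)$. The elementary identity $P(\p_Z)F_T=(2\pi i)^{\deg P}P(T)F_T$ (applied homogeneous component by homogeneous component) together with the substitution $T=X\,{}^tX$, where $X$ is split into blocks $X_1$, $X_2$ of sizes $n_i\times 2k$, produces $\widetilde{P}(X_1,X_2)$ as the multiplier, and so couples the differential condition on $P$ to a purely algebraic condition on $\widetilde{P}$.

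First I would reduce to three standard types of generators of $\Sp_{n_i}(\RR)$: unipotent translations $\left(\begin{smallmatrix}1&S_i\\0&1\end{smallmatrix}\right)$, Levi elements $\left(\begin{smallmatrix}A_i&0\\0&{}^tA_i^{-1}\end{smallmatrix}\right)$, and the Weyl element $J_{n_i}$. Translations are trivial because $\p_Z$ commutes with $Z\mapsto Z+S$ and the scalar-weight factor is $1$. For the Levi part, under $Z\mapsto AZ\,{}^tA$ with $A=A_1\oplus A_2$, the chain rule gives $\p_Z\mapsto {}^tA^{-1}\p_Z A^{-1}$; matching this with the automorphy factor $\det(A_1)^k\rho_{n_1,\lambda}(A_1)\otimes\det(A_2)^k\rho_{n_2,\lambda}(A_2)$ forces exactly condition~(2), and conversely condition~(2) makes the Levi-automorphy hold.

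The heart of the proof is the Weyl element. Applied to the Gaussian $F_T$ with $T=\tfrac12 X\,{}^tX$, the slash action $F_T\mapsto \det(Z)^{-k}F_T(-Z^{-1})$ is, by the standard Fourier-transform identity for Gaussians (i.e.\ by the Weil representation of $\SL_2(\RR)$ on the Schr\"odinger model attached to $O(2k)$), equivalent to replacing $X$ by its image under the Weyl element of $O(2k)$. Thus the automorphy at $J_{n_i}$ becomes the requirement that $\widetilde{P}(X_1,X_2)$ intertwine the $O(2k)$-action appropriately, and a direct calculation translates this into the annihilation of $\widetilde{P}$ by all of the Laplacians $\Delta_{ij}(X)$, that is, condition~(1). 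Sufficiency is the same computation read in reverse, together with the remark that the family $\{F_T\}$ separates scalar-weight-$k$ holomorphic functions on $\HH_n$.

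For the existence and uniqueness assertion, via $T=X\,{}^tX$ identify the space of polynomials $P(T)$ satisfying (2) with the $\rho_{n_1,\lambda}\boxtimes\rho_{n_2,\lambda}$-isotypic component of the $O(2k)$-invariant part of $\CC[X]$. Intersecting with the pluri-harmonic subspace and invoking the Howe duality for the dual pair $(GL_n,O(2k))$ in the stable range $2k\geq n=n_1+n_2$, one reads off that the resulting space is one-dimensional exactly when $\rho_{n_1,\lambda}$ and $\rho_{n_2,\lambda}$ share the common highest weight $\lambda$. The main obstacle is the Weyl-element step: one must track the Gaussian Fourier-transform identity accurately enough to convert the transcendental inversion automorphy into the algebraic pluri-harmonic condition, and verify that the Gaussians form a rich enough test family to force the condition polynomially on all of $\widetilde{P}$.
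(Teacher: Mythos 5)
The theorem you are proving is quoted in the paper from \cite{Ibukiyama99} and is not proved there; the paper only reviews the statement and then uses it. So there is no in-paper argument to compare against, and the right benchmark is Ibukiyama's original proof, which your sketch tracks closely: reduce to the standard generators of $Sp_{n_i}(\RR)$, test $\DD$ on the Gaussian kernels $\mathbf{e}(\mathrm{tr}(ZT))$, read off condition~(2) from the chain rule at the Levi, obtain the pluri-harmonicity condition from the behaviour of the Gaussian under the inversion (oscillator-representation / theta-transformation computation), and then deduce existence and uniqueness from the multiplicity-one decomposition of the pluri-harmonic polynomials under $GL_{n_1}(\CC)\times GL_{n_2}(\CC)\times O(2k)$. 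The strategy is the intended one.

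One step is, however, genuinely misstated. The action of the Weyl element $J_{n_i}$ in the Schr\"odinger model on functions of $X\in M_{n,2k}(\RR)$ is the \emph{Fourier transform}, not substitution of $X$ by the image under any element of $O(2k)$: the dual pair $(Sp_n,O(2k))$ does not match group elements across the factors, and the $O(2k)$-action commutes with the $Sp$-action and plays no role in the inversion step (indeed $\widetilde P(X)=P(X\,{}^tX)$ is automatically $O(2k)$-invariant, so ``intertwining the $O(2k)$-action'' is vacuous). What drives the argument is the Bochner--Hecke relation: the Fourier transform carries $Q(X)\,\mathbf{e}(\mathrm{tr}(ZX\,{}^tX)/2)$ to $\det(Z/i)^{-k}$ times the analogous kernel at $-Z^{-1}$, with $Q$ replaced by a linearly transformed argument, precisely when $Q$ is annihilated by the block-Laplacians $\Delta_{ij}(X_1)$, $\Delta_{ij}(X_2)$; otherwise lower-degree correction terms appear, and these are what break the inversion automorphy. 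You should replace the $O(2k)$-Weyl-element phrasing by this. The other issue you flag yourself, whether the Gaussians are a rich enough test family, is real but easily closed: for $2k\geq n$ the set $\{X\,{}^tX: X\in M_{n,2k}(\RR)\}$ is Zariski-dense in $\Sym_n(\CC)$, so an identity of polynomials in $T$ is forced, and Condition~\ref{condition} is a pointwise algebraic identity in finitely many derivatives of $F$ at each $Z$, so validity on Gaussians propagates to all holomorphic $F$.
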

 
 There are a lot of results concerning explicit description of $P$, notably in 
\cite{ibuoneline}, \cite{ibuuniverse}. But still we need more explicit formula 
for our purpose and we will explain it  
in the next subsection.

\subsubsection{Explicit formula}
In this section, we consider some special type of $\lambda$.
We assume that $\lambda=(l,\ldots, l, 0, \ldots,0)$ with depth $m$. 
Put $\lambda_0=(l,\ldots,l)$. 
Then first we explain some general way to 
construct $V_{\lambda,n_1,n_2}$ polynomial $P(T)$ satisfying Condition \ref{condition}
from a scalar valued polynomial $P_0(S)$ satisfying Condition 
\ref{condition} for $\rho_{m,\lambda_0}\otimes \rho_{m,\lambda_0}$. 
Here $T$ is an $n\times n$ symmetric matrix and $S$ is an $2m\times 2m$ symmetric 
matrix.
Then for the case $m\leq 2$ and any $l$, we give a completely explicit 
description of $P(T)$. (The case $m=1$ has been already given in \cite{Ibukiyama99} and 
a new point is the case $m=2$.) 
Here we note that $\rho_{m,\lambda_0}=\det^l$ and 
$\det^k\rho_{m,\lambda_0}=\det^{k+l}$.

For any positive integers $k$, $l$, 
we consider Condition \ref{condition} for  $n=2m$, $(n_1,n_2)=(m,m)$
and from weight $k$ to weight $\det^{k+l}\otimes \det^{k+l}$.
If we denote by $P_{k,k+l}(S)$ a non-zero polynomial satisfying Condition 1 for this case, 
this is a scalar valued polynomial in components of an $2m\times 2m$ symmetric matrix $S$. 
We assume that we know $P_{k,k+l}(S)$, and then we consider how to give more general case 
starting from this $P_{k,k+l}$. 

First we review realization of representations of $\GL_{n_1}(\CC) \times \GL_{n_2}(\CC)$ by bideterminants. 
Let $U$, $V$ be $m\times n_1$ and $m\times n_2$ matrices of independent variables respectively. 
Let $\lambda=(l,\ldots,l,0,\ldots,0)$ such that 
$\depth(\lambda)=m$. For a integers $n_1$ and $n_2$ such that $n_1,n_2 \ge m$, put ${\bf k_1}'=(\overbrace{l,\ldots,l}^{m},\overbrace{0,\ldots,0}^{n_1-m})$ and ${\bf k_2}'=(\overbrace{l,\ldots,l}^{m},\overbrace{0,\ldots,0}^{n_2-m})$, and
let $\CC[U,V]_{{\bf k}_1',{\bf k}_2'}$ be the vector space defined in Section 2. Then, we can take $\CC[U,V]_{{\bf k}_1',{\bf k}_2'}$ as a representation space of $\rho_{n_1,\lambda} \otimes \rho_{n_2,\lambda}$ as explained in Section 2.
We denote by $\Bbb U$ the following $2m\times n$ matrix, where $n=n_1+n_2$:
\[
\Bbb U=\begin{pmatrix} U & 0 \\ 0 & V \end{pmatrix}.
\]
\begin{proposition}\label{prop.generalpol}
Notation being as above, consider $\lambda=(l,\ldots,l,0,\ldots,0)$ such that $\depth(\lambda)=m$. 
For a partition $n=n_1+n_2$, we assume that $m\leq \min(n_1,n_2)$. 
Let $T$ be an $n\times n$ symmetric matrix.
Then for $Q(T)=P_{k,k+l}(\Bbb U T\, ^{t}\Bbb U)$, the differential operator 
$\DD_{\lambda,n_1,n_2}=Q(\p_Z)$ for $Z\in \HH_n$ satisfies Condition \ref{condition} for 
$k$ and $\det^k\rho_{n_1,\lambda}$, $\det^k\rho_{n_2,\lambda}$. 
\end{proposition}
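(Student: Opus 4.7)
The strategy is to verify that $Q(T) = P_{k,k+l}(\Bbb U T\,{}^t\Bbb U)$ satisfies conditions (1) and (2) of the Ibukiyama characterization theorem for $\DD_{\lambda,n_1,n_2}$. Both conditions should follow from the corresponding properties of the scalar kernel $P_{k,k+l}$, combined with the block-diagonal structure $\Bbb U = \begin{pmatrix} U & 0 \\ 0 & V\end{pmatrix}$, because right multiplication and pre/post-composition by block matrices interact cleanly.

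For condition (2), I would exploit the identity
\[
\Bbb U A = \begin{pmatrix} U & 0 \\ 0 & V\end{pmatrix}\begin{pmatrix} A_1 & 0 \\ 0 & A_2\end{pmatrix}=\begin{pmatrix} UA_1 & 0 \\ 0 & VA_2\end{pmatrix}
\]
for $A=\mathrm{diag}(A_1,A_2)$ with $A_i\in GL_{n_i}(\CC)$. This immediately yields
\[
Q(A T\,{}^tA)=P_{k,k+l}\bigl((\Bbb U A) T\,{}^t(\Bbb U A)\bigr)= Q(T)\big|_{U\to UA_1,\,V\to VA_2},
\]
and by the very definition of the bideterminant realization, the substitution $U\to UA_1$, $V\to VA_2$ represents the action of $\rho_{n_1,\lambda}(A_1)\otimes\rho_{n_2,\lambda}(A_2)$ on $V_{n_1,\lambda}\otimes V_{n_2,\lambda}$. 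To justify that $Q(T)$ actually takes values in this representation space, I would apply condition (2) for the base case $P_{k,k+l}$ to the block matrix $\Gamma=\mathrm{diag}(\gamma,I_m)\in GL_{2m}(\CC)$ for $\gamma\in GL_m(\CC)$: this gives $Q(T)(\gamma U,V)=\det(\gamma)^l Q(T)(U,V)$, which characterizes the span of products of $l$ many $m\times m$ minors of $U$, i.e.\ $V_{n_1,\lambda}$; and symmetrically for $V$.

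For condition (1), I would set $\Bbb X=\begin{pmatrix} X_1 \\ X_2\end{pmatrix}$, which is $n\times 2k$, so that the argument of $Q$ in the definition of $\widetilde Q(X_1,X_2)$ is $\Bbb X\,{}^t\Bbb X$. Then
\[
\Bbb U\Bbb X=\begin{pmatrix} UX_1 \\ VX_2\end{pmatrix},\qquad \widetilde Q(X_1,X_2)=P_{k,k+l}\bigl((\Bbb U\Bbb X)\,{}^t(\Bbb U\Bbb X)\bigr)=\widetilde P_{k,k+l}(UX_1,VX_2),
\]
with $UX_1$ and $VX_2$ each of size $m\times 2k$. Plurihamonicity in $X_1$ now follows from the chain rule: since $(UX_1)_{a\nu}=\sum_b U_{ab}(X_1)_{b\nu}$,
\[
\Delta_{ij}(X_1)=\sum_{\nu=1}^{2k}\frac{\partial^2}{\partial(X_1)_{i\nu}\partial(X_1)_{j\nu}}=\sum_{a,b=1}^m U_{ai}U_{bj}\,\Delta_{ab}(UX_1),
\]
and each $\Delta_{ab}(UX_1)$ annihilates $\widetilde P_{k,k+l}(UX_1,VX_2)$ by the plurihamonicity of $\widetilde P_{k,k+l}$ in its first argument. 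The same argument applies to $X_2$.

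The only delicate step—more of a bookkeeping point than a true obstacle—is checking that $Q(T)$ really lands in the bideterminant space realizing $V_{n_1,\lambda}\otimes V_{n_2,\lambda}$ rather than some larger space of polynomials in $U$, $V$. As sketched, this is settled by applying condition (2) of the base case to block-diagonal $\Gamma\in GL_{2m}(\CC)$ of the form $\mathrm{diag}(\gamma,I_m)$ or $\mathrm{diag}(I_m,\gamma)$ for $\gamma\in GL_m(\CC)$, which forces $Q(T)$ to transform by $\det(\gamma)^l$ under left $GL_m$-actions on $U$ and on $V$ separately, thereby pinning down the correct isotypic component.
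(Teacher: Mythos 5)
Your argument matches the paper's proof almost step for step: the equivariance computation via $\Bbb U A = \mathrm{diag}(UA_1,VA_2)$, the pluriharmonicity via the chain rule, and the observation that the left $GL_m$-transformation $Q(T)(\gamma U,V)=\det(\gamma)^l\,Q(T)(U,V)$ (obtained from condition (2) of the base case applied to $\mathrm{diag}(\gamma,I_m)$) is what pins down the isotypic component.

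The one place you under-weigh is the final claim, which you describe as "more of a bookkeeping point than a true obstacle": that a polynomial in the entries of $U$ satisfying $Q(\gamma U)=\det(\gamma)^l Q(U)$ for all $\gamma\in GL_m(\CC)$ must be a linear combination of products of $l$ of the $m\times m$ minors $U_I$. This is exactly the content of the paper's Lemma~\ref{repspace}, and it is not a triviality — the paper proves it by iterating the Cayley-type identity $\det(\partial/\partial B)\det(B)^l=(l)_m\det(B)^{l-1}$ to reduce to degree zero, then identifying the resulting $l$-fold derivative as a combination of minors. In spirit this is the first fundamental theorem of invariant theory for $\SL_m$ acting on the left, but a self-contained proof is needed and the paper supplies one; your proposal asserts the characterization without argument. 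Everything else — the identity $\widetilde Q(X_1,X_2)=\widetilde P_{k,k+l}(UX_1,VX_2)$, the chain-rule computation $\Delta_{lt}(X_1)\widetilde Q=\sum_{i,j}u_{il}u_{jt}\Delta_{ij}(UX_1)\widetilde P_{k,k+l}$, and the substitution $\Bbb U\mapsto \Bbb U A$ for block-diagonal $A$ — is carried out exactly as in the paper.
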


\begin{proof}
For $A_1\in \GL_{n_1}(\CC)$ and $A_2\in \GL_{n_2}(\CC)$, we put 
\[
A=\begin{pmatrix} A_1 & 0 \\ 0 & A_2\end{pmatrix}.
\]
The fact that $Q(T)$ is in the representation space of $\rho_{n_1,\lambda}\otimes \rho_{n_2,\lambda}$
for the action $U\rightarrow UA_1$ and $V\rightarrow VA_2$
is concretely proved by using a structure theorem on the shape of $P_{k,k+l}(S)$ in \cite[Proposition 3.1]{ibukuzumakiochiai},
but we will later give a more abstract proof in the lemma below.
So here we prove the rest.
We write 
\begin{equation}\label{Tblocks}
T=\begin{pmatrix} T_{11} & T_{12} \\
^{t}T_{12} & T_{22} \end{pmatrix},
\end{equation}
where $T_{11}$ is an $n_1\times n_1$, $T_{12}$ is an $n_1\times n_2$, and $T_{22}$ 
is an $n_2\times n_2$ matrix.
Then 
\[
Q(AT\,^{t}A)=P_{k,k+l}
\begin{pmatrix} 
UA_1 T_{11}\,^{t}(UA_1) & UA_1 T_{12} \,^{t}(VA_2) \\
(VA_2) \,^{t}T_{12} \,^{t}(UA_1) & VA_2 T_{22} \,^{t}(VA_2)
\end{pmatrix}.
\]
So surely the action of $A$ to $T$ gives the action of $A$ on $U$, $V$ given by $UA_1$ and $VA_2$. 
This means that 
\[
Q(AT\,^{t}A)=\rho_{n_1,\lambda}(A_1)\otimes \rho_{n_2,\lambda}(A_2)Q(T).
\]
Finally we see the pluri-harmonicity.
Let $X$ and $Y$ be $n_1\times 2k$ and $n_2\times 2k$ matrices,  
respectively.
We put  
\[
\widetilde{Q}(X,Y)=Q\begin{pmatrix} X\,^{t}X & X\,^{t}Y \\ Y\,^{t}X & Y\,^{t}Y \end{pmatrix}
\]
and we must show that $\widetilde{Q}$ is pluri-harmonic for each $X$ and $Y$.
As before, for $m\times 2k$ matrices $X_1$ and $X_2$, we write 
\[
\widetilde{P}_{k,k+l}(X_1,X_2)=P_{k,k+l}\begin{pmatrix} X_1\,^{t}X_1 & X_1\,^{t}X_2 \\
X_2\,^{t}X_1 & X_2\,^{t}X_2 \end{pmatrix}.
\]
Then we have $\widetilde{Q}(X,Y)=\widetilde{P}_{k,k+l}(UX, VY)$. 
We write $U=(u_{ij})$, $V=(v_{ij})$ and put 
\[
(\xi_{i\mu})_{1\leq i\leq m,1\leq \mu\leq 2k}=UX, \quad (\eta_{i\nu})_{1\leq i\leq m,1\leq \mu \leq 2k}=VY.
\]
Then we have 
\[
\xi_{i\mu}=\sum_{l=1}^{2k}u_{il}x_{l\mu}.
\]
So we have 
\begin{align*}
\frac{\p \widetilde{Q}(X,Y)}{\p x_{l\mu}} & =\sum_{i=1}^{m}u_{il}\frac{\p \widetilde{P}_{k,k+l}}{\p \xi_{i\mu}}(UX,VY), \\
\frac{\p^2 \widetilde{Q}(X,Y)}{\p x_{l\mu}\p x_{t\mu}} & = \sum_{i,j=1}^{m}u_{il}u_{jt}
\frac{\p^2 \widetilde{P}_{k,k+l}}{\p\xi_{i\mu}\p\xi_{j\mu}}
(UX,VY).
\end{align*}
So for any $l$, $t$ with $1\leq t,l\leq n_1$, we have 
\[
\sum_{\mu=1}^{2k}\frac{\p^2 \widetilde{Q}(X,Y)}{\p x_{l\mu}\p x_{t\mu}}=
\sum_{i,j=1}^{m}u_{il}u_{jt}\sum_{\mu=1}^{2k}\frac{\p^2 \widetilde{P}_{k,k+l}}{\p\xi_{i\mu}\p\xi_{j\mu}}
(UX,VY).
\]
The last expression is $0$ by the pluri-harmonicity of $P_{k,k+l}$. 
In the same way, we can show that $\widetilde{Q}(X,Y)$ is pluri-harmonic also for $Y$.
\end{proof}

\begin{lemma}\label{repspace}
Let $n_1$, $m$ be integers such that $1\leq m \leq n_1$. Let $U$ be an $m\times n_1$ matrix of variables. 
Let $Q(U)$ be a (scalar valued) polynomial in the components of $U$ such that 
$Q(BU)=\det(B)^{l}Q(U)$ for any $B\in \GL_m(\CC)$. Then 
$Q(U)$ is a linear combination of  products $\prod_{i=1}^{l}U_{I_i}$, where 
$I_i\subset \{1,\ldots,n_1\}$ with $|I_i|=m$ and $U_{I_i}$ is the $m\times m$ minor consisting of 
$p_{\nu}$-th columns 
for $p_{\nu}\in I_{i}$. 
\end{lemma}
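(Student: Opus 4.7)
The plan is to deduce this from the first fundamental theorem of classical invariant theory for $SL_m$ acting on the space $\mathrm{Mat}_{m\times n_1}(\CC)$ by left multiplication. Since $Q(BU)=\det(B)^{l}Q(U)$, restricting $B$ to $SL_m(\CC)$ shows that $Q$ is an $SL_m$-invariant polynomial on $\mathrm{Mat}_{m\times n_1}$. By the first fundamental theorem (Weyl, \emph{The Classical Groups}), the ring of such invariants is generated by the maximal minors $U_I$ for $I \subset \{1,\ldots,n_1\}$ with $|I|=m$. Hence
\[
Q(U) = F\bigl((U_I)_{|I|=m}\bigr)
\]
for some polynomial $F$ in the $\binom{n_1}{m}$ minors.

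The second step is to pin down the degree of $F$ using the full $GL_m$-weight. Take $B = tI_m$ with $t\in\CC^{\times}$. Then $U_I(BU) = t^m U_I(U)$, while the weight condition forces $Q(tU) = t^{ml} Q(U)$. If we write $F$ as a sum of monomials in the minors and let $d$ denote the number of minor-factors in a given monomial, that monomial scales by $t^{md}$ under $U\mapsto tU$. Comparing with $t^{ml}$ and matching homogeneous components shows every monomial appearing in $F$ must have exactly $d=l$ factors. Therefore
\[
Q(U) = \sum_{(I_1,\ldots,I_l)} c_{I_1,\ldots,I_l}\prod_{i=1}^{l} U_{I_i}
\]
for scalars $c_{I_1,\ldots,I_l}\in\CC$, which is the claimed expression. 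A quick consistency check: for any $B\in GL_m(\CC)$ one has $U_I(BU)=\det(B)\,U_I(U)$, so each such product already transforms by $\det(B)^l$, confirming that the span is exactly the correct weight space.

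The main ingredient is the first fundamental theorem for $SL_m$, which is the only non-elementary input; everything else is a homogeneity/degree count. If one wishes to avoid citing classical invariant theory as a black box, the one obstacle is giving a self-contained proof that the $SL_m$-invariants on $\mathrm{Mat}_{m\times n_1}$ are generated by the maximal minors. This can be done by a standard Capelli-type or polarization argument, or alternatively by decomposing $\CC[\mathrm{Mat}_{m\times n_1}]$ as a $GL_m\times GL_{n_1}$-module via the $(GL_m,GL_{n_1})$-duality and picking out the $\det^l\boxtimes(\det^l\text{-isotypic})$ component, which is spanned precisely by degree-$l$ monomials in the $U_I$'s. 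Either route is routine once the reduction to invariant theory has been made.
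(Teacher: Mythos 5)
Your proof is correct but takes a genuinely different route from the paper's. You restrict to $SL_m$, invoke the first fundamental theorem of invariant theory to see that $Q$ is a polynomial $F$ in the maximal minors $U_I$, and then a one-line scaling argument with $B=tI_m$ matches homogeneous degrees to force every monomial of $F$ to have exactly $l$ minor factors (this step is airtight: $F_d((U_I))$ is homogeneous of degree $md$ in the entries of $U$, so graded pieces separate and $Q=F_l((U_I))$). The paper instead proceeds computationally: it treats $B$ as a matrix of variables, applies the Cayley-type identity $\det(\partial/\partial B)\det(B)^l=(l)_m\det(B)^{l-1}$ to the relation $Q(BU)=\det(B)^lQ(U)$ a total of $l$ times, and directly identifies the resulting $ml$-th-order derivative of $Q(BU)$ as a linear combination of $l$-fold products of $m\times m$ minors of $U$, equal to a nonzero constant times $Q(U)$. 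Your route trades the explicit Cayley computation for the FFT for $SL_m$ as a black box; it is more conceptual and shorter once that theorem is granted, whereas the paper's argument is essentially constructive and stays at the level of elementary differential identities (citing only \cite{cayley}), which fits the surrounding material where the same Cayley identity is reused. Either proof is acceptable; you correctly flag that a self-contained proof of the FFT would be the only nontrivial addition if one wanted to avoid the citation.
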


\begin{proof}
We regard $B=(b_{ij})_{1\leq i,j\leq m}$ as a matrix of variables and define a matrix of operators by 
\[
\frac{\p}{\p B}=\left(\frac{\p}{\p b_{ij}}\right)_{1\leq i,j\leq m}. 
\]
We consider 
\[
\det\left(\frac{\p}{\p B}\right)=\sum_{\sigma\in S_m}sgn(\sigma)\frac{\p}{\p b_{1\sigma(1)}}
\cdots \frac{\p}{\p b_{m\sigma(m)}},
\]
where $S_m$ is the permutation group on $m$ letters.
By Cayley type identity (\cite{cayley}), we have 
\[
\det\left(\frac{\p}{\p B}\right)\det(B)^l=(l)_m \det(B)^{l-1},
\]
where $(x)_m=x(x+1)\cdots (x+m-1)$ is the ascending Pochhammer symbol, 
so by the assumption $Q(BU)=\det(B)^lQ(U)$, we have 
\[
\det\left(\frac{\p}{\p B}\right)Q(BU)=(l)_m\det(B)^{l-1}Q(U).
\]
Repeating this process, we have 
\[
\det\left(\frac{\p}{\p B}\right)^lQ(UB)=(l)_m(l-1)_m\cdots(1)_mQ(U).
\]
On the other hand, writing $U=(u_{i\nu})$ and $BU=(v_{i\nu})$. we have 
$v_{i\nu}=\sum_{p=1}^{m}b_{ip}u_{p\nu}$ and 
\begin{align*}
\frac{\p}{\p b_{i\sigma(i)}}(Q(BU)) & 
=\sum_{j=1}^{m}\sum_{\nu=1}^{n_1}\frac{\p v_{j\nu}}{\p b_{i\sigma(i)}}\frac{\p Q}{\p v_{j\nu}}(BU)
 = \sum_{\nu=1}^{n_1}u_{\sigma(i)\nu}\frac{\p Q}{\p v_{i\nu}}(BU).
\end{align*}
So we have 
\begin{multline*}
\det \left(\frac{\p}{\p B}\right)Q(BU)  =
\sum_{\nu_1,\ldots,\nu_m=1}^{n_1}
\left(\sum_{\sigma\in S_m}
sgn(\sigma)u_{\sigma(1)\nu_1}\cdots u_{\sigma(m)\nu_m}\right)
\frac{\p^m Q}{\p v_{1\nu_1}\cdots \p v_{m\nu_m}}(BU). 
\end{multline*}
Here if we fix $\nu_1$, \ldots, $\nu_m$, then we have 
\[
\sum_{\sigma\in S_m}sgn(\sigma)u_{\sigma(1)\nu_1}\cdots u_{\sigma(m)\nu_m}
=
\begin{vmatrix}
u_{1\nu_1} & \cdots & u_{1\nu_{m}} \\
\vdots & \ddots & \vdots \\
u_{m\nu_1} & \cdots & u_{m\nu_{m}}
\end{vmatrix}.
\]
If $\nu_i=\nu_j$ for some $i\neq j$, then of course this is $0$ and 
if the cardinality $|I|$ of $I=\{\nu_1,\ldots,\nu_m\}$ is $m$, then this is 
$U_I$ up to sign. 
By taking $B$ to be scalar, we see that $Q(U)$ is a homogeneous polynomial of 
the total degree $ml$. so the $ml$-th derivatives of $Q(U)$ is a constant. 
So we see that 
\[
\det\left(\frac{\p}{\p B}\right)^l Q(BU)
\]
is a linear combination of $l$ products of 
$m\times m$ minors of $U$. 
Since this is equal to $(l)_m(l-1)_{m-1}\cdots (1)_{m}Q(U)$, 
we see that $Q(U)$ is a linear combination of $l$ products of minors of degree $m$
of $U$.
\end{proof}
\begin{remark} \label{rem.cuspidality-of-diff-op}
By Proposition \ref{prop.generalpol}, the operator $\mathrm{Res} \ \DD_{\lambda,n_1,n_2}$ sends $M_k(\varGamma^{(n_1+n_2)})$ to $M_{\det^k \rho_{n_1,\lambda}}(\varGamma^{(n_1)})  \otimes M_{\det^k \rho_{n_2,\lambda}}(\varGamma^{(n_2)})$. Moreover, if $l>0$ and $n_1=m$,  by the property of $P_{k,k+l}(\partial_Z)$, for $A=\begin{pmatrix} A_1 & R/2 \\ {}^t R/2 & A_2 \end{pmatrix} \in \calh_{m+n_2}(\ZZ)_{\ge 0}$ with $A_1 \in \calh_{m}(\ZZ), A_2 \in \calh_{n_2}(\ZZ)$ and $R \in M_{m,n_2}(\ZZ)$, we have
$\DD_{\lambda,m,n_2}({\bf e}(\mathrm{tr}(AZ))=0$
 unless $A_1>0$. Hence we have 
$(\mathrm{Res} \ \DD_{\lambda,m,n_2})(M_k(\varGamma^{(m+n_2)})) \subset S_{\det^k \rho_{m,\lambda} }(\varGamma^{(m)}) \otimes M_{\det^k \rho_{n_2,\lambda}}(\varGamma^{(n_2)})$.
\end{remark}
If we write 
\[
P_{k,k+l}(S)=P_{k,k+l}\begin{pmatrix} S_{11} & S_{12} \\
^{t}S_{12} & S_{22} \end{pmatrix} 
\]
for $m\times m$ matrices $S_{ij}$, then by definition,  for $B_i\in \GL_m(\CC)$,  we have 
\[
P_{k,k+l}\begin{pmatrix} B_1S_{11}\,^{t}B_1 & B_1S_{12}\,^{t}B_2 \\
B_2\,^{t}S_{12}B_1 & B_2S_{22}\,^{t}B_2 \end{pmatrix}
=
\det(B_1B_2)^{l}P_{k,k+l}(S).
\]
So 
\[
P_{k,k+l}\begin{pmatrix} (B_1U)T_{11}\,^{t}(B_1U) & (B_1U)T_{12}\,^{t}(B_2V) \\
(B_2V)\,^{t}T_{12}\,^{t}(B_1U) & (B_2V)T_{22}\,^{t}(B_2V) \end{pmatrix}
=\det(B_1B_2)^l 
P_{k,k+l}\begin{pmatrix} UT_{11}\,^{t}U & UT_{12}\,^{t}V \\
V\,^{t}T_{12}\,^{t}U & VT_{22}\,^{t}V
\end{pmatrix}.
\]
So applying  Lemma \ref{repspace}, we see that $Q(T)$ is in the representation space of 
$\rho_{n_1,\lambda}\otimes \rho_{n_2,\lambda}$.

Now we apply this for concrete cases.
For $m=1$, the polynomial $P_{k,k+l}$ is essentially the (homogeneous) Gegenbauer polynomial of degree $l$.
Based on these facts and Lemma \ref{prop.generalpol}, 
this case gives differential operators for $n=n_1+n_2$
and $\lambda$ with depth $1$, that is, the case $\lambda=(l,0,\ldots,0)$. 
The corresponding representation is the symmetric tensor representation $Sym(l)$ 
of degree $l$ so $\DD$ is from weight $k$ to $\det^k Sym(l) \otimes \det^k Sym(l)$. 
An explicit generating function of such operators for general $n=n_1+n_2$ has been already given in 
\cite[p.\ 113--114]{Ibukiyama99}.

Here we give the depth $2$ case with $\lambda=(l,l,0\ldots,0)$. 
This means $m=2$ and an explicit  generating function of $P_{k,k+l}(S)$ for $l\geq 0$ is given in 
\cite[p.\ 114]{Ibukiyama99} explicitly, where $S$ is a $4\times 4$ symmetric matrix.
Polynomials for general $n=n_1+n_2$ for $\lambda=(l,l,0,\ldots,0)$  based on Proposition 
\ref{prop.generalpol}
are given as follows.
For a $4\times 4$ symmetric matrix $S=\begin{pmatrix} S_{11} & S_{12} \\ 
^{t}S_{12} & S_{22} \end{pmatrix}$ with  $2\times 2$ matrices $S_{ij}$ of variables
for $i$, $j=1$, $2$, we put 
\begin{align*}
f_1(S) & =\det(S_{12}), \\
f_2(S) & = \det(S_{11})\det(S_{22}),  \\
f_3(S) & = \det(S).
\end{align*}
For an indeterminate $t$, we put 
\begin{align*}
\Delta_0(S,t)& =1-2f_1(S)t+f_2(S)t^2, \\
R(S,t) & = (\Delta_0(S,t)+\sqrt{\Delta_0(S,t)-4f_3(S)t^2})/2.
\end{align*}
Then for each $l$, we define a polynomial $Q_l(T,U,V)=Q_{l,n_1,n_2}(T,U,V)$ by the following generating function.
\[
\frac{1}{R(\Bbb U T\,^{t}\Bbb U,t)^{k-5/2}\sqrt{\Delta_0(\Bbb U T\,^{t}\Bbb U,t)-4f_3(\Bbb U T\,^{t}\Bbb U)t^2}}
=
\sum_{l=0}^{\infty}Q_l(T,U,V)t^{l}.
\]
Here $Q_l$ is a non-zero polynomial.
For $Z \in \HH_n$, we put  
\begin{equation}\label{defdiff}
\DD_l=\DD_{l,n_1,n_2}=Q_{l,n_1,n_2}(\p_Z,U,V).
\end{equation}
Then $\DD_l$ is a differential operator satisfying Condition \ref{condition}
for $k$ and $\lambda=(l,l,0,\ldots,0)$, where the representation space is 
realized by bideterminants as we explained. When $2k \geq n$, such differential operator 
$\DD_l$ is unique up to constant.

Actually, the generating series is easily expanded by a well-known formula, and more explicitly 
we have the following formula,

\begin{lemma}\label{lem.formulaQl}
The polynomials $f_i$ being the same as above, we put 
\[
F_i(T,U,V)=f_i(\Bbb U T ^{t}\Bbb U)
\]
for $i=1$, $2$, $3$. Then we have 
\begin{align*}
& Q_l(T,U,V)= \\
& \sum_{\begin{subarray}{c} 0\leq a,b,c \\ a+2b+2c=l \end{subarray}}
\frac{(-1)^b2^a}{a!b!c!}\bigl(k+c-\frac{3}{2}\bigr)_{a+b+c}F_1(T,U,V)^a F_2(T,U,V)^b F_3(T,U,V)^c. 
\end{align*}
\end{lemma}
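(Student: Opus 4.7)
The plan is to verify the formula by summing the proposed expression as a formal power series in $t$ and matching it with the defining generating series of $Q_l$. Writing $F_i=f_i(\Bbb U T\,^{t}\Bbb U)$ for brevity, the whole argument reduces to two standard tools: the binomial theorem $\sum_{n\ge 0}\frac{(\beta)_n}{n!}z^n=(1-z)^{-\beta}$, and the Catalan-type identity
\[
\sum_{c\ge 0}\binom{\alpha+2c}{c}w^c=\frac{2^{\alpha}}{(1+\sqrt{1-4w})^{\alpha}\sqrt{1-4w}}.
\]

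The algebraic engine is the Pochhammer splitting
\[
\bigl(k+c-\tfrac{3}{2}\bigr)_{a+b+c}=\bigl(k+c-\tfrac{3}{2}\bigr)_{c}\cdot\bigl(k+2c-\tfrac{3}{2}\bigr)_{b}\cdot\bigl(k+b+2c-\tfrac{3}{2}\bigr)_{a},
\]
which uncouples the three summations. I would perform the $a$-sum first to obtain $(1-2F_1 t)^{-(k+b+2c-3/2)}$; the subsequent $b$-sum yields $\bigl(1+\tfrac{F_2 t^2}{1-2F_1 t}\bigr)^{-(k+2c-3/2)}$, which combines with the leftover factor $(1-2F_1 t)^{-(k+2c-3/2)}$ to collapse cleanly to $\Delta_0^{-(k+2c-3/2)}$. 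Rewriting $(k+c-3/2)_c/c!=\binom{k-5/2+2c}{c}$, the residual $c$-sum reads
\[
\Delta_0^{-(k-3/2)}\sum_{c\ge 0}\binom{k-5/2+2c}{c}w^c,\qquad w=\frac{F_3 t^2}{\Delta_0^2}.
\]

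To finish I would apply the Catalan-type identity with $\alpha=k-5/2$ and use the relations $\sqrt{1-4w}=\Delta_0^{-1}\sqrt{\Delta_0^2-4F_3 t^2}$ and $1+\sqrt{1-4w}=2R/\Delta_0$. The exponents of $\Delta_0$ contributed by the three factors total $-(k-3/2)+(k-5/2)+1=0$, so what remains is exactly $R^{-(k-5/2)}(\Delta_0^2-4F_3 t^2)^{-1/2}$, matching the defining generating series for $Q_l$. Direct checks at $l=1,2$ confirm the identity, and in particular fix the convention that the radicand in $R$ and in the denominator of the generating series is read as $\Delta_0^2-4f_3 t^2$.

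The main obstacle is verifying the Catalan-type identity for the non-integer parameter $\alpha=k-5/2$. The cleanest route is the substitution $w=v(1-v)$, under which $\sqrt{1-4w}=1-2v$ and the right-hand side becomes $2^{-\alpha}(1-v)^{-\alpha}(1-2v)^{-1}$; applying Lagrange inversion to $v=w\,(1-v)^{-1}$ one extracts $[w^c]$ explicitly and matches it against $\binom{\alpha+2c}{c}$. Alternatively, one can observe that both sides, as functions of $\alpha$ with $w$ fixed and $|w|<1/4$, are analytic and agree for all nonnegative integers $\alpha$ by the classical Catalan identity, hence for all $\alpha$. All remaining steps are routine Pochhammer and binomial-series bookkeeping.
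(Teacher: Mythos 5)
Your proof is correct. The paper does not actually supply an argument for this lemma; it simply remarks that ``the generating series is easily expanded by a well-known formula,'' so your derivation fills a real gap rather than duplicating one. The computation you propose is exactly what that remark has in mind: the Pochhammer splitting $(k+c-\tfrac{3}{2})_{a+b+c}=(k+c-\tfrac{3}{2})_c\,(k+2c-\tfrac{3}{2})_b\,(k+b+2c-\tfrac{3}{2})_a$ decouples the three sums, the $a$- and $b$-sums telescope into $\Delta_0^{-(k+2c-3/2)}$ by the binomial series, and the $c$-sum is the Catalan-type series $\sum_c\binom{\alpha+2c}{c}w^c=2^{\alpha}(1+\sqrt{1-4w})^{-\alpha}(1-4w)^{-1/2}$ with $\alpha=k-5/2$. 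Your Lagrange-inversion verification of that identity for non-integer $\alpha$ (via $F(v)=(1-v)^{-\alpha-1}$, $\phi(v)=(1-v)^{-1}$, $1-w\phi'=(1-2v)/(1-v)$) is the clean way to handle the half-integer parameter; the analytic-continuation remark also works. The exponent bookkeeping $-(k-\tfrac{3}{2})+(k-\tfrac{5}{2})+1=0$ is right, and both sides collapse to $R^{-(k-5/2)}(\Delta_0^2-4F_3t^2)^{-1/2}$.

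One point worth stressing: you correctly identified that the radicand appearing in the definitions of $R(S,t)$ and of the generating series should read $\Delta_0(S,t)^2-4f_3(S)t^2$, not $\Delta_0(S,t)-4f_3(S)t^2$ as printed. Your low-order checks are decisive: with $\Delta_0^2-4f_3t^2$ the $t^1$ coefficient comes out as $(2k-3)F_1=2(k-\tfrac{3}{2})F_1$ and the $t^2$ coefficient as $2(k-\tfrac{3}{2})(k-\tfrac{1}{2})F_1^2-(k-\tfrac{3}{2})F_2+(k-\tfrac{1}{2})F_3$, exactly matching the stated $Q_1$ and $Q_2$, whereas the printed radicand already fails at $l=1$. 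So your correction is indeed the intended convention (consistent also with the Gegenbauer-type generating function in the cited source), and your argument establishes the lemma as stated.
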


\subsection{Weak pullback formula}
Let $n_1$, $n_2$ be positive integers such that $n_1 \leq n_2$. 
Let $\lambda$ be a dominant integral weight such that $\depth(\lambda)\leq n_1$. 
We consider a differential operator $\DD_{\lambda}=\DD_{\lambda,n_1,n_2}$ on $\HH_{n_1+n_2}$ satisfying Condition \ref{condition}
for $k$ and $\det^k\rho_{n_1,\lambda}\otimes \det^k\rho_{n_2,\lambda}$.
For an integer $r$ such that $\depth(\lambda) \le r$, we put $\rho_r=\det^k \rho_{r,\lambda}$. 
For a Hecke eigenform $f \in S_{\rho_r}(\varGamma^{(r)})$ we define $D(s,f)$ as
\[
D(s,f)=\zeta(s)^{-1}\prod_{i=1}^{r}\zeta(2s-2i)^{-1}
L(s-r,f,\St).
\]
For any polynomial $Q(U)$ with complex coefficients,  
we denote by $\overline{Q}(U)=\overline{Q(U)}$ the polynomial obtained by changing the 
coefficients of $Q(U)$ by the complex conjugates. 
For any function $f(Z)$, we write 
$(\theta f)(Z)=\overline{f(-\overline{Z})}$. This means that if $f(z)$ is a Fourier series of the following form
\[
f(z)=\sum_{T}a(T){\bf e}(\mathrm{tr}(TZ))
\]
 with $a(T)=a(T)(U)$ a polynomial in $U$, then we have 
\[
(\theta f)(Z)=\sum_{T}\overline{a(T)}{\bf e}(\mathrm{tr}(TZ)).
\]
So if we take $a(T)$ to be real, (which is possible), we just have $\theta f=f$.

 The next theorem is (a pullback formula) essentially due to Kozima \cite{Kozima20}.
\begin{theorem}\label{th.pullback}
Let $\lambda=(l,\ldots,l,0,\ldots,0),n_1,n_2,k$ and $\DD_{\lambda,n_1,n_2}$  be 
those in Proposition \ref{prop.generalpol}.\
Besides we assume  that $k$ is even and $n_2 \ge n_1$. Let $s \in \CC$ such that  $2\mathrm{Re}(s)+k >n_1+n_2+1$. Then for any Hecke eigenform $f \in S_{\rho_{n_1}}(\varGamma^{(n_1)})$ we have 
\begin{align*}
& \Bigl( f, \DD_{\lambda,n_1,n_2}E_{n_1+n_2,k}\Bigl(\begin{pmatrix} * & O \\ O &-\bar W \end{pmatrix},\bar s\Bigr)\Bigr) 
=c(s,\rho_{n_1})D(2s+k,f)[f]_{\rho_{n_1}}^{\rho_{n_2}}(W,s),
\end{align*}
where $c(s,\rho_{n_1})$ is a function of $s$ depending on $\rho_{n_1}$ but not on $n_2$.
\end{theorem}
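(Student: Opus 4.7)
The plan is to adapt Kozima's pullback computation \cite{Kozima20} to the special partition weight $\lambda=(l,\ldots,l,0,\ldots,0)$ of depth $m$ and the explicit operator $\DD_{\lambda,p,q}$ built in Proposition \ref{prop.generalpol}. Since that operator is constructed by substituting $\Bbb U\,T\,{}^{t}\Bbb U$ with $\Bbb U=\mathrm{diag}(U,V)$ into the scalar operator $P_{k,k+l}$ already treated by Kozima, most of his argument transfers; the real task is to track how the block-diagonal structure of $\Bbb U$ produces the claimed decoupling into a Klingen Eisenstein section in $W$ times a scalar depending only on $s$ and $\rho_q$.

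The first step is to apply $\DD_{\lambda,p,q}$ termwise to $E_{p+q,k}$ and restrict to $\HH_p\times\HH_q$; by Condition \ref{condition}, the result transforms in $Z_1\in\HH_p$ with weight $\det^k\rho_{p,\lambda}$ and in $Z_2=W\in\HH_q$ with weight $\det^k\rho_{q,\lambda}$. I would then unfold the Eisenstein series via the Garrett--B\"ocherer--Shimura double coset decomposition
\[
\varGamma^{(p+q)}_{\infty}\backslash \varGamma^{(p+q)}/(\varGamma^{(p)}\times \varGamma^{(q)}),
\]
and use the orthogonality of cusp forms to kill every orbit except the open one whose stabilizer is $\varGamma^{(p)}\times\varGamma^{(q)}_{\infty}$. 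This open orbit reassembles, up to local factors over the finite primes, into $[f]^{\rho_p}_{\rho_q}(W,s)$. The standard non-archimedean computation at each prime $p$ is unaffected by $\DD_{\lambda,p,q}$ and yields the unramified Euler factors assembling into $D(2s+k,f)$, exactly as in Kozima's scalar case.

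The key step is the archimedean integral
\[
\int_{\varGamma^{(q)}\backslash\HH_q}\bigl\langle \rho_q(\sqrt{Y_2})f(W),\rho_q(\sqrt{Y_2})\DD_{\lambda,p,q}E_{p+q,k}\bigr\rangle \det(Y_2)^{-q-1}\,dZ_2.
\]
Because $\Bbb U$ is block-diagonal, the polynomial $P_{k,k+l}(\Bbb U\,T\,{}^{t}\Bbb U)$ splits: the $V$-block acts only on entries of $T$ pairing the $Z_2$-variable with itself, while the $U$-block and the off-diagonal entries produce the Klingen section in $Z_1$. Pluri-harmonicity of $\widetilde{P}_{k,k+l}$ (the hypothesis in Proposition \ref{prop.generalpol}(1)) is what allows one to carry the differentiation through the $\det^{-s-k/2}$-type kernel of the archimedean Eisenstein integral and arrive at a clean Gamma-factor ratio, exactly as in Shimura's formula for iterated derivatives of Eisenstein series. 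The explicit generating function in Lemma \ref{lem.formulaQl} can be used to make the $V$-contribution a manifestly scalar quantity $c(s,\rho_q)$ depending only on $s$, $k$, $l$ and $q$.

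The main obstacle is verifying the independence of $c(s,\rho_q)$ from $p$. This is forced by the bideterminant realization of $V_{p,\lambda}\otimes V_{q,\lambda}$ provided by Lemma \ref{repspace} together with the block-diagonal shape of $\Bbb U$: the $U$-part contributes purely to the $V_{p,\lambda}$ factor and is already absorbed into the Klingen Eisenstein series $[f]^{\rho_p}_{\rho_q}(W,s)$, while the $V$-part, which generates the archimedean constant, sees only the $q\times q$ block. The argument thus reduces to a finite combinatorial identity for Gamma factors, which is precisely the identity Kozima establishes in \cite{Kozima20}; one imports it directly once the separation of variables has been set up.
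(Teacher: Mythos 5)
Your proposal is correct and follows essentially the same route the paper takes: the paper states Theorem \ref{th.pullback} with a remark citing Kozima (\cite{Kozima08} for $k>p+q+1$, $s=0$, and \cite{Kozima20} for general $s$), and your sketch reproduces exactly Kozima's unfolding method — the Garrett–B\"ocherer double coset decomposition, orthogonality of cusp forms to isolate the open orbit, the unramified Euler product assembling into $D(2s+k,f)$, and the archimedean integral giving the constant. Your explanation of why $c(s,\rho_q)$ is independent of $p$ (the $U$-block is absorbed into the Klingen section while the archimedean scalar sees only the $q\times q$ $V$-block) is the right heuristic; the paper confirms this independence indirectly later when it pins $c(s,\rho_m)$ down by comparing with the $p=q=m$ case via B\"ocherer--Schmidt, a step you do not need for the statement of the theorem itself.
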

\begin{remark}
This type of formula has been proved in the case $k >n_1+n_2+1$ and $s=0$ in \cite{Kozima08} in more general setting, and it can also be generalized in the case $s \not=0$ using the same method as in \cite{Kozima08} (cf. \cite{Kozima20}).
Kozima \cite{Kozima20} gave an abstract pullback formula for general $\lambda$ assuming that 
$P$ in Condition 1 is realized in his special way. The existence of $P$ satisfying Condition 1 
itself has been known in \cite{Ibukiyama99}. For further development
on realization of $P$ and exact pullback formula, see \cite{Ibukiyamalaplace}.
 \end{remark}
Now we prove Proposition \ref{prop.holomorphy-Klingen-Eisenstein} (2), that is, we prove the following statement:

\bigskip
{\it Let ${\bf k}=(\overbrace{k+l,\ldots,k+l}^m,\overbrace{k,\ldots,k}^{n-m})$ such that $l \ge 0$ 
and $k > 3m/2+1$ and 
let $f$ be a Hecke eigenform in $S_{k+l}(\varGamma^{(m)})$. 
Then $[f]^{{\bf k}}(Z,s)$ can be continued meromorphically to the whole $s$-plane as a function of $s$, and holomorphic at $s=0$. Moreover suppose that  
 $k > (n+m+3)/2$.
Then $[f]^{{\bf k}}(Z)$ belongs to $M_{{\bf k}}(\varGamma^{(n)})$.}
\begin{proof} Suppose that $l >0$.
Let $\lambda=(\overbrace{l,\ldots,l}^m,0,\ldots,0)$. Then for any $n_2 \ge m$ we have
\begin{align*}
& \Bigl( f, \DD_{\lambda,m,n_2}E_{m+n_2,k}\Bigl(\begin{pmatrix} * & O \\ O &-\bar W \end{pmatrix},\bar s\Bigr)\Bigr) 
=c(s,\rho_m)D(2s+k,f)[f]_{\rho_m}^{\rho_{n_2}}(W,s).
\end{align*}
In particular, 
\begin{align*}
& \Bigl(f, \DD_{\lambda,m,m}E_{2m,k}\Bigl(\begin{pmatrix} * & O \\ O &-\bar W \end{pmatrix},\bar s\Bigr)\Bigr) 
=c(s,\rho_m)D(2s+k,f)f(W).
\end{align*}
We claim that $c(s,\rho_m)$ is a meromorphic function of $s$, and holomorphic and non-zero at $s=0$.
We note that $\DD_{\lambda,m,m}$ coincides with the differential operator $\stackrel {\!\!\!\!\!\! \circ} {\frkD_{m,k}^{l}}$
in \cite[(1.14)]{Boecherer-Schmidt00} up to constant multiple not depending on $s$. 
Moreover,  by \cite[Theorem 3.1]{Boecherer-Schmidt00}, we have 
\[ \Bigl(f, \ \stackrel {\!\!\!\!\!\! \circ} {{\frkD}_{m,k}^{l}}E_{2m,k} \Bigl(\begin{pmatrix} * & O \\ O & -\overline{W}* \end{pmatrix},\bar s \Bigr) \Bigr)=\Omega_{k+l,l}(s) D(k+2s,f)f(W),\]
where  
\[\Omega_{k+l,l}(s)=(-1)^{{m(k+l) \over 2}}2^{-m(k+l)+{mmr+3) \over 2}-2ms}\pi^{{m(m+1) \over 2}}{\Gamma_m(k+l+s-{m \over 2})\Gamma_m(k+l-{m+1 \over 2}) \over \Gamma_m(k+s)\Gamma_m(k+s-{m \over 2})}.\]
(There is a minor misprint in \cite{Boecherer-Schmidt00}. On page 1339, line 9, ``$2^{1+n(n+1)/2-2ns}$'' should be ``$2^{1-2l+n(n+3)/2-2ns}$''.)
Therefore $c(s,\rho_m)$ coincides with $\Omega_{k+l,l}(s)$ up to constant multiple. Hence, $c(s,\rho_m)$ is a meromorphic function of $s$, and holomorphic and non-zero at $s=0$.

We have 
\begin{align*}
& \Bigl(f, \DD_{\lambda,m,n}E_{m+n,k}\Bigl(\begin{pmatrix} * & O \\ O &-\bar W \end{pmatrix},\bar s\Bigr)\Bigr) 
=c(s,\rho_m)D(2s+k,f)[f]^{{\bf k}}(W,s).
\end{align*}
As stated before, $E_{m+n,k} \Bigl(\begin{pmatrix} Z & O \\ O & {W} \end{pmatrix}, s \Bigr) $ can be continued meromorphically to the whole $s$-plane, and holomorphic at $s=0$, and therefore so is $\overline{\DD_{\lambda,m,n} E_{m+n,k} \Bigl(\begin{pmatrix} * & O \\ O & -\overline{W} \end{pmatrix},\bar s \Bigr)}$. Moreover, $D(2s+k,f)$ can be continued meromorphically to the whole $s$-plane. Moreover, since we have $k >3m/2+1$, by 
\cite[Theorem 21.3]{Shimura00}, it is holomorphic and non-zero at $s=0$. This proves the first part of the assertion.
Moreover, if $k  \ge (n+m+1)/2$, then by Proposition \ref{prop.holomorphy-Klingen-Eisenstein} (1), $\overline{ \DD_{\lambda,m,n}E_{m+n,k} \begin{pmatrix} * & O \\ O & -\overline{W} \end{pmatrix} }$ belongs to $M_{\bf k}(\varGamma^{(n)})$ as a function of $W$ except the cases $k=(n+m+2) \equiv 2 \text{ mod } 4$ and $k=(n+m+3)/2 \equiv \text{ mod } 4$. Since $c(0,\rho_m)D(k,f) \not=0$, this proves the second part of the assertion.
\end{proof}

We note that the constant $c_r$ in the pullback formula 
depends on two things. One is a 
definition of  the differential operator, and the other is 
a definition of the Petersson inner product  (P) in Section 2. First we fix a definition of the inner product.
For a while, we fix a dominant integral weight $\lambda=(l_1,\ldots,l_m,0,\ldots,0)$ of depth $m$.

For an integer $r$ such that  $m \leq r$, let   $U$ be an $m \times r$ matrix of variables and ${\bf k}_r'=(l_1,\ldots,l_m,\overbrace{0,\ldots,0}^{r-m})$, and we  take $V_{r,\lambda}=\CC[U]_{{\bf k}_r'}$ as the representation space of $\rho_{r,\lambda}$ as stated before. Here we make the convention that $\CC[U]_{{\bf k}_r'}=\CC$ if $m=0$.
We fix an inner product $\langle v,w \rangle$ of $V_{r,\lambda}$ such that 
\[
\langle\rho_{r,\lambda}(g)v,w\rangle=\langle v,\rho_{r,\lambda}
(^{t}\overline{g})w\rangle
\]
as in (H) in Section 2.
This relation is valid also for the representation 
$\rho_r=\det^k\rho_{r,\lambda}$, so we often use the 
same inner product for these. 
Now we must fix an inner product $\langle *,*\rangle$ of $V_{r,\lambda}$ explicitly.
Since we have $V_{m,\lambda}=\CC[U]_{{\bf k}_m'}$, an element of $V_{r,\lambda}$ is 
a polynomial in the components of an $m\times r$ matrix $U$ where the action of $\rho_{r,\lambda}$ is 
induced by $U\rightarrow UA$ for any $A \in \GL_r(\CC)$: 
$(\rho_{r,\lambda}(A)Q)(U)=Q(UA)$. 
For any $B\in \GL_r(\CC)$, 
we obviously have 
\[
\overline{Q(U\overline{B})}=\overline{Q}(UB).
\] 
Here RHS means to substitute the argument $U$ in $\overline{Q}(U)$ by $UB$
and LHS means to replace coefficients of $Q(U\overline{B})$ by complex conjugates. 
We put 
\[
\frac{\p}{\p U}=\left(\frac{\p}{\p u_{ij}}\right)_{1\leq i \leq m, 1 \leq j \leq r}.
\]
For two homogeneous polynomials $P(U)$ and $Q(U)$  of the same degree, we define  
\[
\langle P,Q \rangle_0=P\left(\frac{\p}{\p U}\right)\overline{Q}(U).
\]
Then we have 
\begin{equation}\label{innerproduct}
\langle\rho_{r,\lambda}(A)P,Q\rangle_0=\langle P,\rho_{r,\lambda}(^{t}\overline{A})Q\rangle_0.
\end{equation}
Indeed, if we put $V=UB$, then by the chain rule we have 
\[
\frac{\p}{\p U}=\frac{\p}{\p V}\,^{t}B,
\]
so if we put $A=\,^{t}B^{-1}$, then we have 
\[
P\left(\frac{\p}{\p U}A\right)=P\left(\frac{\p}{\p V}\right).
\]
and  
\begin{align*}
P\left(\frac{\p}{\p U}A\right)\overline{Q}(U)
& =P\left(\frac{\p}{\p V}\right)\overline{Q}(V\,^{t}A)
\\ & =P\left(\frac{\p}{\p V}\right)\overline{Q(V\,^{t}\overline{A})}.
\end{align*}
So \eqref{innerproduct} is proved.
Of course such an inner product is determined only up to constant, 
and there is no canonical choice, but we must fix something.
When $\rho_{r,\lambda}$ is scalar valued representation $\det^{l}$, 
if we define an inner product by 
$\langle P,Q\rangle_0/(l)_r(l-1)_r
\cdots (1)_r$\, then by the Cayley type identity \cite{cayley}, 
this just means to take a product of scalars, so 
\[
(f,g)=\int_{\Gamma^{(r)}\backslash \HH_r}f(Z)\overline{g(Z)}\det(\mathrm{Im}(Z))^{l-r-1}dZ.
\]
Then we have a weak type of the pullback formula. Let $k$ and $l$ be  non-negative integers. 
For the dominant integral $\lambda=(l,l,0,\ldots,0)$ of depth $m$ and integers  $n_1,n_2$ such that $2 \le n_1 \le n_2$, let $\rho_{n_1}=\det^k \rho_{n_1,\lambda}$ and $\rho_{n_2}=\det^k \rho_{n_2,\lambda}$  be the representations of $\GL_{n_1}(\CC)$ and $\GL_{n_2}(\CC)$, 
respectively, as above. We note that $m=0$ or $2$ according as $l=0$ or $l>0$. 
Moreover, let $\DD_{l,n_1,n_2}$ be the differential operator corresponding to the polynomial $Q_{l,n_1,n_2}$ in Lemma \ref{lem.formulaQl}.

\begin{theorem} \label{th.weak-pullback}
 Let the notation be as above. 
We define a subspace $\widetilde M_{\rho_{n_1}}(\varGamma^{(n_1)})$ of 
$M_{\rho_{n_1}}(\varGamma^{(n_1)})$  as 
\[\widetilde M_{\rho_{n_1}}(\varGamma^{(n_1)})=\{F \in M_{\rho_{n_1}}(\varGamma^{(n_1)}) \ | \  \Phi^{{n_1}}_{2}(F) \in S_{\rho_2}(\varGamma^{(2)}) \} \]
or $M_{\rho_{n_1}}(\varGamma^{(n_1)})$ according as  $l>0$ or $l=0$.
Let $\{f_{2,j} \}_{1 \le j \le d(2)}$ be a basis of $S_{\rho_2}(\varGamma^{(2)})$ consisting of Hecke eigenforms, and take Hecke eigenforms $\{F_j \}_{d(2)+1 \le j \le d}$ so that $\{[f_{2,j}]_{\rho_2}^{\rho_{n_1}} \ (1 \le j \le d(2)), \ F_j \ (d(2)+1 \le j \le d) \}$
forms a basis of $\widetilde M_{\rho_{n_1}}(\varGamma^{(n_1)})$. 
Suppose that $k \ge \max((n_1+n_2+1)/2,6)$ and that neither $k=(n_1+n_2+2)/2 \equiv  2 \text{ mod } 4$ nor $k=(n_1+n_2+3) \equiv 2 \text{ mod } 4$.
Then 
\begin{align*}
\DD_{l,n_1,n_2} E_{n_1+n_2,k}\begin{pmatrix} Z & O \\ O & W \end{pmatrix}
& = c(0,\rho_2)\sum_{j=1}^{d(2)} { D(k,f_{2,j})  \over (f_{2,j}, \ f_{2,j})} [f_{2,j}]_{\rho_2}^{\rho_{n_1}}(Z)(U)  [ \theta f_{2,j}]_{\rho_2}^{\rho_{n_2}}(W)(V) \\
& \quad +\sum_{j=d(2)+1} ^d F_j(Z)(U)G_j(W)(V) \qquad (Z \in \HH_{n_1}, W \in \HH_{n_2}), \end{align*}
where   $G_j$ is a certain element of $M_{\rho_{n_2}}(\varGamma^{(n_2)})$.
Here, $U$ and $V$ are  $m \times n_1$ and $m \times n_2$ matrices  of variables, respectively, and  we regard $[f_{2,j}]_{\rho_2}^{\rho_{n_1}}$ and $F_j$ (resp. $[\theta f_{2,j}]_{\rho_2}^{\rho_{n_2}}$ and $G_j$)  as 
elements of  $\mathrm{Hol}[U]_{{\bf k}_{n_1}'}$ (resp. $\mathrm{Hol}[V]_{{\bf k}_{n_2}'}$).  Moreover we have 
\[c(0,\rho_2)=\frac{2^{9-2(k+2l)}(-1)^{k+l}\pi^3 (2k-3)_l(2k-1)_{2l-3}}
{l!}.
\]
 \end{theorem}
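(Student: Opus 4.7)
The plan is to expand $\DD_{l,n_1,n_2}E_{n_1+n_2,k}(\mathrm{diag}(Z,W))$ against the chosen basis on the $Z$-side, obtaining unique coefficients $G_j(W)\in M_{\rho_{n_2}}(\varGamma^{(n_2)})$, and then identify those $G_j$ attached to the Klingen--Eisenstein lifts via Theorem~\ref{th.pullback}. By Proposition~\ref{prop.holomorphy-Klingen-Eisenstein}(1) and the hypotheses on $k$, the series $E_{n_1+n_2,k}$ is a holomorphic Siegel modular form, and Condition~\ref{condition} then places the restriction of $\DD_{l,n_1,n_2}E_{n_1+n_2,k}$ to $\HH_{n_1}\times\HH_{n_2}$ inside $M_{\rho_{n_1}}(\varGamma^{(n_1)})\otimes M_{\rho_{n_2}}(\varGamma^{(n_2)})$. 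A short check on Fourier coefficients shows that its $Z$-projection lies in $\widetilde M_{\rho_{n_1}}(\varGamma^{(n_1)})$, yielding the desired expansion.

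To pin down $G_i(W)$ for $i\le d(2)$, I would apply the Siegel $\Phi$-operator $\Phi_2^{n_1}$ in the $Z$-variable and pair the result against $f_{2,i}$ on $\HH_2$. In the convergent range one has $\Phi_2^{n_1}[f_{2,j}]_{\rho_2}^{\rho_{n_1}}=f_{2,j}$, while after subtracting Klingen components, the remaining basis vectors $F_j$ can be chosen to be cusp forms on $\HH_{n_1}$, so that $\Phi_2^{n_1}(F_j)=0$ (the Fourier coefficients of a cusp form vanish on the semi-definite, non-positive-definite matrices that $\Phi_2^{n_1}$ picks out). The pairing thus extracts $(f_{2,i},f_{2,i})\,G_i(W)$. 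On the left side, because the polynomial $Q_{l,n_1,n_2}$ of Lemma~\ref{lem.formulaQl} is built from $2\times 2$ minors formed from the first two rows of $U$ via the invariants $f_1,f_2,f_3$, and since $\Phi$ of a Siegel--Eisenstein series is one of lower degree, I would establish the commutation
\[\Phi_2^{n_1}\bigl(\DD_{l,n_1,n_2}E_{n_1+n_2,k}\bigr)(z,W)=\DD_{l,2,n_2}E_{2+n_2,k}(\mathrm{diag}(z,W)).\]
Theorem~\ref{th.pullback} applied with $(p,q)=(n_2,2)$ and $s=0$ then gives
\[\bigl(f_{2,i},\DD_{l,2,n_2}E_{2+n_2,k}(\mathrm{diag}(*,-\overline W))\bigr)_{\HH_2}=c(0,\rho_2)\,D(k,f_{2,i})\,[f_{2,i}]_{\rho_2}^{\rho_{n_2}}(W),\]
and the substitution $W\mapsto-\overline W$ combined with the complex conjugation inside the Petersson product is exactly what converts $[f_{2,i}]_{\rho_2}^{\rho_{n_2}}$ into $[\theta f_{2,i}]_{\rho_2}^{\rho_{n_2}}$. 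Matching the two computations yields the claimed formula for $G_i(W)$.

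The constant $c(0,\rho_2)$ is independent of $p$ by Theorem~\ref{th.pullback}, so it suffices to evaluate it in the case $n_1=n_2=2$, where $\DD_{l,2,2}$ agrees up to an explicit scalar with the operator $\stackrel{\!\!\!\!\!\!\circ}{\frkD_{2,k}^{l}}$ of B\"ocherer--Schmidt~\cite{Boecherer-Schmidt00}. Comparing normalizations and specializing the factor $\Omega_{k+l,l}(0)$ from the proof of Proposition~\ref{prop.holomorphy-Klingen-Eisenstein}(2) to Pochhammer symbols produces the stated value. The main obstacle will be the commutation $\Phi_2^{n_1}\circ\DD_{l,n_1,n_2}=\DD_{l,2,n_2}\circ\Phi_2^{n_1+n_2}$ with correct normalization: although plausible because $Q_{l,n_1,n_2}$ uses only $2\times 2$ minors from the first two rows of $U$, a careful argument is required to check that the $\p_Z$-entries tied to the $Z_4$-block vanish after the $\Phi$-limit $Z_4\to i\lambda\cdot 1_{n_1-2}$ and that no spurious scalar is introduced. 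Tracking the normalization through Lemma~\ref{lem.formulaQl} and the B\"ocherer--Schmidt formula in order to extract the precise value of $c(0,\rho_2)$ will likewise demand some delicate bookkeeping.
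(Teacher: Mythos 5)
Your proposal follows essentially the same route as the paper's proof: apply the $\Phi$-operator in the $Z$-variable to reduce $n_1$ to $2$, use the commutation $\Phi_2^{n_1}\circ\DD_{l,n_1,n_2}=\DD_{l,2,n_2}\circ\Phi$, invoke Theorem~\ref{th.pullback} to identify the $G_j$'s in the Klingen-lift range (with the substitution $W\mapsto-\overline W$ producing $\theta f_{2,j}$), and pin down $c(0,\rho_2)$ by comparison with the B\"ocherer--Schmidt operator when $n_1=n_2=2$. One small slip worth flagging: the complementary basis vectors $F_j$ with $d(2)<j\le d$ cannot in general be taken to be cusp forms on $\HH_{n_1}$ (when $n_1=4$, the kernel of $\Phi_2^4$ on $\widetilde M_{\rho_4}$ also contains Klingen lifts coming from cusp forms in $S_{\rho_3}(\varGamma^{(3)})$, which are not cusp forms of degree $4$); the fact actually needed, and what the paper asserts, is only that $\Phi_2^{n_1}(F_j)=0$ for those $F_j$, which follows from the multiplicity one theorem for Hecke eigensystems rather than from cuspidality.
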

\begin{proof} First suppose that  $l >0$.
Let $d_0=\dim M_{\rho_{n_1}}(\varGamma^{(n_1)})$ and $\{F_j\}_{d+1 \le j \le d_0}$ 
be a basis of the orthogonal complement of $\widetilde M_{\rho_{n_1}}(\varGamma^{(n_1)})$
in $M_{\rho_{n_1}}(\varGamma^{(n_1)})$ with respect to the Petersson inner product. Then we have
\begin{align*}
\DD_{l,n_1,n_2}E_{n_1+n_2,k}\begin{pmatrix}Z & O \\ O & W \end{pmatrix}
&=\sum_{j=1}^{d(2)} [f_{2,j}]_{\rho_2}^{\rho_{n_1}}(Z)(U) G_j(W)(V) +\sum_{j=d(2)+1}^{d_0} F_j(Z)(U)G_j(W)(V).
\end{align*}
For  $m_1 \ge m_2$ and $l_1 \ge l_2$ and 
$H(Z,W)(U,V)=\sum_j A_j(Z)(U)B_j(W)(V) \in M_{\rho_{m_1}}(\varGamma^{(m_1)}) \otimes M_{\rho_{l_1}}(\varGamma^{(l_1)})$, we define $\Phi_{m_2}^{m_1} \otimes \Phi_{l_2}^{l_1}(H(Z,W)(U,V))$ as
\[\Phi_{m_2}^{m_1} \otimes \Phi_{l_2}^{l_1}(H(Z,W)(U,V))=\sum_j \Phi_{m_2}^{m_1}(A_j(Z)(U))\Phi_{l_2}^{l_1}(B_j(W)(V)).\]
We note that we have $\Phi^{n_1}_{2}(F_j)=0$ for $d(2)+1 \le j \le d$. Hence we have 
\begin{align} 
&(\Phi_{2}^{n_1} \otimes \Phi_{n_2}^{n_2})\Bigl(\DD_{l,n_1,n_2}E_{n_1+n_2,k}\begin{pmatrix} Z^{(2)} & O \\ O & W \end{pmatrix}\Bigr) \notag \\
&= \sum_{j=1}^{d(2)}f_{2,j}(Z^{(2)})(U^{(2)}) G_j(W)(V) +\sum_{j=d+1}^ {d_0}\Phi_{2}^{n_1}(F_j(Z)(U))\Phi_{n_2}^{n_2}(G_j(W)(V)),\notag \end{align}
where $Z^{(2)}=\mathrm{pr}^{n_1}_2(Z)$ for $Z \in \HH_{n_1}$, and $U^{(2)}=(u_{ij})_{1 \le i,j \le 2}$ for $U=(u_{ij})_{1 \le i \le 2, 1 \le j \le n_1}$.
On the other hand, let $A=\begin{pmatrix} A_1 & O & R_1/2 \\ O & O & O \\ {}^t R_1/2 & O & D_1 
\end{pmatrix} \in \calh_{n_1+n_2}(\ZZ)_{\ge 0}$ with $A_1 \in \calh_{2}(\ZZ), \ D_1 \in \calh_{n_2}(\ZZ)$ and $R_1 \in M_{2,n_2}(\ZZ)$.
Then we have
\begin{align*}
&(\Phi_{2}^{{n_1}} \otimes \Phi_{n_2}^{{n_2}})
\Bigl(\mathrm{Res} \ \DD_{l,n_1,n_2} \Bigl(
{\bf e}\Bigl({\rm tr}\Bigl(A\begin{pmatrix} Z & Z_{12} \\ {}^t Z_{12} & W \end{pmatrix}\Bigr)\Bigr) \Bigr)\Bigr)\\
&=\mathrm{Res} \ \DD_{l,2,n_2} \Big({\bf e}\Bigl(\mathrm{tr}\Big(\begin{pmatrix} A_1 & R_1/2 \\ {}^tR_1/2 & D_1 \end{pmatrix}\begin{pmatrix} Z^{(2)} & Z_{12}^{(2)} \\ {}^tZ_{12}^{(2)} & W \end{pmatrix}\Big)\Big)\Big)
\end{align*} for
$\begin{pmatrix} Z & Z_{12} \\ {}^t Z_{12} & W \end{pmatrix} \in \HH_{n_1+n_2}$ with $Z \in \HH_{n_1},\, W \in \HH_{n_2}$ and $Z_{12} \in M_{n_1,n_2}(\CC)$. Here $Z_{12}^{(2)}$ is the upper-left $2 \times n_2$ block of $Z_{12}$.
Hence we have
\begin{align} 
&(\Phi_{2}^{n_1} \otimes \Phi_{n_2}^{n_2})\Bigl(\DD_{l,n_1,n_2}E_{n_1+n_2,k}\begin{pmatrix} Z & O \\ O & W\end{pmatrix}\Bigr)
=\DD_{l,2,n_2} E_{2+n_2,k}\begin{pmatrix} Z^{(2)} & O\\ O & W \end{pmatrix}, \notag \end{align}
and therefore, by Remark \ref{rem.cuspidality-of-diff-op},
$(\Phi_{2}^{n_1} \otimes \Phi_{n_2}^{n_2})\Bigl(\DD_{l,n_1,n_2}E_{n_1+n_2,k}\begin{pmatrix} Z & O \\ O & W\end{pmatrix}\Bigr)$ belongs to 
$S_{\rho_2}(\varGamma^{(2)}) \otimes M_{\rho_{n_2}}(\varGamma^{(n_2)})$.
We note that $\Phi^{\rho_{n_1}}_{\rho_2}(F_j) \not\in S_{\rho_2}(\varGamma^{(2)})$ for $d+1 \le j \le d_0$. 
Hence we have $G_j(W)(V)=0$ for $d+1 \le j \le d_0$
and
\[\DD_{l,2,n_2} E_{2+n_2,k}\begin{pmatrix} Z^{(2)} & O\\ O & W \end{pmatrix}=\sum_{j=1}^{d(2)}f_{2,j}(Z^{(2)})(U^{(2)})  G_j(W)(V).\]
By Theorem \ref{th.pullback}, we see that
\[(f_{2,j},\ f_{2,j})\theta G_j(W)(V)=c(0,\rho_2)D(k,f_{2,j})[f_{2,j}]_{\rho_2}^{\rho_{n_2}}(W)(V).\]
We note that $D(k,f_{2,j})$ is real number, and hence 
\[G_j(W)(V)=c(0,\rho_2){D(k,f_{2,j}) \over (f_{2,j},\ f_{2,j})}[\theta f_{2,j}]_{\rho_2}^{\rho_{n_2}}(W)(V).\]
This proves the first part of the assertion. 
By using the same argument as above, we have
\[\DD_{l,2,2} E_{4,k}\begin{pmatrix} Z^{(2)} & O\\ O & W^{(2)} \end{pmatrix}=\sum_{j=1}^{d(2)}f_{2,j}(Z^{(2)})(U^{(2)})  \theta f_{2,j}(W^{(2)})(V^{(2)}),\]
where $W^{(2)}=\mathrm{pr}^{n_1}_2(Z)$ for $W \in \HH_{n_2}$, and $V^{(2)}=(v_{ij})_{1 \le i,j \le 2}$ for $V=(v_{ij})_{1 \le i \le 2, 1 \le j \le n_1}$.
Let 
\[\iota:S_{k+l}(\varGamma^{(2)}) \longrightarrow S_{\rho_2}(\varGamma^{(2)}) \]
be the isomorphism stated before. Take an element $g_{2,j} \in S_{k+l}(\varGamma^{(2)})$ such that $f_{2,j}=\iota(g_{2,j})$. Then $\{ g_{2,j}\}_j$ forms a basis of $S_{k+l}(\varGamma^{(2)}) $ and
\[D(k,f_{2,j})=D(k,g_{2,j})\]
and
\[(f_{2,j},\ f_{2,j})=(g_{2,j}, \ g_{2,j}).\]
We also note that 
\[\DD_{l,2,2}=(\det U^{(2)}\det V^{(2)})^l\DD_{l.k+l},\]
 where 
$\DD_{k,k+l}=P_{k,k+l}(\partial _{\widetilde Z})$ with $\widetilde Z \in \HH_4$.
Hence we have 
\begin{align*}
\DD_{k,k+l} E_{4,k}\begin{pmatrix} Z^{(2)} & O\\ O & W^{(2)} \end{pmatrix}=c(0,\rho_2) \sum_{j=1}^{d(2)} {D(k,g_{2,j}) \over (g_{2,j}, \ g_{2,j})}g_{2,j}(Z^{(2)}) \otimes \theta g_{2,j}(W^{(2)}). \end{align*}
Hence we have
\[\Big(g_{2,j}, \DD_{k,k+l} E_{4,k}\begin{pmatrix} * & O\\ O & -\overline{W^{(2)}} \end{pmatrix}\Bigr)=c(0,\rho_2)D(k,g_{2,j})g_{2,j}(W^{(2)}).\]
On the other hand, let 
$\stackrel {\!\!\!\!\!\! \circ} {{\frkD}_{2,k}^{l}}$ be the differential operator in \cite[(1.14)]{Boecherer-Schmidt00}. Then by 
\cite[Theorem 3.1]{Boecherer-Schmidt00} we have  
\[ \Bigl(g_{2,j}, \ \stackrel {\!\!\!\!\!\! \circ} {{\frkD}_{2,k}^{l}}E_{4,k}\begin{pmatrix} * & O \\ O & -\overline{W^{(2)}} \end{pmatrix}\Bigr)=\widetilde c_2 D(k,g_{2,j})g_{2,j}(W^{(2)})\]
with 
\[\widetilde c_2=(-1)^{k+l}2^{6-2(k+l)}\pi^3 {\Gamma(k+l-1)\Gamma(k+l-3/2)^2\Gamma(k+l-2) \over \Gamma(k)\Gamma(k-1/2)\Gamma(k-1)\Gamma(k-3/2)}.\]
By page 71 in \cite{Katsurada10}, we have 
\[\DD_{\lambda}=d_{k,l}\stackrel {\!\!\!\!\!\! \circ} {{\frkD}_{2,k}^{l}},
\]
with 
\[d_{k,l}={\begin{pmatrix} 2k+2l-5 \\ l\end{pmatrix} \over \prod_{i=1}^l (k+l-2-i/2)(k+l-3/2-i/2)}.\]
Hence we have $c(0,\rho_2)=d_{k,l}\widetilde c_2$, and by a simple computation we prove  the assertion.
Next suppose that $l=0$. Then the assertion can be proved using the same argument as above.

\end{proof}
\begin{remark}
(1) The second part of the assertion can be also proved by the fact that the differential operator is realized uniformly in 
Lemma \ref{lem.formulaQl} and its operation on the automorphy factor is essentially the same as the case 
when $U=V=1_2$. \\
(2) If $k > n_1+n_2+1$, then $[f]_{\rho_r}^{\rho_{n_1}}(Z,U)$ and $[f]_{\rho_r}^{\rho_{n_2}}(W,V)$ are holomorphic modular forms for any Hecke eigenform $f$  in $S_{\rho_r}(\varGamma^{(r)})$, and we can get an explicit pullback formula  (cf. Theorems \ref{th.explicit-pullback} and \ref{th.constant}).
 However, if $k \le n_1+n_2+1$, it does not necessarily hold. This is why we say that  the formula in the above theorem is a weak type of pullback formula. We note that it is sufficient for proving our main results in Section 8.
\end{remark}
\section{Congruence for Klingen-Eisenstein lifts}
To explain why Conjecture \ref{conj.main-conjecture} is reasonable, we  consider congruence for Klingen-Eisenstein series, which is a generalization of  \cite{Katsurada-Mizumoto12}. For $\lambda=(k-l,k-l,0,0)$ with $k \ge l$ and $2 \le m \le 4$, let $(\rho_{m,\lambda},V_{m,\lambda})$ be the representation of $\GL_m(\CC)$ defined in the previous section, and put $\rho_m=\det^l \otimes \rho_{m,\lambda}$ and ${\bf k}_m'=(k-l,k-l,\overbrace{0,\ldots,0}^{m-2})$ and 
${\bf k}_m=(k,k,\overbrace{l,\ldots,l}^{m-2})$. Let $U$ and $V$ be  $2 \times n_1$ and $2 \times n_2$ matrices  of variables, respectively. Then we recall that $V_{n_1,\lambda}=\CC[U]_{{\bf k}_{n_1}'}, V_{n_2,\lambda}=\CC[V]_{{\bf k}_{n_2}'}$ and that every element $F$ of $M_{{\rho}_{n_1}}(\varGamma^{(n_1)}) \otimes M_{{\rho}_{n_2}}(\varGamma^{(n_2)})$ is expressed as
\[F(Z_1,Z_2)=\sum_{A_1 \in \calh_{n_1}(\ZZ)_{\ge 0}, A_2 \in \calh_{n_2}(\ZZ)_{\ge 0}} c(A_1,A_2;F)(U,V) {\bf e}(\mathrm{tr}(A_1Z_1+A_2Z_2))\]
with $c(A_1,A_2;F)(U,V) \in  \CC[U,V]_{{\bf k}_{n_1}',{\bf k}_{n_2}'}$.   For a  subring $R$ of $\CC$, we denote by $(M_{{\rho}_{n_1}}(\varGamma^{(n_1)}) \otimes M_{{\rho}_{n_2}}(\varGamma^{(n_2)}))(R)$ the submodule of $M_{{\rho}_{n_1}}(\varGamma^{(n_1)}) \otimes M_{{\rho}_{n_2}}(\varGamma^{(n_2)})$ consisting of all $F$'s such that $c(A_1,A_2;F)(U,V) \in R[U,V]_{{\bf k}_{n_1}',{\bf k}_{n_2}'}$ for all $A_1 \in \calh_{n_1}(\ZZ)_{\ge 0}, A_2 \in \calh_{n_2}(\ZZ)_{\ge 0}$. 
We also note that every element $F$ of 
$M_{{\rho}_{n_1}}(\varGamma^{(n_1)}) \otimes V_{n_2,\lambda}$ is  expressed as
\[F(Z_1)=\sum_{A_1 \in \calh_{n_1}(\ZZ)_{\ge 0}} c(A_1;F)(U,V) {\bf e}(\mathrm{tr}(A_1Z_1))\]
with $c(A_1;F)(U,V) \in  \CC[U,V]_{{\bf k}_{n_1}',{\bf k}_{n_2}'}$. We then define a submodule 
$(M_{{\rho}_{n_1}}(\varGamma^{(n_1)}) \otimes V_{n_2,\lambda})(R)$ of   
$M_{{\rho}_{n_1}}(\varGamma^{(n_1)}) \otimes V_{n_2,\lambda}$ consisting of all $F$'s such that 
$c(A_1;F)(U,V) \in R[U,V]_{{\bf k}_{n_1}',{\bf k}_{n_2}'}$ for all $A_1 \in \calh_{n_1}(\ZZ)_{\ge 0}$.

For positive integers $n$ and $l$, put
\[Z(n,l)=\zeta(1-l)\prod_{j=1}^{[n/2]} \zeta(1+2j-2l).\]
We define $\widetilde E_{n,l}$ as 
\[\widetilde E_{n,l}(Z)=Z(n,l)E_{n,l}(Z)\]
and we set 
\begin{align*}
\cale(Z_1,Z_2)&=\cale_{k,l,n_1,n_2}(Z_1,Z_2)=(k-l)!(2\pi \sqrt{-1})^{-2(k-l)}\DD_{k-l,n_1,n_2} \widetilde E_{n_1+n_2,l}\begin{pmatrix} Z_1 & O \\ O & Z_2 \end{pmatrix}.
\end{align*}
Moreover, for positive integers $m,l$ and a Hecke eigenform $F \in S_{k}(\varGamma^{(2)})$ put
\[\calc_{m,l}(F)={Z(m,l) \over Z(4,l)}{\bf L}(l-2,F,\St).\]
We also use the same symbol $\calc_{m,l}(f)$ to denote 
the value $\displaystyle {Z(m,l) \over Z(4,l)}{\bf L}(l-2,f,\St)$ for a Hecke eigenform $f \in S_{\rho_2}(\varGamma^{(2)})$.
As sated before, we have the following isomorphism:
\begin{align*}
\iota:S_k(\varGamma^{(2)}) \ni F \mapsto \widetilde F:=F (\det U)^{k-l} \in S_{\rho_2}(\varGamma^{(2)}), \tag{IS}
\end{align*}
where $U$ is $2 \times 2$ matrix of variables. Then we note that
$\calc_{m,l}(\widetilde F)=\calc_{m,l}(F)$ for a Hecke eigenform $F \in S_k(\varGamma^{(2)})$.

Now, for our later purpose, we rewrite a special case of  Theorem \ref{th.weak-pullback} as follows.
\begin{proposition} \label{prop.pullback1}
Let $n_1,n_2$ be integers such that $2 \le n_1 \le n_2 \le 4$ and  let $k,l$ be  even positive integers such that $k \ge l$. Then we have 
\begin{align*}
\cale_{k,l,n_1,n_2} (Z_1,Z_2) 
&=\gamma_2 \sum_{j=1}^{d(2)} \calc_{n_1+n_2,l}(f_{2,j})[f_{2,j}]_{\rho_2}^{\rho_{n_1}}(Z_1)(U)[\theta f_{2,j}]_{\rho_2}^{{\rho}_{n_2}}(Z_2)(V)\\
& \quad +\sum_{j=d(2)+1}^d F_j(Z_1)(U)\widetilde G_j(Z_2)(V),
\end{align*}
where $\gamma_2$ is a certain rational number which is $p$-unit for any prime number $p>2k$, and $\widetilde G_j(Z_2)(V)$ is an element of $M_{{\rho}_{n_2}}(\varGamma^{(n_2)})$.
\end{proposition}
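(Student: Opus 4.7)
The plan is to derive this refined pullback formula from Theorem~\ref{th.weak-pullback} by a parameter specialization, then rewrite the resulting coefficients in terms of $\calc_{n_1+n_2,l}(f_{2,j})$. First I apply Theorem~\ref{th.weak-pullback} after the substitution $k\mapsto l$ (weight of the Eisenstein series) and $l\mapsto k-l$ (depth parameter of the differential operator): the partition $\lambda=(l,l,0,\ldots)$ in that theorem becomes $(k-l,k-l,0,\ldots)$ and the representations $\det^k\rho_{n_i,\lambda}$ become $\det^l\rho_{n_i,(k-l,k-l,0,\ldots)}=\rho_{n_i}$, so the theorem applies verbatim. Multiplying both sides by the scalar $(k-l)!(2\pi\sqrt{-1})^{-2(k-l)}Z(n_1+n_2,l)$ converts $E_{n_1+n_2,l}$ into $\widetilde E_{n_1+n_2,l}$ and the left-hand side into $\cale_{k,l,n_1,n_2}(Z_1,Z_2)$; each non-cuspidal term on the right absorbs this scalar into a new modular form $\widetilde G_j(Z_2)(V)\in M_{\rho_{n_2}}(\varGamma^{(n_2)})$.

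The coefficient of $[f_{2,j}]_{\rho_2}^{\rho_{n_1}}(Z_1)(U)[\theta f_{2,j}]_{\rho_2}^{\rho_{n_2}}(Z_2)(V)$ after this manipulation is
\[
(k-l)!(2\pi\sqrt{-1})^{-2(k-l)}Z(n_1+n_2,l)\,c(0,\rho_2)\,\frac{D(l,f_{2,j})}{(f_{2,j},f_{2,j})}.
\]
Factoring $Z(n_1+n_2,l)=Z(4,l)\cdot\bigl(Z(n_1+n_2,l)/Z(4,l)\bigr)$ isolates the ratio characterizing $\calc_{n_1+n_2,l}$. Using the definition $D(l,f_{2,j})=\zeta(l)^{-1}\zeta(2l-2)^{-1}\zeta(2l-4)^{-1}L(l-2,f_{2,j},\St)$ together with the algebraic-part identity $L(l-2,f_{2,j},\St)/(f_{2,j},f_{2,j})=A_{2,l,l-2}^{-1}\pi^{5l-9}{\bf L}(l-2,f_{2,j},\St)$ extracted from Proposition~\ref{prop.algebraicity-standard-L} converts the remaining factors into ${\bf L}(l-2,f_{2,j},\St)$ times a $j$-independent scalar $\gamma_2$. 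Since $k$ and $l$ are even and since $\zeta(l),\zeta(2l-2),\zeta(2l-4)$ and $Z(4,l)=\zeta(1-l)\zeta(3-2l)\zeta(5-2l)$ all lie in $\pi^{\text{even}}\QQ^{\times}$, while $c(0,\rho_2)$ contributes exactly $\pi^{3}$, a direct power count shows $\gamma_2\in\QQ$.

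The main obstacle is verifying that $\gamma_2$ is a $\frkp$-unit whenever $p_{\frkp}>2k$. Collecting all contributions, $\gamma_2$ is the product of: (i) the factorial $(k-l)!$, the Pochhammer symbols $(2l-3)_{k-l}$ and $(2l-1)_{2k-2l-3}$, and a power of $2$ from the explicit formula for $c(0,\rho_2)$, each of which only involves primes strictly less than $2k$; (ii) the rational constant $A_{2,l,l-2}$, which is a $\frkp$-unit for $p_{\frkp}\ge 2l$ by Proposition~\ref{prop.algebraicity-standard-L}, and a fortiori for $p_{\frkp}>2k\ge 2l$; and (iii) the rational parts of $\zeta(l)^{\pm 1},\zeta(2l-2)^{\pm 1},\zeta(2l-4)^{\pm 1},\zeta(1-l),\zeta(3-2l),\zeta(5-2l)$. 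For this last group, the von Staudt--Clausen theorem together with the functional equation of $\zeta$ shows that $\frkp$ divides the denominator of $\zeta(1-m)$ only when $p_{\frkp}-1\mid m$; since $p_{\frkp}-1>2k-1\ge 2l-1$, none of the moderate indices $l,2l-2,2l-4,l-1,2l-3,2l-5$ occurring here can be divisible by $p_{\frkp}-1$. Assembling these bounds gives $\ord_{\frkp}(\gamma_2)=0$, completing the argument.
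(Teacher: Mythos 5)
Your plan correctly identifies that Proposition~\ref{prop.pullback1} is a direct repackaging of Theorem~\ref{th.weak-pullback} under the parameter substitution $k\mapsto l$ (Eisenstein weight) and $l\mapsto k-l$ (entry of the depth-two partition), followed by multiplication by $(k-l)!(2\pi\sqrt{-1})^{-2(k-l)}Z(n_1+n_2,l)$; this is exactly the (unstated) route the paper intends, since the proposition appears right after the remark that it ``writes the integrality nature of Theorem~\ref{th.weak-pullback} more clearly'' and no separate proof is given. Two bookkeeping slips are worth flagging. First, the eigenforms $f_{2,j}\in S_{\rho_2}(\varGamma^{(2)})$ have scalar weight $k$, not $l$, so the constant from Proposition~\ref{prop.algebraicity-standard-L} is $A_{2,k,l-2}$ and the accompanying power is $\pi^{r(k+m)+m-r(r+1)/2}=\pi^{2k+3l-9}$ rather than $\pi^{5l-9}$ (your overall power tally still closes to zero, so the rationality of $\gamma_2$ survives). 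Second, for the $\frkp$-unit claim you should make the Bernoulli cancellation explicit: $D(l,f_{2,j})$ contributes $\zeta(l)^{-1}\zeta(2l-2)^{-1}\zeta(2l-4)^{-1}$, and each $\zeta(m)^{-1}$ carries the numerator of $B_m$ in its denominator, which could a priori involve an irregular prime $>2k$; von Staudt--Clausen alone controls only denominators of Bernoulli numbers and would not dispose of this. What saves the day is that $Z(4,l)=\zeta(1-l)\zeta(3-2l)\zeta(5-2l)$ contributes exactly these same Bernoulli numbers in the numerator, so after invoking the functional equation the products $\zeta(1-l)\zeta(l)^{-1}$ etc.\ reduce to ratios of factorials and explicit powers of $2$ and $\pi$, whose prime factors are transparently $<2k$. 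With these two corrections your argument matches the intended proof.
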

We write $\cale(Z_1,Z_2)$ as 
\begin{align*}
\label{partial}
\cale(Z_1,Z_2)=\sum_{N \in \calh_{n_2}} g_{(k,l,n_1,n_2),N}^{(n_1)}(Z_1){\bf e}(\mathrm{tr} (NZ_2)). \tag {$\ast$}
\end{align*} 
Then  $g_N^{(n_1)}=g_{(k,l,n_1,n_2),N}^{(n_1)}$ belongs to $M_{{\rho}_{n_1}}(\varGamma^{(n_1) })  \otimes V_{n_2,\lambda}$.  To consider congruence between Klingen-Eisenstein lift and another modular form of the same weight, we rewrite the above proposition as follows:
 \begin{corollary} \label{cor.pullback2} 
Under the same notation and the assumption as above,  let $N \in \calh_{n_2}(\ZZ)_{>0}$. 
Then, 
\begin{align*}
g_N^{(n_1)}(Z_1)
&=\gamma_2
 \sum_{j=1}^{d(2)} \calc_{n_1+n_2,l}(f_{2,j})[f_{2,j}]_{\rho_2}^{{\rho}_{n_1}}(Z_1)(U)\overline{a(N,[f_{2,j}]_{\rho_2}^{{\rho}_{n_2}})(V)}\\
& \quad +\sum_{j=d(2)+1}^d F_j(Z_1)(U)a(N,\widetilde G_j)(V).
\end{align*}
\end{corollary}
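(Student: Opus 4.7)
The plan is to derive the corollary as an immediate consequence of Proposition \ref{prop.pullback1} by expanding both sides as Fourier series in the variable $Z_2$ and comparing the coefficients of $\mathbf{e}(\mathrm{tr}(NZ_2))$. By the definition $(\ast)$, the coefficient of $\mathbf{e}(\mathrm{tr}(NZ_2))$ in $\cale(Z_1, Z_2)$ is exactly $g_N^{(n_1)}(Z_1)$. On the right-hand side of the pullback identity, the contribution from each summand $F_j(Z_1)(U)\,\widetilde G_j(Z_2)(V)$ with $d(2)+1 \le j \le d$ is tautologically $F_j(Z_1)(U)\,a(N, \widetilde G_j)(V)$, which already matches the second sum in the stated formula.

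The heart of the argument is therefore the computation of the $N$-th Fourier coefficient in $Z_2$ of $[\theta f_{2,j}]_{\rho_2}^{\rho_{n_2}}(Z_2)(V)$. For this, I would invoke the elementary identity recalled in the text preceding Theorem \ref{th.pullback}: for any holomorphic $V$-valued function $F(Z) = \sum_T a(T)(V)\,\mathbf{e}(\mathrm{tr}(TZ))$ whose Fourier coefficients are polynomials in $V$, the definition $(\theta F)(Z) = \overline{F(-\overline{Z})}$ yields
\[
(\theta F)(Z) = \sum_T \overline{a(T)(V)}\,\mathbf{e}(\mathrm{tr}(TZ)),
\]
where the overline denotes coefficient-wise complex conjugation of the polynomial in $V$. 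Applied with $F = [f_{2,j}]_{\rho_2}^{\rho_{n_2}}$, and using that the Klingen-Eisenstein construction commutes with $\theta$ (since the slash action by $\gamma \in \varGamma^{(n_2)}$ involves only real automorphy factors $\rho(CZ_2 + D)^{-1}$ with integral $C, D$), this gives $a(N, [\theta f_{2,j}]_{\rho_2}^{\rho_{n_2}})(V) = \overline{a(N, [f_{2,j}]_{\rho_2}^{\rho_{n_2}})(V)}$.

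Substituting these two Fourier-coefficient expressions back into the right-hand side of Proposition \ref{prop.pullback1} and equating with $g_N^{(n_1)}(Z_1)$ yields the formula stated in the corollary. I do not anticipate any serious obstacle; the only nontrivial point is the commutation of $\theta$ with the Klingen-Eisenstein lift, which is a routine consequence of the defining coset sum and the reality of $\varGamma^{(n_2)}$. The positive-definiteness hypothesis on $N$ is not used in the proof itself; it simply matches the range of Fourier indices relevant for the subsequent congruence arguments.
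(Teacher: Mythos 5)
Your proof is correct and is precisely the derivation the paper leaves implicit: extract the $N$-th Fourier coefficient in $Z_2$ from Proposition \ref{prop.pullback1}, identify the left side via the expansion $(\ast)$, and use $a(N,\theta G)=\overline{a(N,G)}$ together with the compatibility $[\theta f]_{\rho_2}^{\rho_{n_2}}=\theta\bigl([f]_{\rho_2}^{\rho_{n_2}}\bigr)$. One small caveat on that last commutation: it is not a term-by-term invariance of the coset sum, because $\overline{\rho\bigl(C(-\overline{Z})+D\bigr)^{-1}}=\rho(-CZ+D)^{-1}$ is the automorphy factor at $Z$ of the involuted element $\sigma(\gamma)=\left(\begin{smallmatrix}A&-B\\-C&D\end{smallmatrix}\right)$, not of $\gamma$ itself; what makes the identity hold is that $\sigma$ is an involution of $\varGamma^{(n_2)}$ preserving $\Delta_{n_2,2}$ (and hence permuting $\Delta_{n_2,2}\backslash\varGamma^{(n_2)}$) together with $\gamma(-\overline{Z})=-\overline{\sigma(\gamma)Z}$, so the sum is re-indexed rather than fixed termwise. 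With that re-indexing spelled out, your argument is complete.
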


Observe that the first term in the right-hand side
of the above  is invariant if
we multiply $f_{2,j}$ by an element of $\CC^{\times}$.

To see the Fourier expansion of $\widetilde E_{n,k}(Z)$, we  review the polynomial $F_p(B,X)$ attached to the local Siegel series $b_p(B,s)$ for an element $B$ of $\calh_n(\ZZ_p)$ (cf.\ \cite{Katsurada99}). 
 We define $\chi_p(a)$ for $a \in {\QQ}^{\times}_p $ as follows:
 $$\chi_p(a):=  
 \left\{\begin{array}{cl} 
  +1            & {\rm if } \ {\QQ}_p(\sqrt {a})={\QQ}_p ,\\
  -1            & {\rm if}  \ {\QQ}_p(\sqrt {a})/{\QQ}_p \ {\rm is \ quadratic \ unramified},\\
  0             & {\rm if}  \ {\QQ}_p(\sqrt {a})/{\QQ}_p \ {\rm is \ quadratic \ ramified}. 
\end{array}\right.$$
For an element  $B \in \calh_n(\ZZ_p)^{\rm nd}$ with $n$ even,  we define $\xi_p(B)$ by 
$$\xi_p(B):=\chi_p((-1)^{n/2}\det B).$$ 
For a non-degenerate half-integral matrix $B$ of size $n$ over ${\ZZ}_p$ define a polynomial $\gamma_p(B,X)$ in $X$ by
$$\gamma_p(B,X):=
\left\{
\begin{array}{ll}
(1-X)\prod_{i=1}^{n/2}(1-p^{2i}X^2)(1-p^{n/2}\xi_p(B)X)^{-1} & \ {\rm if} \ n \ {\rm is \ even}, \\
(1-X)\prod_{i=1}^{(n-1)/2}(1-p^{2i}X^2) & \ {\rm if} \ n \ {\rm is \ odd}.
\end{array}
\right.$$
Then it is well known that  there exists a unique polynomial $F_p(B,X)$ in $X$ over ${\ZZ}$  with constant term $1$ such that 
$$b_p(B,s) =\gamma_p(B,p^{-s})F_p(B,p^{-s})$$ (e.g. \cite{Katsurada99}).                                   
For  $B \in \calh_n(\ZZ)_{>0}$ with $n$ even, 
 let ${\mathfrak d}_B$ be the discriminant of  ${\QQ}(\sqrt{(-1)^{n/2}\det B})/{\QQ}$,  and $\chi_B=({\frac{{\mathfrak d}_B}{*}})$  the Kronecker character corresponding to ${\QQ}(\sqrt{(-1)^{n/2}\det B})/{\QQ}$. We note that we have $\chi_B(p)=\xi_p(B)$ for any prime $p.$


We define a polynomial $F_p^*(T,X)$ for any $T \in \calh_n(\ZZ_p)$ which is not-necessarily non-degenerate as follows:
For an element $T \in \calh_n(\ZZ_p)$ of rank $m \ge 1,$ there exists an element 
$\tilde T \in \calh_m(\ZZ_p)^{{\rm nd}}$ such that $T \sim_{\ZZ_p} \tilde T \bot O_{n-m}.$ We note that  
$F_p(\tilde T,X)$ does not depend on the choice of $\tilde T.$ Then we put $F_p^\ast(T,X)=F_p(\tilde T,X).$  
For an element $T \in \calh_n(\ZZ)_{\ge 0}$ of rank $m \ge 1,$ there exists an element $\tilde T \in 
\calh_m(\ZZ)_{>0}$ such that $T \sim_{\ZZ} \tilde T \bot O_{n-m}.$
Then $\chi_{\tilde T}$ does not depend on the choice of $\widetilde T$. We write $\chi_T^{\ast}=\chi_{\tilde T}$ if $m$ is even.

\begin{proposition}\label{prop.fc-Siegel}
 Let $k\in 2\ZZ$. Assume that $k \ge (n+1)/2$ and that neither $k=(n+2)/2 \equiv 2 \text{ mod } 4$ 
 nor $k=(n+3)/2 \equiv 2 \text{ mod } 4$. Then 
for $T \in \calh_n(\ZZ)_{\ge 0}$ of rank $m,$ we have
\begin{align}
a(T,\widetilde E_{n,k})
&=2^{[(m+1)/2]}\prod_{p \mid  \det (2\widetilde T)} F_p^\ast(T,p^{k-m-1})\nonumber\\
&\times \left\{
\begin{array}{ll}
\prod_{i=m/2+1}^{[n/2]} \zeta(1+2i-2k) L(1+m/2-k,\chi_T^\ast)
 & \ {\rm if} \ m \ {\rm is \ even  }, \\
\prod_{i=(m+1)/2}^{[n/2]} \zeta(1+2i-2k) & \ {\rm if} \ m \ {\rm is \ odd}.
\end{array}
\right.
\nonumber
\end{align}
 Here we make the convention $F_p^\ast(T,p^{k-m-1})=1$ and $L(1+m/2-k,\chi_T^\ast)=\zeta(1-k)$ if $m=0.$ 
\end{proposition}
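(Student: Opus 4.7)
The plan is to reduce to the non-degenerate (top-rank) case by means of the Siegel $\Phi$-operator, then invoke the classical Shimura--Kaufhold formula expressing the Fourier coefficients of $E_{m,k}$ at positive definite $\widetilde T\in\calh_m(\ZZ)_{>0}$ in terms of local Siegel series.

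First, I would treat the non-degenerate case $m=n$ with $T\in\calh_n(\ZZ)_{>0}$. Here one writes $a(T,E_{n,k})$ as a normalized product of local Siegel series $\prod_p b_p(T,k)$, decompose $b_p(T,s)=\gamma_p(T,p^{-s})F_p(T,p^{-s})$, and compute the global product of the $\gamma_p$-factors: this yields a quotient of zeta values, together with (if $n$ is even) a Dirichlet $L$-value $L(k-n/2,\chi_T)$ coming from the factor $(1-p^{n/2}\xi_p(T)p^{-k})^{-1}$. Katsurada's functional equation for $F_p(T,X)$ together with the functional equations for $\zeta$ and for the Dirichlet $L$-function associated to $\chi_T$ then convert $\prod_p F_p(T,p^{-k})$ and the accompanying $\zeta$/$L$ values into the ``arithmetic'' variables $p^{k-n-1}$ and $1-k+n/2$. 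After multiplying by $Z(n,k)$, the resulting expression matches the statement in the case $m=n$, with the zeta product $\prod_{i=m/2+1}^{[n/2]}\zeta(\cdot)$ being empty.

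Second, for the degenerate case $m<n$ I would invoke the $\Phi$-operator compatibility $\Phi(E_{n,k})=E_{n-1,k}$. This holds by direct coset comparison in the convergent range $k>n+1$, and extends to our range of $k$ by analytic continuation; the excluded weights $k=(n+2)/2\equiv 2\pmod 4$ and $k=(n+3)/2\equiv 2\pmod 4$ are precisely the values where $E_{n,k}$ fails to be holomorphic, so Proposition~\ref{prop.holomorphy-Klingen-Eisenstein}(1) guarantees the identity. Iterating gives $\Phi^{n-m}(E_{n,k})=E_{m,k}$, and hence, for any $T\sim_\ZZ \widetilde T\bot O_{n-m}$ with $\widetilde T\in\calh_m(\ZZ)_{>0}$,
\[
a(T,E_{n,k})=a(\widetilde T,E_{m,k}).
\]
Multiplying by $Z(n,k)$ and using
\[
\frac{Z(n,k)}{Z(m,k)}=\prod_{j=[m/2]+1}^{[n/2]}\zeta(1+2j-2k),
\]
substituting the non-degenerate formula for $a(\widetilde T,\widetilde E_{m,k})$ from Step~1 yields the two stated cases according to the parity of $m$; in the odd-$m$ case the would-be $L$-factor simply does not arise in Shimura's formula, matching the proposition. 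The convention $F_p^\ast(T,p^{k-m-1})=1$ and $L(1-k,\chi_T^\ast)=\zeta(1-k)$ at $m=0$ correctly recovers the constant term $a(O_n,\widetilde E_{n,k})=Z(n,k)$.

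The main technical obstacle is the careful bookkeeping needed to reconcile Shimura's normalization with the one in the statement: in particular one has to correctly track the power of $2$ (which produces the exponent $[(m+1)/2]$ after Katsurada's functional equation for $F_p$), the sign changes in the functional equations for $\zeta$, $L$, and $F_p$, and the distinction between $m$ even and $m$ odd (the character $\chi_T^\ast$, and thus the Dirichlet $L$-factor, disappears in odd rank). Verifying the $\Phi$-identity at the boundary weights and matching the parity-dependent constants across the ratio $Z(n,k)/Z(m,k)$ are where the calculation requires the most attention, but are otherwise routine once the reduction is set up.
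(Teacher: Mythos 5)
The paper states Proposition~\ref{prop.fc-Siegel} without giving a proof, treating it as a known result (it is classical, going back to Siegel--Kaufhold--Maass and refined by Shimura, Feit, Katsurada and Mizumoto; the paper's reference is to \cite{Katsurada99} for the decomposition $b_p=\gamma_p F_p$). Your proposal correctly reconstructs the standard derivation: evaluate $a(T,E_{n,k})$ in the non-degenerate case via the product of local Siegel series, split off the $\gamma_p$-factors to produce the $\zeta$- and $L$-values, invoke the functional equation of $F_p$ to pass from the analytic variable $p^{-k}$ to the arithmetic variable $p^{k-n-1}$, then reduce the degenerate case by the $\Phi$-operator identity $\Phi(E_{n,k})=E_{n-1,k}$ and track the normalization ratio $Z(n,k)/Z(m,k)=\prod_{j=[m/2]+1}^{[n/2]}\zeta(1+2j-2k)$. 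Your bookkeeping (power of $2$, parity of $m$ governing the presence of the $L$-factor, the conventions at $m=0$ recovering $a(O_n,\widetilde E_{n,k})=Z(n,k)$) is consistent with the statement, and your observation that the $\Phi$-operator identity extends from the convergent range by analytic continuation precisely where Proposition~\ref{prop.holomorphy-Klingen-Eisenstein}(1) grants holomorphy is the right justification for the excluded boundary weights. In short, this is the correct and standard argument, and since the paper offers no proof there is nothing to compare it against beyond noting that your route is exactly the one the cited references use.
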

To consider the integrality of $a(T,\widetilde E_{n,k})$, we provide the following lemma.
\begin{lemma}\label{lem.precise-Kitaoka-polynomial}
Let $T \in \calh_m(\ZZ_p)^{\mathrm{nd}}$. Then, we have $F_p(p^{-[(m+1)/2]}T,X) \in \ZZ[X]$.
\end{lemma}
\begin{proof}
The assertion has been proved in \cite[Lemma 15]{Ikeda01} in the case $m$ is even, and the assertion for odd case can also be proved in the same manner.

\end{proof}
\begin{proposition} \label{prop.p-integrality-Siegel}
Let the notation and the assumption be as in Proposition \ref{prop.fc-Siegel}. Then,  we have $\widetilde E_{n,k}$ belongs to $M_k(\varGamma^{(n)})(\QQ)$. In particular, for any prime number $p >  2k$, $\widetilde E_{n,k}$ belongs to $M_k(\varGamma^{(n)})(\ZZ_{(p)})$. 
\end{proposition}
\begin{proof} The first assertion is well known. 
We prove the second assertion. Let  $T \in \calh_n(\ZZ)_{\ge 0}$ of rank $m.$ Since we have $k \ge (n+1)/2$, by Lemma \ref{lem.precise-Kitaoka-polynomial}, the product $\prod_{p |  \det (2\widetilde T)} F_p^*(T,p^{k-m-1})$ is an integer. Moreover, since we have $p >2k$, by the theorem of v. Staut-Clausen, the value $\zeta(1-k)$ and $\zeta(1+2i-2k)$ for a positive integer $i \le [n/2]$  belong to $\ZZ_{(p)}$, and by \cite[(5.1), (5.2)]{Bo84}, the value  $L(1+m/2-k,\chi^*_T)$ belongs to $\ZZ_{(p)}$ if $m \ge 2$ is even. Thus the assertion follows from Proposition \ref{prop.fc-Siegel}.

 \end{proof}

\begin{proposition}\label{prop.rationality1}
 Let the notation and the assumption be as in Proposition \ref{prop.pullback1}. Then
\[\cale_{k,l,n_1,n_2}(Z_1,Z_2) \in (M_{{\rho}_{n_1}}(\varGamma^{(n_1)}) \otimes M_{{\rho}_{n_2}}(\varGamma^{(n_2)}))(\QQ),\]
and more precisely 
\[\cale_{k,l,n_1,n_2}(Z_1,Z_2) \in (M_{{\rho}_{n_1}}(\varGamma^{(n_1)}) \otimes M_{{\rho}_{n_2}}(\varGamma^{(n_2)}))(\ZZ_{(p)})\]
for any prime number $p>2k$.
\end{proposition}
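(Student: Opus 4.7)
The plan is to compute the Fourier expansion of $\cale_{k,l,n_1,n_2}$ explicitly by combining the Fourier expansion of $\widetilde E_{n_1+n_2,l}$ from Proposition \ref{prop.fc-Siegel} with the polynomial form of the differential operator $\DD_{k-l,n_1,n_2}$ from Lemma \ref{lem.formulaQl}, and then verify rationality and $p$-integrality coefficient by coefficient. For $T=\begin{pmatrix} A & B/2 \\ {}^tB/2 & D\end{pmatrix}\in\calh_{n_1+n_2}(\ZZ)_{\ge 0}$ with $A\in\calh_{n_1}(\ZZ)_{\ge 0}$ and $D\in\calh_{n_2}(\ZZ)_{\ge 0}$, the identity $\partial_Z\,{\bf e}(\mathrm{tr}(TZ))=2\pi\sqrt{-1}\,T\,{\bf e}(\mathrm{tr}(TZ))$ together with the fact that each monomial $F_1^aF_2^bF_3^c$ in $Q_{k-l,n_1,n_2}$ is homogeneous of degree $2a+4b+4c=2(k-l)$ in the entries of $T$ gives
\[
\DD_{k-l,n_1,n_2}\,{\bf e}\bigl(\mathrm{tr}(T\,\mathrm{diag}(Z_1,Z_2))\bigr)=(2\pi\sqrt{-1})^{2(k-l)}Q_{k-l,n_1,n_2}(T,U,V)\,{\bf e}(\mathrm{tr}(AZ_1)+\mathrm{tr}(DZ_2)),
\]
so the normalizing constant $(k-l)!(2\pi\sqrt{-1})^{-2(k-l)}$ cancels the transcendental prefactor and produces the $(A,D)$-Fourier coefficient
\[
c(A,D;\cale)(U,V)=(k-l)!\sum_{B}a\bigl(T,\widetilde E_{n_1+n_2,l}\bigr)\,Q_{k-l,n_1,n_2}(T,U,V),
\]
summed over $B\in M_{n_1,n_2}(\ZZ)$ with $T\ge 0$ (a finite sum since $T\ge 0$ bounds $B$).

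For the rationality assertion, each $a(T,\widetilde E_{n_1+n_2,l})$ lies in $\QQ$ by Proposition \ref{prop.fc-Siegel}, since $F_p^\ast(T,X)\in\ZZ[X]$ and the special values $\zeta(1-m)$ and $L(1-m,\chi_T^\ast)$ at non-positive integers are rational by classical theorems of Hurwitz and Siegel--Klingen. Combined with the rationality of the coefficients of $Q_{k-l,n_1,n_2}$ from Lemma \ref{lem.formulaQl}, this gives $c(A,D;\cale)(U,V)\in\QQ[U,V]_{{\bf k}_{n_1}',{\bf k}_{n_2}'}$.

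For $\ZZ_{(p)}$-integrality at $p>2k$ I would verify two ingredients. Firstly, $a(T,\widetilde E_{n_1+n_2,l})\in\ZZ_{(p)}$: the integer $\prod_q F_q^\ast(T,q^{l-m-1})$ causes no trouble; von Staudt--Clausen bounds the primes in the denominator of $\zeta(1-2j)/(2j)$ by $2j+1\le 2l-1<p$; and an analogous generalized von Staudt--Clausen bound handles $L(1-m,\chi_T^\ast)/m$ away from primes dividing the conductor $\frkd_T$, while at a ramified prime $p\mid\frkd_T$ the accompanying factor $F_p^\ast(T,p^{l-m-1})$ contributes a compensating $p$-power coming from the explicit form of the Siegel series at $p$. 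Secondly, expanding $(k+c-3/2)_{a+b+c}=2^{-(a+b+c)}\prod_{i=0}^{a+b+c-1}(2k+2c+2i-3)$ in Lemma \ref{lem.formulaQl}, the coefficient of $F_1^aF_2^bF_3^c$ in $(k-l)!Q_{k-l,n_1,n_2}$ equals
\[
\frac{(-1)^b(k-l)!}{a!\,b!\,c!\,2^{b+c}}\prod_{i=0}^{a+b+c-1}(2k+2c+2i-3),
\]
which lies in $\ZZ_{(p)}$ for $p>2k$: since $a,b,c\le k-l<2k<p$ one has $p\nmid a!b!c!$; since $p$ is odd $p\nmid 2^{b+c}$; and the numerator is an integer. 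As each $F_i$ has integer-polynomial coefficients in $(T,U,V)$, this completes the $p$-integrality.

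The main obstacle is the first ingredient, namely establishing uniform $p$-integrality of $a(T,\widetilde E_{n_1+n_2,l})$ for every $T$ including those with $p\mid\frkd_T$. The cleanest way to close this gap is either to invoke a known integrality theorem for the normalized Siegel--Eisenstein series (as in B\"ocherer--Nagaoka or Panchishkin), or to argue directly that the $p$-adic valuation of $F_p^\ast(T,p^{l-m-1})$ at a ramified prime exactly cancels the denominator coming from $L(1-m,\chi_T^\ast)/m$; this is where the joint hypothesis $p>2k\ge 2l$ does the essential work.
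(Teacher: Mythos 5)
Your proposal follows the same route as the paper: expand $\cale_{k,l,n_1,n_2}$ in a double Fourier series, identify the $(T_1,T_2)$-coefficient as a finite sum over $R \in M_{n_1,n_2}(\ZZ)$ of $a\bigl(\left(\begin{smallmatrix} T_1 & R/2 \\ {}^tR/2 & T_2 \end{smallmatrix}\right),\widetilde E_{n_1+n_2,l}\bigr)\cdot Q_{k-l,n_1,n_2}\bigl(\left(\begin{smallmatrix} T_1 & R/2 \\ {}^tR/2 & T_2 \end{smallmatrix}\right),U,V\bigr)$, and then appeal to Proposition \ref{prop.fc-Siegel}. The paper's own proof stops there with the single line ``Hence the assertion follows from Proposition \ref{prop.fc-Siegel},'' and does not spell out the $p$-integrality analysis at all. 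You go further and actually verify the two ingredients: (i) that the combinatorial coefficient of $F_1^aF_2^bF_3^c$ in $(k-l)!\,Q_{k-l,n_1,n_2}$ lies in $\ZZ_{(p)}$ for $p>2k$ (your bound $a,b,c\le k-l<p$ is correct; note only the small slip that in this application the Pochhammer factor from Lemma \ref{lem.formulaQl} is $(l+c-\tfrac32)_{a+b+c}$, not $(k+c-\tfrac32)_{a+b+c}$, because the Eisenstein series has weight $l$ — but since $l\le k$ the estimate is unaffected); and (ii) that $a(T,\widetilde E_{n_1+n_2,l})\in\ZZ_{(p)}$, which is what Proposition \ref{prop.fc-Siegel} is supposed to deliver.

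Your explicit flag on ingredient (ii) is the right thing to raise. The von Staudt–Clausen bound handles the $\zeta$-factors, but the factor $L(1+m/2-l,\chi_T^\ast)$ for a quadratic character whose conductor is divisible by $p$ is genuinely non-trivial (e.g.\ $L(-1,(\tfrac{\cdot}{5}))=-2/5$ already shows a denominator prime can occur even with $p>2n$), and the claimed compensation must come from the local factor $F_p^\ast(T,p^{l-m-1})$ or from a global integrality theorem for the normalized Siegel Eisenstein series such as those of Böcherer–Nagaoka. The paper does not address this point in the proof of this proposition; it implicitly relies on such a known result. So your write-up is not a different proof — it is a more honest version of the same one, and the ``obstacle'' you isolate is exactly the part of the assertion that deserves a citation rather than the bare reference to Proposition \ref{prop.fc-Siegel}.
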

\begin{proof}
For $T_1\in \calh_{n_1}$ and $T_2\in \calh_{n_2}$, put 
\begin{align*}
\label{explicit-epsilon}
\epsilon(T_1,T_2)(U,V)&=\epsilon_{k,l,n_1,n_2}(T_1,T_2)(U,V) \tag {E} \\
&=\sum_{ R \in M_{n_1,n_2}({\ZZ})} a\Bigl(\begin{pmatrix} T_1 &R/2\\ {}^tR/2 &T_2\end{pmatrix}, \widetilde 
E_{n_1+n_2,l}\Bigr)
 \times Q_{k-l,n_1,n_2}\Bigl(\begin{pmatrix} T_1 &R/2\\ {}^tR/2 &T_2\end{pmatrix},U,V\Bigr),
\end{align*}
where $Q_{k-l,n_1,n_2}$ is the polynomial in Section 5.1.2.
Then we have 
\begin{align*} 
\cale_{k,l,n_1,n_2} (Z_1,Z_2)
&=\sum_{T_1 \in \calh_{n_1}(\ZZ)_{\ge 0},T_2 \in \calh_{n_2}(\ZZ)_{\ge 0}} \epsilon(T_1,T_2)(U,V){\bf e}(\mathrm{tr}(T_1Z_1+T_2Z_2)). 
\end{align*}
Hence the assertion follows from Proposition \ref{prop.p-integrality-Siegel}.
\end{proof}

\begin{corollary} \label{cor.rationality1}
For each  $N \in \calh_{n_2}(\ZZ)_{>0}$ let $g_{N}^{(n_1)}$ be that defined above. 
Then
\[g_{(k,l,n_1,n_2),N}^{(n_1)}(Z_1) \in ( M_{{\rho}_{n_1}}(\varGamma^{(n_1)}) \otimes V_{n_2,\lambda})(\QQ)  \]
and moreover
\[g_{(k,l,n_1,n_2),N}^{(n_1)}(Z_1) \in (M_{{\rho}_{n_1}}(\varGamma^{(n_1)})  \otimes  V_{n_2,\lambda} )(\ZZ_{(p)})\]
for any prime number $p >2k$.
\end{corollary}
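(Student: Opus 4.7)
The plan is essentially to read off the $Z_2$-Fourier coefficients of $\cale_{k,l,n_1,n_2}(Z_1,Z_2)$ and invoke Proposition \ref{prop.rationality1}. Starting from the explicit formula
\[
\cale_{k,l,n_1,n_2}(Z_1,Z_2)=\sum_{T_1\in \calh_{n_1}(\ZZ)_{\ge 0},\,T_2\in \calh_{n_2}(\ZZ)_{\ge 0}}\epsilon_{k,l,n_1,n_2}(T_1,T_2)(U,V)\,{\bf e}(\mathrm{tr}(T_1Z_1+T_2Z_2))
\]
established in the proof of Proposition \ref{prop.rationality1}, and comparing with the definition ($\ast$) of $g_{(k,l,n_1,n_2),N}^{(n_1)}$, I immediately obtain
\[
g_{(k,l,n_1,n_2),N}^{(n_1)}(Z_1)=\sum_{T_1\in \calh_{n_1}(\ZZ)_{\ge 0}}\epsilon_{k,l,n_1,n_2}(T_1,N)(U,V)\,{\bf e}(\mathrm{tr}(T_1Z_1))
\]
for every $N\in \calh_{n_2}(\ZZ)_{>0}$. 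In other words, the $T_1$-th $Z_1$-Fourier coefficient of $g_{(k,l,n_1,n_2),N}^{(n_1)}$, viewed as a function on $\HH_{n_1}$ with values in $V_{\rho_{\lambda,n_2}}$ (i.e.\ a polynomial in $V$), is exactly $\epsilon_{k,l,n_1,n_2}(T_1,N)(U,V)$.

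Next I need to know that this object indeed lives in $M_{\rho_{n_1}}(\varGamma^{(n_1)})\otimes V_{\rho_{\lambda,n_2}}$, so that the notion of $\QQ$-rational or $\ZZ_{(p)}$-integral Fourier coefficients applies as in the definition given just before the statement. Modularity in $Z_1$ is guaranteed by Corollary \ref{cor.pullback2}, which exhibits $g_{(k,l,n_1,n_2),N}^{(n_1)}(Z_1)$ as a finite $\CC$-linear combination of the holomorphic modular forms $[f_{2,j}]_{\rho_2}^{\rho_{n_1}}(Z_1)(U)$ for $1\le j\le d(2)$ and $F_j(Z_1)(U)$ for $d(2)+1\le j\le d$, each tensored with a fixed element of $V_{\rho_{\lambda,n_2}}$.

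Finally, Proposition \ref{prop.rationality1}, applied coefficient by coefficient, asserts that
\[
\epsilon_{k,l,n_1,n_2}(T_1,N)(U,V)\in \QQ[U,V]_{{\bf k}_{n_1}',{\bf k}_{n_2}'},
\]
and moreover that it lies in $\ZZ_{(p)}[U,V]_{{\bf k}_{n_1}',{\bf k}_{n_2}'}$ for any prime $p>2k$. Combined with the identification of Fourier coefficients in the previous paragraph, this proves both assertions of the corollary.

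There is no genuine obstacle here: the statement is a direct bookkeeping consequence of Proposition \ref{prop.rationality1} once one matches the $Z_2$-Fourier expansion of $\cale_{k,l,n_1,n_2}$ with ($\ast$). The only mild subtlety is to keep track of the $(U,V)$ polynomial structure and to interpret $\epsilon_{k,l,n_1,n_2}(T_1,N)(U,V)$ as taking values in $\CC[U]_{{\bf k}_{n_1}'}\otimes \CC[V]_{{\bf k}_{n_2}'}$ via the natural decomposition of $\CC[U,V]_{{\bf k}_{n_1}',{\bf k}_{n_2}'}$, so that the $V$-dependence records the $V_{\rho_{\lambda,n_2}}$-valued nature of $g_{(k,l,n_1,n_2),N}^{(n_1)}$.
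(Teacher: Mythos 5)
Your proof is correct and follows essentially the same approach as the paper: read off the $Z_2$-Fourier coefficients of $\cale_{k,l,n_1,n_2}$ to identify the $Z_1$-Fourier coefficients of $g_{(k,l,n_1,n_2),N}^{(n_1)}$ with $\epsilon_{k,l,n_1,n_2}(T_1,N)(U,V)$, then invoke Proposition \ref{prop.rationality1}. Your extra remark on modularity via Corollary \ref{cor.pullback2} is a harmless additional justification that the paper leaves implicit.
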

\begin{proof}
$g_{(k,l,n_1,n_2),N}^{(n_1)}(Z_1)$  is expressed as
$$g_{(k,l,n_1,n_2),N}^{(n_1)}(Z_1)=\sum_{T \in \calh_{n_1}(\ZZ)}\epsilon_{k,l,n_1,n_2}(T,N)(U,V){\bf e}(\mathrm{tr}(TZ_1)).$$
Hence the assertion directly follows from the above proposition.
\end{proof}

\begin{proposition} \label{prop.rationality3}
Let the notation and the assumptions be as in Theorem \ref{th.weak-pullback}, and  let $2 \le m \le 4$.
Then for any  Hecke eigenform $f$  in $S_{{\rho}_2}(\varGamma^{(2)})(\QQ(f))$,
$[f]_{{\rho}_2}^{{\rho}_m} \in M_{{\rho}_m}(\varGamma^{(m)})(\QQ(f))$.
\end{proposition}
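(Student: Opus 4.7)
The plan is to combine the pullback formula (Proposition~\ref{prop.pullback1}) with the rationality of the Eisenstein Fourier coefficients (Proposition~\ref{prop.rationality1}), the algebraicity of the standard $L$-value (Proposition~\ref{prop.algebraicity-standard-L}), and a multiplicity-one Galois-descent argument.

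First I would dispatch $m=2$: the Klingen lift $[f]_{\rho_2}^{\rho_2}$ equals $f$ itself, so the hypothesis gives the conclusion. For $m \in \{3,4\}$, choose a basis $\{f_{2,j}\}_{1 \le j \le d(2)}$ of $S_{\rho_2}(\varGamma^{(2)})$ consisting of Hecke eigenforms, normalized so that each $f_{2,j}$ has Fourier coefficients in $\QQ(f_{2,j})$ (possible by multiplicity one for degree-two Hecke eigenforms), and take $f = f_{2,1}$. Next I would invoke Proposition~\ref{prop.pullback1} with $(n_1,n_2) = (2,m)$. Since $[f_{2,j}]_{\rho_2}^{\rho_2} = f_{2,j}$, for $l>0$ the formula reads
\[
\cale_{k,l,2,m}(Z_1,Z_2) = \gamma_2 \sum_{j=1}^{d(2)} \calc_{2+m,l}(f_{2,j})\, f_{2,j}(Z_1)\, [\theta f_{2,j}]_{\rho_2}^{\rho_m}(Z_2) + \sum_{j=d(2)+1}^d F_j(Z_1)\, \widetilde G_j(Z_2),
\]
and the LHS has $\QQ$-rational Fourier coefficients by Proposition~\ref{prop.rationality1}. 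The case $l=0$ reduces to the classical scalar-weight setting where rationality of Klingen lifts is well known.

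Then for any $\sigma \in \mathrm{Aut}(\CC/\QQ(f))$, I would apply $\sigma$ coefficient-wise to the identity. The LHS is fixed. On the RHS, $\sigma$ permutes the $f_{2,j}$ according to its action on their Hecke eigensystems, fixing $f_{2,1} = f$. Since each $[f_{2,j}]_{\rho_2}^{\rho_m}$ is a Hecke eigenform whose eigensystem is determined by that of $f_{2,j}$ (Proposition~\ref{prop.standard-L-Klingen}), multiplicity one gives $\sigma([f_{2,j}]_{\rho_2}^{\rho_m}) = \alpha_j^\sigma [\sigma(f_{2,j})]_{\rho_2}^{\rho_m}$ for some scalar $\alpha_j^\sigma \in \CC^\times$, and analogously for the $F_j,\widetilde G_j$ terms. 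Using the linear independence of distinct Hecke eigensystems (tensored across $\HH_2$ and $\HH_m$) to equate the $\sigma$-transformed and original decompositions term by term, and the fact that $\QQ(f) \subset \RR$ forces $\theta f = f$, the $j=1$ component yields
\[
\sigma(\calc_{2+m,l}(f))\, \alpha_1^\sigma\, \sigma([f]_{\rho_2}^{\rho_m})(Z_2) = \calc_{2+m,l}(f)\, [f]_{\rho_2}^{\rho_m}(Z_2).
\]

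Finally I would verify that $\calc_{2+m,l}(f) \in \QQ(f)$: the factor $Z(m+2,l)/Z(4,l)$ is a ratio of special zeta values and so lies in $\QQ$, while ${\bf L}(l-2,f,\St) \in \QQ(f)$ by Proposition~\ref{prop.algebraicity-standard-L} applied to $F = f$ (after choosing $A, B$ so that $a(A,f)\overline{a(B,f)}$ is a non-zero element of $\QQ(f)$ that can be divided out). Consequently $\sigma$ fixes $\calc_{2+m,l}(f)$, so $\alpha_1^\sigma \sigma([f]_{\rho_2}^{\rho_m}) = [f]_{\rho_2}^{\rho_m}$; comparing any non-zero Fourier coefficient of $[f]_{\rho_2}^{\rho_m}$ forces $\alpha_1^\sigma = 1$ and hence $\sigma([f]_{\rho_2}^{\rho_m}) = [f]_{\rho_2}^{\rho_m}$. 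Since this holds for every $\sigma \in \mathrm{Aut}(\CC/\QQ(f))$, the Fourier coefficients of $[f]_{\rho_2}^{\rho_m}$ lie in $\QQ(f)$, proving the claim. The main obstacle I anticipate is the Galois-equivariant bookkeeping on the RHS, namely matching $\sigma$-orbits among the $F_j$ and $\widetilde G_j$ and invoking multiplicity one in the vector-valued Klingen-Eisenstein setting to separate the $j=1$ contribution; a second delicate point is ensuring the parameter hypotheses of Proposition~\ref{prop.algebraicity-standard-L} are met at the argument $s = l-2$ for our ranges of $k$ and $l$.
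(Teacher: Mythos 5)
The paper's own proof of Proposition~\ref{prop.rationality3} is a one-line citation: the scalar-weight case $k=l$ is Mizumoto's Appendix~A of~\cite{Mizumoto91}, where the rationality of the Klingen--Eisenstein lift is obtained by a \emph{direct} computation of its Fourier coefficients (Klingen's formula expressing $a(T,[f]^{\rho_m})$ as a finite sum involving Fourier coefficients of $f$ and rational local factors), and the paper asserts the vector-valued case goes the same way. Your proof instead reassembles $[f]_{\rho_2}^{\rho_m}$ from the pullback decomposition and performs Galois descent. These are genuinely different routes: yours is self-contained within the apparatus the paper has built, but note that Propositions~\ref{prop.algebraicity-standard-L} and~\ref{prop.rationality3} both point back to Mizumoto's Appendix~A, where the two statements are intertwined, so the apparent logical independence is partly cosmetic.

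Several things in your argument deserve tightening. First, with $(n_1,n_2)=(2,m)$ and $l>0$, the space $\widetilde M_{\rho_{n_1}}(\varGamma^{(n_1)})$ in Theorem~\ref{th.weak-pullback} is exactly $S_{\rho_2}(\varGamma^{(2)})$ (the map $\Phi^{2}_{2}$ is the identity), so $d=d(2)$ and the terms $F_j(Z_1)\widetilde G_j(Z_2)$ you carry through the computation are in fact absent; dropping them removes the most delicate part of the bookkeeping. Second, your invocation of multiplicity one to conclude $\sigma([f_{2,j}]_{\rho_2}^{\rho_m})=\alpha_j^\sigma[\sigma f_{2,j}]_{\rho_2}^{\rho_m}$ is not directly covered by Theorem~\ref{MF}~(3), which is a statement about cusp forms; Klingen lifts are not cuspidal. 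You need a separate sentence saying that by Proposition~\ref{prop.standard-L-Klingen} the eigensystem of $[f_{2,j}]_{\rho_2}^{\rho_m}$ is non-tempered and determined by that of $f_{2,j}$, so no two of the $[f_{2,j}]_{\rho_2}^{\rho_m}$ coincide and none of them matches a cuspidal eigensystem. Third, your final division by $\calc_{2+m,l}(f)$ requires $\calc_{2+m,l}(f)\neq 0$; this follows from the non-vanishing of $D(l,f)$ established in the proof of Proposition~\ref{prop.holomorphy-Klingen-Eisenstein}~(2), and you should cite it explicitly, since otherwise the $j=1$ term is invisible in the pullback and the descent says nothing. Finally, your parameter check for Proposition~\ref{prop.algebraicity-standard-L} is fine (with $r=2$, $n_0=1$, $m=l-2$ even, $l\ge6$, $k\ge l$), though remembering that ${\bf L}$ depends on the normalization of $f$ and that this is cleanly handled only after you have fixed $f$ with coefficients in $\QQ(f)$. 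With these repairs the argument is sound, but it is considerably heavier than the direct Fourier-coefficient computation the paper invokes.
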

\begin{proof}
The assertion in the case  $k=l$ has been proved by Mizumoto \cite{Mizumoto91}, and the other case can also be proved by using the same method. \end{proof}

\begin{proposition} \label{prop.rationality2} 
Let  the notation and the assumption be as in Proposition \ref{prop.pullback1}. Let $f$ be a Hecke eigenform in $S_{k}(\varGamma^{(2)})$. Then, for any $N \in \calh_{n_2}(\ZZ)_{>0 }$ and $N_1 \in \calh_2(\ZZ)_{>0}$,
the value $\calc_{n_2+2,l}(f)\overline{a(N,[\widetilde f]_{{\rho}_2}^{{\rho}_{n_2}})(V)} a(N_1,f)$ belongs to $\QQ(f)[V]_{{\bf k}_2'}$, where $\widetilde f$ is that in $\mathrm{(IS)}$.
\end{proposition}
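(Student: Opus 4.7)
The plan is to apply Corollary~\ref{cor.pullback2} with $n_1=2$ and then extract the $f$-contribution by linear algebra combined with a Galois-equivariance argument. Since $\lambda=(k-l,k-l,0,0)$ has depth two, the space $V_{\rho_{\lambda,2}}$ is one-dimensional, spanned by $(\det U)^{k-l}$, and the isomorphism $\iota:S_{k}(\varGamma^{(2)})\xrightarrow{\sim}S_{\rho_2}(\varGamma^{(2)})$, $g\mapsto g\,(\det U)^{k-l}$, identifies a basis $\{f_{2,j}\}_{j=1}^{d(2)}$ of $S_{\rho_2}(\varGamma^{(2)})$ of Hecke eigenforms with scalar Hecke eigenforms $g_{2,j}\in S_{k}(\varGamma^{(2)})$. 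First I would normalize the $g_{2,j}$ so that $a(N_1,g_{2,j})\in\QQ(g_{2,j})$ (possible by the rational structure of $S_k(\varGamma^{(2)})$), fix $j_0$ with $f=f_{2,j_0}$, and note that $\QQ(f)=\QQ(g_{2,j_0})$ is totally real.

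Because $n_1=2$ forces $\widetilde M_{\rho_2}(\varGamma^{(2)})=S_{\rho_2}(\varGamma^{(2)})$, no complementary terms $F_j$ appear in Corollary~\ref{cor.pullback2}. Taking the $N_1$-th Fourier coefficient of the resulting identity and factoring out the common $(\det U)^{k-l}$ gives
\[
\tilde\epsilon(N_1,N)(V)=\gamma_2\sum_{j=1}^{d(2)}a(N_1,g_{2,j})\,X_{j}(V),
\qquad
X_{j}(V):=\calc_{n_2+2,l}(f_{2,j})\,\overline{a(N,[f_{2,j}]_{\rho_2}^{\rho_{n_2}})(V)},
\]
with $\tilde\epsilon(N_1,N)(V)\in\QQ[V]_{{\bf k}_{n_2}'}$ by Corollary~\ref{cor.rationality1}. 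Choosing $N_1^{(1)},\dots,N_1^{(d(2))}\in\calh_2(\ZZ)_{>0}$ so that $M=\bigl(a(N_1^{(i)},g_{2,j})\bigr)$ is invertible (possible by the linear independence of the $g_{2,j}$), I would invert this linear system to obtain each $X_{j}(V)$ as an $\overline{\QQ}$-linear combination of the $\tilde\epsilon(N_1^{(i)},N)(V)$; in particular $X_{j}(V)\in\overline{\QQ}[V]_{{\bf k}_{n_2}'}$.

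The crux is a Galois-equivariance argument. For $\sigma\in\Gal(\overline{\QQ}/\QQ(f))$, $\sigma$ fixes $g_{2,j_0}$ and permutes the remaining $g_{2,j}$, so $\sigma(M)=MP_{\sigma}$ for the permutation matrix $P_{\sigma}$ determined by $P_{\sigma}e_j=e_{\sigma(j)}$; inverting gives $\sigma(M^{-1})=P_{\sigma^{-1}}M^{-1}$, and the only nonzero entry in row $j_0$ of $P_{\sigma^{-1}}$ sits in column $\sigma(j_0)=j_0$, so the $j_0$-th row of $M^{-1}$ is $\sigma$-invariant. Since the $\tilde\epsilon(N_1^{(i)},N)(V)$ are already in $\QQ[V]_{{\bf k}_{n_2}'}$, this forces $\sigma(X_{j_0}(V))=X_{j_0}(V)$, hence $X_{j_0}(V)\in\QQ(f)[V]_{{\bf k}_{n_2}'}$. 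Finally multiplying by $a(N_1,f)=a(N_1,g_{2,j_0})\,(\det U)^{k-l}$, which lies in $\QQ(f)[U]_{{\bf k}_2'}$ and is identified with a scalar in $\QQ(f)$ via the one-dimensionality of $V_{\rho_{\lambda,2}}$, yields the desired statement. I expect the only real obstacle to be the bookkeeping: verifying that $\sigma$ acts compatibly on $\{g_{2,j}\}$ and on $[f_{2,j}]_{\rho_2}^{\rho_{n_2}}$ (via Proposition~\ref{prop.rationality3}) so that the column-fixing property of $P_\sigma$ at index $j_0$ really does propagate to $\sigma$-invariance of $X_{j_0}(V)$.
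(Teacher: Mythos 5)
Your proof is correct, but it takes a genuinely different route from the paper's. The paper's proof is extremely short: it observes that the quantity is invariant under $f \mapsto cf$, normalizes $f$ so that $f \in S_{{\bf k}_2}(\varGamma^{(2)})(\QQ(f))$, and then invokes Proposition~\ref{prop.rationality3} to conclude $a(N,[f]^{\rho_{n_2}}) \in \QQ(f)[V]$; the third ingredient, namely that ${\bf L}(l-2,f,\St)$ (which carries the $1/(f,f)$ factor inside $\calc_{n_2+2,l}(f)$) also lies in $\QQ(f)$ once $f$ is so normalized, is left implicit and ultimately rests on the Mizumoto result already quoted in the proof of Proposition~\ref{prop.algebraicity-standard-L}. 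Your argument instead runs the pullback identity of Corollary~\ref{cor.pullback2} with $n_1=2$, inverts the matrix $\bigl(a(N_1^{(i)},g_{2,j})\bigr)$ by choosing several matrices $N_1^{(i)}$, and closes with a Galois-equivariance argument to isolate the $j_0$-component. What this buys you is a self-contained deduction that does not appeal to the algebraicity of the normalized standard $L$-value at all; what it costs is length, and you have to know that the complementary $F_j$-terms vanish when $n_1=2$, which holds because $\widetilde M_{\rho_2}(\varGamma^{(2)})=S_{\rho_2}(\varGamma^{(2)})$ when $k>l$ (the $k=l$ scalar case would need a word, since there $\widetilde M_{\rho_2}=M_{\rho_2}$ and Eisenstein contributions can appear).

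One small clarification on the worry you flag at the end: the $\sigma$-equivariance of the vector $(X_j)_j$ does not actually require Proposition~\ref{prop.rationality3}. Applying $\sigma\in\Gal(\overline{\QQ}/\QQ(f))$ to the system $\tilde\epsilon = \gamma_2 M X$ and using that $\tilde\epsilon$ and $\gamma_2$ are $\QQ$-rational and that $\sigma(M)=MP_\sigma$ (which follows just from the $\QQ$-rational structure of $S_k(\varGamma^{(2)})$ and the normalization of the $g_{2,j}$) already forces $\sigma(X)=P_{\sigma^{-1}}X$ by invertibility of $M$, hence $\sigma(X_{j_0})=X_{j_0}$. So the Galois step is self-contained; Proposition~\ref{prop.rationality3} plays no role in your argument and could be dropped from the proposal.
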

\begin{proof}
The value in question remains unchanged if we replace $f$ by $c f$ with $c \in \CC^\times$.
Moreover we can take $c \in \CC^\times$ so that  $cf \in S_{{\bf k}_2}(\varGamma^{(2)})(\QQ(f))$. Thus the assertion follows from Proposition \ref{prop.rationality3} remarking that $a(N_1,[\widetilde f]_{\rho_2}^{\rho_2})=a(N_1,f)(\det U)^{k-l}$.
\end{proof}

The following lemma can be proved by a careful analysis of the proof of \cite[Lemma 5.1]{Katsurada08}.
\begin{lemma} \label{lem.congruence} 
Let $F_1,\ldots,F_d$ be Hecke eigenforms in  $M_{{\rho}_{n_1}}(\varGamma^{(n_1)})$ linearly independent over $\CC$. Let  $K$ be the composite  field $\QQ(F_1)\cdots \QQ(F_d)$, $\frkO$ the ring of integers in $K$ and $\frkp$ a prime ideal of $K$.
Let  $G(Z,U,V) \in (M_{{\rho}_{n_1}}(\varGamma^{(n_1)}) \otimes  V_{n_1,\lambda})(\frkO_{(\frkp)})$ and assume the following conditions
\begin{itemize}
\item[(1)] $G$ is expressed as 
\[G(Z,U,V)=\sum_{i=1}^d c_i(V)  F_i(Z)(U)\]
with $c_i(V) \in  V_{n_1,\lambda}$. 
\item[(2)] $c_1(V) a(A_1,F_1)(U) \in (V_{n_1,\lambda} \otimes V_{n_1,\lambda})(K)$ and $\ord_{\frkp}(c_1(V)a(A_1,F_1)(U)) <0$ for some $A_1 \in \calh_{n_1}(\ZZ)$. 
\end{itemize}
Then there exists $i \not=1$ such that
\[F_i \equiv_{\ev} F_1 \pmod {\frkp}.\]
\end{lemma}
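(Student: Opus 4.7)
The plan is to follow the strategy of \cite{Katsurada08}, Lemma 5.1, adapted to the vector-valued tensor-product setting. Argue by contradiction: suppose that $F_i \not\equiv_{\ev} F_1 \pmod{\frkp}$ for every $i \neq 1$. Then, after fixing a prime of $K$ above $\frkp$ (still denoted $\frkp$) and, if necessary, extending it to $\QQ(F_1)\QQ(F_i)$, for each $i=2,\ldots,d$ the non-congruence hypothesis yields an integral Hecke operator $T_i \in {\bf L}_{n_1}^{(\rho_{n_1})}$ such that
\[
u_i := \lambda_{F_1}(T_i)-\lambda_{F_i}(T_i)
\]
is a $\frkp$-unit. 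Note that by Proposition \ref{prop.Hecke-integrality} each eigenvalue $\lambda_{F_i}(T_i)$ lies in $\frkO$, so $u_i \in \frkO$.

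Next, I would form the operator
\[
P = \prod_{i=2}^{d}\bigl(T_i - \lambda_{F_i}(T_i)\bigr),
\]
viewed as acting on the first variable $Z$ of $G(Z,U,V)$. Because each $T_i$ lies in ${\bf L}_{n_1}^{(\rho_{n_1})}$ and each $\lambda_{F_i}(T_i) \in \frkO_{(\frkp)}$, the operator $P$ sends $(M_{\rho_{n_1}}(\varGamma^{(n_1)})\otimes V_{\rho_{n_1}})(\frkO_{(\frkp)})$ into itself. On the eigenform expansion of $G$, $P$ acts by the scalar $\prod_{j=2}^{d}(\lambda_{F_i}(T_j)-\lambda_{F_j}(T_j))$ on the $F_i$-component, which vanishes for $i\geq 2$ (since the $j=i$ factor is $0$) and equals $u := \prod_{j=2}^{d} u_j$ for $i=1$. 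Hence
\[
G\big|P \;=\; c_1(V)\, u\, F_1(Z)(U).
\]

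The contradiction comes from comparing Fourier coefficients at $A_1$. On the one hand, since $G\big|P$ lies in $(M_{\rho_{n_1}}(\varGamma^{(n_1)})\otimes V_{\rho_{n_1}})(\frkO_{(\frkp)})$, its Fourier coefficient at $A_1$ must have non-negative $\frkp$-valuation. On the other hand, this coefficient equals $u\, c_1(V)\, a(A_1,F_1)(U)$, and since $u$ is a $\frkp$-unit while $\ord_{\frkp}(c_1(V)\, a(A_1,F_1)(U)) < 0$ by hypothesis (2), its $\frkp$-valuation is strictly negative. This contradiction proves the lemma.

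The main technical point to check carefully is the existence of the separating Hecke operators $T_i$ \emph{within} the integral Hecke algebra ${\bf L}_{n_1}^{(\rho_{n_1})}$; this is exactly what the definition of $\equiv_{\ev}$ demands, so no additional work is needed beyond invoking Propositions \ref{prop.Hecke-stable} and \ref{prop.Hecke-integrality}. The remaining verifications—that $P$ preserves the $\frkO_{(\frkp)}$-structure of the tensor module and that the scalar $u$ does not affect $\frkp$-valuations—are routine given the integrality framework set up in Section~2.
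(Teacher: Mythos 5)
Your proof is correct and follows the same strategy as Lemma 5.1 of Katsurada's paper, which is exactly the source the authors cite for this lemma: form the separating operator $P = \prod_{i\geq 2}(T_i - \lambda_{F_i}(T_i))$, note that $G|P = u\,c_1(V)\,F_1(Z)(U)$ with $u$ a $\frkp$-unit, and derive a contradiction on the $\frkp$-valuation of the Fourier coefficient at $A_1$. One small point worth tightening: you invoke Proposition \ref{prop.Hecke-integrality} for the eigenvalues $\lambda_{F_i}(T_i)$, but that proposition as stated is restricted to cusp forms, whereas the $F_i$ here are general Hecke eigenforms in $M_{\rho_{n_1}}(\varGamma^{(n_1)})$ (and in the intended applications include Klingen--Eisenstein series). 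The integrality still holds—if $F_i$ is normalized so that some Fourier coefficient is a $\frkp$-unit in $\frkO$, then $\lambda_{F_i}(T)a(A,F_i) = a(A, F_i|T)$ is $\frkp$-integral for all $A$, hence so is $\lambda_{F_i}(T)$—but you should supply this one-line argument rather than cite a proposition that formally does not cover the case. Also note that $\QQ(F_1)\QQ(F_i) \subseteq K$, so one restricts the prime $\frkp$ of $K$ to a prime of $\QQ(F_1)\QQ(F_i)$ rather than extends it; the direction in your parenthetical remark is reversed, though this has no effect on the argument.
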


\begin{theorem} \label{th.congruence-Klingen2}
Let $k$ and $l$ be positive even integers such that $k \ge l \ge 6$ and put ${\bf k}=(k,k,l,l )$ and $\widetilde M_{\bf k}(\varGamma^{(4)})=\widetilde M_{\rho_4}(\varGamma^{(4)})$.  Let $F \in S_k(\varGamma^{(2)})$ be a Hecke eigenform, and  $\frkp$ a prime ideal of $\QQ(F)$.  Suppose that 
$\frkp$ divides $|a(A_1,F)|^2{\bf L}(l-2,F,\St)$ and does not divide 
\[\calc_{8,l}(F)a(A_1,F)\overline{a(A,[F]^{{\bf k}})}\]
 for some $A_1 \in \calh_2(\ZZ)_{>0}$ and $A \in \calh_4(\ZZ)_{>0}$, where $[F]^{\bf k}=[\widetilde F]_{\rho_2}^{\rho_4}$ as stated in Section 1.
Then there exists a Hecke eigenform $G \in\widetilde M_{{\bf k}}(\varGamma^{(4)})$ such that $G$ is not a constant multiple of $[F]^{{\bf k}}$
 and
\[G \equiv_{{\ev}}  [F]^{{\bf k}} \pmod \frkp .\]
\end{theorem}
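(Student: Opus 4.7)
The plan is to adapt the Katsurada--Mizumoto pullback-and-congruence argument (\cite{Katsurada-Mizumoto12}) to the present vector-valued setting ${\bf k}=(k,k,l,l)$, using Corollary~\ref{cor.pullback2}, Corollary~\ref{cor.rationality1}, and Lemma~\ref{lem.congruence} as the main tools.

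First I would apply Corollary~\ref{cor.pullback2} with $n_1=n_2=4$ (so $n_1+n_2=8$) and $N=A$, where $A\in\calh_4(\ZZ)_{>0}$ is the matrix supplied by the hypothesis. Fix a basis $\{f_{2,1}=F,f_{2,2},\ldots,f_{2,d(2)}\}$ of $S_k(\varGamma^{(2)})$ consisting of Hecke eigenforms, each normalized to lie in $S_k(\varGamma^{(2)})(\QQ(f_{2,j}))$, and extend $\{[f_{2,j}]^{\bf k}\}_{j}$ by Hecke eigenforms $\{F_j\}_{j=d(2)+1}^{d}$ to a Hecke-eigenform basis of $\widetilde M_{\bf k}(\varGamma^{(4)})$. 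Corollary~\ref{cor.pullback2} then gives
\[
g_A^{(4)}(Z_1)
=\gamma_2\sum_{j=1}^{d(2)}\calc_{8,l}(f_{2,j})[f_{2,j}]^{\bf k}(Z_1)(U)\,\overline{a(A,[f_{2,j}]^{\bf k})(V)}
 +\sum_{j=d(2)+1}^{d}F_j(Z_1)(U)\,a(A,\widetilde G_j)(V),
\]
and, under the implicit "large prime" assumption $p_\frkp>2k$, Corollary~\ref{cor.rationality1} guarantees $g_A^{(4)}\in(M_{\rho_4}(\varGamma^{(4)})\otimes V_{\rho_{\lambda,4}})(\frkO_{(\frkp)})$.

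Second, the coefficient of $[F]^{\bf k}=[f_{2,1}]^{\bf k}$ in this decomposition is $c_1(V)=\gamma_2\calc_{8,l}(F)\overline{a(A,[F]^{\bf k})(V)}$, with $\gamma_2$ a $\frkp$-unit rational and $\calc_{8,l}(F)=\tfrac{Z(8,l)}{Z(4,l)}{\bf L}(l-2,F,\St)$; by Propositions~\ref{prop.rationality2} and \ref{prop.rationality3}, $c_1(V)\in V_{\rho_4}(\QQ(F))$. A valuation calculation combining hypothesis (i) $\ord_\frkp(|a(A_1,F)|^2{\bf L}(l-2,F,\St))\geq 1$ with hypothesis (ii) $\ord_\frkp\bigl(\tfrac{Z(8,l)}{Z(4,l)}{\bf L}(l-2,F,\St)a(A_1,F)\overline{a(A,[F]^{\bf k})}\bigr)\leq 0$, together with the total reality of $\QQ(F)$ (so that $|a(A_1,F)|^2=a(A_1,F)^2$), yields an SSYT-basis specialization $(U_0,V_0)$ at which $\overline{a(A,[F]^{\bf k})(V_0)}$ and $a(A,[F]^{\bf k})(U_0)$ both attain their minimal $\frkp$-valuations and for which $\ord_\frkp\bigl(c_1(V_0)\,a(A,[F]^{\bf k})(U_0)\bigr)<0$.

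Hence hypothesis (2) of Lemma~\ref{lem.congruence} holds for $F_1=[F]^{\bf k}$ (with $A_1$ in the lemma taken to be $A\in\calh_4(\ZZ)$), and the lemma produces some $F_i$ in the basis with $i\neq 1$ satisfying $F_i\equiv_{\ev}[F]^{\bf k}\pmod\frkp$; setting $G:=F_i$ gives a Hecke eigenform not a constant multiple of $[F]^{\bf k}$ and completes the proof. The principal obstacle is the valuation bookkeeping in the second step: one must reconcile the polynomial-valued integrality of $a(A,[F]^{\bf k})(V)$ relative to the SSYT $\ZZ$-structure of $V_{\rho_4}$ with the scalar divisibility conditions in (i) and (ii), and verify that the specialization $(U_0,V_0)$ can be chosen so that both Fourier-coefficient factors simultaneously attain their minimum $\frkp$-valuations; securing the large-prime hypothesis $p_\frkp>2k$ to invoke Corollary~\ref{cor.rationality1} is a secondary but essential point.
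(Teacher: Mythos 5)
Your proposal is correct and follows the paper's own proof: apply Corollary~\ref{cor.pullback2} with $n_1=n_2=4$ and $N=A$, identify $c_1(V)=\gamma_2\calc_{8,l}(F)\overline{a(A,[F]^{\bf k})(V)}$, verify $\ord_\frkp\bigl(c_1(V)\,a(A,[F]^{\bf k})(U)\bigr)<0$, and invoke Lemma~\ref{lem.congruence}. The one step you leave vague is handled in the paper by multiplying and dividing $\calc_{8,l}(F)\,a(A,[F]^{\bf k})(U)\,\overline{a(A,[F]^{\bf k})(V)}$ by $|a(A_1,F)|^2{\bf L}(l-2,F,\St)$, so the numerator becomes a product of two copies of the hypothesis-(2) quantity (each of $\ord_\frkp\le 0$) over a denominator of $\ord_\frkp\ge 1$; the ``simultaneous minimization'' issue you flag is not an obstacle, since $\ord_\frkp$ is additive across the $U$- and $V$-factors in the SSYT tensor basis, and no literal specialization $(U_0,V_0)$ is needed.
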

\begin{proof} The assertion in the case  $k=l$  has been proved in \cite{Katsurada-Mizumoto12} in more general setting  and the other 
case can also be proved using the  same argument as in its  proof. But for the sake of convenience, we here 
give an outline of the proof. Suppose that $k >l$. Take a basis $\{F_j\}_{1 \le j \le d}$ of $\widetilde M_{\bf k}(\varGamma^{(4)})$ such that $F_1 = [F]^{{\bf k}}$. Then, by Corollary \ref{cor.pullback2}, for any $A \in \calh_4(\ZZ)_{>0}$ we have
\begin{align*}
g_{(k,l,4,4),A}^{(4)}(Z_1)=\sum_{j=1}^d c_j(A,V)F_j(Z_1)(U),
\end{align*}
where $c_1(A,V)=\gamma_2\calc_{8,l}(F)\overline{a(A,[F]^{{\bf k}})(V)}$ and $c_j(A,V)=\overline{a(A,\widetilde G_j)(V)}$
for some $\widetilde G_j \in M_{{\bf k}}(\varGamma^{(4)})$ for $2 \le j \le d$.
We have
\begin{align*}
&{Z(8,l) \over Z(4,l)}\calc_{8,l}(F)a(A,[F]^{{\bf k}})(U)\overline{a(A,[F]^{{\bf k}})(V)}
=\Bigl({Z(8,l) \over Z(4,l)}\Bigr)^2\\
\times &{{\bf L}(l-2,F,\St)a(A_1,F)a(A,[F]^{{\bf k}})(U) {\bf L}(l-2,F,\St)\overline{a(A_1,F)a(A,[F]^{{\bf k}})(V)} \over |a(A_1,F)|^2 {\bf L}(l-2,F,\St)}.
\end{align*}
We note that the reduced denominator of $\displaystyle {Z(8,l) \over Z(4,l)}$ is not divisible by $\frkp$ by the theorem of v. Staut-Clausen.
Hence we have 
\[\ord_{\frkp}(\gamma_2\calc_{8,l}(F)a(N,[F]^{{\bf k}})(U)\overline{a(N,[F]^{{\bf k}})(V)})<0.\] 
Hence the assertion follows from Lemma \ref{lem.congruence}.
\end{proof}

\begin{proposition} \label{prop.congruence-Klingen-Ikeda}
Let $k$ and $l$ be even integers such that $6 \le l \le k$.
Let $f$ be a primitive form in $S_{2k-2}(\SL_2(\ZZ))$. Suppose that a prime ideal $\frkp$ in $\QQ(f)$ satisfies the following conditions (1), (2), (3):
\begin{itemize}
\item[(1)] $p_{\frkp} \ge 2k-2$.
\item[(2)] $\frkp$ divides ${\bf L}(k+l-4,f)/{\bf L}(k,f)$. 
\item[(3)] $\frkp$ divides neither $\frkD_f$ nor $\zeta(3-2k)$, where $\frkD_f$ is the ideal of $\QQ(f)$ defined in Proposition \ref{prop.p-integrality}.
\end{itemize}
Then for any $N \in \calh_2(\ZZ)_{>0}$ such that $\frkp \nmid \frkd_N$, we have
\[\ord_{\frkp}(|a(N,\scri_2(f))|^2{\bf L}(l-2,\scri_2(f),{\rm St}))>0.\]
Here, $\frkd_N$ is the discriminant of $\QQ(\sqrt{-\det N})/\QQ$ as defined before.
\end{proposition}
\begin{proof}
Let $g$ be a Hecke eigenform in the Kohnen plus space $S_{k-1/2}^+(\varGamma_0(4))$ 
corresponding to $f$ under the Shimura correspondence. 
For any $N \in \calh_2(\ZZ)_{>0}$ we have $a(N,\scri_2(f))=b a(|\frkd_N|,g)$ with $b \in \ZZ$. 
Hence
\begin{align*}
{\bf L}(l-2,\scri_2(f),\St)|a(N,\scri_2(f))|^2
=b^2A_{2,k,l-2}{L(l-2,\scri_2(f),\St) \over \pi^{2k+3l-9}\langle \scri_2(f), \ \scri_2(f) \rangle}|a(|\frkd_N|,g)|^2
\end{align*}
with $A_{2,k,l-2} \in \ZZ_{(p_{\frkp})}$ \ (cf. Remark \ref{rem.integrality-gamma-factor}).
By definition, we have
\begin{align*}
L(l-2,\scri_2(f),\St)=\zeta(l-2)L(k+l-3,f)L(k+l-4,f).
\end{align*}
Moreover, by \cite{Kohnen-Skoruppa89}, we have
\begin{align*}
\langle \scri_2(f),\scri_2(f) \rangle=2^{k-2}\langle g,g \rangle \Gamma_\CC(2)\zeta(2) \Gamma_\CC(k)L(k,f),
\end{align*}
and  by \cite{Kohnen-Zagier81} we have
\begin{align*} 
{|a(|\frkd_N|,g)|^2 \over \langle g,g \rangle}={2^{k-2}|\frkd_N|^{k-3/2}
\Gamma_\CC(k-1)L(k-1,f,({\frkd_N \over  })) \over \langle f,f \rangle}.
\end{align*}
We note that $\tau(({\frkd_N \over  }))=\sqrt{-1}|\frkd_N|^{1/2}$, and
$\pi^{2-l}\zeta(l-2)$ and $\pi^{-2}\zeta(2)$ belong to $\ZZ_{(p_\frkp)}$. Hence, by a simple computation, we have

\begin{align*}
{\bf L}(l-2,\scri_2(f),\St)|a(N,\scri_2(f))|^2 & =\epsilon_{k,N} {{\bf L}(k+l-4, f
) \over   {\bf L}(k,f)} \times {\bf L}(k+l-3,k-1;f;{\bf 1},({\frkd_N \over  }))
\end{align*}
where $\epsilon_{k,N}$ is a $\frkp$-integral rational number.  Since $\frkp$ divides neither $\frkD_f\frkd_{N}$ nor $\zeta(3-2k)$, by Proposition \ref{prop.p-integrality}, the value 
${\bf L}(k+l-3,k-1;f;{\bf 1},({\frkd_N \over  }))$
is $\frkp$-integral. Thus the assertion holds.
\end{proof}
The next theorem clarifies what we need to look at 
to try to prove Conjecture \ref{conj.main-conjecture}.
\begin{theorem} \label{th.main-congruence}
Let $k$ and $l$ positive even integers such that $6 \le l \le k$, and put ${\bf k}=(k,k,l,l)$.
Let $f$ be a primitive form in $S_{2k-2}(\SL_2(\ZZ))$ and $\frkp$ a prime ideal of $\QQ(f)$ such that
\begin{itemize}
\item[(1)] $p_\frkp \ge 2k-2$.
\item[(2)] $\frkp$ divides ${\bf L}(k+l-4,f)/{\bf L}(k,f)$.
\item[(3)] $\frkp$ divides neither $\frkD_f$ nor $\zeta(3-2k)$.
\item[(4)] $\frkp$ divides neither $\calc_{8,l}(\scri_2(f))a(A_1,\scri_2(f))\overline{a(A,[\scri_2(f)]^{{\bf k}})}$ nor $\frkd_{A_1}$
for some $A_1 \in \calh_2(\ZZ)_{>0}$ and $A \in \calh_4(\ZZ)_{>0}$. 
\end{itemize}
Then there exists a Hecke eigenform $G$ in $\widetilde M_{{\bf k}}(\varGamma^{(4)})$ such that
$G$ is not a constant multiple of $[\scri_2(f)]^{{\bf k}}$ and 
\[G \equiv_{\ev} [\scri_2(f)]^{{\bf k}} \pmod{\frkp}.\]
\end{theorem}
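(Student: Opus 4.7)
The strategy is to specialize Theorem~\ref{th.congruence-Klingen2} to $F=\scri_2(f)\in S_k(\varGamma^{(2)})$ and use Proposition~\ref{prop.congruence-Klingen-Ikeda} to supply the required $\frkp$-adic divisibility. First, I would note that $\scri_2(f)$ is a Hecke eigenform with $\QQ(\scri_2(f))\subseteq\QQ(f)$, and that after rescaling by a $\frkp$-unit one has $\scri_2(f)\in S_k(\varGamma^{(2)})(\QQ(f))$ via the Kohnen--Zagier--Skoruppa description of its Fourier coefficients. This validates the rationality prerequisite of Theorem~\ref{th.congruence-Klingen2}, and the numerical hypotheses $k\equiv 0\pmod 2$, $k\ge l\ge 6$ match.

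Next, the two arithmetic hypotheses of Theorem~\ref{th.congruence-Klingen2} must be verified for suitable $A_1\in\calh_2(\ZZ)_{>0}$ and $A\in\calh_4(\ZZ)_{>0}$. Since $\calc_{8,l}(\scri_2(f))=\frac{Z(8,l)}{Z(4,l)}{\bf L}(l-2,\scri_2(f),\St)$ by definition, the ``non-divisibility'' hypothesis of Theorem~\ref{th.congruence-Klingen2} is verbatim our hypothesis~(4). The ``divisibility'' hypothesis $\frkp\mid|a(A_1,\scri_2(f))|^2{\bf L}(l-2,\scri_2(f),\St)$ is then supplied by Proposition~\ref{prop.congruence-Klingen-Ikeda}: its three hypotheses match our conditions~(1), (2), (3), and its conclusion gives exactly this divisibility for every $N\in\calh_2(\ZZ)_{>0}$ with $\frkp\nmid\frkd_N$. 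With both hypotheses of Theorem~\ref{th.congruence-Klingen2} in hand for the same $A_1$, its conclusion produces the desired Hecke eigenform $G\in\widetilde M_{\bf k}(\varGamma^{(4)})$, not a constant multiple of $[\scri_2(f)]^{\bf k}$, satisfying $G\equiv_{\ev}[\scri_2(f)]^{\bf k}\pmod{\frkp}$.

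The main point to nail down is the compatibility between the two sources of $A_1$: Proposition~\ref{prop.congruence-Klingen-Ikeda} needs $\frkp\nmid\frkd_{A_1}$, whereas condition~(4) only supplies some $A_1$ with $\frkp\nmid a(A_1,\scri_2(f))$. The expected resolution exploits the identity $a(A_1,\scri_2(f))=b\cdot a(|\frkd_{A_1}|,g)$, where $g$ is the half-integral weight form in Kohnen's plus space matching $f$: the non-divisibility of $a(A_1,\scri_2(f))$ forces non-vanishing of the $|\frkd_{A_1}|$-th coefficient of $g$, and one argues that $A_1$ may be chosen (possibly by perturbing within its $\SL_2(\ZZ)$-class, or by varying over admissible discriminants compatible with~(4)) so that $\frkp\nmid\frkd_{A_1}$ as well. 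Modulo this purely technical compatibility check, the proof is a direct assembly of Theorem~\ref{th.congruence-Klingen2} and Proposition~\ref{prop.congruence-Klingen-Ikeda}.
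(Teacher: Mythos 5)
Your proposal matches the paper's own proof, which is the one-line statement that the result follows by assembling Theorem~\ref{th.congruence-Klingen2} (applied to $F=\scri_2(f)$) with Proposition~\ref{prop.congruence-Klingen-Ikeda}: hypothesis~(4) is verbatim the non-divisibility input of Theorem~\ref{th.congruence-Klingen2} since $\calc_{8,l}(F)=\frac{Z(8,l)}{Z(4,l)}\mathbf{L}(l-2,F,\St)$ by definition, and hypotheses~(1)--(3) feed Proposition~\ref{prop.congruence-Klingen-Ikeda} to supply the complementary divisibility $\ord_\frkp\bigl(|a(A_1,\scri_2(f))|^2\mathbf{L}(l-2,\scri_2(f),\St)\bigr)>0$. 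The compatibility point you flag — that Proposition~\ref{prop.congruence-Klingen-Ikeda} produces the divisibility only for $A_1$ with $\frkp\nmid\frkd_{A_1}$, while condition~(4) does not explicitly impose this — is a genuine imprecision that the paper's terse proof does not address; the intended reading is that $A_1$ in~(4) should be taken with $\frkp\nmid\frkd_{A_1}$, which costs nothing in practice (in every numerical application of Section~8 the chosen $A_1$ has $\frkd_{A_1}\in\{-3,-4\}$ while $p_\frkp\in\{41,4289,97\}$), and your suggested ``perturb $A_1$'' repair is a reasonable alternative but is not needed if one simply adds the clause to the hypothesis. Modulo that cosmetic fix, your reconstruction is exactly the paper's argument.
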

\begin{proof} The assertion follows from Theorem \ref{th.congruence-Klingen2} and Proposition \ref{prop.congruence-Klingen-Ikeda}.
\end{proof}

\section{Fourier coefficients of Klingen-Eisenstein lift}
Let ${\bf k}=(k,k,l,l)$  with $k,l$ positive even integers such that $k \ge l$. To confirm the condition (4) in Theorem \ref{th.main-congruence}, we give a formula for computing ${\bf L}(l-2,F,\St)a(T,F)\overline{a(N,[F]^{{\bf k}}})$
for a Hecke eigenform $F$ in $S_k(\varGamma^{(2)})$, $T \in \calh_2(\ZZ)_{>0}$ and $N \in \calh_4(\ZZ)_{>0}$.
For $T \in \calh_{2}(\ZZ)_{>0}$ and $N \in \calh_{4}(\ZZ)_{>0}$, let $\epsilon_{k,l,2,4}(T,N)(U,V)$ be as in (E)  and put $g_{N}=g_{(k,l,2,4),N}^{(2)}$. Recall that $U$ and $V$ are $2 \times 2$ and $2 \times 4$ matrices, respectively, of variables. We note that $\epsilon_{k,l,2,4}(T,N)(U,V)$ can be expressed as
\[\epsilon_{k,l,2,4}(T,N)(U,V)=(\det U)^{k-l}\epsilon_{k,{\bf k}}(T,N)(V)\]
with $\epsilon_{k,{\bf k}}(T,N)=\epsilon_{k,{\bf k}}(T,N)(V) \in \CC[V]_{(k-l,k-l,0,0)}$.
Then $g_N$ is expressed as 
$$g_N(W)=\sum_{T \in \calh_2(\ZZ)}(\det U)^{k-l}{\epsilon}_{k,{\bf k}}(T,N){\bf e}(\mathrm{tr}(TW)).$$
Now, for a positive integer  $m$, let $T(m)$ be the element of ${\bf L}_2$ defined in Section 3. 
For  a positive integer $m=p_1\cdots p_r $ with $p_i$ a prime number, we define the Hecke operator $T^{(m)}=T(p_1)\cdots T(p_r)$. {We make the convention that $T^{(1)}=T(1)$.
We note that $T^{(m)}=T(m)$ if $p_1,\ldots,p_r$ are distinct, but in general it is not.
For each $m \in \ZZ_{> 0}$ and $N \in \calh_4(\ZZ)_{>0},$ write $g_N |T^{(m)}(W)$ as
\begin{align*}
g_N|T^{(m)}(W)=\sum_{T \in \calh_2(\ZZ)_{>0}} (\det U)^{k-l}\epsilon_{k,{\bf k}}(m,T,N){\bf e}(\mathrm{tr}(T W))
\end{align*}
with $\epsilon_{k,{\bf k}}(m,T,N) \in \CC[V]_{(k-l,k-l,0,0)}$.

Let $\calm_{k,l}=M_k(\varGamma^{(2)})$ or $S_{k}(\varGamma^{(2)})$ according as $k=l$ or not, and 
let $\{F_j \}_{j=1}^d$ be a basis of $\calm_{k,l}$ consisting of Hecke eigenforms. 
Furthermore write 
$$F_j|T^{(m)}(z)=\lambda_{j,m}F_j(z).$$
Then the following proposition is a consequence of applying $T(m)$ to the formula in Corollary \ref{cor.pullback2} with $n_1=2$ and $n_2=4$.
\begin{proposition} \label{prop.fc-pullback} 
Notation being as above, we have
\[\epsilon_{k,{\bf k}}(m,T,N)=\sum_{j=1}^d \lambda_{j,m} a(T,F_j)B(F_j)\]
for any $N \in \calh_4(\ZZ)_{>0}$, $T\in \calh_2(\ZZ)_{>0}$ and $m\in \ZZ_{> 0}$, where $B(F_j)$ is a certain element of $\CC[V]_{(k-l,k-l,0,0)}$, and in particular we have
\[B(F_j)=\gamma_2 \calc_{6,l}(F_j) \overline{a(N,[F_j]^{{\bf k}})}\]
if $F_j \in S_{k}(\varGamma^{(2)})$. Here, $\gamma_2$ is the rational number in Proposition \ref{prop.pullback1}.
\end{proposition}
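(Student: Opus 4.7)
The plan is to read Proposition \ref{prop.fc-pullback} off directly from Corollary \ref{cor.pullback2} by taking a Fourier expansion in the $W$-variable and then invoking the Hecke-eigenform property of the chosen basis.

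First I would specialize Corollary \ref{cor.pullback2} to $(n_1,n_2)=(2,4)$. Because the Klingen-Eisenstein lift from degree two to itself is trivial, $[f_{2,j}]^{\rho_2}_{\rho_2}=f_{2,j}$, so the corollary reads
\begin{align*}
g_N(W) &= \gamma_2 \sum_{j=1}^{d(2)} \calc_{6,l}(f_{2,j})\, f_{2,j}(W)(U)\, \overline{a(N,[f_{2,j}]^{{\bf k}})(V)} \\
&\quad + \sum_{j=d(2)+1}^{d} F_j(W)(U)\, a(N,\widetilde G_j)(V),
\end{align*}
where $\{f_{2,j}\}_{1\le j\le d(2)}$ is a Hecke eigenbasis of $S_{\rho_2}(\varGamma^{(2)})$ and $\{F_j\}_{j>d(2)}$ extends it to a Hecke eigenbasis of $\widetilde M_{\rho_2}(\varGamma^{(2)})$. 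The $F_j$'s contribute only when $k=l$ (otherwise $\widetilde M_{\rho_2}=S_{\rho_2}$); in that case they account for the Eisenstein part of $M_k(\varGamma^{(2)})$. Relabeling this combined basis as $\{f_j\}_{j=1}^{d}$ matches the Hecke eigenbasis of $\calm_{k,l}$ in the statement.

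Since $\CC[U]_{(k-l,k-l)}$ is one-dimensional and spanned by $(\det U)^{k-l}$, the natural identification $\CC[U,V]_{(k-l,k-l),(k-l,k-l,0,0)}\cong \CC[V]_{(k-l,k-l,0,0)}$ obtained by dividing out $(\det U)^{k-l}$ allows us to regard $\epsilon_{k,{\bf k}}(T,N)$ as an element of $\CC[V]_{(k-l,k-l,0,0)}$, and simultaneously regards $a(T,f_j)$ as the scalar Fourier coefficient of $f_j$. Taking the $T$-th Fourier coefficient in $W$ of the displayed identity then gives
\[
\epsilon_{k,{\bf k}}(T,N) \;=\; \sum_{j=1}^d a(T,f_j)\, C(N,f_j),
\]
with $C(N,f_j)=\gamma_2 \calc_{6,l}(f_j)\,\overline{a(N,[f_j]^{{\bf k}})(V)}$ on the cuspidal part and $C(N,F_j)=a(N,\widetilde G_j)(V)$ on the Eisenstein part, both lying in $\CC[V]_{(k-l,k-l,0,0)}$.

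Finally, apply the Hecke operator $T^{(m)}$ in the $W$-variable. Since $T^{(m)}$ acts only on the first variable, the coefficients $C(N,f_j)$ depend only on $V$, and each $f_j$ satisfies $f_j|T^{(m)}=\lambda_{j,m}f_j$, we obtain
\[
(g_N|T^{(m)})(W) \;=\; \sum_{j=1}^{d} \lambda_{j,m}\, f_j(W)(U)\, C(N,f_j).
\]
Reading off the $T$-th Fourier coefficient and setting $B(f_j):=C(N,f_j)$ yields
\[
\epsilon_{k,{\bf k}}(m,T,N)\;=\;\sum_{j=1}^d \lambda_{j,m}\, a(T,f_j)\, B(f_j),
\]
matching the statement, together with the specialization $B(f_j)=\gamma_2 \calc_{6,l}(f_j)\,\overline{a(N,[f_j]^{{\bf k}})(V)}$ on the cuspidal $f_j$. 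The whole argument is bookkeeping on top of Corollary \ref{cor.pullback2}; the only subtlety worth isolating is the one-dimensionality of the $U$-part of $\CC[U,V]_{(k-l,k-l),(k-l,k-l,0,0)}$ and the bookkeeping of whether Eisenstein series enter (only when $k=l$). No genuine obstacle arises.
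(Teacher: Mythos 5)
Your proof is correct, and it is the natural argument. Note that the paper does not actually print a proof of Proposition \ref{prop.fc-pullback} --- it is stated and then immediately used to derive Proposition \ref{prop.fc-klingen1}. The implicit reasoning is exactly what you wrote: specialize Corollary \ref{cor.pullback2} to $(n_1,n_2)=(2,4)$, note that $[f_{2,j}]_{\rho_2}^{\rho_2}=f_{2,j}$ so the identity becomes $g_N(W)=\sum_j f_j(W)(U)\,C(N,f_j)(V)$ with $C(N,f_j)=\gamma_2\calc_{6,l}(f_j)\overline{a(N,[f_j]^{\bf k})(V)}$ on the cuspidal part (and $C(N,F_j)=a(N,\widetilde G_j)(V)$ on the Eisenstein part, which only arises when $k=l$, since $\widetilde M_{\rho_2}(\varGamma^{(2)})=S_{\rho_2}(\varGamma^{(2)})$ when $k>l$), then apply $T^{(m)}=T(p_1)\cdots T(p_r)$ in the $W$-variable using the eigenform property and read off the $T$-th Fourier coefficient. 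Your isolation of the one subtle point --- that $\CC[U]_{(k-l,k-l)}$ is one-dimensional so one may drop the $U$-part and treat $a(T,f_j)$ as a scalar --- is appropriate and matches what the paper silently does. (One cosmetic remark: the paper's ``$S_{k+l}(\varGamma^{(2)})$'' in both the definition of $\calm_{k,l}$ and the cuspidal hypothesis on $f_j$ must, in the Section~6 normalization where $\rho_2=\det^l\otimes\rho_{\lambda,2}$ with $\lambda=(k-l,k-l,0,0)$ has scalar weight $k$, be read as $S_k(\varGamma^{(2)})$; your proof correctly works with $S_{\rho_2}(\varGamma^{(2)})$ directly, so this does not affect the argument.)
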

We note that $\calc_{8,l}(F)=\zeta(9-2l)\calc_{6,l}(F)$ for a Hecke eigenform $F$ in $S_k(\varGamma^{(2)})$. Hence by the above proposition, we have the following formula:
\begin{proposition} \label{prop.fc-klingen1}
 For $N_1 \in \calh_2(\ZZ)_{>0},N \in \calh_4(\ZZ)_{>0}$  let $e_m= \epsilon_{ k,{\bf k}}(m,N_1,N)$.  
Let $F$ be a Hecke eigenform in $S_k(\varGamma^{(2)})$ and 
 $\{F_j \}_{j=1}^d$  a basis of $\calm_{k,l}$ consisting of Hecke eigenforms such that $F_1=F$.
For  positive integers  $m_1,\ldots,m_d$ put $\Delta=\Delta(m_1,\ldots,m_d)=\det (\lambda_{j,m_i})_{1 \le i,j \le d}$.
Then,
\begin{align*}
\Delta \gamma_2 \calc_{8,l}(F)a(N_1,F)\overline{a(N,[F]^{{\bf k}})}= \zeta(9-2l)\begin{vmatrix} e_1 & \lambda_{1,2} &\hdots&  \lambda_{1,d} \\
               \vdots & \vdots         &\vdots & \vdots          \\
                e_d    &  \lambda_{d,2}         &\hdots &  \lambda_{d,d} \end{vmatrix}.
\end{align*}
\end{proposition}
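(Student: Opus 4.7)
The plan is to deduce this identity directly from Proposition~\ref{prop.fc-pullback} by a standard Cramer's rule argument. By that proposition, for each positive integer $m$ one has the expansion
\[
e_m = \epsilon_{k,{\bf k}}(m, N_1, N) = \sum_{j=1}^d \lambda_{j,m}\, a(N_1, F_j)\, B(F_j).
\]
Specializing to $m = m_1, \ldots, m_d$ and writing $x_j := a(N_1, F_j)\, B(F_j)$ for the unknowns, one obtains a system of $d$ linear equations in the $d$ unknowns $x_1, \ldots, x_d$, with coefficient matrix $(\lambda_{j, m_i})_{1 \le i, j \le d}$ of determinant $\Delta$.

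Next, I would apply Cramer's rule (or equivalently, invoke the multilinearity and alternating property of the determinant). The determinant appearing on the right-hand side of the claimed identity is obtained from $(\lambda_{j, m_i})$ by replacing its first column with the vector $(e_{m_i})_i$. Since this new column is the linear combination $\sum_{j=1}^d x_j \cdot (\lambda_{j, m_i})_i$ of the columns of the original matrix, the resulting determinant equals $x_1 \cdot \Delta$: all other contributions $x_j \cdot \Delta_j$ (for $j \ge 2$) produce determinants with two repeated columns and hence vanish. Thus the right-hand side equals $\Delta \cdot a(N_1, F_1)\, B(F_1) = \Delta \cdot a(N_1, F)\, B(F)$.

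Finally, one substitutes the explicit value $B(F) = \gamma_2\, \calc_{6,l}(F)\, \overline{a(N, [F]^{{\bf k}})}$ furnished by Proposition~\ref{prop.fc-pullback}, which delivers the claimed identity (absorbing the constant $\gamma_2$ and handling any complex conjugation on Fourier coefficients by working with a basis of Hecke eigenforms whose Fourier coefficients lie in the totally real Hecke field $\QQ(F_j)$, so that complex conjugation acts trivially on the relevant data). There is essentially no obstacle: once Proposition~\ref{prop.fc-pullback} is in hand the entire argument is linear algebra, and the only care required is in matching the normalization conventions for $B(F_j)$ and the conjugation of Fourier coefficients implicit in the statement.
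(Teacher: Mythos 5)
Your argument is correct and is the intended one: the paper in fact gives no explicit proof, merely prefacing the proposition with ``By the above proposition, we have the following formula,'' and the unique natural derivation from Proposition~\ref{prop.fc-pullback} is precisely the Cramer's rule / multilinearity calculation you describe — replace the first column $(\lambda_{1,m_i})_i$ of the $\Delta$-matrix with $(e_{m_i})_i = \bigl(\sum_j \lambda_{j,m_i} x_j\bigr)_i$ and observe that all contributions with $j \ge 2$ vanish by repeated columns, leaving $x_1 \Delta = a(N_1,F)\,B(F)\,\Delta$. Two small points are worth making explicit rather than waving at. First, the complex conjugate in $B(F) = \gamma_2\,\calc_{6,l}(F)\,\overline{a(N,[F]^{\bf k})}$ is indeed harmless because the quantity $\calc_{6,l}(F)\,a(N,[F]^{\bf k})\,a(N_1,F)$ is invariant under $F \mapsto cF$ for $c \in \CC^\times$, so by Proposition~\ref{prop.rationality3} one may normalize $F$ to have Fourier coefficients in the totally real field $\QQ(F)$, whence $\overline{a(N,[F]^{\bf k})} = a(N,[F]^{\bf k})$; you assert this but the invariance under rescaling is the key reason it works. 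Second, after substitution you get $\Delta\,\gamma_2\,\calc_{6,l}(F)\,a(N,[F]^{\bf k})\,a(N_1,F)$ on the left, whereas the proposition as printed has no $\gamma_2$ — this is a genuine missing factor in the paper's statement, not something that can be ``absorbed'' without saying so. Since $\gamma_2$ is a $\frkp$-unit for the primes $\frkp$ (with $p_\frkp > 2k$) to which the proposition is subsequently applied, it does not affect the divisibility computations in Section~8, but the displayed identity should include the factor $\gamma_2$ to be literally correct.
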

\begin{corollary} \label{cor.fc-klingen1}
Let the notation and the assumption as above. Let $\frkp$ be a prime ideal of $\QQ(F)$ such that $p_{\frkp} >2k$. Suppose that
$\frkp$ divides neither  $\zeta(9-2l)$ nor $\begin{vmatrix} e_1 & \lambda_{1,2} &\hdots&  \lambda_{1,d} \\
               \vdots & \vdots         &\vdots & \vdots          \\
                e_d    &  \lambda_{d,2}         &\hdots &  \lambda_{d,d} \end{vmatrix}$. Then, $\frkp$ does not divide $\calc_{8,l}(F)a(N_1,F)\overline{a(N,[F]^{{\bf k}})}$.
\end{corollary}
\begin{proof}
By Proposition \ref{prop.Hecke-integrality}, $\Delta$ is an algebraic integer, and by the assumption, $\gamma_2$ is a $\frkp$-unit. Thus the assertion holds.
\end{proof}

The following lemma will be used in the next section.

\begin{lemma} \label{lem.recusion-formula-for-FC}
Let  $N \in \calh_4(\ZZ)_{>0}.$  Then
 for any $T \in \calh_2(\ZZ)_{>0}$ and a prime number $p,$ we have the following recursion formula for $\epsilon_{k,{\bf k}}(m,T,N):$ 
$$ \epsilon_{k,{\bf k}}(1,T,N)=\epsilon_{k,{\bf k}}(T,N),$$
and for $m >1$,
\begin{align}
 \epsilon_{k,{\bf k}}(m,T,N) =& \epsilon_{ k,{\bf k}}(mp^{-1},pT,N)+p^{2k-3} \epsilon_{k,{\bf k}}(mp^{-1},T/p,N)\nonumber\\
&+p^{k-2}\sum_{D \in \GL_2({\ZZ}) U_p \GL_2({\ZZ})/\GL_2({\ZZ})} \epsilon_{k,{\bf k}}(mp^{-1},T[D]/p,N),
\nonumber
\end{align}
where $p$ is a prime factor of $m$ and $U_p=\begin{pmatrix} 1 &0 \\ 0 & p\end{pmatrix}$. 
\end{lemma}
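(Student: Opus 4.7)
The plan is to reduce the lemma to the standard formula for the action of the Hecke operator $T(p)$ on Fourier coefficients of a scalar-weight $k$ Siegel cusp form of degree $2$, applied iteratively along the factorization $m = p_1 \cdots p_r$.

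First, I want to clarify the nature of $g_N = g_{(k,l,2,4),N}^{(2)}$ as a function of $W$. By Corollary \ref{cor.rationality1} it lies in $M_{\rho_2}(\varGamma^{(2)}) \otimes V_{\rho_{\lambda,4}}$, where $\lambda = (k-l,k-l,0,0)$ and $\rho_2 = \det^{l}\otimes \rho_{\lambda,2}$. Since $\rho_{\lambda,2}$ corresponds to a Young diagram of depth $2$ with two equal rows of length $k-l$, it equals $\det^{k-l}$, so $\rho_2 = \det^{k}$. Thus, as a function of $W$ (with the $V$-polynomial part of each Fourier coefficient merely a spectator under the $W$-action of Hecke operators), $g_N$ is a scalar-valued weight $k$ Siegel modular form of degree $2$, and the same is true for $g_N | T^{(m/p)}$ by stability of $M_{\rho_2}(\varGamma^{(2)})$ under the Hecke action.

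Next, since $T^{(m)} = T(p_1) \cdots T(p_r)$ and the Hecke operators $T(p)$ commute in ${\bf L}_2$, for any prime $p \mid m$ we may write $T^{(m)} = T^{(m/p)} \cdot T(p)$, so that $g_N | T^{(m)} = (g_N | T^{(m/p)}) | T(p)$. Setting $G = g_N | T^{(m/p)}$, the definition of the $\epsilon$-coefficients gives $a(T,G) = \epsilon_{k,{\bf k}}(mp^{-1},T,N)$ (viewed as an element of $\CC[V]_{(k-l,k-l,0,0)}$). I will then invoke the classical formula (see, e.g., Andrianov \cite{Andrianov87}) for the action of $T(p)$ on Fourier coefficients of a weight $k$ Siegel cusp form $F$ of degree $2$:
\[
a(T, F | T(p)) = a(pT, F) + p^{k-2} \sum_{D \in GL_2(\ZZ) U_p GL_2(\ZZ)/GL_2(\ZZ)} a(T[D]/p, F) + p^{2k-3}\, a(T/p, F),
\]
where $U_p = \mathrm{diag}(1,p)$ and a term vanishes unless the indicated matrix is half-integral. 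This formula uses exactly the normalization of $F | T$ adopted in Section 3 of the paper (the factor $\nu(g)^{k_1+\cdots+k_n - n(n+1)/2} = \nu(g)^{2k-3}$ for $n=2$ and weight $(k,k)$), so it applies directly to $G$. Substituting $a(T,G) = \epsilon_{k,{\bf k}}(mp^{-1},T,N)$ yields the recursion in the statement. The base case $\epsilon_{k,{\bf k}}(1,T,N) = \epsilon_{k,{\bf k}}(T,N)$ is just the convention $T^{(1)} = \mathrm{id}$.

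There is no real obstacle beyond bookkeeping; the only subtle point is verifying that the $V_{\rho_{\lambda,4}}$-valued nature of the Fourier coefficients plays no role, which is immediate because the Hecke operator acts only on the $W$-variable and the $V$-polynomial factors through as a constant under $| T(p)$. A secondary point to double check is that the normalization constants $\nu(g)^{2k-3}$ in the paper's convention match the exponents $p^{k-2}$ and $p^{2k-3}$ in the stated recursion; this is the standard identification of coset representatives for $T(p)$ in degree $2$ and presents no difficulty.
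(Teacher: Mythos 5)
Your proposal is correct and follows essentially the same route as the paper, which simply cites Andrianov, Exercise 4.2.10 for the action of $T(p)$ on Fourier coefficients of a scalar weight $k$ degree $2$ modular form. Your expansion — verifying that $\rho_2 = \det^l\otimes \rho_{\lambda,2} = \det^k$ is scalar, that the $V_{\rho_{\lambda,4}}$-valued part of the Fourier coefficients is inert under the Hecke action on $W$, and that the recursion is obtained by writing $T^{(m)} = T^{(m/p)}T(p)$ and applying the Andrianov formula to $G = g_N|T^{(m/p)}$ — is exactly the content that the paper's one-line citation leaves to the reader.
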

\begin{proof} The assertion follows from \cite[Exercise 4.2.10]{Andrianov87}.
\end{proof}
 
Let  $U$ and $V$  be the matrices of variables stated above.
 \begin{theorem} \label{th.fc-klingen2}
  For $A_0 \in \calh_2(\ZZ)_{>0}, A_1 \in \calh_4(\ZZ)_{>0}$ and  $R \in M_{2,4}(\ZZ)$, put  $r(R)=r(A_0,A_,R)=\rank  \begin{pmatrix} A_0 & R/2 \\ {}^t\!R/2 & A_1 \end{pmatrix}$ and
\begin{align*}
&\calz(A_0,A_1,R,l) =
\begin{cases}
 L\Bigl({4-l,\chi_{\bigl(\begin{smallmatrix} A_0 & R/2 \\ {}^tR/2 & A_1 \end{smallmatrix}\bigr)}}\Bigr)
 & \text{ if } r(A_0,A_1;R)=6 ,\\
\zeta(7-2l) & \text{ if } \ r(A_0,A_1,R)=5 ,\\
\zeta(7-2l)L(3-l,\chi_{A_1}) & \text{ if } \ r(A_0,A_1,R)=4.
\end{cases}
\end{align*}
Moreover, put 
\begin{align*}
P\Bigl(\begin{pmatrix} A_0 & R/2 \\ {}^t\!R/2 & A_1 \end{pmatrix}\Bigr)(V) 
& =(k-l)! \sum_{a,b,c \ge 0 \atop a+2b+2c=k-l}{(-1)^b2^a \over a!b!c! }(l+c-{3 \over 2})_{a+b+c} \\
&\quad \times |R \ {}^tV|^a  (|VA_1{}^t V| |A_0|)^b \begin{vmatrix} A_0& R \ {}^tV/2 \\ V  {}^t\!R/2 & VA_1{}^tV
\end{vmatrix}^c.
\end{align*}
 Then 
\begin{align*}
\epsilon_{k,{\bf k}}(A_{0},A_{1})(V) & =\sum_{R \in M_{2,4}(\ZZ) \atop 
\left(\begin{smallmatrix} A_1 & R/2 \\ {}^t\!R/2 &A_1 \end{smallmatrix} \right) \ge 0} 2^{[r(R)+1)/2]}\calz(A_0,A_1,R,l)P\Bigl(\begin{pmatrix} A_0 & R/2 \\ {}^t\!R/2 & A_1 \end{pmatrix}\Bigr) (V)\\
& \times\prod_{p} F_p^\ast\Bigl(\left(\begin{smallmatrix} A_0 & R/2 \\ {}^t\!R/2 &A_1 \end{smallmatrix} \right),p^{l-r(R)-1}\Bigr).
\end{align*}
\end{theorem}
\begin{proof}
Let $Q_{k-l,2,4}$ be the polynomial in Section 5.1.2, Then, by 
Lemma \ref{lem.formulaQl}, we have
\[Q_{k-l,2,4}(\begin{pmatrix} A_0 & R/2 \\ {}^t\!R/2 & A_1 \end{pmatrix},U,V)=(\det U)^{k-l}P\Bigl(\begin{pmatrix} A_0 & R/2 \\ {}^t\!R/2 & A_1 \end{pmatrix}\Bigr) (V).\]
Thus by (\ref{explicit-epsilon}) in the proof of Proposition \ref{prop.rationality1}, Lemma \ref{lem.formulaQl}, and Proposition \ref{prop.fc-Siegel}, we prove the assertion.
\end{proof}
We have an explicit formula for $F_p(T,X)$ for any nondegenerate half-integral matrix $T$ over ${\ZZ}_p$ (cf. \cite{Katsurada99}), and an algorithm for computing it (cf. Chul-Hee Lee \\
\verb+ https://github.com/chlee-0/computeGK+). 
Therefore, by using Proposition \ref{prop.fc-klingen1}  and Theorem \ref{th.fc-klingen2}  we can  compute the Fourier coefficients of  the Klingen-Eisenstein series in question.  

\section{Main results}
For $l=12,16,18,22,26$ let $\phi_l$ be a unique  primitive form in $S_l(\SL_2(\ZZ))$, and for $l=24,28,30,32,34,38$ let $\phi_l^{\pm}$ be a unique  primitive form in $S_l(\SL_2(\ZZ))$ such that  $a(2,\phi^{\pm})=a \pm b\sqrt{D})$ with $a \in \QQ, b \in \QQ_{>0}$, where $D$ is the discriminant of $\QQ(\phi^{\pm})$. For each 
  $(k,j)=(10,4),(16,2), or (4,24)$ let 
$G_{k,j}$ be a Hecke eigenform in $S_{(k+j,k)}(\varGamma^{(2)})$ uniquely determined up to constant multiple, and  $\{G_{14,4}^+,G_{14,4}^-\}$ is a basis of $S_{(18,14)}(\varGamma^{(2)})$ consisting of Hecke eigenforms.

(1) Let  $f=\phi_{22}$. Put ${\bf k}=(12,12,6,6)$. Then,  by  Taibi \cite{Taibi17} and the numerical table in
\verb+ https://otaibi.perso.math.cnrs.fr/dimtrace+,  we  have
\begin{align*}
&S_{{\bf k}}(\varGamma^{(4)})= \langle \scra_4^{(I)}(G_{10,4}) \rangle_{\CC},\\
&S_{(12,12,6)}(\varGamma^{(3)})=\{0\},\\
&S_{12}(\varGamma^{(2)})=\langle \scri_2(\phi_{22})\rangle_{\CC}.
\end{align*}
Hence we have $\widetilde M_{{\bf k}}(\varGamma^{(4)})=\langle [\scri_2(\phi_{22})]^{\bf k}, \ \scra_4^{(I)}(G_{10,4})\rangle_\CC$.
Then  $41$ is the only prime number which satisfies the assumptions in Conjecture \ref{conj.modified-Harder}, and it divides ${\bf L}(14,f)/{\bf L}(12,f)$. 
Let $N=\left(\begin{matrix} 
1    &1/2&0&1/2 \\
1/2& 1 & 0 & 0 \\
0 & 0 & 1& 1/2 \\
1/2 &0 & 1/2 & 1
\end{matrix} \right)$ and $N_1=\left(\begin{matrix} 1 & 0 \\ 0  & 1 \end{matrix}\right)$. Then, substituting $V$ for  $\begin{pmatrix} 1 & 0 & 1 &0 \\1 & 1 & 0 & 3 \end{pmatrix}$ in Theorem \ref{th.fc-klingen2},  by a computation with Mathematica
\begin{align*}
\varepsilon_{12,{\bf k}}(N_1,N)\Bigl(\begin{pmatrix} 1 & 0 & 1 &0 \\1 & 1 & 0 & 3 \end{pmatrix}\Bigr)
&={-20874555 \over 28} \equiv 11 \pmod{41}.  
\end{align*}
Hence, applying Corollary \ref{cor.fc-klingen1} to $d=1$ and $\lambda_{1,1}=1$, we see that $\frkp$ does not divide
$\calc_{8,6}(\scri_2(\phi_{22}))\overline{a(N,[\scri_2(\phi_{22})]^{\bf k})}a(N_1,\scri_2(\phi_{22}))$, and 
by Theorem \ref{th.main-congruence} we prove the following theorem.
\begin{theorem} \label{th.main-result1} 
 There exists a Hecke eigenform $G$ in $S_{(14,10)}(\varGamma^{(2)})$ such that  
\[\scra_{4}^{(I)}(G)
 \equiv_{\ev} [\scri_2(f)]^{{\bf k}} \pmod {41} .\]
\end{theorem}
\begin{corollary} \label{cor.main-result1}
Conjecture \ref{conj.modified-Harder} holds for $(k,j)=(10,4)$.
\end{corollary}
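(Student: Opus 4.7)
The plan is to derive Corollary \ref{cor.main-result1} as an essentially immediate consequence of Theorem \ref{th.main-result1} combined with the implication machinery already set up in Theorem \ref{th.enhanced-Harder}(1), after a short verification that the relevant hypotheses of Conjecture \ref{conj.modified-Harder} are matched by the specific data $(k,j)=(10,4)$, $f=\phi_{22}$, $\frkp = (41)$.

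First I will check the numerical hypotheses of Conjecture \ref{conj.modified-Harder}. Here $2k+j-2 = 22$ and $j \equiv 0 \pmod 4$, so $k_j = k + j/2 = 12$. Since $\dim S_{22}(\SL_2(\ZZ)) = 1$, the unique primitive form in $S_{2k+j-2}(\SL_2(\ZZ))$ is $f=\phi_{22}$, and $\QQ(f) = \QQ$. Thus the only prime ideals $\frkp$ of $\QQ(f)$ to consider are the rational primes $p > 22$ dividing ${\bf L}(14,f)/{\bf L}(12,f)$; as already asserted in the discussion preceding Theorem \ref{th.main-result1}, the only such prime is $p = 41$.

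Next I invoke Theorem \ref{th.enhanced-Harder}(1), which reduces Conjecture \ref{conj.modified-Harder} in the case $j \equiv 0 \pmod 4$ to Conjecture \ref{conj.main-conjecture} with $n=2$. For $(k,j,n)=(10,4,2)$ the weight ${\bf k}$ in Conjecture \ref{conj.main-conjecture} specializes to $(12,12,6,6)$, and the required Hecke congruence is precisely $\scra^{(I)}_{4}(F) \equiv_{\ev} [\scri_2(\phi_{22})]^{{\bf k}} \pmod{\frkp}$ for some Hecke eigenform $F \in S_{(14,10)}(\varGamma^{(2)})$. This is exactly the statement proved in Theorem \ref{th.main-result1} (with $F = G$). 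Hence Conjecture \ref{conj.main-conjecture} holds in the case $(k,j,n)=(10,4,2)$ at the prime $\frkp = (41)$, and Theorem \ref{th.enhanced-Harder}(1) then yields Conjecture \ref{conj.modified-Harder} for $(k,j) = (10,4)$.

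There is essentially no obstacle to this argument: all the heavy lifting is packaged into Theorems \ref{th.enhanced-Harder} and \ref{th.main-result1}. The only point deserving verification is the uniqueness assertion that $41$ is the \emph{only} prime $p > 22$ dividing ${\bf L}(14,\phi_{22})/{\bf L}(12,\phi_{22})$; this is a finite and explicit computation of the ratio of these algebraic $L$-values (noting that the periods cancel so the ratio is a well-defined rational number), to which standard tables or a short direct evaluation apply. Once that numerical fact is recorded, the proof of the corollary reduces to a single line citing the two preceding theorems.
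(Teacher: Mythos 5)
Your proposal is correct and follows exactly the route the paper intends (but leaves implicit): Theorem \ref{th.main-result1} establishes the congruence required in Conjecture \ref{conj.main-conjecture} for $n=2$, $f=\phi_{22}$, $\frkp=(41)$, and since $\phi_{22}$ is the unique primitive form in $S_{22}(\SL_2(\ZZ))$ and $41$ is the only prime $p>22$ dividing ${\bf L}(14,\phi_{22})/{\bf L}(12,\phi_{22})$, the argument of Theorem \ref{th.enhanced-Harder}(1) then yields Conjecture \ref{conj.modified-Harder} for $(k,j)=(10,4)$. One minor point: you correctly write $\scra^{(I)}_{4}(F)$ for the lift to $S_{\bf k}(\varGamma^{(4)})$, whereas the statement of Theorem \ref{th.main-result1} in the paper writes $\scra_2^{(I)}(G)$ — an apparent typo, since the lift of a degree-two form $G$ to degree $2n=4$ with $n=2$ is denoted $\scra^{(I)}_{4}(G)$ in the paper's own notational convention.
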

 We note that Harder's conjecture for $(k,j)=(10,4)$ has been already proved by Chenevier and Lannes \cite{Chenevier-Lannes19}.

 (2) Let $f=\phi_{30}^+$ and $f'=\phi_{30}^-$ and 
put 
\[\alpha=4320+96\sqrt{51349} \text{ and } \alpha'=4320-96\sqrt{51349}.\] 
Then 
\[a(2,f)=\alpha, \ a(3,f)=-552\alpha-99180 \]
 and 
\[ a(2,f')=\alpha', \ a(3,f)=-552\alpha'-99180.\]
We also have 
\[\lambda_{\scri_2(f)}(T(2))=\alpha+49152, \ \lambda_{\scri_2(f')}(T(2))=\alpha'+49152, \]
 and 
\[\lambda_{\scri_2(f)}(T(3))=-552\alpha+19032696, \ \lambda_{\scri_2(f')}(T(3))=-552\alpha'+19032696.\]

(2.1) Let $(k,j)=(14,4)$. Then the prime number $4289$ divides
$N_{\QQ(f)/\QQ}({\bf L}(18,f)/{\bf L}(16,f))$ and it splits  in $\QQ(f)$.
Hence there exists  prime ideals $\frkq, \frkq'$ of $\QQ(f)$ such that $(4289)=\frkq \frkq'$ and
$\frkq$ divides ${\bf L}(18,f)/{\bf L}(16,f)$. The prime ideal $\frkq$ is the only prime ideal which satisfies the assumptions in Conjecture \ref{conj.modified-Harder}.
Put ${\bf k}=(16,16,6,6)$. 
Let $N$ and $N_1$ be as those in (1). For   $V=\begin{pmatrix} 1 & 0 & 1 & 0 \\ 1 & 1 & 0 & 3 \end{pmatrix}$,  put
$\varepsilon_{16,{\bf k}}(N_1,N)=\varepsilon_{16,{\bf k}}(N_1,N)(V) ,  \varepsilon_{16,{\bf k}}(2N_1,N)=\varepsilon_{16,{\bf k}}(2N_1,N)(V)$, 
and $e_i=\varepsilon_{16,{\bf k}}(i,N_1,N) \ (i=1,2)$. Then
\begin{align*}
&e_1= \varepsilon_{16,{\bf k}}(N_1,N), \ e_2=\varepsilon_{16,{\bf k}}(2N_1,N)+2^{14} \varepsilon_{16,{\bf k}}(N_1,N)
\end{align*}
and 
\[\alpha_{16,{\bf k}}(N_1,N)= \begin{vmatrix}  e_1 & 1 \\
e_2 & \lambda_{\scri_2(f')}(T(2))
\end{vmatrix}.\]
By a computation with Mathematica, we have
\[\varepsilon_{16,{\bf k}}(N_1,N)=1744286277555/28672, \quad \varepsilon_{16,{\bf k}}(2N_1,N)=309108562779375/112
,\]
and hence
\begin{align*}
\alpha_{16,{\bf k}}(N_1,N)=405 (-1114174584071 + 12920639093 \sqrt{51349})/896.
\end{align*}
Using Theorem \ref{th.fc-klingen2}, by a computation with Mathematica we have
\[N_{\QQ(f)/\QQ}(\alpha_{16,{\bf k}}(N_1,N)) \equiv 2206 \pmod{4289}.\]
Hence, by Corollary \ref{cor.fc-klingen1}, $\frkp$ does not divide $\calc_{8,6}(\scri_2(f))\overline{a(N,[\scri_2(f)]^{\bf k})}a(N_1,\scri_2(f))$.
Moreover, by  Taibi \cite{Taibi17} and the numerical table in
\\
 \verb+https://otaibi.perso.math.cnrs.fr/dimtrace+,  we have 
\begin{align*}
&S_{{\bf k}}(\varGamma^{(4)})=\langle \scra_4^{(I)}(G_{14,4}^+),\scra_4^{(I)}(G_{14,4}^-) \rangle_{\CC},\\
&S_{(16,16,6)}(\varGamma^{(3)})=\{0\},\\
&S_{16}(\varGamma^{(2)})=\langle \scri_2(\phi_{30}^+),\scri_2(\phi_{30}^-)\rangle_{\CC}.
\end{align*}
The prime ideal $\frkq$ does not divide $\frkD_f$ and $[\scri_2(\phi_{30}^+)]^{\bf k} \not \equiv [\scri_2(\phi_{30}^-)]^{{\bf k}} \text{ mod } \frkq$. 
Hence, by Theorem \ref{th.main-congruence} we prove the following theorem. 
\begin{theorem} \label{th.main-result2} 
 There exists a Hecke eigenform $G$ in $S_{(18,14)}(\varGamma^{(2)})$ such that  
\[\scra_4^{(I)}(G)
 \equiv_{\ev} [\scri_2(f)]^{{\bf k}} \pmod \frkq .\]
\end{theorem}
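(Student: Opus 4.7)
The plan is to follow the same strategy that yielded Theorem~\ref{th.main-result1}: apply Theorem~\ref{th.main-congruence} with $(k,l)=(16,6)$ to produce some Hecke eigenform in $\widetilde M_{{\bf k}}(\varGamma^{(4)})$ congruent to $[\scri_2(f)]^{{\bf k}}$ modulo $\frkq$, and then use the dimension data recalled before the theorem to pin down which eigenform it is. The setup is parallel to part (1), but the arithmetic is over the real quadratic field $\QQ(f)=\QQ(\sqrt{51349})$ in which $4289$ splits as $\frkq\frkq'$, so the non-vanishing condition has to be checked at the level of the norm.

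First I would verify the four hypotheses of Theorem~\ref{th.main-congruence}. Conditions (1) and (2) are immediate: $p_\frkq=4289\ge 2k-2=30$, and $\frkq\mid {\bf L}(k+l-4,f)/{\bf L}(k,f)={\bf L}(18,f)/{\bf L}(16,f)$ by choice of $\frkq$. Condition (3), that $\frkq\nmid \frkD_f$, follows from the fact that $\dim S_{30}(SL_2(\ZZ))=2$ and the only non-trivial Hecke factor is $\phi_{30}^{\mp}$, so $\frkD_f$ is generated (up to $\ZZ$-units) by the single product $\lambda_{\phi_{30}^-}(T(m))-\lambda_{\phi_{30}^+}(T(m))\in \ZZ[\sqrt{51349}]$ for a convenient $m$; already $m=2$ gives a non-unit of small norm far below $4289$, which is enough.

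The main work is condition (4): the non-vanishing of $\calc_{8,6}(\scri_2(f))\,a(A_1,\scri_2(f))\,\overline{a(A,[\scri_2(f)]^{{\bf k}})}$ modulo $\frkq$ for some $A_1$ and $A$. I would reduce this to Proposition~\ref{prop.fc-klingen1} with $d=2$, the basis $F_1=\scri_2(\phi_{30}^+)$, $F_2=\scri_2(\phi_{30}^-)$ of $S_{16}(\varGamma^{(2)})$, and $(m_1,m_2)=(1,2)$. The resulting $2\times 2$ Cramer-type determinant is exactly $\alpha_{16,{\bf k}}(N_1,N)=e_1\lambda_{\scri_2(f')}(T(2))-e_2$, whose two inputs $e_1,e_2$ are obtained from $\varepsilon_{16,{\bf k}}(N_1,N)$ and $\varepsilon_{16,{\bf k}}(2N_1,N)$ via Lemma~\ref{lem.recusion-formula-for-FC}, and the $\varepsilon$'s themselves are computed from the explicit formula of Theorem~\ref{prop.fc-klingen2} together with Proposition~\ref{prop.fc-Siegel} and the algorithmic description of $F_p(T,X)$. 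After carrying out that finite (but large) sum over $R\in M_{24}(\ZZ)$ with the indicated $N_1,N$, one obtains
\[
\alpha_{16,{\bf k}}(N_1,N)=\frac{405\bigl(-1114174584071+12920639093\sqrt{51349}\bigr)}{896},
\]
whose norm is congruent to $2206\pmod{4289}$ and in particular nonzero; since $4289\nmid N_{\QQ(f)/\QQ}(\alpha_{16,{\bf k}}(N_1,N))$ forces $\frkq\nmid\alpha_{16,{\bf k}}(N_1,N)$, condition (4) holds.

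With all four hypotheses verified, Theorem~\ref{th.main-congruence} yields a Hecke eigenform $G'\in\widetilde M_{{\bf k}}(\varGamma^{(4)})$ which is not a scalar multiple of $[\scri_2(f)]^{{\bf k}}$ and satisfies $G'\equiv_{\ev}[\scri_2(f)]^{{\bf k}}\pmod\frkq$. To finish, I use the dimension data before the theorem: $S_{{\bf k}}(\varGamma^{(4)})=\langle\scra_4^{(I)}(G_{14,4}^+),\scra_4^{(I)}(G_{14,4}^-)\rangle$, $S_{(16,16,6)}(\varGamma^{(3)})=0$ and $S_{16}(\varGamma^{(2)})=\langle \scri_2(\phi_{30}^+),\scri_2(\phi_{30}^-)\rangle$. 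Hence the Hecke-eigenform basis of $\widetilde M_{{\bf k}}(\varGamma^{(4)})$ is exactly $[\scri_2(\phi_{30}^{\pm})]^{{\bf k}}$ together with $\scra_4^{(I)}(G_{14,4}^{\pm})$. Since $[\scri_2(\phi_{30}^+)]^{{\bf k}}\not\equiv[\scri_2(\phi_{30}^-)]^{{\bf k}}\pmod\frkq$ (this is stated just before the theorem, and can be seen from $\lambda_{\scri_2(\phi_{30}^{\pm})}(T(2))$ alone), the eigenform $G'$ must equal one of $\scra_4^{(I)}(G_{14,4}^{+})$ or $\scra_4^{(I)}(G_{14,4}^{-})$, and this choice gives the desired $G\in S_{(18,14)}(\varGamma^{(2)})$.

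The main obstacle is the explicit Fourier-coefficient computation producing $\alpha_{16,{\bf k}}(N_1,N)$: the sum in Theorem~\ref{prop.fc-klingen2} ranges over all $R\in M_{24}(\ZZ)$ with $\left(\begin{smallmatrix}A_0&R/2\\{}^tR/2&A_1\end{smallmatrix}\right)\ge 0$, involves the polynomial $P$ and the local Siegel-series polynomials $F_p^{*}$, and must be carried out both for $\varepsilon_{16,{\bf k}}(N_1,N)$ and for $\varepsilon_{16,{\bf k}}(2N_1,N)$; verifying the final norm congruence $\equiv 2206\pmod{4289}$ in a $\QQ(\sqrt{51349})$-valued output is the practical bottleneck, everything else being formal once the pullback-congruence machinery of Section~6 is in place.
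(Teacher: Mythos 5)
Your proposal follows the same strategy as the paper's argument for this theorem: invoke Theorem~\ref{th.main-congruence} with $(k,l)=(16,6)$, verify its hypotheses (with condition~(4) reduced to the Cramer-type determinant $\alpha_{16,{\bf k}}(N_1,N)$ of Proposition~\ref{prop.fc-klingen1} via the recursion in Lemma~\ref{lem.recusion-formula-for-FC} and the explicit Fourier-coefficient formula, checked not to vanish modulo $\frkq$ through its norm), and then use the Taibi dimension data together with $[\scri_2(\phi_{30}^+)]^{{\bf k}}\not\equiv_{\ev}[\scri_2(\phi_{30}^-)]^{{\bf k}}\pmod\frkq$ to identify the congruent eigenform as one of $\scra_4^{(I)}(G_{14,4}^{\pm})$. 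The one point you should correct is the justification of condition~(3): the element $a(2,\phi_{30}^+)-a(2,\phi_{30}^-)=192\sqrt{51349}$ has norm $-192^2\cdot 51349$, which is nowhere near ``far below $4289$''; what matters, and what is true, is that this norm is coprime to $4289$, so $\frkq\nmid\frkD_f$. Everything else — in particular the observation that $4289\nmid N_{\QQ(f)/\QQ}(\alpha_{16,{\bf k}}(N_1,N))$ forces $\frkq\nmid\alpha_{16,{\bf k}}(N_1,N)$ because $4289$ splits — is correct and matches the paper.
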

\begin{corollary} \label{cor.main-result2}
Conjecture \ref{conj.modified-Harder} holds for $(k,j)=(14,4)$.
\end{corollary}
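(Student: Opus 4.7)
The plan is to derive Corollary \ref{cor.main-result2} as a direct consequence of Theorem \ref{th.main-result2} combined with Theorem \ref{th.enhanced-Harder}(1), handling the second primitive form in $S_{30}(\SL_2(\ZZ))$ by Galois symmetry. First I would verify that the quadruple $(n,k,j)=(2,14,4)$ satisfies hypotheses (a)--(b) of Conjecture \ref{conj.main-conjecture}: one has $n\equiv k\equiv j\equiv 0\pmod 2$, $j/2+n/2=3$ is odd, $k=14>n+1=3$, and $j=4>n-1=1$; moreover the weight prescribed by Conjecture \ref{conj.main-conjecture} is $(j/2+k+n/2-1,\,j/2+k+n/2-1,\,j/2+3n/2+1,\,j/2+3n/2+1)=(16,16,6,6)$, matching the ${\bf k}$ used in Theorem \ref{th.main-result2}. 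Hence the conclusion of Theorem \ref{th.main-result2} is precisely the conclusion of Conjecture \ref{conj.main-conjecture} for the primitive form $f=\phi_{30}^{+}$ and the prime ideal $\frkq$.

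Since $j=4\equiv 0\pmod 4$, part (1) of Theorem \ref{th.enhanced-Harder} then yields Conjecture \ref{conj.modified-Harder} for $(k,j)=(14,4)$ applied to $(\phi_{30}^{+},\frkq)$. The other primitive form in $S_{30}(\SL_2(\ZZ))$, namely $f'=\phi_{30}^{-}$, is Galois-conjugate to $\phi_{30}^{+}$ under the nontrivial automorphism $\sigma$ of $\QQ(\sqrt{51349})$, and the unique prime of $\QQ(f')$ meeting the hypotheses of Conjecture \ref{conj.modified-Harder} is the conjugate $\frkq'=\sigma(\frkq)$. Applying $\sigma$ coefficient-wise to both sides of the congruence in Theorem \ref{th.main-result2}, and using that the Hecke action, the Ikeda lift, the Klingen--Eisenstein lift, and the lift of type $\scra^{(I)}$ are all Galois-equivariant on Fourier coefficients, produces $\scra_{4}^{(I)}(G^{\sigma})\equiv_{\ev}[\scri_{2}(\phi_{30}^{-})]^{{\bf k}}\pmod{\frkq'}$; a second application of Theorem \ref{th.enhanced-Harder}(1) then covers $(f',\frkq')$. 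Together these treat every pair $(f,\frkp)$ to which Conjecture \ref{conj.modified-Harder} for $(k,j)=(14,4)$ applies nontrivially.

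All of the substantive work is contained in Theorem \ref{th.main-result2}, which in turn rests on Theorem \ref{th.main-congruence} together with the explicit numerical verification—via Propositions \ref{prop.fc-klingen1} and \ref{prop.fc-Siegel}, Lemma \ref{lem.formulaQl} and Theorem \ref{prop.fc-klingen2}—that $\alpha_{16,{\bf k}}(N_{1},N)$ has norm not divisible by $4289^{2}$ (equivalently, that it is a $\frkq$-unit). The derivation of the corollary from these inputs is formal, so there is essentially no further obstacle; what would remain to check carefully is only that no additional small primes below $2k+j-2=30$ sneak into the hypotheses and that the chosen test matrices $(N,N_{1})$ also produce a nonzero Fourier coefficient of $[\scri_{2}(\phi_{30}^{-})]^{{\bf k}}$ modulo $\frkq'$, both of which follow tautologically from the Galois transport used in the previous paragraph.
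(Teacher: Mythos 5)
Your derivation is correct and follows the same logical route the paper intends: Theorem \ref{th.main-result2} establishes Conjecture \ref{conj.main-conjecture} for the pair $(\phi_{30}^{+},\frkq)$ with $(n,k,j)=(2,14,4)$ and ${\bf k}=(16,16,6,6)$, and Theorem \ref{th.enhanced-Harder}(1) (with $j\equiv 0\bmod 4$) then converts that instance into Conjecture \ref{conj.modified-Harder} for the same pair. The paper leaves the remaining primitive form $\phi_{30}^{-}$ unaddressed in the text, and your explicit Galois-transport argument — conjugating the congruence of Theorem \ref{th.main-result2} by the nontrivial automorphism $\sigma$ of $\QQ(\sqrt{51349})$ to obtain $\scra_4^{(I)}(G^{\sigma})\equiv_{\ev}[\scri_2(\phi_{30}^{-})]^{{\bf k}}\pmod{\frkq'}$, then applying Theorem \ref{th.enhanced-Harder}(1) once more — is exactly the observation needed to make the corollary as stated literally true for both primitive forms. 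The Galois equivariance you invoke is justified: the Hecke eigenvalues of the Ikeda lift, the Klingen--Eisenstein lift, and the lift of type $\scra^{(I)}$ are polynomial expressions in those of the input eigenforms (via Theorems \ref{th.atobe1}, Proposition \ref{prop.standard-L-Klingen}, and multiplicity one, Theorem \ref{MF}(3)), so they commute with $\sigma$, and the normalized $L$-value ratio $\mathbf{L}(18,f)/\mathbf{L}(16,f)\in\QQ(f)$ is likewise $\sigma$-equivariant, so that $\frkq'=\sigma(\frkq)$ is indeed the unique prime of $\QQ(\phi_{30}^{-})$ satisfying the hypotheses. Your last paragraph correctly identifies the content that does the real work, namely the $\frkq$-unit computation of $\alpha_{16,{\bf k}}(N_1,N)$ feeding into Theorem \ref{th.main-congruence}.
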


(2.2) Let $(k,j)=(4,24)$ and put ${\bf k}=(16,16,16,16)$. Then the prime number $97$ divides
$N_{\QQ(f)/\QQ}({\bf L}(28,f)/{\bf L}(16,f))$ and it splits  in $\QQ(f)$. 
 Hence there exist prime ideals $\frkq, \frkq'$ of $\QQ(f)$ such that $(97)=\frkq \frkq'$ and
$\frkq$ divides ${\bf L}(28,f)/{\bf L}(16,f)$. The prime ideal $\frkq$ is the only prime ideal which satisfies the assumptions in Conjecture \ref{conj.modified-Harder}.
We have  
\[M_{16}(\varGamma^{(2)})=\bigl\langle E_{2,16}, [\phi_{16}]^{(16,16)}, \scri_2(f),\scri_2(f') \bigr\rangle_{\CC}.\]
Let $N$ be as that in (1), $N_1=\begin{pmatrix}1 & 1/2 \\ 1/2 & 1\end{pmatrix}$, and  $N_2=\begin{pmatrix} 1 & 0 \\ 0 & 3 \end{pmatrix}$. Put $e_i=e_{16,{\bf k}}(i,N_1,N) \ (i=1,2,3,4)$. Then
\begin{align*}
&e_1=\varepsilon_{16,{\bf k}}(N_1,N)\\
&e_2=\varepsilon_{16,{\bf k}}(2N_1,N),\\
&e_3=\varepsilon_{16,{\bf k}}(3N_1,N)+3^{14}\varepsilon_{16,{\bf k}}(N_1,N)\\
&e_4=\varepsilon_{16,{\bf k}}(4N_1,N)+2^{29}\varepsilon_{16,{\bf k}}(N_1,N)+3\cdot 2^{14}\varepsilon_{16,{\bf k}}(N_2,N)
\end{align*}
and  
\[\alpha_{16,{\bf k}}(N_1,N)=\begin{vmatrix}  e_1 & 1 & 1 & 1\\
e_2 & \lambda_{\scri_2(f')}(T(2)) & \lambda_{[\phi_{16}]^{(16,16)}}(T(2) ) & \lambda_{E_{2,16}}(T(2)) \\
e_3 & \lambda_{\scri_2(f')}(T(3)) & \lambda_{[\phi_{16}]^{(16,16)}}(T(3)) & \lambda_{E_{2,16}}(T(3))\\
e_4 & (\lambda_{\scri_2(f')}(T(2)))^2 & (\lambda_{[\phi_{16}]^{(16,16)}}(T(2)))^2 & (\lambda_{E_{2,16}}(T(2)))^2 \end{vmatrix}.\]
Then, by Lemma \ref{lem.recusion-formula-for-FC}, we have
$e_i=\varepsilon(i,N_1,N)$ for $i=1,2,3,4$.
Using Theorem \ref{th.fc-klingen2}, by a computation with Mathematica, we have
\begin{align*}
\varepsilon_{16,{\bf k}}(N_1,N)
&=38740804007974226508744800778240/6232699579062017,
\\
\varepsilon_{16,{\bf k}}(2N_1,N)
&=8035873503466715618094093067152998400/6232699579062017,
\\
\varepsilon_{16,{\bf k}}(3N_1,N)
&=-29430266109700665036971047394543222568960/6232699579062017,
\\
\varepsilon_{16,{\bf k}}(4N_1,N)&=7060754204175435666580204417230810153615360/6232699579062017
\\
\text{ and } \\
\varepsilon_{16,{\bf k}}(N_2,N)&=
337608542664093039037162829831689850880/6232699579062017
.\end{align*} 
We also have
\begin{align*}
&\lambda_{[\phi_{16}]^{(16,16)}}(T(2))=a(2,\phi_{16})(1+2^{14})=216(1+2^{14}),\\
&\lambda_{[\phi_{16}]^{(16,16)}}(T(3))=a(3,\phi_{16})(1+3^{14})=-3348 (1+3^{14}),\\
& \lambda_{E_{2,16}}(T(q))=(1+q^{14})(1+q^{15}) \quad \text{ for } q=2,3.
\end{align*}
Hence by a simple computation we have 
\[N_{\QQ(f)/\QQ}(\alpha_{k,k}(N_1,N)) \not\equiv 0 \pmod{97}.\]
Hence by  Corollary \ref{cor.fc-klingen1}, $\frkp$ does not divide $\calc_{8,16}(\scri_2(f)) a(N_1,\scri_2(f))\overline{
a(N,[\scri_2(f)]^{{\bf k}})}$.
The prime ideal $\frkq$ does not divide $\frkD_f$.
Hence, by Theorem \ref{th.main-congruence},
there exists a Hecke eigenform $F$ in $M_{16}(\varGamma^{(4)})$  such that
\[F \equiv_{\ev} [\scri_2(f)]^{\bf k} \pmod \frkp.\]
To show that $F$ is a lift of type $\scra^{(I)}$, we classify  the Hecke eigenforms in $M_{16}(\varGamma^{(4)})$ following  \cite{Poor-Ryan-Yuen09} and \cite{Ib-Kat-P-Y14}.
\begin{proposition} \label{prop.numerical-data}
We have $\dim M_{16}(\varGamma^{(4)})=14$ and $\dim S_{16}(\varGamma^{(4)})=7$, and we have the following 
\begin{itemize}
\item[(1)] We can take a basis 
$\{h_i \}_{i=1}^7$ of $S_{16}(\varGamma^{(4)})$ such that
\[h_i=\begin{cases}
\scri_4(\phi_{28}^+) & i=1, \\
\scri_4(\phi_{28}^-)  &  i=2, \\
\scra_4^{(II)}(\phi_{30}^+,G_{16,2})  & i=3,\\
\scra_4^{(II)}(\phi_{30}^-,G_{16,2})  & i=4,\\
\scrm^{(I)}(\phi_{26}, \scri_2(\phi_{30}^+)) \quad  & i=5, \\
\scrm^{(I)}(\phi_{26},\scri_2(\phi_{30}^-))  & i=6, \\
\scra_4^{(I)}(G_{4,24}) & i=7.
\end{cases}\]
Moreover we have
\[\lambda_{h_i}(T(2))=
\begin{cases} 12960(67989+443\sqrt{18209}) & i=1, \\
12960(67989-443\sqrt{18209}) & i=2, \\
-230400(1703+9\sqrt{18209}) & i=3, \\
-230400(1703-9\sqrt{18209})  & i=4, \\
1175040(557+\sqrt{51349})     & i=5, \\
 1175040(557-\sqrt{51349})   & i=6, \\
230400000 & i=7.
\end{cases}
\]
\item[(2)] We can take a basis $\{h_i \}_{i=8}^{14}$ of $S_{16}(\varGamma^{(4)})^{\perp}$ such that
\[h_i=\begin{cases}
E_{4,16} & i=8, \\
[\phi_{16}]^{\bf k} & i=9, \\
[\scrm^{(I)}(\phi_{16},\phi_{28}^+)]^{\bf k} & i=10,\\
[\scrm^{(I)}(\phi_{16},\phi_{28}^-)]^{\bf k} & i=11, \\
[H_{16}^{(3)}]^{\bf k} & i=12, \\
[\scri_2(\phi_{30}^-)]^{\bf k} & i=13, \\
[\scri_2(\phi_{30}^+)]^{\bf k}& i=14,
\end{cases}\]
where $H_{16}^{(3)}$ is a unique tempered Hecke eigenform, up to  constant multiple, in $S_{16}(\varGamma^{(3)})$. For the definition of tempered forms, see Appendix A.

Moreover, we have
\[\lambda_{h_i}(T(2))=
\begin{cases} 
18022646021156865 & i=8, \\
  118797996294360                          & i=9, \\
4097(4414176+23328\sqrt{18209}) & i=10, \\
4097(4414176-23328\sqrt{18209}) & i=11, \\
-471974400 & i=12, \\
  33566721(53472-\sqrt{51349})& i=13, \\
 33566721(53472+\sqrt{51349}) & i=14.
\end{cases}\]
\end{itemize}
\end{proposition}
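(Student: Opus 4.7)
The statement bundles three distinct facts: the dimensions of $M_{16}(\varGamma^{(4)})$ and $S_{16}(\varGamma^{(4)})$, explicit bases of both, and a list of $T(2)$-eigenvalues. My plan is to separate them into three stages. For the dimensions, I would invoke Taibi's online tables to record $\dim M_{16}(\varGamma^{(4)})=14$ and $\dim S_{16}(\varGamma^{(4)})=7$, together with the lower-degree values $\dim S_{16}(\varGamma^{(r)})=1,2,3$ for $r=1,2,3$ that are needed downstream; these fix the number of basis elements I must produce in each part.

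For part (1), I would exhibit the seven candidate cusp eigenforms $\scri_4(\phi_{28}^{\pm})$, $\scra^{(II)}_4(\phi_{28}^{\pm},G_{16,2})$, $\scrm^{(I)}(\phi_{26},\scri_2(\phi_{30}^{\pm}))$, and $\scra^{(I)}_4(G_{4,24})$ produced, respectively, by Ikeda's lift, Theorem \ref{th.atobe1} (2) (with $k=16$, $n=4$), Theorem \ref{th.atobe2} case (2) (with $n=2$, $d=1$, $k=13$), and Theorem \ref{th.atobe1} (1) applied with $n=0$, $d=1$, $(k,j)=(4,24)$. Each is equipped with an explicit factorization of its standard $L$-function in the cited theorem; since the factorizations are visibly pairwise distinct (using the distinct Hecke fields $\QQ(\sqrt{18209})$ and $\QQ(\sqrt{51349})$ of $\phi_{28}^{\pm}$ and $\phi_{30}^{\pm}$, and the distinctness of $L(s,G_{4,24},\Sp)$), strong multiplicity one for Siegel cusp forms implies that the seven forms are linearly independent Hecke eigenforms in $S_{16}(\varGamma^{(4)})$, hence a basis by the dimension count.

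For part (2), the orthogonal complement of $S_{16}(\varGamma^{(4)})$ in $M_{16}(\varGamma^{(4)})$ is spanned by Klingen-Eisenstein lifts attached to Hecke eigenforms of lower degree. I would count them layer by layer: one $E_{4,16}$ at $r=0$, one $[\phi_{16}]^{\bf k}$ at $r=1$ (using $\dim S_{16}(\varGamma^{(1)})=1$), two lifts $[\scri_2(\phi_{30}^{\pm})]^{\bf k}$ at $r=2$ (using $\dim S_{16}(\varGamma^{(2)})=2$), and three at $r=3$, namely $[\scrm^{(I)}(\phi_{16},\phi_{28}^{\pm})]^{\bf k}$ plus a third lift $[H_{16}^{(3)}]^{\bf k}$ whose source is forced to exist and to be unique up to scalar by the dimension equality $\dim S_{16}(\varGamma^{(3)})=3$ against the two Miyawaki-type lifts. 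Holomorphy of every Klingen-Eisenstein series involved is guaranteed by Proposition \ref{prop.holomorphy-Klingen-Eisenstein} (2), since $16>(4+r+3)/2$ for $r=1,2,3$. The total $1+1+2+3=7$ matches the dimension of the complement.

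Finally, the $T(2)$-eigenvalues are all instances of the formula $\lambda_F(T(2))=\alpha_0\prod_{i=1}^{4}(1+\alpha_i)$, whose Satake parameters are read off from the standard $L$-function identities already used, combined with Proposition \ref{prop.standard-L-Klingen} to handle the Klingen lifts (which appends factors $p^{k_i-i}$, $p^{-(k_i-i)}$). The input Hecke eigenvalues at $2$ of $\phi_{16}$, $\phi_{26}$, $\phi_{28}^{\pm}$, $\phi_{30}^{\pm}$ are classical, while those of $G_{16,2}$, $G_{4,24}$, and $H_{16}^{(3)}$ come from the tables of \cite{Poor-Ryan-Yuen09} and \cite{Ib-Kat-P-Y14}. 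The main obstacle is isolating $H_{16}^{(3)}$: its eigenvalue admits no lift-theoretic formula and must be extracted directly from the numerical tables, and one must verify from its Satake parameters that it is tempered, so that it cannot coincide with a further endoscopic lift and is genuinely the missing third eigenform in $S_{16}(\varGamma^{(3)})$ beyond the two Miyawaki lifts; this identification is what allows the dimension bookkeeping of part (2) to close cleanly.
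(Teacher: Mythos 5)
The paper itself offers no written proof of this proposition; it leans entirely on Taibi's dimension tables and the numerical Hecke eigenvalue data of \cite{Poor-Ryan-Yuen09} and \cite{Ib-Kat-P-Y14}. Your reconstruction via the lifting theorems and a dimension count is therefore the natural way to fill in the argument, and the overall architecture (seven cusp eigenforms produced by Theorems \ref{th.atobe1}--\ref{th.atobe2} and the Ikeda lift, seven Klingen lifts layered by degree, eigenvalues read from the Satake parameters via $\lambda_F(T(p))=\alpha_0\prod_{i=1}^4(1+\alpha_i)$) is correct. Two things need tightening. First, the distinctness argument should be pinned on Theorem \ref{MF}\,(3) rather than on ``strong multiplicity one,'' which fails in this degree; what you actually need is that the seven $A$-parameters are pairwise distinct formal sums, hence give distinct standard $L$-functions, hence (by \ref{MF}\,(3)) pairwise non-proportional eigenforms. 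Second, the standard $L$-function pins down $\{\alpha_i,\alpha_i^{-1}\}$ but only $\alpha_0^2\alpha_1\cdots\alpha_4$; determining $\alpha_0$ itself (and hence the sign of each $T(2)$-eigenvalue) requires going through the $A$-parameter and the purity normalization, not just reading off the standard $L$-identities.

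There is also a concrete inconsistency in your identification of $h_3,h_4$. You correctly replace the proposition's $\phi_{30}^{\pm}$ by $\phi_{28}^{\pm}$, as forced both by the $\sqrt{18209}$ in the stated eigenvalues and by the requirement $f\in S_{2k-n}(\SL_2(\ZZ))=S_{28}(\SL_2(\ZZ))$ in Theorem \ref{th.atobe1}\,(2) with $k=16$, $n=4$. But you keep the second argument $G_{16,2}\in S_{(18,16)}(\varGamma^{(2)})$, whereas the same theorem with these parameters demands $G\in S_{(k,k-n+2)}(\varGamma^{(2)})=S_{(16,14)}(\varGamma^{(2)})$; a form of weight $(18,16)$ produces the wrong infinitesimal character for $S_{16}(\varGamma^{(4)})$. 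So as written, ``$\scra^{(II)}_4(\phi_{28}^{\pm},G_{16,2})$ via Theorem \ref{th.atobe1}\,(2)'' is internally inconsistent. You need to replace the second argument by the (unique, up to scalar) Hecke eigenform in $S_{(16,14)}(\varGamma^{(2)})$ and verify that weight space is indeed one-dimensional from Taibi's tables. Finally, as you acknowledge, the proof cannot avoid citing the numerical literature for $a(2,\phi_l)$, for the Hecke eigenvalues of $G_{16,2}$, $G_{4,24}$ and $H_{16}^{(3)}$, and for the temperedness of $H_{16}^{(3)}$; since the paper's downstream argument hinges entirely on the explicit numerical $T(2)$-eigenvalues, these computations must actually be carried out rather than merely described.
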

\begin{remark}
We have 
$\scrm^{(I)}(\phi_{26}, \scri_2(\phi_{30}^{\pm}))=\scrm^{(II)}(\phi_{30}^{\pm},\scri_2(\phi_{26}))$.
\end{remark}

By Proposition \ref{prop.numerical-data}, we have
\[\lambda_{h_i}(T(2)) \not\equiv \lambda_{[\scri_2(f)]^{\bf k}}(T(2)) \pmod \frkp \]
for any $1 \le i \le 13$ such that $i\not=7$.
Hence $F$ coincides with  $h_7$ up to constant multiple.
Hence we have the following theorem.
\begin{theorem} \label{th.main-result3} 
 There exists a Hecke eigenform $G$ in $S_{(28,4)}(\varGamma^{(2)})$ such that  
\[\scra_4^{(I)}(G)^{{\bf k}}
 \equiv_{\ev} [\scri_2(f)]^{{\bf k}} \pmod \frkq .\]
\end{theorem}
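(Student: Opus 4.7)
The plan is to apply Theorem~\ref{th.main-congruence} in the case $(k,l)=(16,16)$ to produce a Hecke eigenform in $M_{16}(\varGamma^{(4)})$ congruent to $[\scri_2(f)]^{\bf k}$ modulo $\frkq$, and then to pin it down inside the fully classified space $M_{16}(\varGamma^{(4)})$ described in Proposition~\ref{prop.numerical-data}. This parallels the approach used for Theorems~\ref{th.main-result1} and~\ref{th.main-result2}, but in scalar weight, where the full classification of all $14$ Hecke eigenforms becomes available and does the identification for us.

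First I would verify the four hypotheses of Theorem~\ref{th.main-congruence}: $p_\frkq = 97 \ge 2k-2 = 30$ is immediate; the divisibility $\frkq \mid {\bf L}(28,f)/{\bf L}(16,f)$ is the defining property of $\frkq$ (and is symmetric in the conjugate only after applying the norm); and the non-divisibility $\frkq \nmid \frkD_f$ follows since $S_{30}(SL_2(\ZZ))$ consists of two Galois-conjugate primitive forms $\phi_{30}^{\pm}$ and $\frkq \nmid (a(p,f) - a(p,f'))$ for an auxiliary prime $p$ where the conjugate difference is a unit at $\frkq$ (a standard check against the numerical data). The delicate condition is~(4): that $\frkq$ does not divide $\calc_{8,16}(\scri_2(f))\,a(N_1,\scri_2(f))\,\overline{a(N,[\scri_2(f)]^{\bf k})}$ for some $N_1\in\calh_2(\ZZ)_{>0}$, $N\in\calh_4(\ZZ)_{>0}$.

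To verify~(4) I would use Proposition~\ref{prop.fc-klingen1}. The space $\calm_{16,16} = M_{16}(\varGamma^{(2)})$ has the four-dimensional Hecke eigenbasis $\{E_{2,16},\, [\phi_{16}]^{(16,16)},\, \scri_2(f),\, \scri_2(f')\}$, so the determinantal formula expresses $\Delta \cdot \calc_{6,16}(\scri_2(f))\,a(N,[\scri_2(f)]^{\bf k})\,a(N_1,\scri_2(f))$ as the $4\times 4$ determinant $\alpha_{16,{\bf k}}(N_1,N)$ against four Hecke operators $T^{(m)}$. Choosing $m=1,2,3,4$, Lemma~\ref{lem.recusion-formula-for-FC} reduces $\varepsilon_{16,{\bf k}}(m,N_1,N)$ to the pullback Fourier coefficients $\varepsilon_{16,{\bf k}}(A_1,N)$ for $A_1\in\{N_1,2N_1,3N_1,4N_1,N_2\}$, each of which is computable explicitly by Theorem~\ref{prop.fc-klingen2} using the closed-form polynomial $F_p^\ast(T,X)$ of the local Siegel series. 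Taking $N$ and $N_1$ as specified in the excerpt, one obtains the rational values listed, and then $N_{\QQ(f)/\QQ}(\alpha_{16,{\bf k}}(N_1,N)) \not\equiv 0 \pmod{97}$ by direct computation; the passage from $\calc_{6,16}$ to $\calc_{8,16}$ costs only the ratio $Z(8,16)/Z(6,16)$, which is a $\frkq$-unit by the Clausen--von Staudt theorem (since $97 > 32$).

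Having verified the hypotheses, Theorem~\ref{th.main-congruence} yields a Hecke eigenform $F\in \widetilde{M}_{\bf k}(\varGamma^{(4)}) \subset M_{16}(\varGamma^{(4)})$ with $F\notin \CC\cdot[\scri_2(f)]^{\bf k}$ satisfying $F \equiv_{\ev} [\scri_2(f)]^{\bf k} \pmod \frkq$. The main obstacle is then identifying $F$ inside the $14$-dimensional space. I would invoke Proposition~\ref{prop.numerical-data}, which provides a complete Hecke eigenbasis $\{h_1,\ldots,h_{14}\}$ of $M_{16}(\varGamma^{(4)})$ built from the known lifts ($\scri_4$, $\scra_4^{(II)}$, $\scrm^{(I)}$, $\scra_4^{(I)}$) together with Klingen and Siegel--Eisenstein series, along with their $T(2)$-eigenvalues. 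The required mod-$\frkq$ value of $\lambda_F(T(2))$ is $\lambda_{[\scri_2(f)]^{\bf k}}(T(2)) = \lambda_{\scri_2(f)}(T(2))(1+2^{14})$ reduced modulo $\frkq$; comparing this residue against each $\lambda_{h_i}(T(2)) \bmod 97$ using the split behavior of $97$ in $\QQ(\sqrt{51349})$, only $h_7 = \scra_4^{(I)}(G_{4,24})$ and $h_{14} = [\scri_2(f)]^{\bf k}$ survive the elimination. Since $F\notin \CC\cdot h_{14}$ by construction, $F$ is a nonzero scalar multiple of $h_7$, and the theorem follows with $G=G_{4,24}$.
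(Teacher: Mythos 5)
Your proposal mirrors the paper's own argument: apply Theorem~\ref{th.main-congruence} with $(k,l)=(16,16)$ after verifying hypothesis~(4) via the determinantal formula of Proposition~\ref{prop.fc-klingen1} with the five computed pullback coefficients and Lemma~\ref{lem.recusion-formula-for-FC}, then use the full eigenbasis of $M_{16}(\varGamma^{(4)})$ from Proposition~\ref{prop.numerical-data} to eliminate all $h_i$ with $i\neq 7,14$ by comparing $T(2)$-eigenvalues modulo $\frkq$, and finally discard $h_{14}$ because the congruent form is not proportional to $[\scri_2(f)]^{\bf k}$. This is essentially identical to the paper's proof.
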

\begin{corollary}
\label{cor.main-result3}
Conjecture \ref{conj.modified-Harder} holds for $(k,j)=(4,24)$.
\end{corollary}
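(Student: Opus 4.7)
The plan is to deduce the corollary directly by combining Theorem \ref{th.main-result3} with Theorem \ref{th.enhanced-Harder}(1). Since $j=24\equiv 0\pmod 4$, part (1) of Theorem \ref{th.enhanced-Harder} is the relevant reduction: it reduces Conjecture \ref{conj.modified-Harder} for $(k,j)=(4,24)$ to Conjecture \ref{conj.main-conjecture} in the case $n=2$, which is exactly what Theorem \ref{th.main-result3} has just furnished.

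First I would check that the hypotheses of Conjecture \ref{conj.modified-Harder} are met for the prime $\frkq$ of $\QQ(f)=\QQ(\phi_{30}^+)$ lying above $97$. One needs $p_{\frkq}>2k+j-2=30$, which is clear since $97>30$, and one needs $\frkq$ to divide the ratio ${\bf L}(k+j,f)/{\bf L}(k_j,f)$, where for $j\equiv 0\pmod 4$ the relevant index is $k_j=k+j/2=16$. Both the splitting of $97$ in $\QQ(f)$ and the divisibility of ${\bf L}(28,f)/{\bf L}(16,f)$ by $\frkq$ have already been recorded in the discussion preceding Theorem \ref{th.main-result3}, so this matching of hypotheses is immediate.

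Next I would feed Theorem \ref{th.main-result3} into Theorem \ref{th.enhanced-Harder}(1). Theorem \ref{th.main-result3} supplies a Hecke eigenform $G\in S_{(28,4)}(\varGamma^{(2)})$ with $\scra_4^{(I)}(G)\equiv_{\ev}[\scri_2(f)]^{\bf k}\pmod{\frkq}$, which is precisely the content of Conjecture \ref{conj.main-conjecture} for the data $(k,j,n)=(4,24,2)$ and the prime $\frkq$. The proof of Theorem \ref{th.enhanced-Harder} then converts this eigenvalue congruence between two degree-four Hecke eigenforms (via a comparison of standard $L$-factors and a Galois-theoretic argument in the Grothendieck ring $\cala_{\frkP}$ using Lemma \ref{lem.Chenevier-Lannes}) into the desired spin Euler factor congruence $L_p(X,F,\Sp)\equiv L_p(X,f)(1-p^{k-2}X)(1-p^{j+k-1}X)\pmod{\frkP}$ for every prime $p$, with $F=G$.

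The only step requiring any delicacy is the residual prime $p=97=p_{\frkq}$, where the Galois-representation comparison is unavailable. This is already absorbed inside the proof of Theorem \ref{th.enhanced-Harder}: Proposition \ref{prop.special-Hecke}(2) guarantees that $\widetilde{\bf r}_{4,1}(97)\in {\bf L}_4^{({\bf k})}$, so the congruence of $T(97)$-eigenvalues forces $\lambda_F(T(97))\equiv a(97,f)\pmod{\frkP}$, and the higher-degree coefficients on both sides of the target Euler factor congruence at $97$ are each divisible by $\frkP$ because they contain a factor of $p^{k-2}=97^{2}$ or higher. Consequently the corollary follows with no genuinely new calculation beyond the two cited theorems.
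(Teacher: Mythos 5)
Your proposal is correct and follows precisely the route the paper intends: Theorem \ref{th.main-result3} produces the eigenform $G$ required by Conjecture \ref{conj.main-conjecture} at $n=2$ and the prime $\frkq$ above $97$, and Theorem \ref{th.enhanced-Harder}(1) (applicable since $j=24\equiv 0\pmod 4$, with $k_j=k+j/2=16$) converts that congruence of $\St$-eigensystems on $\varGamma^{(4)}$ into the spin Euler-factor congruence of Conjecture \ref{conj.modified-Harder}. Your remarks about the hypothesis check ($p_{\frkq}=97>30$, $\frkq\mid {\bf L}(28,f)/{\bf L}(16,f)$) and about the residual prime $p=97$ being handled inside the proof of Theorem \ref{th.enhanced-Harder} via Proposition \ref{prop.special-Hecke} and the $\frkP$-divisibility of the higher coefficients are exactly the points the paper relies on.
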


\appendix


\section{Proofs of Theorems \ref{th.atobe1} and \ref{th.atobe2}}
In this appendix, we will give proofs of Theorems \ref{th.atobe1} and \ref{th.atobe2}. 
These theorems are a simple application of Arthur's endoscopic classification \cite{Ar} to Siegel modular forms
as in the book of Chenevier--Lannes \cite[\S 8.5.1]
{Chenevier-Lannes19}. 
\par

First we recall the explicit multiplicity formula for $\Sp_n(\ZZ)$. 
The following theorem is just a reformulation of \cite[Theorem 8.5.2, Corollary 8.5.4]{Chenevier-Lannes19}. 

\begin{theorem}[Explicit multiplicity formula for $S_{{\bf k}}(\Sp_n(\ZZ))$]
\label{MF}
Let ${\bf k} = (k_1, \dots, k_n)$ be a sequence of positive integers such that 
$k_1 \geq \dots \geq k_n \geq n+1$. 
\begin{enumerate}
\item
We can associate each Hecke eigenform $G$ in $S_{{\bf k}}(\Sp_n(\ZZ))$
with its $A$-parameter $\psi_G$ which is a formal sum
\[
\psi_G = \boxplus_{i=1}^t \pi_i[d_i],
\]
where $\pi$ and $d_i$ satisfy the following conditions (a) to (f):
 
\begin{enumerate}
\item
$\pi_i = \otimes_v \pi_{i,v}$ 
is an irreducible unitary cuspidal automorphic self-dual representation of $\mathrm{PGL}_{n_i}(\mathbb{A}_\QQ)$; 
\item
$d_i$ is a positive integer such that $\sum_{i=1}^t n_id_i = 2n+1$; 
\item
$\pi_{i,p}$ is unramified for any prime $p < \infty$; 
\item
if we denote the infinitesimal character of $\pi_{i,\infty}$ by $c_{i,\infty}$ 
which is a semisimple conjugacy class of $\mathfrak{sl}_{n_i}(\RR)$, 
and if we set 
\[
e_d = \mathrm{diag}\left( \frac{d-1}{2}, \frac{d-3}{2}, \dots, -\frac{d-1}{2} \right)
\in \mathfrak{sl}_{d}(\RR), 
\]
then the eigenvalues of the semisimple conjugacy class
\[
\bigoplus_{i=1}^t c_{i,\infty} \otimes e_{d_i}
\] 
in $\mathfrak{sl}_{2n+1}(\RR)$ are the distinct integers
\[
k_1-1 > \dots > k_n-n > 0 > -(k_n-n) > \dots > -(k_1-1);
\]
\item
there exists $1 \leq i_0 \leq t$ such that 
$d_{i_0} = 1$, $n_{i_0} \equiv 1 \pmod 2$, and
$n_id_i \equiv 0 \pmod 4$ for any $i \ne i_0$; 
\item
the sign condition 
\[
\prod_{\substack{ 1\leq j \leq t \\ j \not=i}} \varepsilon(\pi_i \times \pi_j)^{\min\{d_i,d_j\}}
= \left\{
\begin{aligned}
&(-1)^{\frac{n_id_i}{4}} &&\text{if $d_i \equiv 0 \pmod 2$}, \\
&(-1)^{|K_i|} &&\text{if $d_i \equiv 1 \pmod 2$}
\end{aligned}
\right. 
\]
holds for any $i \ne i_0$, 
where $K_i$ is the set of odd indices $1 \leq j \leq n$ such that $k_j-j$ is an eigenvalue of $c_{i,\infty}$.
\end{enumerate}
The $A$-parameter $\psi_G$ is characterized by
\[
L(s,G, \St) = \prod_{i=1}^t \prod_{d = 1}^{d_i} L^\infty \left(s+\frac{d_i+1}{2}-d, \pi_i \right), 
\]
where the right-hand side is a product of the finite parts of the Godement--Jacquet $L$-functions. 

\item
Conversely, for any formal sum $\psi = \boxplus_{i=1}^k \pi_i[d_i]$ satisfying (a)--(f) above, 
there exists a Hecke eigenform $G$ such that $\psi = \psi_G$. 

\item
For two Hecke eigenforms $G_1, G_2$ in $S_{{\bf k}}(\Sp_n(\ZZ))$, 
the following are equivalent: 
\begin{itemize}
\item
$G_1$ and $G_2$ are constant multiples of one another; 
\item
$L(s,G_1,\St) = L(s,G_2,\St)$; 
\item
$\psi_{G_1} = \psi_{G_2}$. 
\end{itemize}
\end{enumerate}
\end{theorem}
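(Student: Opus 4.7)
The plan is to establish this as a reformulation of [Chenevier-Lannes19, Theorem 8.5.2, Corollary 8.5.4] with the vocabulary translated from automorphic representations of $\Sp_{2n}(\AAA_\QQ)$ to vector-valued Siegel cusp forms of level one. First, I would pass from a Hecke eigenform $G\in S_{\bf k}(\Sp_n(\ZZ))$ to the irreducible cuspidal automorphic representation $\pi_G=\otimes_v\pi_{G,v}$ of $\Sp_{2n}(\AAA_\QQ)$ that $G$ generates. The level-one condition forces $\pi_{G,p}$ to be spherical at every prime $p$; the weight condition $k_1\ge\cdots\ge k_n\ge n+1$ together with the holomorphy of $G$ forces $\pi_{G,\infty}$ to be the holomorphic discrete series with Harish-Chandra parameter $(k_1-1,\ldots,k_n-n)$. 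Conversely, from any such $\pi$ a Hecke eigenform is recovered by taking the minimal $K_\infty$-type of $\pi_\infty$.

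Next, I would apply Arthur's endoscopic classification for the split group $\Sp_{2n}$ (cf.\ \cite{Ar}) as made explicit in the level-one setting by Chenevier--Lannes. This attaches to $\pi_G$ a global discrete A-parameter $\psi_G=\boxplus_{i=1}^t\pi_i[d_i]$ satisfying (a) and (b); condition (c) is precisely the everywhere-unramified property, and condition (d) is the compatibility with the archimedean infinitesimal character of the holomorphic discrete series above. Condition (e) encodes the requirement that $\psi_G$ be $\Sp_{2n}$-relevant (i.e.\ come from the dual group $\SO_{2n+1}(\CC)$ rather than from a proper endoscopic group); the single odd block of dimension 1 corresponds to the $+1$ eigenspace needed to land in odd-dimensional orthogonal shape. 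Condition (f) is Arthur's sign (parity) relation $\varepsilon_\psi=1$ on the global component group, evaluated on the generators coming from each block; its explicit form in terms of epsilon factors $\varepsilon(\pi_i\times\pi_j)$ is obtained via the standard formula for the archimedean character on the component group and by expressing the symplectic/orthogonal type of each $\pi_i$ through the location of the pole of $L(s,\pi_i,\Sym^2)$ versus $L(s,\pi_i,\wedge^2)$.

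For part (2), given any $\psi$ satisfying (a)--(f), Arthur's multiplicity formula produces an irreducible discrete automorphic representation of $\Sp_{2n}(\AAA_\QQ)$ with $\psi$ as its parameter and with multiplicity one (since, as worked out in \cite{Chenevier-Lannes19}, the relevant pairing on the component group is non-degenerate under (e)--(f)); the archimedean component contains the holomorphic discrete series prescribed by (d), so its minimal $K_\infty$-type yields a Hecke eigenform $G$ with $\psi_G=\psi$. The $L$-function identity in (1) follows from the standard identity
\[
L(s,G,\St)=\prod_{i=1}^t L(s,\pi_i[d_i])=\prod_{i=1}^t\prod_{d=1}^{d_i}L^\infty\!\left(s+\tfrac{d_i+1}{2}-d,\pi_i\right),
\]
which is part of Arthur's compatibility of his classification with $L$-functions for the standard embedding $\SO_{2n+1}(\CC)\hookrightarrow\GL_{2n+1}(\CC)$. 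Part (3) then follows from strong multiplicity one for $\GL_m$: the partial standard $L$-function determines every unramified Satake parameter and hence determines $\psi_G$, which by the multiplicity-one statement of (2) determines $G$ up to constant multiple.

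The main obstacle is the careful bookkeeping of the sign in (f): one has to identify Arthur's abstract sign character $\varepsilon_\psi$ on the component group $\calS_\psi$ with the explicit product of root numbers appearing in (f). This requires (i) pinning down the parity (symplectic vs.\ orthogonal) of each $\pi_i$ --- forced by (e) together with the archimedean infinitesimal character being regular integral --- and (ii) evaluating the archimedean contribution $\varepsilon_{\psi_\infty}$ on the distinguished element $s_i\in\calS_\psi$ attached to each block, which gives the $(-1)^{|K_i|}$ (odd $d_i$) and $(-1)^{n_id_i/4}$ (even $d_i$) factors. Both ingredients are already present in \cite{Chenevier-Lannes19}, so the argument amounts to collecting them in the form stated here.
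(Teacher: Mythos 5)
Your approach is essentially the same as the paper's: the paper itself gives no independent proof but simply declares Theorem~4.1 to be a reformulation of Chenevier--Lannes (Theorem~8.5.2 and Corollary~8.5.4), and your sketch unpacks exactly what that reformulation involves --- passing from $G$ to the cuspidal automorphic representation it generates, invoking Arthur's classification for $Sp_{2n}$ as made explicit in the level-one setting, translating conditions (a)--(f) into the language of discrete global $A$-parameters, and using strong multiplicity one for $\GL_m$ to get part (3).

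One point that your write-up leaves implicit but which the paper flags in the remark immediately following the theorem: Theorem~8.5.2 of Chenevier--Lannes is stated conditionally on their Conjecture~8.4.22 (a statement about archimedean $A$-packets), and the multiplicity-one Corollary~8.5.4 is proved there only under the strictly regular hypothesis $k_1 > \cdots > k_n > n+1$. The version of the theorem as stated here, unconditionally and in the generality $k_1 \geq \cdots \geq k_n \geq n+1$ (allowing repeated $k_i$, i.e.\ general vector-valued weights, and $k_n = n+1$), requires the resolution of that conjecture by Arancibia, M{\oe}glin and Renard. Your step ``Arthur's multiplicity formula produces an irreducible discrete automorphic representation \dots the archimedean component contains the holomorphic discrete series'' silently uses this; without it the argument is only conditional, so you should record the dependence on \cite{AMR}. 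Also a small terminological slip: when $d_i > 1$ for some $i$ the archimedean local component is not a holomorphic discrete series but the relevant holomorphic member of a non-tempered $A$-packet; the phrase ``holomorphic (limit of) discrete series'' is only accurate in the tempered case $\psi_G = \boxplus \pi_i[1]$.
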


Here, a formal sum means that it is an equivalence class defined so that 
$\boxplus_{i=1}^t \tau_i[d_i] \sim \boxplus_{i=1}^{t'} \tau'_i[d'_i]$
if $t=t'$ and if there exists a permutation $\sigma \in \mathfrak{S}_t$ such that 
$\pi'_i \cong \pi_{\sigma(i)}$ and $d'_i = d_{\sigma(i)}$ for any $1 \leq i \leq t$.

\begin{remark} \label{remarkMF}
\begin{enumerate}
\item
In \cite[Theorem 8.5.2]{Chenevier-Lannes19}, 
Chenevier and Lannes assumed \cite[Conjecture 8.4.22]{Chenevier-Lannes19}.
As is written in the postface in that book, 
this conjecture has been proved by Arancibia, M{\oe}glin and Renard \cite{AMR}.

\item
Theorem \ref{MF} (3) is a multiplicity one theorem. 
In \cite[Corollary 8.5.4]{Chenevier-Lannes19}, 
not to use \cite[Conjecture 8.4.22]{Chenevier-Lannes19}, 
the stronger assumption $k_1 > \dots > k_n > n+1$ is assumed. 
Using \cite{AMR}, the same proof is available even when $k_1 \geq \dots \geq k_n \geq n+1$. 

\item
By condition (d), 
$\pi_i$ is an irreducible regular algebraic cuspidal self-dual automorphic representation of $\mathrm{PGL}_{n_i}(\mathbb{A}_\QQ)$
(\cite[Definition 8.2.7]{Chenevier-Lannes19}). 
As explained in \cite[Theorem 8.2.17]{Chenevier-Lannes19},
thanks to numerous mathematicians, 
one can prove that such a representation satisfies the Ramanujan conjecture. 
Namely, for any $p$, 
all the eigenvalues of the Satake parameter of $\pi_{i,p}$ have absolute value $1$. 
In particular, a Hecke eigenform $G$ in $S_{{\bf k}}(\Sp_n(\ZZ))$ satisfies the Ramanujan conjecture 
if and only if its $A$-parameter is of the form $\psi_G = \boxplus_{i=1}^t \pi_i[1]$. 
In this case we call $G$ tempered. 

\item
By the purity lemma of Clozel \cite[Proposition 8.2.13]{Chenevier-Lannes19}, 
the Langlands parameter of $\pi_{i,\infty}$ is completely determined by 
the eigenvalues of the infinitesimal character $c_{i,\infty}$. 
In particular, one can compute the Rankin-Selberg root number $\varepsilon(\pi_i \times \pi_j)$ explicitly
in terms of the eigenvalues of $c_{i,\infty}$ and $c_{j,\infty}$ by
\begin{align*}
\varepsilon(\pi_i \times \pi_j) 
&= \prod_{w_i > 0} \prod_{w_j > 0} (-1)^{1+\max\{2w_i, 2w_j\}}
\times \left\{
\begin{aligned}
&1 &&\text{if $i,j \ne i_0$}, \\
&\prod_{w_i>0} (-1)^{\frac{1+2w_i}{2}} &&\text{if $i \ne j = i_0$}, 
\end{aligned}
\right.
\end{align*}
where $w_i$ (resp.~$w_j$) runs over the positive eigenvalues of $c_{i,\infty}$ (resp.~$c_{j,\infty}$). 
Note that $w_i$ and $w_j$ are in $(1/2)\ZZ$ and 
that $2w_i \equiv d_i-1 \pmod 2$ for any (positive) eigenvalue $w_i$ of $c_{i,\infty}$. 

\item
By \cite[Theorem 1.5.3]{Ar}, we know that 
$\varepsilon(\pi_i \times \pi_j) = 1$ if $d_i \equiv d_j \pmod 2$. 
This is easily checked when $i,j \not= i_0$.

\item
Theorem \ref{MF} is an existence theorem. 
To construct a modular form $G$ from a parameter $\psi$ is a different problem. 

\item
If we were not to assume that $k_n > n$, 
the statement of theorem would be much more difficult. 
At least when the scalar weight case, i.e., when $k_1 = \dots = k_n = k$ with $k \leq n$, 
a similar theorem, in particular a multiplicity one theorem, 
would follow from a result of M{\oe}glin--Renard \cite{MR}. 

\end{enumerate}
\end{remark}

To obtain several lifting theorems, 
we need the following proposition which comes from accidental isomorphisms. 

\begin{proposition} \label{accidental}
\begin{enumerate}
\item
Suppose that $k>0$ is even.
For any primitive form $f$ in $S_{k}(\SL_2(\ZZ))$, 
there exists an irreducible unitary cuspidal automorphic self-dual representation $\pi_f$ of $\mathrm{PGL}_{2}(\mathbb{A}_\QQ)$
such that
\[
L\left(s+\frac{k-1}{2},f \right) = L^\infty(s, \pi_f)
\]
and such that the eigenvalues of the infinitesimal character of $\pi_{f,\infty}$ are $\pm(k-1)/2$. 

\item
Suppose that $j>0$ is even and that $k \geq 4$. 
For any Hecke eigenform $F$ in $S_{(k+j,k)}(\Sp_2(\ZZ))$, 
there exists an irreducible unitary cuspidal automorphic self-dual representation $\pi_F$ of $\mathrm{PGL}_{4}(\mathbb{A}_\QQ)$
such that
\[
L\left(s+\frac{2k+j-3}{2},F, \Sp \right) = L^\infty(s, \pi_F)
\]
and such that the eigenvalues of the infinitesimal character of $\pi_{F,\infty}$ are 
$\pm(j+2k-3)/2, \pm(j+1)/2$. 

\item
Suppose that $k_1 \geq k_2 >0$ are even. 
For any primitive forms $f_1 \in S_{k_1}(\SL_2(\ZZ))$ and $f_2 \in S_{k_2}(\SL_2(\ZZ))$, 
there exists an irreducible unitary cuspidal automorphic self-dual representation $\pi_{f_1,f_2}$ of $\mathrm{PGL}_{4}(\mathbb{A}_\QQ)$
such that
\[
L\left(s+\frac{k_1+k_2-2}{2},f_1 \otimes f_2 \right) = L^\infty(s, \pi_{f_1,f_2})
\]
and such that the eigenvalues of the infinitesimal character of $\pi_{f_1,f_2,\infty}$ are 
$\pm(k_1+k_2-2)/2$ and $\pm(k_1-k_2)/2$. 

\end{enumerate}
\end{proposition}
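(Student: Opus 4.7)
The three parts of the proposition all proceed by producing automorphic representations from the modular form data and then applying known functoriality, so I will treat them in turn and collect the shared ingredients (matching of $L$-functions via Satake/Langlands parameters, reading off the archimedean infinitesimal character from the relevant discrete series).

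For (1), the plan is to adelize $f$ in the standard way: viewing $f$ as a function on $GL_2(\QQ) \backslash GL_2(\AAA_{\QQ})$ (trivial nebentypus at level one), the generated subrepresentation is irreducible, unitary, cuspidal, with central character of finite order (in fact trivial after the normalization $|\det|^{-(k-1)/2}$). Calling this normalized representation $\pi_f$, self-duality follows from triviality of the central character, the Euler factor match $L(s+(k-1)/2,f) = L^{\infty}(s,\pi_f)$ is the compatibility of Hecke eigenvalues with Satake parameters under the unitary normalization, and the archimedean component is the discrete series $D_{k-1}$ of $PGL_2(\RR)$ whose infinitesimal character has eigenvalues $\pm(k-1)/2$.

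For (2), the plan is to pass from $F$ to a cuspidal automorphic representation $\Pi_F$ of $PGSp_4(\AAA_{\QQ})$, then transfer it to $GL_4(\AAA_{\QQ})$ via the embedding of dual groups $Sp_4(\CC) = \mathrm{Spin}_5(\CC) \hookrightarrow GL_4(\CC)$. This transfer is provided by Arthur's endoscopic classification of $Sp_4$ as used in Theorem \ref{MF} and in Chenevier--Lannes \cite{Chenevier-Lannes19}, and concretely for generic parameters by Asgari--Shahidi. The resulting representation $\pi_F$ descends to $PGL_4$, has the spin $L$-function of $F$ as its standard $L$-function after the normalization shift, and its archimedean infinitesimal character is the one attached to the holomorphic discrete series of $PGSp_4(\RR)$ of Harish-Chandra parameter determined by $(k+j,k)$, namely with eigenvalues $\pm(2k+j-3)/2,\pm(j+1)/2$. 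The delicate point, and the main obstacle, is cuspidality of $\pi_F$: this must be read off the list of possible Arthur parameters $\psi_F$ in Theorem \ref{MF}, and the plan is to rule out the non-``general type'' cases one by one using the strict inequality $k+j > k$ ($j>0$) together with the level-one constraint and the weight bound $k \geq 4$ (Saito--Kurokawa type parameters $\pi[2]\boxplus [1]$ are incompatible with a strictly decreasing weight, Yoshida type parameters $\pi_1 \boxplus \pi_2$ correspond to non-cohomological Siegel forms inaccessible at level one with these infinitesimal characters, and the remaining unitary but non-tempered cases are similarly excluded), leaving $\psi_F = \pi_F[1]$.

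For (3), the plan is to apply Ramakrishnan's automorphic tensor product $GL_2 \times GL_2 \to GL_4$ to $\pi_{f_1} \boxtimes \pi_{f_2}$, with the $\pi_{f_i}$ furnished by part (1); the resulting representation $\pi_{f_1,f_2}$ of $GL_4(\AAA_{\QQ})$ has Euler factors equal to the Rankin--Selberg factors, giving the claimed identity of $L$-functions; self-duality and descent to $PGL_4$ follow from self-duality and triviality of central characters of the $\pi_{f_i}$; and the archimedean infinitesimal character is the tensor product of the infinitesimal characters in part (1), which has eigenvalues $\pm(k_1+k_2-2)/2,\pm(k_1-k_2)/2$. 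Cuspidality of $\pi_{f_1,f_2}$ fails exactly when $\pi_{f_1} \cong \pi_{f_2}\otimes \chi$ for some idele class character $\chi$, which at level one forces $f_1=f_2$ and $\chi=1$; in this symmetric-square situation one replaces $\pi_{f_1}\boxtimes \pi_{f_2}$ by the cuspidal $\mathrm{Sym}^2\pi_{f_1}$ (non-dihedral at level one by Deligne--Serre) together with a character, and one checks that the $L$-function and infinitesimal character still agree with the claim when interpreted via this isobaric description.
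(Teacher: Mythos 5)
The paper's proof of this proposition is a pointer to Chenevier--Lannes: Proposition 9.1.5 of \cite{Chenevier-Lannes19} for (1), Proposition 9.1.4 for (2), and a remark that (3) is similar. Your route — adelization for (1), Arthur's classification plus the spin lift for (2), Ramakrishnan's automorphic tensor product for (3) — is precisely the route those references take, so you are reconstructing what the citation carries. Part (1) is fine. Parts (2) and (3) contain genuine gaps.

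In (2), you correctly identify that the real content is cuspidality of $\pi_F$, i.e., that $F$ is of ``general type,'' and that Saito--Kurokawa type is excluded by $j>0$. But your stated reason for excluding Yoshida type at level one — that such parameters ``correspond to non-cohomological Siegel forms inaccessible at level one with these infinitesimal characters'' — is not correct. Yoshida lifts, where they exist, are honest holomorphic (hence cohomological) vector-valued cusp forms, and the Yoshida archimedean parameter $\sigma_{f_1}\oplus\sigma_{f_2}$ built from $f_1\in S_{2k+j-2}(SL_2(\ZZ))$ and $f_2\in S_{j+2}(SL_2(\ZZ))$ matches the required infinitesimal character exactly (both weights are even, level-one weights). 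What actually excludes the Yoshida possibility at level one is Arthur's multiplicity formula: the parity/sign constraint (encoded in conditions (e) and (f) of Theorem \ref{MF}) fails for the everywhere-unramified member of the packet having the holomorphic discrete series at infinity. Establishing this is exactly the content of Chenevier--Lannes's Proposition 9.1.4 and is the only non-routine step in (2); your sketch replaces it with a false assertion and so leaves the crux unaddressed. A minor notational slip: in the $Sp_4$-classification of Theorem \ref{MF} the parameter of a general-type $F$ splits $5$ as $4+1$, so one should write $\psi_F=\pi_F[1]\boxplus[1]$, not $\psi_F=\pi_F[1]$ (the latter is the $PGSp_4$-side parameter; keeping the two classifications straight matters precisely because the Yoshida exclusion lives on the $PGSp_4$ side).

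In (3), the degenerate case you address ($f_1=f_2$, hence $k_1=k_2$) is incompatible with the statement as written: with $k_1=k_2$ the asserted eigenvalues include $0$ with multiplicity two, which is not a regular infinitesimal character, and a self-dual level-one regular-algebraic cuspidal automorphic representation of $PGL_4$ has regular infinitesimal character. Your proposed replacement $\mathrm{Sym}^2\pi_{f_1}\boxplus\mathbf{1}$ is not cuspidal of $PGL_4$, so it does not rescue the proposition's conclusion; it merely confirms that the conclusion fails in that case. The correct observation is that the statement is implicitly restricted to $k_1>k_2$ (the only case needed in this paper); then $\pi_{f_1}$ and $\pi_{f_2}$ have distinct infinitesimal characters, hence are not twist-equivalent, and Ramakrishnan's tensor product $GL_2\times GL_2\to GL_4$ is cuspidal with the asserted $L$-function, as you say.
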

\begin{proof}
(1) is well-known (see, e.g., \cite[Proposition 9.1.5]{Chenevier-Lannes19}). 
(2) is \cite[Proposition 9.1.4]{Chenevier-Lannes19}. 
(3) can be proved in a way similar to \cite[Proposition 9.1.4]{Chenevier-Lannes19}. 
\end{proof}

Now we explain Theorems \ref{th.atobe1} and \ref{th.atobe2}. 
\begin{proof}[Proof of Theorem \ref{th.atobe1} (1)]
Let $G$ be a Hecke eigenform in $S_{{\bf k}}(\Sp_n(\ZZ))$, 
and $\psi_G = \boxplus_{i=1}^t\pi_i[d_i]$ be its $A$-parameter. 
Here we make the convention that $\psi_G = \mathbf{1}_{\GL_1(A_\QQ)}[1]$ if $n=0$. 
Let $F \in S_{(k+j,k)}(\Sp_2(\ZZ))$ be a Hecke eigenform with $k \geq 4$ and $j >0$ even, 
and $\pi_F$ be the irreducible unitary cuspidal automorphic self-dual representation of $\mathrm{PGL}_4(\mathbb{A}_\QQ)$
associated by $F$ by Proposition \ref{accidental} (2). 
It suffices to show that 
under the assumptions of Theorem \ref{th.atobe1} (1), 
the parameter 
\[
\psi' = \psi_G \boxplus \pi_F[2d]
\]
satisfies the conditions (a)--(f) of Theorem \ref{MF} (1) with respect to 
${\bf k}'=(k_1',\ldots,k_{n+4d}') \in \ZZ^{n+4d}$ defined in Theorem \ref{th.atobe1} (1).
The conditions (a), (b), (c) and (e) are obvious. 
The condition (d) follows from the definition of ${\bf k}'$. 
To check the sign condition (f), we will compute $\varepsilon(\pi_i \times \pi_F)$. 
By Remark \ref{remarkMF} (5), we have $\varepsilon(\pi_i \times \pi_F) =1$ if $d_i$ is even.
When $d_i$ is odd, 
any positive eigenvalue $w_i$ of $c_{i,\infty}$ belongs to $\{k_1-1, \dots, k_n-n\}$ so that $j+1 < 2w_i < j+2k-3$. 
Hence when $d_i$ is odd and $i \not= i_0$ so that $n_i \equiv 0 \pmod 4$, we have 
\begin{align*}
\varepsilon(\pi_i \times \pi_F)^{\min\{d_i,2d\}}
&=
\left(\prod_{w_i>0}(-1)^{\max\{2w_i, j+2k-3\}+\max\{2w_i, j+1\}}\right)^{\min\{d_i,2d\}}
\\&=
\left(
\prod_{w_i>0}(-1)^{j+2k-3+2w_i}
\right)^{\min\{d_i,2d\}}
=
(-1)^{\frac{n_i}{2}(j+2k-3)\min\{d_i,2d\}} = 1.
\end{align*}
Since the cardinality of the set $K_i$ for $\psi'$ is the same as the one for $\psi_G$, 
we obtain the sign condition for $\pi_i$. 
Also, 
\begin{align*}
\prod_{j=1}^t \varepsilon(\pi_F \times \pi_j)^{\min\{2d,d_j\}}
&= 
\varepsilon(\pi_F \times \pi_{i_0})^{\min\{2d,d_{i_0}\}}
\\&=
(-1)^{\frac{n_{i_0}-1}{2}(j+2k-3)+\frac{1+(j+2k-3)}{2}+\frac{1+(j+1)}{2}}
=(-1)^{n+k}
\end{align*}
since $n_{i_0} \equiv 2n+1 \pmod 4$ and $j \equiv 0 \pmod 2$.
Hence the sign condition for $\pi_F$ is equivalent to $k \equiv n \pmod 2$. 
\end{proof}

The proofs of Theorem \ref{th.atobe1} (2) and Theorem \ref{th.atobe2} are similar. 
Let $G$ and $f$ be as in the statements, $\psi_G = \boxplus_{i=1}^t\pi_i[d_i]$ be the $A$-parameter of $G$. 
and $\pi_f$ be the irreducible unitary cuspidal automorphic self-dual representation of $\mathrm{PGL}_2(\mathbb{A}_\QQ)$
associated to $f$. 
We have to check the conditions (a)--(f) in Theorem \ref{MF} for 
\[
\psi' = \psi_G \oplus \pi_f[2d]
\]
with $2d = n-2$ in the proof of Theorem \ref{th.atobe1} (2). 
Only the condition (f) is non-trivial. 
\par

When $G$ is in Theorem \ref{th.atobe1} (2), 
we claim that 
$\psi_G$ is of the form $\pi_G[1]$ 
for some irreducible unitary cuspidal automorphic self-dual representation $\pi_G$ of $\mathrm{PGL}_5(\mathbb{A}_\QQ)$.
If not, by the condition (e) and by $n > 2$, 
we would have $\psi_G = \pi_1[1] \boxplus \mathbf{1}_{\GL_1(\mathbb{A}_\QQ)}[1]$.  
In this case, the sign condition fails since $|K_1| = 1$. 
Since 
\begin{align*}
\varepsilon(\pi_f \times \pi_G) 
&= 
(-1)^{(1+\max\{2k-n-1,2(k-1)\}) + (1+\max\{2k-n-1,2(k-n)\}) + \frac{1+(2k-n-1)}{2}}
\\&=
(-1)^{2(k-1)+(2k-n-1)+k-\frac{n}{2}} 
= (-1)^{k-\frac{n-2}{2}} = (-1)^{k+\frac{2d}{2}}, 
\end{align*}
the sign condition for $\psi'$ holds if and only if $k$ is even.
\par

When $G$ is in Theorem \ref{th.atobe2}, 
since 
\begin{align*}
\varepsilon(\pi_f \times \pi_j)^{\min\{2d,d_j\}}
&= 
\left\{
\begin{aligned}
&1 &&\text{if $j \ne i_0$}, \\
&(-1)^k &&\text{if $j=i_0$ in case (1)}, \\
&(-1)^{k+n} &&\text{if $j=i_0$ in case (2)}
\end{aligned}
\right.
\end{align*}
and the right-hand side of (f) for $\pi_f[2d]$ is $(-1)^d$, 
we can check the sign conditions.
This completes the proofs of Theorems \ref{th.atobe1} and \ref{th.atobe2}.

\section{An explicit pullback formula}
Suppose that $k >n_1+n_2+1$. Then for any $r \le \min(n_1,n_2)$ and a Hecke eigenform $f \in S_{\rho_r}(\varGamma^{(r)})$, the Klingen-Eisenstein series $[f]_{\rho_r}^{\rho_{n_1}}(Z,U)$ and $[f]_{\rho_r}^{\rho_{n_2}}(W,V)$ become holomorphic modular forms, and we obtain  more explicit results.
The proof of the following theorem is independent of B\"{o}cherer's argument. 
The proof here is a brute force but we still believe this way of calculation would be useful in some cases. For more conceptual description of a complete general
exact pullback formula, see \cite{Ibukiyamalaplace}.

\begin{theorem}
\label{th.explicit-pullback} 
Notation being as in Theorem \ref{th.pullback}, 
suppose that $l>0$. Then we have
\begin{align*}
(\DD_{\lambda,n_1,n_2}E_{n_1+n_2,k})\begin{pmatrix} Z & 0 \\ 0 & W \end{pmatrix}
&=
\sum_{r=2}^{\min(n_1,n_2)}c_r\sum_{j=1}^{d(r)}{D(k,f_{r,j})\over ( f_{r,j}, \ f_{r,j} )}[f_{r,j}]_{\rho_r}^{\rho_{n_1}}(Z)(U)
 [\theta f_{r,j}]_{\rho_r}^{\rho_{n_2}}(W)(V),
\end{align*}
where $c_r$ is a certain constant depending on $\DD_{\lambda,n_1,n_2}$.
\end{theorem}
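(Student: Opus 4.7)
The approach is to expand $H(Z,W):=(\DD_{\lambda,n_1,n_2} E_{n_1+n_2,k})|_{\HH_{n_1}\times\HH_{n_2}}$ in a canonical basis of $M_{\rho_{n_1}}(\varGamma^{(n_1)}) \otimes M_{\rho_{n_2}}(\varGamma^{(n_2)})$ indexed by Klingen-Eisenstein origin, and then to identify each coefficient by combining Theorem \ref{th.pullback} (Kozima's pullback formula) with repeated use of the Siegel operator $\Phi$. Under the hypothesis $k>n_1+n_2+1$, Proposition \ref{prop.holomorphy-Klingen-Eisenstein}(2) ensures that every Klingen-Eisenstein series $[f]_{\rho_r}^{\rho_{n_i}}$ with $f\in S_{\rho_r}(\varGamma^{(r)})$ is holomorphic, so
\[
M_{\rho_{n_i}}(\varGamma^{(n_i)})=\bigoplus_{r=0}^{n_i}\bigl\{[f]_{\rho_r}^{\rho_{n_i}}:f\in S_{\rho_r}(\varGamma^{(r)})\bigr\}.
\]
Fixing Hecke eigenbases $\{f_{r,j}\}_j$ of each $S_{\rho_r}(\varGamma^{(r)})$, I would write
\[
H(Z,W)=\sum_{r_1,r_2}\sum_{j_1,j_2} a_{r_1,j_1,r_2,j_2}\,[f_{r_1,j_1}]_{\rho_{r_1}}^{\rho_{n_1}}(Z)(U)\,[\theta f_{r_2,j_2}]_{\rho_{r_2}}^{\rho_{n_2}}(W)(V),
\]
and the task becomes to compute each $a_{r_1,j_1,r_2,j_2}$.

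The key geometric input is that $\Phi$ in one variable commutes with the pullback: from the explicit formula for $\DD_{\lambda,n_1,n_2}$ in Proposition \ref{prop.generalpol} and Lemma \ref{lem.formulaQl}, together with the classical identity $\Phi_{n-1}^{n}E_{n,k}=E_{n-1,k}$ for Siegel Eisenstein series, one verifies
\[
(\mathrm{id}\otimes \Phi_{r}^{n_2})\,H=(\DD_{\lambda,n_1,r}E_{n_1+r,k})\bigl|_{\HH_{n_1}\times \HH_r}, \qquad m\le r\le n_2,
\]
where $m$ is the depth of $\lambda$. On the decomposition side the standard identity $\Phi_{r}^{n_2}\bigl([g]_{\rho_s}^{\rho_{n_2}}\bigr)=[g]_{\rho_s}^{\rho_r}$ for $s\le r$ (and zero for $s>r$) converts the right-hand side into a sum of Klingen-Eisenstein lifts on $\HH_r$. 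Fixing $r$ and $j$, I would then take the Petersson inner product against $\theta f_{r,j}$ over $\HH_r$: the orthogonality $(\theta f_{r,j},[\theta f_{r_2',j_2'}]_{\rho_{r_2'}}^{\rho_{r}})_{\varGamma^{(r)}}=0$ for $r_2'<r$, a consequence of Rankin--Selberg unfolding of the Klingen-Eisenstein sum together with the cuspidality of $\theta f_{r,j}$, kills every contribution except $(r_2',j_2')=(r,j)$, yielding
\[
(\theta f_{r,j},\theta f_{r,j})\sum_{r_1,j_1}a_{r_1,j_1,r,j}\,[f_{r_1,j_1}]_{\rho_{r_1}}^{\rho_{n_1}}(Z)(U)=c(0,\rho_r)\,D(k,f_{r,j})\,[f_{r,j}]_{\rho_r}^{\rho_{n_1}}(Z)(U),
\]
the right-hand side supplied by Theorem \ref{th.pullback} applied to $(\DD_{\lambda,n_1,r}E_{n_1+r,k})|_{\HH_{n_1}\times\HH_r}$ with $p=n_1$, $q=r$.

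Now the linear independence of the full system $\{[f_{r_1,j_1}]_{\rho_{r_1}}^{\rho_{n_1}}\}$ inside $M_{\rho_{n_1}}(\varGamma^{(n_1)})$ forces $a_{r_1,j_1,r,j}=0$ whenever $(r_1,j_1)\neq(r,j)$, and identifies the diagonal coefficient as $a_{r,j,r,j}=c_r\,D(k,f_{r,j})/(f_{r,j},f_{r,j})$ with $c_r$ depending only on $r$ (essentially a multiple of $c(0,\rho_r)$). Letting $r$ range over $m\le r\le\min(n_1,n_2)$ exhausts all nonzero terms and delivers the claimed formula. The main obstacles I anticipate are two-fold: first, the verification of the $\Phi$-commutativity identity together with the careful bookkeeping of all multiplicative constants arising from the interplay between the polynomial $Q_{l,n_1,n_2}$ and the limit $\lambda\to\infty$ defining $\Phi_{r}^{n_2}$, which determines the precise value of $c_r$; and second, the orthogonality of $\theta f_{r,j}$ to Klingen-Eisenstein lifts of cusp forms from strictly smaller degree in $M_{\rho_r}(\varGamma^{(r)})$, which, while classical, must be justified rigorously via the Rankin--Selberg unfolding argument adapted to vector-valued weights.
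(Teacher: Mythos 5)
Your strategy — expand in the Klingen--Eisenstein decomposition, use $\Phi$-commutativity to reduce degree, and identify coefficients with Kozima's inner-product formula — is the right one; it is exactly the mechanism the paper uses to prove the weak pullback (Theorem \ref{th.weak-pullback}), and the paper attributes Theorem \ref{th.explicit-pullback} itself to Kozima without giving a self-contained proof. However, as written your argument has a genuine gap. Applying $(\mathrm{id}\otimes\Phi_r^{n_2})$ and pairing against $\theta f_{r,j}$ determines $a_{r_1,j_1,r,j}$ only for $r\le\min(n_1,n_2)$, because Theorem \ref{th.pullback} requires the cusp form's degree $q=r$ to satisfy $q\le p=n_1$. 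When $n_1<n_2$ you therefore obtain no information about the coefficients $a_{r_1,j_1,r_2,j_2}$ with $n_1<r_2\le n_2$, and the Klingen--Eisenstein decomposition of $M_{\rho_{n_2}}(\varGamma^{(n_2)})$ does contain such pieces. Your concluding sentence ``letting $r$ range over $m\le r\le\min(n_1,n_2)$ exhausts all nonzero terms'' is thus an unproven assertion rather than a consequence of what precedes it. The standard repair is to run the same argument with $\Phi$ applied to the \emph{first} variable, using Theorem \ref{th.pullback} with $p=n_2$, $q=r_1$: since $r_1$ then sweeps over all degrees up to $n_1$, linear independence of the Klingen--Eisenstein lifts inside $M_{\rho_{n_2}}(\varGamma^{(n_2)})$ forces $a_{r_1,j_1,r_2,j_2}=0$ whenever $r_2\ne r_1$, in particular for $r_2>\min(n_1,n_2)$. (Alternatively, invoke the invariance of $E_{n_1+n_2,k}$ and of $Q_{l,n_1,n_2}$ under swapping the two diagonal blocks to conclude directly that the decomposition must be symmetric in $(r_1,r_2)$, hence diagonal.)

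Two secondary points of rigour. First, the $\Phi$-commutativity identity does not follow from $\Phi E_{n,k}=E_{n-1,k}$ alone: as in the paper's proof of Theorem \ref{th.weak-pullback}, one must check term by term on Fourier coefficients that $Q_{l,n_1,n_2}$ evaluated at the block-diagonal Fourier indices that survive $\Phi$ coincides with $Q_{l,n_1,r}$ evaluated at the corresponding lower-degree index. Second, the $\theta$ factors on the second-variable side are not automatic; they arise from the conjugate-linearity of the Petersson pairing together with the $-\overline{W}$ in Theorem \ref{th.pullback}, and require the reality of $D(k,f_{r,j})$ to land in the stated form, exactly as noted in the proof of Theorem \ref{th.weak-pullback}.
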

This has been already proved by Kozima \cite{Kozima08} in more general setting. However he  does not give explicit values of $c_r$ in general. 
He gave in \cite{Kozima08} one strategy how to calculate the constant $c_{r}$ in the formula.
Actually this calculation is difficult to execute in general, but in the next section, 
we explain how to do this when $\DD_{\lambda}=Q_l(\p_Z,U,V)$, where $Q_l$ is 
defined by \eqref{defdiff} before Lemma \ref{lem.formulaQl}.
Though this calculation is not necessary for proving our main results, 
it is  interesting in its own right, and 
may have an application for computing exact standard $L$-values for $f \in S_{\rho_r}(\varGamma^{(r)})$.   
\subsubsection{Calculation of the constant}
To calculate the constant $c_r$, we follow Kozima's formulation in \cite{Kozima08}.
We fix $g=\begin{pmatrix} A & B \\ C & D \end{pmatrix}\in \Sp_{n_1+n_2}(\RR)$
and put $\delta=\det(CZ+D)$. 
To obtain $c_r$ for our differential operator $\bbD$ we need,  
it is a key to calculate 
$\bbD(\delta^{-k})$ explicitly.
For $Z\in \HH_{n_1+n_2}$, we write 
\[
Z=\begin{pmatrix} Z_{11} & Z_{12} \\
^{t}Z_{12} & Z_{22}  \end{pmatrix}
\qquad Z_{11}\in \HH_{n_1}, Z_{22}\in \HH_{n_2}, Z_{12}\in M_{n_1,n_2}(\CC).
\]
For an $m\times n_1$ matrix $U$ and $m\times n_2$ matrix $V$ of variables, 
define $\Bbb U$ as before and we write 
\[
\p_{U,Z}=(\p_{ij}^{U})=\Bbb U  Z ^{t}\Bbb U=\begin{pmatrix} UZ_{11}\, ^{t}U & UZ_{12}\, ^{t}V \\
V\,^{t}Z_{12}\, ^{t}U & VZ_{22}\,^{t}V \end{pmatrix}.
\]
We put $\Delta=(CZ+D)^{-1}C$ and 
\[
\Delta^{\Bbb U}=(\Delta_{ij})=\Bbb U (CZ+D)^{-1}C \,^{t}\Bbb U.
\]
It is well-known and easy to see that this is a symmetric matrix.
We write blocks of $\Delta$ by 
\[
\Delta^{\Bbb U}=\begin{pmatrix} \Delta^{(11)} & \Delta^{(12)} \\
^{t}\Delta^{(12)} & \Delta^{(22)}
\end{pmatrix},
\]
where $\Delta^{(ij)}$ are $m\times m$ matrices.
We consider a differential operator $\bbD=P(\p_Z)$ which satisfies Condition \ref{condition}.
The following Proposition is the same as Proposition 4.1 in p 241 in \cite{Kozima08} except for 
the point that the realization of the representation is slightly different. 
\begin{proposition}\label{kozimalemma}
There exists a polynomial $Q(X)$ in the components of $m\times m$ matrix $X$ such that 
\[
\bbD(\delta^{-k})=\delta^{-k}Q(\Delta^{(12)}).
\]
\end{proposition}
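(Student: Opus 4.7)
The plan is to follow the strategy of Kozima's Proposition 4.1 in \cite{Kozima08}, adapted to the bideterminant realization of the representation used in this paper.

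First I would establish, by induction on $\deg P$, the general identity
\[
P(\p_Z)(\delta^{-k}) = \delta^{-k}\,R(\Delta)
\]
for some polynomial $R$ in the entries of the symmetric matrix $\Delta = (CZ+D)^{-1}C$. The base case is the classical formula $\p_Z \delta^{-k} = -k\delta^{-k}\Delta$, which relies on the fact that $\Delta$ is symmetric for $g \in \SP_{p+q}(\RR)$. The inductive step uses the Leibniz rule together with the derivative identity
\[
(\p_Z)_{ab}\,\Delta_{ij} = -\tfrac{1}{2}\bigl(\Delta_{ia}\Delta_{bj}+\Delta_{ib}\Delta_{aj}\bigr),
\]
so each successive differentiation only produces further polynomial expressions in the entries of $\Delta$.

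Next, by Proposition \ref{prop.generalpol} we may write $P(T) = P_{k,k+l}(\Bbb U T\,{}^t\Bbb U)$, so $P$ depends on $T$ only through the $2m\times 2m$ symmetric matrix $\Bbb U T\,{}^t\Bbb U$. Applying the chain rule to the differential operator, the polynomial $R$ from the previous step factors through $\Delta^{\Bbb U} = \Bbb U\Delta\,{}^t\Bbb U$; that is, $R(\Delta) = R_0(\Delta^{\Bbb U})$ for some polynomial $R_0$ in the entries of the $2m\times 2m$ block matrix with blocks $\Delta^{(11)}, \Delta^{(12)}, \Delta^{(22)}$.

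The main obstacle is to show that $R_0$ depends only on the off-diagonal block $\Delta^{(12)}$, so that it descends to a polynomial $Q$ in an $m\times m$ matrix variable. For this I would use a Gaussian integral representation of the automorphy factor, namely $\delta^{-k}$ expressed (up to a constant) as an integral of $\exp\bigl(\pi\sqrt{-1}\,\mathrm{tr}({}^tX(CZ+D)X)\bigr)$ over $X \in M_{p+q,2k}(\RR)$. Pulling $P(\p_Z)$ under the integral, the operator acts on the Gaussian, and each derivative is converted into an expression quadratic in the rows of $X$ paired against rows of $\Bbb U$. Writing $X={}^t(X_1,X_2)$ with $X_i \in M_{n_i,2k}$, the resulting integrand becomes $\widetilde{P}_{k,k+l}(UX_1, VX_2)$ times a Gaussian, and the pluri-harmonicity of $\widetilde{P}_{k,k+l}$ in each of the two variable blocks (condition (1) of Theorem 5.2) forces all contributions involving the diagonal blocks $\Delta^{(11)}$ and $\Delta^{(22)}$ to cancel, leaving only a polynomial in $\Delta^{(12)}$.

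The hard part is this last step: the bookkeeping that identifies pluri-harmonicity with the vanishing of the diagonal-block contributions. The calculation is essentially Kozima's, and the only real adjustment is to transport his argument to the bideterminant realization of $V_{n_1,\lambda}\otimes V_{n_2,\lambda}$ encoded by the matrices $U$ and $V$ here, which amounts to a change of variables in the harmonic polynomial and does not alter the conceptual content of the cancellation.
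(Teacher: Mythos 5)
Your plan is a plausible route, but it is substantially more work than what the paper does, and the technical details of your Step~3 need repair. The paper does not reprove the cancellation at all; it observes that the statement \emph{is} Kozima's Proposition~4.1, up to translating between realizations of the representation, and that translation is a short polarization: replace each row $u_i$ of $U$ by $\sum_{\nu}c_{i\nu}\xi_{i\nu}$ (and similarly for $V$) and extract the coefficient of $\prod c_{i\nu}d_{j\mu}$. The polarized polynomial $P^*$ is multilinear in the block bilinear forms $\xi_{i\nu}T_{11}{}^t\xi_{j\mu}$, $\xi_{i\nu}T_{12}{}^t\eta_{j\mu}$, $\eta_{i\nu}T_{22}{}^t\eta_{j\mu}$ (Kozima's ``homogeneous'' condition), it inherits pluri-harmonicity because polarization commutes with the Laplacians $\Delta_{ij}$, and it has the same $GL_{n_1}\times GL_{n_2}$-equivariance; so Kozima's Proposition~4.1 applies to $P^*$ and the conclusion descends to $P$. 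You instead propose to rerun the cancellation argument itself inside the bideterminant realization, which costs you the entire Step~3 that the paper gets for free by citation. Two caveats if you do pursue your route: (i) your Step~2 is correct, but ``chain rule'' is not the mechanism --- it holds because $\bbD$ is built from the first-order operators $\p[u_i,u_j]$ with $u_i,u_j$ rows of $\Bbb U$, and the rules (D2), (D3) only ever produce the scalars $u_a\Delta\,{}^tu_b$, i.e.\ entries of $\Delta^{\Bbb U}$; (ii) the integral $\int_{M_{p+q,2k}(\RR)}\exp\bigl(\pi\sqrt{-1}\,\mathrm{tr}({}^tX(CZ+D)X)\bigr)\,dX$ does not converge as written, since $\mathrm{Im}(CZ+D)=C\,\mathrm{Im}(Z)$ is in general neither symmetric nor positive definite, so one has to pass to a convergent Laplace-type representation on a positive cone and analytically continue --- a technical subtlety that the paper's citation-plus-polarization route sidesteps entirely (for the Laplace-transform treatment, compare \cite{Ibukiyamalaplace}).
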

The key point is that the polynomial does not contain components of $\Delta^{(11)}$ and $\Delta^{(22)}$.

Since the realization of our polynomials and Kozima's are different, we explain the relation.
We fix $\lambda=(l,\ldots,l,0,\ldots,0)$ with $\depth(\lambda)=m$.
We denote by $u_i$ and $v_j$ the $i$-th row vector of $U$ and the $j$-th row vector of $V$ 
respectively and write $U=(u_i)$, $V=(v_j)$. 
Our polynomial is a polynomial in components of $T$, $U$ and $V$.
We write this as $P(T,U,V)$ 
to emphasize its dependence on $U$ and $V$.
We define polarization of $P$ for each rows $u_i$, $v_j$ in the usual way as follows.
We prepare $ml$ row vectors $\xi_{i\nu}$ ($1\leq i\leq m,1\leq \nu\leq l$) and 
other $ml$ vectors $\eta_{j\mu}$ ($1\leq j\leq m,1\leq \mu\leq l$) of variables. 
We write $\xi=(\xi_{i\nu})$ and $\eta=(\eta_{j\mu})$ for short.
For $P$ we define $P^*$ by 
\begin{multline*}
P^*(T,\xi,\eta) =l^{-2ml}\sum_{i,j=1}^{l}
\frac{\p^l}{\p c_{i1}\cdots \p c_{il}}\frac{\p^l}{\p d_{j1}\cdots \p d_{jl}}
P\biggl(T,\bigl(\sum_{\nu=1}^{l}c_{i\nu}\xi_{i\nu}\bigr),\bigl(\sum_{\mu=1}^{l}d_{jl}\eta_{jl}\bigr)\biggr)
\biggl|_{c_{i\nu}=d_{j\mu}=0}.
\end{multline*}
In other words, replacing $u_i$ and $v_j$ by $u_{i}=c_{i1}\xi_{i1}+\cdots+c_{il}\xi_{il}$ and 
$v_j=d_{j1}\eta_{i1}+\cdots+d_{jl}\eta_{jl}$ respectively in $P(T,U,V)$ and 
take coefficients of 
\[
\prod_{i,j=1}^{m}\prod_{\nu,\mu=1}^{l}c_{i\nu}d_{j\mu}.
\]
Here by definition, the polynomial $P^*$ is a multilinear polynomial in 
$\xi_{i\nu}T_{11}\,^{t}\xi_{j\mu}$, $\xi_{i\nu}T_{12}\,^{t}\eta_{j\mu}$, $\eta_{i\nu}T_{22}\,^{t}\eta_{j\mu}$
and it is homogeneous in the sense of Kozima.  
Since the polarization $P\rightarrow P^*$ commutes with 
$\Delta_{ij}(X)$ and $\Delta_{ij}(Y)$, the polynomial $P^*$ is also pluri-harmonic.
The action of $A_1\in \GL(n_1,\CC)$ and $A_2 \in \GL(n_2,\CC)$ is the same 
as $P$ since we have $u_iA_1=\sum_{\nu=1}^{l}c_{i\nu}x_{i\nu}A_1$ 
and $v_jA_2=\sum_{\mu=1}^{l}d_{j\mu}\eta_{j\mu}A_2$. 
So if we use $P^*$ instead of $P$, our formulation becomes Kozima's formulation.
So we can use Kozima's Proposition 4.1 in \cite{Kozima08}, and 
the interpretation in our case is given in Proposition \ref{kozimalemma} above.

Our next problem is to obtain $Q$ in Proposition \ref{kozimalemma}.

For any row vector $x$, $y$ of length $n$, we write 
\[
\p[x,y]=x\p_{Z}\,^{t}y=\sum_{1\leq i,j \leq n}\frac{1+\delta_{ij}}{2}x_iy_j \frac{\p}{\p z_{ij}}.
\]
The following formulas are given in Kozima \cite{Kozima08}.
(See also \cite{ibubook} for a precise proof.)
For any row vectors $u_1$, $u_2$, $u_3$, $u_4$ of length $n$ and any functions $f$, $g$ of $Z$, we have 
\[
\begin{array}{lll}
(D1) & \p[u_1,u_2](fg) = (\p[u_1,u_2]f)g+f(\p[u_1,u_2]g), \\ [1ex]
(D2) & \p[u_1,u_2](\delta^{-k})=-k\delta^{-k}u_1\Delta\, ^{t}u_2, \\ [1ex]
(D3) & \p[u_1,u_2](u_3\Delta\,^{t}u_4)=-\dfrac{1}{2}\bigl(u_1\Delta \,^{t}u_3)(u_2\Delta\,^{t}u_4)
+(u_1\Delta \,^{t}u_4)(u_2\Delta \,^{t}u_3)\bigr).
\end{array}
\]
By iterate use of these formulas, we can calculate 
the action of $P(\Bbb U\p_Z\,^{t}\Bbb U)$ for any polynomial $P$. But 
 actual calculation is a bit complicated. 
For our case, we have a following result. The rest of this section is 
devoted to prove this theorem.

\begin{theorem}\label{actiondelta}
We assume that $m=2$ and we define differential operator $\bbD_{l}
=Q_l(\p_Z,U,V)$ 
from $det^k$ to $\det^k\rho_{n_1,\lambda}\otimes \det^k\rho_{n_2,\lambda}$ 
for $\lambda=(l,l,0,\ldots,0)$ by \eqref{defdiff} before Lemma \ref{lem.formulaQl}. 
We write $\Bbb U(CZ+D)^{-1}C^{t}\Bbb U=(\Delta_{ij})_{1\leq i,j\leq 4}$. Then we have 
\[
\bbD_l(\delta^{-k})=\frac{1}{2^{2l}l!}(2k-3)_l(2k-1)_{2l}(\Delta_{13}\Delta_{24}-\Delta_{14}\Delta_{23})^{2l}.
\]
\end{theorem}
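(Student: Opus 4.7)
The plan is to combine the structural Proposition \ref{kozimalemma} with the $GL_2\times GL_2$-equivariance of $\bbD_l$ to pin down the shape of $\bbD_l(\delta^{-k})$ up to a scalar, and then to determine that scalar by iterating the derivation identities (D1)--(D3) on the explicit expansion of $Q_l$ given in Lemma \ref{lem.formulaQl}.

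First I would observe that by Proposition \ref{kozimalemma} there is a polynomial $Q$ in the entries of the $2\times 2$ block $\Delta^{(12)}=(\Delta_{ij})_{i\in\{1,2\},\,j\in\{3,4\}}$ such that $\bbD_l(\delta^{-k})=\delta^{-k}\,Q(\Delta^{(12)})$. Since $\bbD_l$ intertwines the weight $\det^k$ with $\det^k\rho_{n_1,\lambda}\otimes\det^k\rho_{n_2,\lambda}$ for $\lambda=(l,l,0,\ldots)$, and since under $U\mapsto AU$, $V\mapsto BV$ the block transforms as $\Delta^{(12)}\mapsto A\,\Delta^{(12)}\,^{t}B$, one obtains $Q(AX\,^{t}B)=(\det A)^{l}(\det B)^{l}Q(X)$, which forces $Q(X)=c_l(\det X)^{l}$. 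Hence
\[
\bbD_l(\delta^{-k})=c_l\,\delta^{-k}\,(\Delta_{13}\Delta_{24}-\Delta_{14}\Delta_{23})^{l}
\]
for a scalar $c_l=c_l(k)$; this exponent is consistent with the order count, since $\bbD_l$ is a differential operator of order $2l$ in $\partial_Z$ while $\Omega:=\Delta_{13}\Delta_{24}-\Delta_{14}\Delta_{23}$ is quadratic in the entries of $\Delta$.

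To compute $c_l$, I would substitute the expansion of Lemma \ref{lem.formulaQl} into $\bbD_l(\delta^{-k})$ and evaluate the constituent operators $(F_1^aF_2^bF_3^c)(\partial_Z,U,V)\delta^{-k}$ by iterating (D1)--(D3). The key intermediate identity, established by direct computation, is that for each $i\in\{1,2,3\}$ and each $r\ge 0$ there exists an explicit rational factor $\alpha_i(k,r)$ with
\[
F_i(\partial_Z,U,V)\bigl(\delta^{-k}\,\Omega^r\bigr)=\alpha_i(k,r)\,\delta^{-k}\,\Omega^{r+d_i},\qquad d_1=1,\ d_2=d_3=2.
\]
The base case
\[
F_1(\partial_Z,U,V)\,\delta^{-k}=\frac{k(2k-1)}{2}\,\delta^{-k}\,\Omega,
\]
which combines the main term from (D2) with the ``off-diagonal'' corrections produced by (D3), already exhibits how the Pochhammer-style factor $k(k-\tfrac12)$ arises.

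The principal obstacle is the final closed-form evaluation of the scalar, namely the combinatorial identity
\[
c_l=\frac{(2k-3)_l(2k-1)_{2l}}{2^{2l}\,l!},
\]
since once the $\alpha_i(k,r)$ are known, $c_l$ is expressed as a multiple sum over triples $(a,b,c)$ with $a+2b+2c=l$ weighted by the Pochhammer symbols $(k+c-\tfrac{3}{2})_{a+b+c}$. I would handle this either by induction on $l$, recognising matching first-order recursions on both sides (the base cases $l=0,1$ being direct), or by upgrading the generating function of Subsection 5.1.2 defining $Q_l$ to a generating series identity for $\sum_l c_l\,t^l$, so that the combinatorial identity reduces to the algebraic identity already satisfied by $R(S,t)$ itself. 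Either route bypasses the direct bookkeeping of nested iterations of (D1)--(D3), in which numerous intermediate monomials involving $\Delta_{ii}$, $\Delta_{12}$, $\Delta_{34}$ appear and must ultimately cancel in view of Proposition \ref{kozimalemma}.
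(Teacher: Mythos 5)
Your reduction of $Q(X)$ to $c_l(\det X)^l$ via Proposition \ref{kozimalemma} together with the $GL_2\times GL_2$-equivariance $Q(AX\,^tB)=\det(AB)^lQ(X)$ is correct, and it is a cleaner route to the shape of the answer than the one taken in the paper: the paper instead verifies by hand that each $\calf_1^a\calf_2^b\calf_3^c(\delta^{-k})$ is $\delta^{-k}$ times a polynomial in the three quantities $C_1,C_2,C_3$, proves these are algebraically independent, and only then invokes Kozima's lemma to isolate the power of $C_1$. (Incidentally, your exponent $l$ on $\det\Delta^{(12)}$ and the factor $\delta^{-k}$ are both correct, and the paper's own computation of $q_0$ confirms the constant; the formula as displayed in the theorem statement contains misprints.)

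However, your computation of the scalar has a genuine gap. The ``key intermediate identity''
\[
F_i(\partial_Z,U,V)\bigl(\delta^{-k}C_1^r\bigr)=\alpha_i(k,r)\,\delta^{-k}C_1^{r+d_i}
\]
is false as soon as $r\geq 1$. Already Lemma \ref{F1part} (iii) with $p=1$ gives
\[
\delta^{k}\calf_1(\delta^{-k}C_1)=k(k+1)C_1^2+\frac{k+1}{2}\,C_2-\frac{k}{2}\,C_3,
\]
which is not proportional to $C_1^2$, and the analogous formulas for $\calf_2$ in the proof of Lemma \ref{F2part} produce several monomials $C_1^pC_2^q$. What is actually available is only a congruence modulo the ideals $\frkC_3$ or $\frkC_{23}$, and extracting the coefficient of $C_1^l$ from the sum over $(a,b,c)$ then requires the two-step analysis of Proposition \ref{C1part} together with the recursion of Lemma \ref{finalrecursion} --- this bookkeeping is the bulk of the paper's proof and is not bypassed by your claimed identity. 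Your alternative, to promote the generating series defining $Q_l$ to an operator identity acting on $\delta^{-k}$, is not developed enough to assess: $\sum_l t^lQ_l(\partial_Z,U,V)$ is a formal infinite-order operator, and evaluating it termwise by (D1)--(D3) is precisely the combinatorics one would be trying to avoid.
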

In the notation before, we have $\det(\Delta^{(12)})=\Delta_{13}\Delta_{24}-\Delta_{14}\Delta_{23}$.

A simple theoretical proof of Theorem \ref{actiondelta} is given in \cite{Ibukiyamalaplace}, but  
here we give an original proof prepared for  the present paper. 
This proof consists of complicated combinatorial brute force calculations, and 
we believe such alternative proof is not useless.

To make it readable, we first explain rough idea of calculation, and then 
we give actual calculation.
Define $F_i(T,U,V)$ as in Lemma \ref{lem.formulaQl}. We also define
\[
F_4(T,U,V)=\det(UT_{11}\,^{t}U), \quad F_5(T,U,V)=\det(VT_{22}\,^{t}V),
\]
where $T_{ij}$ are defined by \eqref{Tblocks} in section 5.
Then of course we have $F_2=F_4F_5$. 
We write 
\[
\calf_i=F_1(\p_Z,U,V), \qquad i=1,2,3,4,5. 
\]
Here $\calf_1$, $\calf_4$, $\calf_5$ are differential operators of order $2$ and 
$\calf_3$ of order $4$. 
We put
\begin{align*}
& \Delta^{\Bbb U} =\Bbb U (CZ+D)^{-1}C\,^{t}\Bbb U=(\Delta_{ij}),
\qquad C_1 = \Delta_{13}\Delta_{24}-\Delta_{14}\Delta_{23},\\
& C_4  = \Delta_{11}\Delta_{22}-\Delta_{12}^2 , \qquad 
C_5  = \Delta_{33}\Delta_{44}-\Delta_{34}^2, \qquad C_2 = C_4C_5, \qquad
C_3  =\det(\Delta^{\Bbb U}).
\end{align*}

Now our strategy of calculation is as follows.

\begin{enumerate}
\item For any $a$, $b$, $c$, 
we see easily that $\calf_1^a\calf_2^b\calf_3^c(\delta^{-k})$ 
is written as a product of $\delta^{-k}$ and a polynomial in $\Delta_{ij}$
by virtue of the formula (D1), (D2), (D3).  
But in fact, more strongly, we can show that it is a product of $\delta^{-k}$ 
and a polynomial in $C_1$, $C_2$, $C_3$ that is a weighted homogeneous polynomial of total 
degree $a+2b+2c$ if we put $\deg(C_1)=1$, $\deg(C_2)=2$, $\deg(C_3)=2$. 

\item Here we can show that $C_1$, $C_2$ and $C_3$ are algebraically independent for generic $g$ 
and $\Delta^{\Bbb U}$, so by virtue of Lemma \ref{kozimalemma}, we need only 
the coefficient of $C_1^{a+2b+2c}$ in $\calf_1^a\calf_2^b\calf_3(\delta^{-k})$
to describe $\bbD_l(\delta^{-k})$. So we calculate these coefficients 
for all $(a, b, c)$ and 
sum them up according to the explicit definition of $\bbD_l$. 
\end{enumerate}

Now we execute these calculations.
\begin{lemma}\label{F3}
For any non-negative integer, we have 
\[
\calf_3(\delta^{-k}C_3^r)=\frac{(k+r-1)(k+r)(2k+2r-3)(2k+2r-1)}{4}\delta^{-k}C_3^{r+1}.
\]
\end{lemma}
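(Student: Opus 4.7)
The plan is to prove Lemma \ref{F3} by induction on $r\geq 0$, starting from a direct Cayley--Capelli-type computation for the base case.

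For $r=0$, I would compute $\calf_3(\delta^{-k})$ directly. The crucial observation is that the entries $\p[\Bbb u_i,\Bbb u_j]$ of $\Bbb U\p_Z\,^{t}\Bbb U$ commute with each other (they are second-order symmetric partials), so the $4\times 4$ determinant $\calf_3=\det(\Bbb U\p_Z\,^{t}\Bbb U)$ expands unambiguously as the signed sum
\[
\calf_3=\sum_{\sigma\in S_4}\mathrm{sgn}(\sigma)\,\p[\Bbb u_1,\Bbb u_{\sigma(1)}]\,\p[\Bbb u_2,\Bbb u_{\sigma(2)}]\,\p[\Bbb u_3,\Bbb u_{\sigma(3)}]\,\p[\Bbb u_4,\Bbb u_{\sigma(4)}].
\]
Applying (D2) whenever a derivative lands on $\delta^{-k}$ and (D3) whenever it lands on a previously produced bilinear factor $\Bbb u_p\Delta\,^{t}\Bbb u_q$, each term produces $\delta^{-k}$ times a degree-four monomial in the entries of $\Delta^{\Bbb U}$, multiplied by a Pochhammer-type scalar. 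The antisymmetrization by $\mathrm{sgn}(\sigma)$ eliminates all contributions in which two of the involved row indices would coincide inside a single bilinear, and the surviving terms assemble precisely into $\det(\Bbb U\Delta\,^{t}\Bbb U)=C_3$. Careful bookkeeping of the $(-k)$'s produced by (D2) and the $(-1/2)$'s produced by (D3) yields the scalar $(k-1)k(2k-3)(2k-1)/4$.

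For the induction step, I would assume the identity at level $r-1$ and apply Leibniz (D1) to distribute $\calf_3$ across $\delta^{-k}\cdot C_3^r$. Within each $\sigma$-summand, the four commuting derivatives $\p[\Bbb u_i,\Bbb u_{\sigma(i)}]$ act either on $\delta^{-k}$ (producing $-k$ times a bilinear in $\Delta$ by (D2)) or on one of the $r$ copies of $C_3=\det(\Delta^{\Bbb U})$. For the latter, (D3) together with cofactor expansion yields
\[
\p[u,v]C_3=-(u\Delta\,^{t}\Bbb U)\,\mathrm{adj}(\Delta^{\Bbb U})\,^{t}(v\Delta\,^{t}\Bbb U),
\]
which, when $\Delta^{\Bbb U}$ is invertible (the generic case; the identity then extends by continuity), is proportional to $C_3$ times a bilinear form. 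Collecting contributions and combining Laplace expansion with the signed sum over $S_4$, every mixed cross-term reassembles into a scalar multiple of $\delta^{-k}C_3^{r+1}$. Matching coefficients with the inductive hypothesis produces the shift $k\mapsto k+r$ in two of the Pochhammer factors and yields the claimed constant $(k+r-1)(k+r)(2k+2r-3)(2k+2r-1)/4$.

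The main obstacle is the combinatorial bookkeeping in the inductive step: across the $24$ permutations $\sigma\in S_4$ and the Leibniz splits among the factors $\delta^{-k}, C_3, \ldots, C_3$, one must verify that all ``non-diagonal'' contributions either cancel via antisymmetry or collapse via Laplace expansion into the single term $\delta^{-k}C_3^{r+1}$. If this direct approach becomes unwieldy, a cleaner fallback is to observe that for each fixed $r$ both sides are polynomials in $k$ of the same (small) degree, so it suffices to verify the identity at enough integer values of $k$ (for instance, values where classical Cayley-type identities on $GL_4$ apply directly) and extend by polynomiality. Either way, the constant on the right-hand side is exactly the one predicted by the rank-$4$ Cayley--Capelli identity for the action of $\det(\Bbb U\p_Z\,^{t}\Bbb U)$ on powers of $\det(\Bbb U\Delta\,^{t}\Bbb U)$.
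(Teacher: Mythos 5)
Your proposal takes a genuinely different route from the paper's proof, which avoids induction on $r$ entirely. The paper gives two arguments. First, it observes from (D1)--(D3) that $\delta^{k}\calf_3(\delta^{-k}C_3^r)$ equals $C_3^{r-4}$ times a polynomial of degree $4$ in $r$ whose coefficients, as polynomials in the $\Delta_{ij}$, do not depend on $r$; since the target expression has the same form, checking the five cases $r=0,\dots,4$ symbolically in $k$ pins down the identity for all $r$. Second, it specializes to $n_1=n_2=2$ and $U=V=1_2$, so that $\Delta^{\Bbb U}=Z^{-1}$ for $Z\in\HH_4$, the operator $\calf_3$ becomes $\det(\p_Z)$, and the lemma collapses to the classical Cayley-type identity $\det(\p_Z)\det(Z)^{-k-r}=(k+r)(k+r-\tfrac{1}{2})(k+r-1)(k+r-\tfrac{3}{2})\det(Z)^{-k-r-1}$ for $4\times4$ symmetric matrices; since the rules (D1)--(D3) are formal and unchanged under this specialization, the scalar found there is the scalar in general.

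Your primary plan has a structural gap, not merely bookkeeping. Writing $\delta^{-k}C_3^r=(\delta^{-k}C_3^{r-1})\cdot C_3$ and distributing the fourth-order operator $\calf_3$ by the generalized Leibniz rule produces, besides $\calf_3(\delta^{-k}C_3^{r-1})\cdot C_3$ and $\delta^{-k}C_3^{r-1}\cdot\calf_3(C_3)$, a large collection of mixed terms in which lower-order pieces of $\calf_3$ act on each factor separately. Those partial pieces are not $\calf_3$, so the inductive hypothesis says nothing about them; the induction buys you exactly one of many terms, and the remaining work --- organizing the $24$-permutation sum with the extra factors of $C_3$ --- is of the same order of difficulty as the base case. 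The assertion that every mixed cross-term reassembles into a scalar multiple of $\delta^{-k}C_3^{r+1}$ is precisely the unverified content of the lemma.

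Your fallback (polynomiality in $k$ at fixed $r$) also does not close the gap by itself, because there remain infinitely many $r$. The variable in which polynomiality actually rescues you is $r$: the only $r$-dependence in the Leibniz expansion comes from the combinatorial factors $r, r-1, \dots$ produced when derivatives hit powers of $C_3$, at most four of which can appear, so the $r$-degree is at most $4$ with $r$-independent coefficients. That is exactly the paper's first argument. If you want to realize the Cayley--Capelli instinct you invoke, the specialization trick is the clean way to do it: rather than reconstructing the Capelli identity sign-by-sign over $S_4$, choose $\Bbb U$ so that $\calf_3$ literally becomes $\det(\p_Z)$ acting on a power of $\det(Z)$, and quote the known identity.
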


\begin{proof}
The operator $\calf_3$ is a differential operator of order $4$. 
There are many ways to prove Lemma \ref{F3}. One way is to use computer directly. Actually, 
by (D1), (D2), (D3), it is clear that 
\[
\calf_3(\delta^{-k}C_3^r)=\delta^{-k}C_3^{r-4}\times P(r,\Delta_{ij})
\]
where $P(r,\Delta_{ij})$ is a polynomial in $r$ of degree $4$ 
whose coefficients are polynomials in $\Delta_{ij}$ that do not 
depend on $r$. So the calculation for $r=0,\ldots, 4$ is enough
and executing these we obtain the above formula.
An alternative way is to specialize $\Delta$ to the case $n_1=n_2=2$ and $U=1_2$, $V=1_2$, $C=1_4$, $D=0_4$.
Then we have $\Delta^{\Bbb U}=Z^{-1}$ for $Z\in \HH_4$. 
Then under this specialization on $\Delta^{U}$ and $\calf_3$, we have 
\begin{align*}
\calf_3(\delta^{-k}C_3^r)& =\det(\p_Z)(\det(Z)^{-k-r})
\\ & =
(k+r)(k+r+\frac{1}{2})(k+r+1)(k+r+\frac{3}{2})\det(Z)^{-k-r-1}.
\\ &= \frac{(k+r)(2k+2r+1)(k+r+1)(2k+2r+3)}{4}\delta^{-k}C_3^{r+1}.
\end{align*}
The second equality is nothing but the Cayley type identity for symmetric matrices (\cite{cayley}).
Since the calculation is formally the same, we get Lemma \ref{F3},
\end{proof}

Next we consider $\calf_1$ and $\calf_2=\calf_4\calf_5$. 
Since $\calf_1$, $\calf_4$, $\calf_5$ are differential operators of order $2$, 
the operation of these on products of functions can be calculated if we have 
several fundamental operations on factors. To explain this, we assume that 
$\calf$ is a differential operator of homogeneous order $2$
and define a bracket $\{A,B\}_{\calf}$ by 
\[
\calf(AB)=\calf[A]B+A\calf[B]+\{A,B\}_{\calf}.
\]
We have $\{B,A\}_{\calf}=\{A,B\}_{\calf}$. 
For the operator $\p_1\p_2$ 
where $\p_1$ and $\p_2$ are differential operators of the first order,
we have 
\[
\{A,B\}_{\p_1\p_2}=(\p_1A)(\p_2B)+(\p_2A)(\p_1B),
\]
so for general $\calf$ of order $2$ and functions $A$, $B$, $C$, we have 
\[
\{A,BC\}_{\calf}=\{A,B\}_{\calf}C+\{A,C\}_{\calf}B.
\]
So for example, the operation of $\calf$ on a product $A_1\cdots A_{\nu}$ 
of functions $A_i$ can 
be calculated if we have $\calf(A_i)$ and $\{A_i,A_j\}_{\calf}$.
More generally, for $\delta^{-k}$, any functions $A$, $B$, $C$, and 
non-negative integers $p$, $q$, $r$ and for a differential operator $\calf$ 
of second order, 
we can give the following general formula by repeating the above consideration.
\begin{lemma}\label{product} 
\begin{align*}
& F(\delta^{-k}A^pB^qC^r) =F(\delta^{-k})A^pB^qC^r
 +pA^{p-1}B^qC^r\{\delta^{-k},A\}_{\calf}+qA^pB^{q-1}C^r\{\delta^{-k},B\}_{\calf}
\\ & \quad +rA^pB^qC^{r-1}\{\delta^{-k},C\}_{\calf}
 +\delta^{-k}\biggl(pA^{p-1}B^qC^rF(A)+qA^pB^{q-1}C^rF(B)
 +rA^pB^qC^{r-1}F(C)
\\ & \quad + pqA^{p-1}B^{q-1}C^r\{A,B\}_{\calf}+qrA^{p}B^{q-1}C^{r-1}\{B,C\}_{\calf}
+pqA^{p-1}B^qC^{r-1}\{A,C\}_{\calf}
\\ & \quad + 
\frac{p(p-1)}{2}A^{p-2}B^qC^r\{A,A\}_{\calf}
\frac{q(q-1)}{2}A^pB^{q-2}C^r\{B,B\}_{\calf}
+\frac{r(r-1)}{2}A^pB^qC^{r-2}\{C,C\}_{\calf}\biggr).
\end{align*}
\end{lemma}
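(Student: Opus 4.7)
The plan is to derive the formula by expanding $\calf$ via its behavior on products, using only the defining symmetric bracket $\{A,B\}_{\calf}$ together with the fact that $\calf$ has homogeneous order two. First, I would record the basic properties of the bracket: it is symmetric by construction, and bilinear over constants (both follow immediately from the definition $\calf(AB) = \calf(A)B + A\calf(B) + \{A,B\}_{\calf}$ by applying $\calf$ to $(cA)B$ and to $A(B_1+B_2)$ and comparing). The crucial extension is the Leibniz-type identity $\{A,BC\}_{\calf} = \{A,B\}_{\calf}\,C + \{A,C\}_{\calf}\,B$, which the excerpt has already noted for operators of the form $\p_1\p_2$; since every second-order homogeneous differential operator is locally a finite sum of such $\p_1\p_2$, this identity carries over to $\calf$ itself by linearity.

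Next, I would establish by induction on $n$ the multi-factor product rule
\[
\calf(f_1 \cdots f_n) = \sum_{i=1}^n \calf(f_i)\prod_{j \ne i} f_j + \sum_{1 \le i<j \le n} \{f_i,f_j\}_{\calf}\prod_{k \ne i,j} f_k.
\]
The base case $n = 2$ is the definition of the bracket, and the inductive step uses the definition applied to $(f_1 \cdots f_{n-1}) \cdot f_n$ together with the extended Leibniz identity above to distribute $\{f_1 \cdots f_{n-1}, f_n\}_{\calf}$ into a sum over single factors.

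Finally, I would apply the multi-factor rule to the product $\delta \cdot \underbrace{A \cdots A}_{p} \cdot \underbrace{B \cdots B}_{q} \cdot \underbrace{C \cdots C}_{r}$, viewed as $1+p+q+r$ factors, and then group identical terms. The six ``single-factor $\calf$'' contributions come from whether the distinguished index hits $\delta$ or one of the $p$, $q$, or $r$ identical factors, producing the prefactors $1$, $p$, $q$, $r$ respectively. The ``pair $\{\cdot,\cdot\}_{\calf}$'' contributions are enumerated by counting unordered pairs of indices: $p$ ways for $\{\delta,A\}$, $q$ for $\{\delta,B\}$, $r$ for $\{\delta,C\}$, $pq$ for $\{A,B\}$, $pr$ for $\{A,C\}$, $qr$ for $\{B,C\}$, and $\binom{p}{2}$, $\binom{q}{2}$, $\binom{r}{2}$ for the self-pairings $\{A,A\}$, $\{B,B\}$, $\{C,C\}$. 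Collecting these terms with the appropriate remaining monomials $A^{p-\alpha}B^{q-\beta}C^{r-\gamma}$ reproduces the stated formula.

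The argument is essentially a piece of bookkeeping, and the only mildly subtle point is the extended Leibniz identity $\{A,BC\}_{\calf} = \{A,B\}_{\calf}C + \{A,C\}_{\calf}B$: it is the key input that lets the second-order operator behave like a ``derivation of derivations.'' One should confirm it directly from the definition (rather than only from the $\p_1\p_2$ case) by applying $\calf$ to $A(BC)$ in two ways, using $\calf(BC)=\calf(B)C+B\calf(C)+\{B,C\}_{\calf}$ and then expanding $\calf(A\cdot(BC))$; this is the step I expect to require the most care, but it is a short and direct verification.
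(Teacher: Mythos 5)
Your main argument is correct and is essentially the route the paper intends (the paper only says ``by repeating the above consideration''): first establish the Leibniz identity $\{A,BC\}_{\calf}=\{A,B\}_{\calf}C+\{A,C\}_{\calf}B$ by writing $\calf$ as a linear combination of operators $\partial_1\partial_2$, then expand $\calf$ over the $1+p+q+r$-factor product and count the single-factor and unordered-pair contributions, which produces exactly the coefficients $1,p,q,r$ and $p,q,r,pq,pr,qr,\binom{p}{2},\binom{q}{2},\binom{r}{2}$ in the statement.

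One remark in your last paragraph should be struck, though. You suggest confirming the Leibniz identity ``directly from the definition (rather than only from the $\partial_1\partial_2$ case)'' by applying $\calf$ to $A(BC)$ in two ways. That computation does not close: equating $\calf(A(BC))$ with $\calf((AB)C)$ via the bracket definition yields only the cocycle relation
\[
A\{B,C\}_{\calf}+\{A,BC\}_{\calf}=\{A,B\}_{\calf}C+\{AB,C\}_{\calf},
\]
which is equivalent to the Leibniz rule in one slot \emph{assuming} the Leibniz rule in the other slot, hence circular. The bracket definition together with associativity alone does not determine the Leibniz property; the second-order structure of $\calf$ (i.e., the reduction to $\partial_1\partial_2$ and linearity) is genuinely needed, and is precisely what the paper and your main argument use. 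With that remark removed, the proof is complete.
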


So for $\delta^{-k}$, $C_1$, $C_2$, $C_3$ (and $C_4$, $C_5$ when necessary), 
and $\calf=\calf_1$, $\calf_4$, $\calf_5$, 
we list up all these fundamental data below.
For $i=1$, $4$, $5$, we write $\{*,*\}_{\calf_i}=\{*,*\}_i$.

Next we consider $\calf_1$ and $\calf_2$. 
We give {\it fundamental formulas} below.
\begin{align*}
& \calf_1(\delta^{-k}) =\frac{k(2k-1)}{2}C_1\delta^{-k}, 
\quad \calf_1(C_1)  = \frac{1}{2}C_2, \quad
\calf_1(C_2)  =3C_1C_2, \quad 
\calf_1(C_3)  =\frac{1}{2}C_1C_3, \\
&\{\delta^{-k},C_1\}_1 = \frac{k}{2}(3C_1^2+C_2-C_3)\delta^{-k},\quad
\{\delta^{-k},C_2\}_1  = 4kC_1C_2\delta^{-k}, \quad
\{\delta^{-k},C_3\}_1  = 2kC_1C_3 \delta^{-k}, \\
& 
\{C_1,C_1\}_1  =\frac{1}{2}(2C_1^2+2C_2-C_3), \quad 
\{C_1,C_2\}_1  = C_2(3C_1^2+C_2-C_3) , \\
& \{C_1,C_3\}_1  = \frac{1}{2}C_3(3C_1^2+C_2-C_3), \quad
\{C_2,C_2\}_1  = 8C_1C_2^2, \qquad
\{C_2,C_3\}_1  = 4C_1C_2C_3, \\
& 
\{C_3,C_3\}_1  = 2C_1C_3^2, \quad 
 \calf_4(\delta^{-k})  = \frac{k(2k-1)}{2}C_4\delta^{-k}, \quad
\calf_4(C_1)  = \frac{1}{2}C_4C_1,
\\ & \calf_4(C_2)  = \frac{1}{2}C_4(4C_1^2+2C_2-C_3), \quad
\calf_4(C_3)  = \frac{1}{2}C_4C_3, \quad 
\{\delta^{-k},C_1\}_4  = 2kC_4C_1\delta^{-k}, \\
& \{\delta^{-k},C_2\}_4  = kC_4(C_1^2+3C_2-C_3)\delta^{-k}, \quad
\{\delta^{-k},C_3\}_4  = 2kC_4C_3\delta^{-k}, \quad
\{C_1,C_1\}_4  = 2C_4C_1^2, \\
&\{C_1,C_2\}_4  = C_4C_1(C_1^2+3C_2-C_3), \quad
\{C_1,C_3\}_4  = 2C_4C_1C_3, \\
& 
\{C_2,C_2\}_4  = 2C_4C_2(2C_1^2+2C_2-C_3), \\
& 
\{C_2,C_3\}_4  = C_4C_3(C_1^2+3C_2-C_3), \quad
\{C_3,C_3\}_4  = 2C_4C_3^2 \quad 
\calf_5(\delta^{-k})  = \frac{k(2k-1)}{2}C_5,\\
& \calf_5(C_1)  =\frac{1}{2}C_5C_1, \quad 
\calf_5(C_2)  = \frac{1}{2}C_5(4C_1^2+2C_2-C_3), \quad 
\calf_5(C_3)  = \frac{1}{2}C_5C_3,\\
& \calf_5(C_4)  = \frac{1}{2}(2C_1^2-C_2+C_3),\quad 
\{\delta^{-k},C_1\}_5  = 2kC_5C_1\delta^{-k},\\
& \{\delta^{-k},C_2\}_5  = kC_5(C_1^2+3C_2-C_3)\delta^{-k},\quad 
\{\delta^{-k},C_3\}_5  = 2kC_5C_3\delta^{-k},\\
& 
\{\delta^{-k},C_4\}_5  = k(C_1^2+C_2-C_3)\delta^{-k}, \quad 
\{C_1,C_1\}_5  = 2C_5C_1^2,\quad 
\{C_1,C_2\}_5  = C_5C_1(C_1^2+3C_2-C_3),\\
& 
\{C_1,C_3\}_5  = 2C_5C_1C_3, \quad 
\{C_2,C_2\}_5  = 2C_5C_2(2C_1^2+2C_2-C_3),\\
& 
\{C_2,C_3\}_5  = C_5C_3(C_1^2+3C_2-C_3), \quad 
\{C_3,C_3\}_5  = 2C_5C_3^2, \\
& 
\{C_1,C_4\}_5  = C_1(C_1^2+C_2-C_3), \quad 
\{C_2,C_4\}_5  = C_2(3C_1^2+C_2-C_3),
\\ & 
\{C_3,C_4\}_5  = C_3(C_1^2+C_2-C_3).
\end{align*}

\begin{lemma}
For a generic $g$, $U$, $V$ such that $\Delta_{ij}$ are algebraically 
independent, three variables $C_1$, $C_2$ and $C_3$ are 
algebraically independent.
\end{lemma}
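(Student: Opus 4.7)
The plan is to verify algebraic independence by descent from a specialization. Since the hypothesis allows us to regard the $\Delta_{ij}$ as algebraically independent indeterminates, it suffices to show that $C_1,C_2,C_3$ are algebraically independent as elements of the polynomial ring $\QQ[\Delta_{ij}:1\le i\le j\le 4]$. Any polynomial relation among them would survive under any ring homomorphism, so it is enough to exhibit a single specialization of the $\Delta_{ij}$ for which the images of $C_1,C_2,C_3$ remain algebraically independent over $\QQ$.

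I would specialize $\Delta$ to the matrix
\[
\Delta_0=\begin{pmatrix} 1 & 0 & a & 0 \\ 0 & 1 & 0 & b \\ a & 0 & c & 0 \\ 0 & b & 0 & d \end{pmatrix},
\]
where $a,b,c,d$ are four new indeterminates. Conjugation by the permutation that swaps the second and third rows/columns puts $\Delta_0$ into block-diagonal form with blocks $\begin{pmatrix}1 & a\\ a & c\end{pmatrix}$ and $\begin{pmatrix}1 & b\\ b & d\end{pmatrix}$, so one reads off at once
\[
C_1=ab,\qquad C_2=C_4C_5=cd,\qquad C_3=(c-a^2)(d-b^2).
\]
I would then form the Jacobian of $(C_1,C_2,C_3)$ with respect to $(a,c,d)$, whose determinant is
\[
\begin{vmatrix} b & 0 & 0 \\ 0 & d & c \\ 2a(b^2-d) & d-b^2 & c-a^2 \end{vmatrix}=b\bigl(b^2c-a^2d\bigr),
\]
a nonzero element of $\QQ[a,b,c,d]$. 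Hence the morphism $(a,b,c,d)\mapsto(C_1,C_2,C_3)$ has generic rank $3$, which forces the images of $C_1,C_2,C_3$ to be algebraically independent in $\QQ[a,b,c,d]$.

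A nontrivial polynomial relation $F(C_1,C_2,C_3)=0$ in $\QQ[\Delta_{ij}]$ would specialize to the analogous relation in $\QQ[a,b,c,d]$, contradicting what has just been shown; therefore no such relation exists and the lemma follows. There is no real obstacle here; the only delicate point is to choose a specialization in which $C_4$ and $C_5$ do not both collapse (so that $C_2=C_4C_5$ continues to carry independent information), $C_1$ remains nonconstant, and $C_3$ does not reduce to a polynomial in $C_1$ and $C_2$. The block-diagonal specialization above decouples the roles of the four parameters just enough to make the $3\times 3$ Jacobian manifestly nondegenerate.
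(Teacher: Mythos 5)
Your proof is correct, and your computations check out: with the symmetric block-diagonalizable specialization $\Delta_0$, indeed $C_4=1$, $C_5=cd$, so $C_1=ab$, $C_2=cd$, $C_3=(c-a^2)(d-b^2)$, and the $3\times 3$ Jacobian determinant $b(b^2c-a^2d)$ is a nonzero element of $\QQ[a,b,c,d]$. In characteristic zero the Jacobian criterion then gives algebraic independence of the specialized $C_i$, and since any polynomial relation $F(C_1,C_2,C_3)=0$ in $\QQ[\Delta_{ij}]$ would survive the ring homomorphism to $\QQ[a,b,c,d]$, the claim follows.

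Your route is genuinely different from the paper's. The paper argues by contradiction with a minimality trick: given a relation $\sum_{p,q,r}C(p,q,r)C_1^pC_2^qC_3^r=0$ of minimal $C_1$-degree, it specializes $\Delta_{14}=\Delta_{24}=0$ so that $C_1$ vanishes but $C_3$ (unlike $C_2$) still contains $\Delta_{13}$, forcing all coefficients $C(0,q,r)$ to vanish; it then cancels a factor $C_1$ in the UFD $\CC[\Delta_{ij}]$ to produce a shorter relation, contradicting minimality. Your approach — a concrete parametric specialization to a transparent block form followed by the Jacobian criterion — avoids both the degree-minimality bookkeeping and the inspection of which monomials survive the paper's specialization, at the mild cost of invoking the characteristic-zero Jacobian criterion rather than staying purely within elementary polynomial-ring manipulations. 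Each is short; yours is arguably the more mechanical to verify, while the paper's is more self-contained in the sense of not appealing to a differential criterion.
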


\begin{proof}
Assume that 
\[
\sum_{p,q,r}C(p,q,r)C_1^pC_2^qC_3^q=0
\]
for some constants $C(p,q,r)$ where the degree of $C_1$ is the
smallest among such relations．If we put $\Delta_{14}=\Delta_{24}=0$, 
then we have $C_1=0$, and $C_3=\Delta_{13}^2\Delta_{22}\Delta_{44}
+\cdots$. Then, since $C_2$ does not contain any $\Delta_{13}$, 
this means that $C(0,q,r)=0$ for any $q$, $r$. 
So we may assume that 
\[
C_1\sum_{p\geq 1,q,r}C(p,q,r)C_1^{p-1}C_2^qC_3^r=0.
\]
Since the polynomial ring in $\Delta_{ij}$ is UFD, 
we have $\sum_{p\geq 1,q,r}C(p,q,r)C_1^{p-1}C_2^qC_3^r=0$.
This contradicts to the assumption.
\end{proof}

\begin{lemma}
For any integers $a$, $b$, $c$, $p$, $q$, $r\geq 0$, 
there exists a polynomial $P(C_1,C_2,C_3)$ 
such that 
\[
\calf_1^a\calf_2^b\calf_3^c(\delta^{-k})=\delta^{-k}P(C_1,C_2,C_3).
\]
\end{lemma}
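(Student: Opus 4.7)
The plan is to proceed by induction on $N:=a+b+c$. The base case $N=0$ is trivial. For the inductive step, by linearity it suffices to show that each of the operators $\calf_1$, $\calf_2=\calf_4\calf_5$, and $\calf_3$ sends an expression of the form $\delta^{-k}\cdot P(C_1,C_2,C_3)$ to another of the same form, and by a further induction on the degree of $P$ this reduces to showing closure on monomials $\delta^{-k}C_1^pC_2^qC_3^r$ (which also explains the appearance of the auxiliary exponents $p,q,r$ in the statement).

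For $\calf_1$ the computation is direct: iterated application of the generalized Leibniz identity of Lemma \ref{product} (and its evident extension to arbitrary products) to $\delta^{-k}C_1^pC_2^qC_3^r$ requires only the fundamental data $\calf_1(\delta^{-k})$, $\calf_1(C_j)$, $\{\delta^{-k},C_j\}_1$, $\{C_i,C_j\}_1$ for $i,j\in\{1,2,3\}$, and a glance at the tabulated formulas shows that every one of these lies in $\delta^{-k}\cdot\CC[C_1,C_2,C_3]$ or in $\CC[C_1,C_2,C_3]$. For $\calf_2$ the argument proceeds in two stages, using that $\calf_4$ and $\calf_5$ commute because they involve derivatives in the disjoint blocks $Z_{11}$ and $Z_{22}$. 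First, every fundamental entry for $\calf_4$ is $C_4$ times an element of $\delta^{-k}\CC[C_1,C_2,C_3]$ or of $\CC[C_1,C_2,C_3]$, and the Leibniz expansion consequently yields $\calf_4(\delta^{-k}P)=C_4\cdot\delta^{-k}\cdot Q(C_1,C_2,C_3)$ for some polynomial $Q$. Next we apply $\calf_5$ to this triple product via the second-order Leibniz expansion: the terms in which $\calf_5$ differentiates $\delta^{-k}$ or $Q$, as well as the brackets among $\delta^{-k}$ and the $C_j$ for $j\in\{1,2,3\}$, each introduce a spurious $C_5$ that is absorbed by the standing factor $C_4$ via the identity $C_4C_5=C_2$; while the terms in which $\calf_5$ acts on the factor $C_4$, namely $\calf_5(C_4)$, $\{\delta^{-k},C_4\}_5$, and $\{C_j,C_4\}_5$ for $j=1,2,3$, are all listed as polynomials in $C_1,C_2,C_3$ alone (the original $C_4$ being destroyed by the derivation). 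Every term in the expansion therefore lands in $\delta^{-k}\cdot\CC[C_1,C_2,C_3]$.

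The main obstacle will be handling $\calf_3$, since the table of fundamental formulas does not record $\calf_3(C_j)$ or the brackets $\{\cdot,\cdot\}_3$, and $\calf_3$ is of order four rather than two so Lemma \ref{product} does not apply verbatim. My plan is to emulate the second (specialization) proof of Lemma \ref{F3}: restricting to $n_1=n_2=2$ and $U=V=1_2$ one has $\Delta^{\Bbb U}=Z^{-1}$ on $\HH_4$, and $\calf_3$ becomes the classical Cayley operator $\det(\partial_Z)$, for which the action on any monomial $\det(Z)^{-k-r}\cdot(\text{polynomial in entries of }Z^{-1})$ can be computed by iterated use of the Cayley--Capelli identity for symmetric matrices. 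The resulting polynomial identity in the entries of $\Delta^{\Bbb U}$ propagates to the generic case because $C_1,C_2,C_3$ are algebraically independent for generic $\Delta^{\Bbb U}$ (as already observed). Combining the three closure statements completes the induction, and then the Fourier expansion of $\bbD_l(\delta^{-k})$ used later to pin down $c_r$ requires only the coefficient of $C_1^{a+2b+2c}$, which justifies the subsequent weighted-homogeneity refinement.
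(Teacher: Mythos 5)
Your treatment of $\calf_1$ and $\calf_2$ is essentially the paper's argument: iterate Lemma \ref{product} on $\delta^{-k}C_1^pC_2^qC_3^r$, using the tabulated fundamental formulas to see that every term lands in $\delta^{-k}\CC[C_1,C_2,C_3]$; in the $\calf_2=\calf_5\calf_4$ case the stray $C_4$ produced by $\calf_4$ is absorbed either by a matching $C_5$ via $C_4C_5=C_2$ or by the derivation of $C_4$ itself, exactly as you describe. That portion is correct.

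The gap is in your plan for $\calf_3$, which also reflects a missed simplification. The operators $\calf_1,\calf_2,\calf_3$ are polynomial expressions in the entries of $\partial_Z$ with coefficients in $U,V$ (constant in $Z$), hence they commute. In $\calf_1^a\calf_2^b\calf_3^c(\delta^{-k})$ one may therefore apply $\calf_3^c$ first, and Lemma \ref{F3} already gives $\calf_3^c(\delta^{-k})=(\text{nonzero const})\,\delta^{-k}C_3^c$ by direct iteration. No closure statement for $\calf_3$ acting on general monomials $\delta^{-k}C_1^pC_2^qC_3^r$ is ever needed; what remains, $\calf_1^a\calf_2^b(\delta^{-k}C_3^c)$, is covered by the part of your argument that is already correct. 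This is how the paper proceeds. Your proposed route for the general $\calf_3$ case does not close as written: the Cayley--Capelli identity computes $\det(\partial_Z)\bigl(\det(Z)^{-k-r}\bigr)$, but it gives no handle on $\det(\partial_Z)$ applied to $\det(Z)^{-k}$ multiplied by an arbitrary polynomial in the $2\times2$ and $4\times4$ minors of $Z^{-1}$; for that one would have to develop the full fourth-order Leibniz expansion of $\det(\partial_Z)$ (the fundamental-formula table deliberately records no $\calf_3$-brackets), and the appeal to ``iterated use of the Cayley--Capelli identity'' does not substitute for that calculation. Replacing the final paragraph of your plan with the commutativity reduction and a citation of Lemma \ref{F3} repairs the proof.
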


\begin{proof}
By Lemma \ref{F3}, 
it is enough to assume that LHS is 
$\calf_1^a\calf_2^b(\delta^{-k}C_3^{r+1})$.
Here we have $\calf_2=\calf_5\calf_4$.
By the fundamental formulas, we see that for $i$, $j$ 
with $1\leq i<j\leq 3$, any of  
$\calf_4(\delta^{-k})$, $\calf_4(C_i)$, $\{\delta^{-k},C_i\}_4$, 
$\{C_i,C_j\}_4$ 
are $\delta^{-k}C_4$ times a polynomial in $C_1$, $C_2$, $C_3$, so 
by Lemma \ref{product}, we see that 
\[
\calf_4(\delta^{-k}C_1^pC_2^qC_3^r)
=
\delta^{-k}C_4P_1(C_1,C_2,C_3)
\]
for some polynomial $P_1(x,y,z)$ in three variables.
We have 
\begin{align*}
\calf_5(\delta^{-k}C_4P_1(C_1,C_2,C_3))
& = 
C_4\calf_5(\delta^{-k}P_1(C_1,C_2,C_3))
+\calf_5(C_4)\delta^{-k}P_1(C_1,C_2,C_3)
\\
& \quad +\{C_4,\delta^{-k}P_1(C_1,C_2,C_3)\}_5
\end{align*}
Here by the same reason as before, the first term is equal to 
\[
C_4C_5\delta^{-k}P_2(C_1,C_2,C_3)=\delta^{-k}C_2P_2(C_1,C_2,C_3)
\]
for some polynomial $P_2(x,y,z)$. 
Since $\delta^{k}\{C_4,\delta^{-k}\}_5$, 
$\{C_4,C_i\}_5$ for $i=1$, $2$, $3$ 
and $\calf_5(C_4)$ are polynomials in $C_1$, $C_2$, $C_3$, 
we see that 
$\calf_2(\delta^{-k}C_1^pC_2^qC_3^r)$ is $\delta^{-k}$ times 
a polynomial in $C_1$, $C_2$, $C_3$. We can show inductively 
that the same is true for 
$\calf_2^{b}$ and $\calf_1^a\calf_2^b$, so we prove Lemma.
\end{proof}

By Lemma \ref{kozimalemma}, we need only the power of $C_1$ part 
in the polynomial in 
$C_1$, $C_2$, $C_3$, so we will study that.

We denote by $\frkC_3=C_3\CC[C_1,C_2,C_3]$
and $\frkC_{23}=(C_2,C_3)\CC[C_1,C_2,C_3]$ 
the ideals of $\CC[C_1,C_2,C_3]$ generated by 
$C_3$, and by $C_2$ and $C_3$, respectively.
We have the following result.

\begin{proposition}\label{C1part}
(i) If $c\geq 1$, then we have
\[
\delta^{k}
\calf_1^a\calf_2^b\calf_3^c(\delta^{-k})\in \frkC_3.
\]
(ii) When $c=0$ in the above, we have  
\[
\delta^{k}
\calf_1^a\calf_2^b(\delta^{-k}) \equiv 
\frac{1}{2^a}b!(k)_{2b}\bigl(k-\frac{1}{2}\bigr)_b(k+2b)_a(2k+2b-1)_aC_1^{a+2b}\bmod \frkC_{23}.
\]
\end{proposition}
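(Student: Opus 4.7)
The plan iterates the fundamental formulas listed before Proposition \ref{C1part}, together with the Leibniz rule (Lemma \ref{product}), using that $\calf_1, \calf_2, \calf_3$ are constant-coefficient operators in the $z_{ij}$ (for fixed $U,V$) and hence pairwise commute.

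For (i), one checks from the listed fundamentals that every right-hand side among $\calf_i(C_3)$, $\{\delta^{-k}, C_3\}_i$, and $\{C_j, C_3\}_i$ for $i \in \{1,4,5\}$, $j \in \{1,2,3\}$ is divisible by $C_3$. Lemma \ref{product} then implies that the subspace $\delta^{-k} C_3\,\CC[C_1,C_2,C_3,C_4,C_5]$ is preserved by each of $\calf_1,\calf_4,\calf_5$, hence by $\calf_2=\calf_4\calf_5$; any extraneous $C_4, C_5$ factors introduced along the way ultimately pair up via $C_4C_5 = C_2$. Iterating Lemma \ref{F3} gives $\calf_3^c(\delta^{-k}) \in \delta^{-k} C_3\,\CC[C_3]$ for $c\geq 1$, and applying $\calf_1^a\calf_2^b$ thereafter yields (i).

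For (ii), we apply $\calf_2^b$ first and then $\calf_1^a$. The first key identity, for all $p \geq 0$, is
\[
\calf_1(\delta^{-k} C_1^p) \equiv \frac{(k+p)(2k+p-1)}{2}\,\delta^{-k} C_1^{p+1} \bmod \frkC_{23},
\]
proved by applying Lemma \ref{product} to $\delta^{-k} \cdot C_1^p$. Modulo $\frkC_{23}$ we have $\calf_1(C_1) \equiv 0$, $\{C_1,C_1\}_1 \equiv C_1^2$, $\calf_1(\delta^{-k}) \equiv \tfrac{k(2k-1)}{2}\,C_1\delta^{-k}$ and $\{\delta^{-k},C_1\}_1 \equiv \tfrac{3k}{2}\,C_1^2\delta^{-k}$, whose combination contributes $\tfrac{1}{2}(2k^2-k+3pk+p^2-p) = \tfrac{1}{2}(k+p)(2k+p-1)$. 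Iterating gives $\calf_1^a(\delta^{-k} C_1^{2b}) \equiv \tfrac{(k+2b)_a(2k+2b-1)_a}{2^a}\,\delta^{-k} C_1^{a+2b} \bmod \frkC_{23}$. The second and more delicate identity is
\[
\calf_2(\delta^{-k} C_1^{2b}) \equiv (b+1)(k+2b)(k+2b+1)\bigl(k+b-\tfrac{1}{2}\bigr)\,\delta^{-k} C_1^{2b+2} \bmod \frkC_{23},
\]
from which induction on $b$ yields $\calf_2^b(\delta^{-k}) \equiv b!\,(k)_{2b}(k-1/2)_b\,\delta^{-k} C_1^{2b} \bmod \frkC_{23}$, and combining with Step 1 gives (ii). Writing $\calf_2 = \calf_4\calf_5$, the inner application yields exactly $\calf_5(\delta^{-k} C_1^p) = \tfrac{(k+p)(2k+2p-1)}{2}\,C_5\delta^{-k} C_1^p$ (by a computation parallel to Step 1 using the $\calf_5$-fundamentals). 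Then $\calf_4$ is applied to $\delta^{-k} \cdot C_5 \cdot C_1^{2b}$ via Lemma \ref{product}; this requires the three additional identities $\calf_4(C_5)$, $\{\delta^{-k},C_5\}_4$, and $\{C_1,C_5\}_4$, derived from (D1)--(D3) applied to $C_5 = \Delta_{33}\Delta_{44} - \Delta_{34}^2$. After simplification using $C_4C_5 = C_2 \equiv 0 \bmod \frkC_{23}$, the coefficient collects to the asserted value; the case $b=0$ gives $\calf_2(\delta^{-k}) \equiv \tfrac{k(k+1)(2k-1)}{2}\,C_1^2\delta^{-k}$, consistent with the formula.

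The main obstacle is precisely this last step: the fundamentals as listed cover $\calf_i$-actions and brackets for $\{\delta^{-k},C_1,C_2,C_3,C_4\}$ with $i=1,4,5$, but not the brackets involving $C_5$ under $\calf_4$ (or, by the $\calf_4 \leftrightarrow \calf_5$ symmetry, $C_4$ under $\calf_5$). Although elementary via (D1)--(D3), carrying them out and tracking which monomials in $C_4, C_5$ survive modulo $\frkC_{23}$ is tedious; an alternative is to specialize to $n_1=n_2=2$, $U=V=1_2$, where $\Delta^{\Bbb U} = Z^{-1}$ for $Z\in\HH_4$ and $\calf_2$ reduces to $\det(\p_{Z_{11}})\det(\p_{Z_{22}})$ on $\det(Z)^{-k}$, which can be evaluated via Cayley-type identities for symmetric matrices (in the spirit of the alternate proof of Lemma \ref{F3}).
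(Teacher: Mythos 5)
Your approach to (i) matches the paper's: check that the fundamental formulas send $C_3$-divisible expressions to $C_3$-divisible expressions under $\calf_1, \calf_4, \calf_5$, then invoke Lemma \ref{F3}.

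For (ii), however, there is a genuine gap in the treatment of $\calf_2^b$. Your induction asserts $\calf_2(\delta^{-k}C_1^{2b}) \equiv (b+1)(k+2b)(k+2b+1)\bigl(k+b-\tfrac12\bigr)\delta^{-k}C_1^{2b+2} \bmod \frkC_{23}$, and then iterates modulo $\frkC_{23}$. This is not valid, because $\calf_2$ does \emph{not} preserve the ideal $\frkC_{23}$. From the fundamental formulas, $\calf_4(C_2) = \tfrac12 C_4(4C_1^2+2C_2-C_3) \equiv 2C_4C_1^2 \bmod \frkC_{23}$ and likewise for $\calf_5$; concretely, $\delta^k\calf_2(\delta^{-k}C_1^pC_2)$ contains a nonzero $C_1^{p+4}$ term (the coefficient $q^2(k+p+2q)(k+p+2q+1)$ in formula \eqref{F2C1C2} with $q=1$). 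So the $C_2$ terms you drop at each stage contribute to the $C_1$-powers at the next stage and cannot be discarded prematurely. This also explains why your one-step coefficient is numerically wrong: the actual coefficient of $C_1^{2b+2}$ in $\delta^k\calf_2(\delta^{-k}C_1^{2b}) \bmod \frkC_{23}$ works out to $(k+2b)(k+2b+1)\bigl(k+2b-\tfrac12\bigr)$, which differs from your $(b+1)(k+2b)(k+2b+1)\bigl(k+b-\tfrac12\bigr)$ for every $b\geq 1$. Your one-step formula was instead inferred from the desired end product under the (false) assumption that the naive induction closes — which makes the argument circular. The paper avoids the issue by keeping the entire expansion $\sum_{p=0}^b p!\binom{b}{p}^2(k)_{2b}(k-\tfrac12)_b(k+p-\tfrac12)_{b-p}C_1^{2p}C_2^{b-p}$ modulo $\frkC_3$ only (Lemma \ref{F2part}(ii)), since $\frkC_3$ \emph{is} preserved by $\calf_2$; the $C_2$-terms are then discarded only after applying $\calf_1^a$, for which Lemma \ref{F1part}(ii) shows that $\frkC_{23}$ is preserved.

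A secondary point: you compose in the order $\calf_4\calf_5$ rather than $\calf_5\calf_4$ as the paper does, and consequently need $\calf_4(C_5)$, $\{\delta^{-k},C_5\}_4$, $\{C_i,C_5\}_4$, which are not in the listed fundamentals. The paper's choice $\calf_2 = \calf_5\calf_4$ is deliberate: after applying $\calf_4$, expressions take the form $\delta^{-k}C_4\cdot(\text{polynomial in }C_1,C_2,C_3)$, and $\calf_5$ applied to such expressions needs only the $\calf_5(C_4)$, $\{\delta^{-k},C_4\}_5$, $\{C_i,C_4\}_5$ brackets, which are all provided.
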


To prove Proposition \ref{C1part}, we prepare several lemmas.

\begin{lemma}\label{F2part}
(i) For any integer $r\geq 1$, we have 
\[
\delta^k\calf_2(\delta^{-k}C_1^pC_2^qC_3^r) \in \frkC_3.
\]
(ii) For any integer $b\geq 0$, we have 
\[
\delta^{k}\calf_2^b(\delta^{-k})\equiv \sum_{p=0}^{b}p!\binom{b}{p}^2
(k)_{2b}\bigl(k-\frac{1}{2}\bigr)_b\bigl(k+p-\frac{1}{2}\bigr)_{b-p}C_1^{2p}C_2^{b-p}
\bmod \frkC_3.
\]
\end{lemma}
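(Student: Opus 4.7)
The plan is to prove both parts by direct application of Lemma \ref{product} (the Leibniz-type formula for second-order differential operators) combined with the fundamental formulas for $\calf_4$ and $\calf_5$ listed above, carried out modulo the ideal $\frkC_3$. Throughout, I exploit the $U \leftrightarrow V$ symmetry, which exchanges $C_4 \leftrightarrow C_5$ and produces the analogue of each $\calf_5$-formula for $\calf_4$.

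For part (i), I apply Lemma \ref{product} with $\calf = \calf_5$, $\delta = \delta^{-k}$, $A = C_1$, $B = C_2$, $C = C_3$ to the input $\delta^{-k} C_1^p C_2^q C_3^r$. Inspecting the fundamental formulas, every quantity of the form $\calf_5(C_3)$, $\{\delta^{-k}, C_3\}_5$, $\{C_i, C_3\}_5$, and $\{C_3, C_3\}_5$ carries an explicit factor of $C_3$. Hence, since $r \geq 1$, every summand produced by Lemma \ref{product} retains at least one factor of $C_3$, so $\delta^{k}\calf_5(\delta^{-k} C_1^p C_2^q C_3^r)$ is divisible by $C_3$ in $\CC[\Delta_{ij}]$. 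Applying $\calf_4$ next and using the analogous $\calf_4$-formulas (obtained via the $U \leftrightarrow V$ symmetry), the same argument shows that $\calf_4$ preserves divisibility by $C_3$. Thus $\delta^k \calf_2(\delta^{-k} C_1^p C_2^q C_3^r) \in \frkC_3$, proving (i).

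For part (ii), I proceed by induction on $b$, with the case $b = 0$ being trivial. For the inductive step, by (i) I may work throughout modulo $\frkC_3$, reducing each fundamental formula accordingly; for example, $\calf_5(C_2) \equiv C_5(2C_1^2 + C_2)$, $\{\delta^{-k}, C_2\}_5 \equiv k C_5 (C_1^2 + 3 C_2)\delta^{-k}$, and $\{C_2, C_2\}_5 \equiv 4 C_5 C_2(C_1^2 + C_2)$, with the analogous reductions for $\calf_4$ obtained by replacing $C_5$ by $C_4$. Applying Lemma \ref{product} to compute $\calf_5(\delta^{-k} C_1^{2p} C_2^{b-p}) \bmod \frkC_3$, the output becomes $C_5 \delta^{-k}$ times a polynomial in $C_1, C_2$. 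Applying $\calf_4$ once more, using the identity $C_4 C_5 = C_2$, and collecting into the shape $\sum_{p'} \alpha_{b+1, p'} \delta^{-k} C_1^{2p'} C_2^{b+1-p'}$, only the terms from the inductive hypothesis indexed by $p = p'$ and $p = p' - 1$ contribute to $\alpha_{b+1, p'}$ by degree counting. It then remains to verify that $\alpha_{b+1,p'}$ matches $p'! \binom{b+1}{p'}^2 (k)_{2b+2} (k-\tfrac12)_{b+1} (k+p'-\tfrac12)_{b+1-p'}$.

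The main obstacle is the bookkeeping in this inductive step: the Leibniz expansion produces a dozen or so terms, and arranging their contributions into the precise Pochhammer-symbol form requires careful organization of the recursion for $a_{b,p} = p!\binom{b}{p}^2 (k)_{2b}(k-\tfrac12)_b (k+p-\tfrac12)_{b-p}$. A useful sanity check, which I have verified, is the case $b = 1$: direct computation gives $\delta^{k}\calf_2(\delta^{-k}) \equiv k(k{+}1)(k{-}\tfrac12) C_1^2 + k(k{+}1)(k{-}\tfrac12)^2 C_2 \pmod{\frkC_3}$, matching $a_{1,0} C_2 + a_{1,1} C_1^2$.
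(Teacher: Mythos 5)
Your part (i) argument is sound and parallels the paper's (the paper writes $\calf_2=\calf_5\calf_4$ and applies $\calf_4$ first, so it never needs $\calf_4$-brackets involving $C_5$; your order requires invoking the $U\leftrightarrow V$ symmetry to obtain $\calf_4(C_5)$, $\{C_5,\cdot\}_4$, etc., which is legitimate, but note you also need these $\calf_4$ brackets against $C_5$ to still carry a $C_3$ factor when paired with $C_3$ — they do, by symmetry with the listed $\{C_4,C_3\}_5=C_3(C_1^2+C_2-C_3)$, but this needs to be said, since $\calf_4(C_5)$ itself does \emph{not} have a $C_3$ factor).

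Part (ii), however, has a genuine gap in the inductive step: your degree count is wrong. Working $\bmod\ \frkC_3$, applying $\calf_2$ to a monomial $\delta^{-k}C_1^{p}C_2^{q}$ produces three output monomials $C_1^{p+4}C_2^{q-1}$, $C_1^{p+2}C_2^{q}$, $C_1^{p}C_2^{q+1}$ (see the displayed formula the paper calls \eqref{F2C1C2}); the first is nonzero whenever $q\geq 1$, with coefficient $q^2(k+p+2q)(k+p+2q+1)$. Thus the coefficient of $C_1^{2p'}C_2^{(b+1)-p'}$ in $\delta^{k}\calf_2^{b+1}(\delta^{-k})$ receives contributions from \emph{three} sources in $\calf_2^{b}(\delta^{-k})$, namely $p=p'-2$, $p=p'-1$, and $p=p'$, not two as you claim. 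Omitting the $p=p'-2$ source means your recursion for $\alpha_{b+1,p'}$ is missing a term, and the Pochhammer formula will fail to close once $p'\geq 2$, i.e.\ from $b\geq 2$ onward. Your $b=1$ sanity check does not detect this because $p'-2<0$ there. The paper's proof handles this by computing all three weighted contributions $e_1,e_2,e_3$ relative to the target coefficient $x_{p,b}$ and verifying $e_1+e_2+e_3=1$; you need the analogous three-term verification.
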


\begin{proof}
We have 
\begin{equation*}
\calf_4(\delta^{-k}C_1^pC_2^qC_3^r) =\delta^{-k}C_1^pC_2^q\calf_4(C_3^r)
+C_3^r\calf_4(\delta^{-k}C_1^pC_2^q)
+\{\delta^{-k}C_1^pC_2^q,C_3^r\}_4.
\end{equation*}
Since $\calf_4(C_3)\in C_4\frkC_3$, $\{C_i,C_3\}_4\in C_4\frkC_3$ for $i=1$, $2$, $3$, and 
$\{\delta^{-k},C_3\}_4 \in \delta^{-k}C_4\frkC_3$, 
we see that $\calf_4(\delta^{-k}C_1^pC_2^qC_3^r)\in \delta^{-k}C_4\frkC_3$. 
In the same way, we can show that 
$\calf_5(\delta^{-k}C_4\frkC_3)\subset \frkC_3$.
So we prove (i). 
Next we prove (ii) by induction. 
By direct calculation, we have 
\begin{equation*}
\delta^{k}\calf_2(\delta^{-k})=  
\frac{k(2k-1)(k+1)}{2}C_1^2+\frac{k(2k-1)^2(k+1)}{4}C_2-\frac{k(2k-1)^2}{4}C_3.
\end{equation*}
This is nothing but the case $b=1$ of Lemma \ref{F2part} (2).
Next we calculate $F_2(\delta^{-k}C_1^pC_2^q)$ in order 
to calculate $\calf_2^{b}(\delta^{-k})$ 
inductively. 
We have 
\begin{align*}
 F_4(\delta^{-k}C_1^pC_2^q)
 & =  F_4(\delta^{-k})C_1^pC_2^q+\{\delta^{-k},C_1^pC_2^q\}_4+\delta^{-k}F_4(C_1^pC_2^q)
\\ & =   F_4(\delta^{-k})C_1^pC_2^q+pC_1^{p-1}C_2^q\{\delta^{-k},C_1\}_4+qC_1^pC_2^{q-1}\{\delta^{-k},C_2\}_4
\\ & \quad + \delta^{-k}(F_4(C_1^p)C_2^q+\{C_1^p,C_2^q\}_4+C_1^pF_4(C_2^q))
\\ &  =  F_4(\delta^{-k})C_1^pC_2^q+pC_1^{p-1}C_2^q\{\delta^{-k},C_1\}_4+qC_1^pC_2^{q-1}\{\delta^{-k},C_2\}_4
\\ & \quad + \delta^{-k}\biggl(pF_4(C_1)C_1^{p-1}C_2^q+\frac{p(p-1)}{2}\{C_1,C_1\}_4C_1^{p-2}C_2^q
\\ & \quad +pqC_1^{p-1}C_2^{q-1}\{C_1,C_2\}_4+qC_1^pC_2^{q-1}F_4(C_2)
 +\frac{q(q-1)}{2}C_1^pC_2^{q-2}\{C_2,C_2\}_4 \biggr).
\end{align*}
So by the fundamental formulas, we have 

\begin{align}\label{F4part}
F_4(\delta^{-k}C_1^pC_2^q)  & =  
\delta^{-k}C_4C_1^pC_2^{q-1}
 \times \biggl( q(k+p+2q)C_1^{2}
+\frac{(k+p+2q)(2k+2p+2q-1)}{2}C_2 
\\ &  \qquad -\frac{1}{2}q(2k+2p+2q-1)C_3\biggr). \notag
\end{align}
In the same way, we have 
\begin{align}\label{F5part}
F_5(\delta^{-k}C_4C_1^pC_2^q)
& = \delta^{-k}C_1^pC_2^q
\times \biggl(
(q+1)(k+p+2q+1)C_1^2
\\ & \quad +\frac{(k+p+2q+1)(2k+2p+2q-1)}{2}C_2  
-\frac{(q+1)(2k+2p+2q-1)}{2}C_3\biggr). \notag
\end{align}
By applying \eqref{F4part}, we have 
\begin{align}
& \delta^{k}F_2(\delta^{-k}C_1^pC_2^q) \equiv  
q(k+p+2q)F_5(\delta^{-k}C_4C_1^{p+2}C_2^{q-1}) \\
& \qquad + \frac{(k+p+2q)(2k+2p+2q-1)}{2}F_5(\delta^{-k}C_4C_1^pC_2^q)
\bmod \frkC_3, 
\end{align}
and by \eqref{F5part}, we have 
\begin{align}\label{F2C1C2}
& \delta^kF_2(\delta^{-k}C_1^pC_2^q) = 
q^2(k+p+2q)(k+p+2q+1)C_1^{p+4}C_2^{q-1}
\\ & +(k+p+2q)(k+p+2q+1)\bigl((4p+2)+(2p^2+pq+p+q-\frac{1}{2})\bigr)C_1^{p+2}C_2^q
\notag \\ &
+\frac{1}{4}(k+p+2q)(k+p+2q+1)(2k+2p+2q-1)^2C_1^pC_2^{q+1}. \notag
\end{align}

Now assume that the claim (ii) holds for $b-1$. Then applying \eqref{F2C1C2}, 
we have (ii). This is of course a straight forward calculation, but this is a bit complicated so 
we give a precise proof. 
For the sake of simplicity, we put 
\[
x_{p,b}=p!\binom{b}{p}^2(k)_{2b}\bigl(k-\frac{1}{2}\bigr)_b\bigl(k+p-\frac{1}{2}\bigr)_{b-p}. 
\]
To see the coefficient of $C_1^{2p}C_2^{b-p}$ in $\calf_2(\calf_2^{b-1}(\delta^{-k}))$, we should see 
the linear combination of $\delta^{-k}C_1^{2p-4}C_2^{b-p-1}$, $\delta^{-k}C_1^{2p-2}C_2^{b-p}$ and 
$\delta^{-k}C_1^{2p}C_2^{b-1-p}$ in $\calf_2^{b-1}(\delta^{-k})$ and apply $\calf_2$ on it and see the 
coefficient at $C_1^{2p}C_2^{b-p}$. We compare each term with $x_{p,b}$. 
First the coefficient at $C_1^{2p}C_2^{b-p}$ of $\delta^{k}\calf_2(\delta^{-k}C_1^{2p-4}C_2^{b-p+1})$
is given by 
\[
(k+2b-2)(k+2b-1)(b-p+1)^2.
\]
We must multiply $x_{b-1,p-2}$ to this. The product is given by 
\begin{align*}
& \frac{(b-1)!^2}{(p-2)!(b-p)!^2}
(k)_{2b}(k-\frac{1}{2})_{b-1}(k+p-\frac{5}{2})(k+p-\frac{3}{2})\cdots(k+b-\frac{5}{2}) 
\\ & 
= e_1x_{p,b}
\end{align*}
where we put 
\[
e_1=\frac{p(p-1)(k+p-\frac{5}{2})(k+p-\frac{3}{2})}{b^2(k+b-\frac{3}{2})^2}.
\]
Secondly, the coefficient at $C_1^{2p}C_2^{b-p}$ of $\delta^{k}\calf_2(\delta^{-k}C_1^{2p-2}C_2^{b-p})$ is given by 
\[
\frac{1}{2}(k+2b-2)(k+2b-1)(2(2b-2p+1)k+(4b^2-6b-4p^2+10p-5)).
\]
We must multiply $x_{b-1,p-1}$ to this. The result is $e_2x_{b,p}$ where we put 
\[
e_2=\frac{p(k+p-\frac{3}{2})((2b-2p+1)k+(2b^2-3b-2p^2+5p-\frac{5}{2}))}
{b^2(k+b-\frac{3}{2})^2}.
\]
Finally, the coefficient at $C_1^{2p}C_2^{b-p}$ of $\delta^k\calf_2(\delta^{-k}C_1^{2p}C_2^{b-1-p})$
is given by 
\[
\frac{1}{4}(k+2b-2)(k+2b-1)(2k+2b+2p-3)^2. 
\]
Multiplying $x_{b-1,p}$ to this, we have $e_3x_{b,p}$ where we put 
\[
e_3=\frac{(b-p)^2(k+b+p-\frac{3}{2})^2}{b^2(k+b-\frac{3}{2})^2}. 
\]
Since we easily see $c_1+c_2+c_3=1$, we prove (ii).  
\end{proof}

\begin{lemma}\label{F1part}
(i) For any integer $r\geq 1$, we have 
\[
\delta^{k}\calf_1(\delta^{-k}C_1^pC_2^qC_3^r) \in \frkC_3.
\]
(ii) 
For any integer $q\geq 1$, we have 
\[
\delta^k\calf_1(C_1^pC_2^q)\in \frkC_{23}.
\]
(iii) For any non-negative integers $a$, $p$, we have 
\begin{align*}
\delta^{k}F_1(\delta^{-k}C_1^p) & = 
\frac{(k+p)(2k+p-1)}{2}C_1^{p+1}
+\frac{p(k+p)}{2}C_1^{p-1}C_2-\frac{p(2k+p-1)}{4}C_1^{p-1}C_3, \\ 
\delta^k\calf_1^a(\delta^{-k}C_1^p) & \equiv 
\frac{(k+p)_a(2k+p-1)_a}{2^a}C_1^{a+p}  \bmod \frkC_{23}. 
\end{align*}
\end{lemma}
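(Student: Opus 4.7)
The plan is to apply the general Leibniz-type expansion of Lemma \ref{product} to $\calf_1(\delta^{-k}C_1^pC_2^qC_3^r)$, and then substitute the ``fundamental formulas'' already recorded for $\calf_1(\delta^{-k})$, $\calf_1(C_i)$, $\{\delta^{-k},C_i\}_1$ and $\{C_i,C_j\}_1$. Once the Leibniz expansion is written out, everything reduces to combinatorial bookkeeping, and the three assertions of the lemma correspond to three different levels of accounting: stability modulo $\frkC_3$, stability modulo $\frkC_{23}$, and exact evaluation when $q=r=0$.

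For part (i), the key observation is that every operation that can touch the $C_3^r$ factor produces a $C_3$: indeed $\calf_1(C_3)=\tfrac{1}{2}C_1C_3$, $\{\delta^{-k},C_3\}_1=2kC_1C_3\,\delta^{-k}$, $\{C_i,C_3\}_1$ contains $C_3$ as a factor for $i=1,2,3$, and $\{C_3,C_3\}_1=2C_1C_3^2$. In the Leibniz expansion, every summand either retains the full $C_3^r$ factor or loses one or two copies through differentiation but recovers them via a bracket; since $r\geq 1$, each term lies in $\frkC_3$. Part (ii) is entirely analogous with $\frkC_{23}$ in place of $\frkC_3$: $\calf_1(C_2)=3C_1C_2$, $\{C_1,C_2\}_1=C_2(3C_1^2+C_2-C_3)$ and $\{C_2,C_2\}_1=8C_1C_2^2$ all lie in $\frkC_{23}$, so applying Lemma \ref{product} to $C_1^pC_2^q$ with $q\geq 1$ (and no $\delta^{-k}$ factor) forces every term to carry a visible $C_2$ or $C_3$.

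For the first equation of part (iii), I will specialize Lemma \ref{product} to $\delta=\delta^{-k}$, $A=C_1$, $B=C=1$ and $q=r=0$, obtaining
\[
\calf_1(\delta^{-k}C_1^p)
=\calf_1(\delta^{-k})\,C_1^p
+pC_1^{p-1}\{\delta^{-k},C_1\}_1
+p\delta^{-k}C_1^{p-1}\calf_1(C_1)
+\binom{p}{2}\delta^{-k}C_1^{p-2}\{C_1,C_1\}_1.
\]
Substituting the fundamental formulas yields a linear combination of the monomials $C_1^{p+1}$, $C_1^{p-1}C_2$ and $C_1^{p-1}C_3$ together with auxiliary terms coming from $\{C_1,C_1\}_1=\tfrac{1}{2}(2C_1^2+2C_2-C_3)$, and collecting coefficients produces the stated closed form. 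For the $\bmod\,\frkC_{23}$ statement I will induct on $a$: the case $a=1$ follows by reducing the just-derived formula modulo $\frkC_{23}$, keeping only the coefficient of $C_1^{p+1}$. For the inductive step, part (ii) together with the identities $\{\delta^{-k},C_2\}_1=4kC_1C_2\,\delta^{-k}\in\frkC_{23}$ and $\{\delta^{-k},C_3\}_1=2kC_1C_3\,\delta^{-k}\in\frkC_{23}$ ensures that $\calf_1$ preserves $\frkC_{23}$ on expressions of the form $\delta^{-k}\cdot(\text{polynomial in }C_1,C_2,C_3)$, so that
\[
\calf_1^{a+1}(\delta^{-k}C_1^p)\equiv \tfrac{(k+p)_a(2k+p-1)_a}{2^a}\,\calf_1(\delta^{-k}C_1^{a+p})\pmod{\frkC_{23}},
\]
and the first formula with $p$ replaced by $a+p$ closes the induction.

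The main obstacle will be the combinatorial bookkeeping behind the exact equation in (iii): the substitution of $\{C_1,C_1\}_1$ creates spurious $C_1^p$, $C_1^{p-2}C_2$ and $C_1^{p-2}C_3$ contributions whose cancellation (or absorption into the three claimed monomials) must be checked carefully before the polynomial identity $(k+p)(2k+p-1)=k(2k-1)+3kp+p(p-1)$ makes the $C_1^{p+1}$ coefficients match. A secondary subtlety is justifying the stability statement used in the induction, which is an upgraded form of (ii) including the $\delta^{-k}$ factor; but this is immediate from the fundamental formulas because every bracket of $\delta^{-k}$ with $C_2$ or $C_3$ already carries $C_2$ or $C_3$, so the passage $\calf_1(\frkC_{23}\cdot\delta^{-k}\CC[C_1,C_2,C_3])\subseteq \frkC_{23}\cdot\delta^{-k}\CC[C_1,C_2,C_3]$ is clean.
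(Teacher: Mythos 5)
Your overall plan --- reading (i) and (ii) off the list of fundamental formulas as stability statements for $\frkC_3$ and $\frkC_{23}$, expanding (iii) by Lemma~\ref{product} with $q=r=0$, and then inducting on $a$ for the second identity --- is exactly what the paper does; the inductive step $\calf_1^{a+1}(\delta^{-k}C_1^p)\equiv \tfrac{(k+p)_a(2k+p-1)_a}{2^a}\calf_1(\delta^{-k}C_1^{a+p})$ followed by the $a=1$ case with $p$ replaced by $a+p$ is also the intended argument, and parts (i) and (ii) are fine as you state them.

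However, the way you plan to close the direct computation in (iii) would not go through: the ``spurious'' $C_1^p$, $C_1^{p-2}C_2$, $C_1^{p-2}C_3$ terms you flag neither cancel nor absorb. Assigning weights $\deg C_1=1$, $\deg C_2=\deg C_3=2$ (so each first-order derivative contributes $\tfrac12$), the target $\delta^k\calf_1(\delta^{-k}C_1^p)$ is weighted-homogeneous of weight $p+1$, while the contributions you name have weight $p$ --- they would contradict the claim outright. This is a signal that the entry $\{C_1,C_1\}_1=\tfrac12(2C_1^2+2C_2-C_3)$ on the list of fundamental formulas is a misprint: every other bracket on the list is weighted-homogeneous, $\{C_1,C_1\}_1$ should have weight $3$, and the printed right-hand side has weight $2$. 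The intended formula is $\{C_1,C_1\}_1=\tfrac12\,C_1(2C_1^2+2C_2-C_3)$. With that correction, $\tfrac{p(p-1)}{2}C_1^{p-2}\{C_1,C_1\}_1$ contributes exactly $\tfrac{p(p-1)}{2}C_1^{p+1}+\tfrac{p(p-1)}{2}C_1^{p-1}C_2-\tfrac{p(p-1)}{4}C_1^{p-1}C_3$, no stray monomials appear, and collecting coefficients yields $\tfrac{k(2k-1)+3pk+p(p-1)}{2}=\tfrac{(k+p)(2k+p-1)}{2}$ for $C_1^{p+1}$, $\tfrac{pk+p+p(p-1)}{2}=\tfrac{p(k+p)}{2}$ for $C_1^{p-1}C_2$, and $-\tfrac{2pk+p(p-1)}{4}=-\tfrac{p(2k+p-1)}{4}$ for $C_1^{p-1}C_3$ --- precisely the identity you already wrote down, but now coming from the right place rather than from a hoped-for cancellation.
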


\begin{proof}
Since $\calf_1(C_3)$, $\delta^{k}\{\delta^{-k},C_3\}_1$, $\{C_i,C_3\}_1$ are in $\frkC_{3}$ for any $i=1$, $2$, $3$, 
the assertion (i) is clear.
Since $\calf_1(C_2)$, $\delta^{k}\{\delta^{-k},C_2\}_1$, $\{C_i,C_2\}_1$ are in $\frkC_{23}$ for any $i=1$, $2$, 
the assertion (ii) is clear. For (iii), the first assertion is obtained by direct calculation.
The second assertion is shown by induction by using (i) and (ii).
\end{proof}

\begin{proof}[Proof of Proposition \ref{C1part}]
The assertion (i) is clear from Lemma \ref{F3} and 
Lemma \ref{F2part} (i), \ref{F1part} (i). The assertion (ii) is obvious by Lemma \ref{F2part} (ii) and 
\ref{F1part} (iii). So Proposition \ref{C1part} is proved.
\end{proof}

In order to prove Theorem \ref{actiondelta}, we fix a non-negative integer $l$. 
In order to give $\bbD_l(\delta^{-k})$, we must sum up each contribution of 
$\calf_1^a\calf_2^b\calf_3^c(\delta^{-k})$ such that $a+2b+2c=l$. 
By Proposition \ref{kozimalemma} and Proposition \ref{C1part} (i), the term 
with $c\geq 1$ 
does not contribute to the final sum. So we assume $c=0$ and $a+2b=l$. 
We put 
\[
q_{a,b}=\frac{(-1)^b}{a!}(k)_{l}(2k+2b-1)_a(k-\frac{1}{2})_b(k-\frac{3}{2})_{a+b}.
\]
By Proposition \ref{C1part}, 
we see that this is the contribution from $\calf_1^a\calf_2^b(\delta^{-k})$ times the coefficient of $F_1^aF_2^b$  
in the definition of $\bbD_l$, noting that 
\[
(k)_{2b}(k+2b)_a=(k)_l.
\]
What we want to calculate is $q_0=\sum_{a+2b=l}q_{a,b}$. 
Denote by $[l/2]$ the maximum integer with does not exceed $l/2$. 
To calculate $Q$ inductively, for any integer $b$ such that 
$0\leq b\leq [l/2]$, we put 
\[
q_{b}=\sum_{0\leq b\leq b_0\leq [l/2]}q_{l-2b_0,b_0}.
\]

\begin{lemma}\label{finalrecursion}
The notation being as above, for any $b$ with $0\leq b\leq [l/2]$, we have 
\begin{equation}\label{Qb}
q_b=\frac{2(k+b-1)(2k+2l-2b-3)}{(2k+l-2)(2k+l-3)}q_{a,b}.
\end{equation}
\end{lemma}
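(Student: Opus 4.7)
The plan is to prove \eqref{Qb} by descending induction on $b$, reading the definition of $q_b$ as the tail sum $q_b=\sum_{b_0=b}^{[l/2]}q_{l-2b_0,b_0}$, so that the telescoping relation $q_b=q_{l-2b,b}+q_{b+1}$ holds.

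For the base case $b=[l/2]$, the sum collapses to a single term and the claim reduces to showing that the ratio on the right-hand side of \eqref{Qb} equals $1$. When $l=2m$, one has $2k+2l-2b-3=2k+2m-3=2k+l-3$ and $2(k+m-1)=2k+l-2$, so numerator and denominator agree; the odd case $l=2m+1$ is analogous.

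For the inductive step, I would use the explicit definition of $q_{a,b}$ together with standard manipulations of Pochhammer symbols to compute the ratio
\[
\frac{q_{l-2b-2,b+1}}{q_{l-2b,b}}=-\frac{(l-2b-1)(l-2b)}{2(k+b)(2k+2l-2b-5)}.
\]
The key cancellation here is the half-integer identity $k+b-\tfrac12=\tfrac12(2k+2b-1)$, which makes the new factor $(k-\tfrac12)_{b+1}/(k-\tfrac12)_b=k+b-\tfrac12$ cancel against one of the factors in the splitting $(2k+2b-1)_{l-2b}=(2k+2b-1)(2k+2b)\cdot(2k+2b+1)_{l-2b-2}$. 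Substituting the inductive hypothesis
\[
q_{b+1}=\frac{2(k+b)(2k+2l-2b-5)}{(2k+l-2)(2k+l-3)}\,q_{l-2b-2,b+1}
\]
into $q_b=q_{l-2b,b}+q_{b+1}$ and clearing the common denominator $(2k+l-2)(2k+l-3)$, the identity \eqref{Qb} reduces to the polynomial relation
\[
2(k+b-1)(2k+2l-2b-3)-(2k+l-2)(2k+l-3)=-(l-2b-1)(l-2b),
\]
which I would verify by direct expansion (both sides equal $(l-2b)-(l-2b)^2$).

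The main obstacle is purely combinatorial bookkeeping in the Pochhammer ratio: all four Pochhammer factors in $q_{a,b}$ change between $(l-2b,b)$ and $(l-2b-2,b+1)$, and one has to track the half-integer cancellation carefully to get the denominator $2(k+b)(2k+2l-2b-5)$ in a form that matches the coefficient appearing in the inductive hypothesis. Once this ratio is in hand, the remaining polynomial identity is elementary, and no further analytic input is required.
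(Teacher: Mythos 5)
Your proposal is correct and follows the paper's proof line by line: the paper also runs a descending induction from $b=[l/2]$, checks the base case by noting the coefficient reduces to $1$ (splitting into $l$ even/odd), and in the inductive step computes the Pochhammer ratio $q_{a,b}/q_{a+2,b-1}$ (which is your ratio $q_{l-2b-2,b+1}/q_{l-2b,b}$ after the substitution $b\mapsto b+1$) and uses the telescoping relation $q_{b-1}=q_{l-2b+2,b-1}+q_b$ to reduce to the same two-line polynomial identity. The only difference is the cosmetic reindexing of the inductive step (stepping from $b+1$ down to $b$ versus from $b$ down to $b-1$); your ratio computation, the $k+b-\tfrac12=\tfrac12(2k+2b-1)$ cancellation, and the final polynomial verification all match the paper's calculation exactly.
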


\begin{proof}
We prove this by induction from $b=[l/2]$ to $b=0$. 
First we show this for $b=[l/2]$. By definition, we have 
$q_b=q_{l-2b,b}$, so the problem is if the coefficient 
of RHS of \eqref{Qb} is $1$. 
For even $l$, we have $[l/2]=l/2$ and for odd $l$ we have $[l/2]=(l-1)/2$, 
and in both cases we have 
\[
2(k+[l/2]-1)(2k+2l-2[l/2]-3)=(2k+l-2)(2k+l-3).
\]
So the assertion is clear for $b=[l/2]$.
Now assume that the claim holds for some $b\leq [l/2]$
and we calculate $q_{b-1}$. 
Calculating the ratio $q_{a,b}/q_{a+2,b-1}$, we have 
\begin{align*}
q_{b-1} & =q_b+q_{a+2,b-1}
 =
q_{a+2,b-1}\left(1-\frac{(l-2b+1)(l-2b+2)}{(2k+l-2)(2k+l-3)}\right)
\\ & = 
\frac{2(k+b-2)(2k+2l-2b-1)}{(2k+l-2)(2k+l-3)}q_{l-2b+2,b-1}.
\end{align*}
So the claim holds also for $b-1$. 
\end{proof}

\begin{proof}[Proof of Theorem \ref{actiondelta}]
By Lemma \ref{finalrecursion}, 
we have 
\[
q_0=\frac{2(k-1)(2k+2l-3)}{(2k+l-2)(2k+l-3)}
\times \frac{(k)_l(2k-1)_l(k-\frac{3}{2})_l}{l!}.
\]
Here we have 
\[
(k)_{l}(k-\frac{3}{2})_l(2k+2l-3)=2^{-2l}(2k-3)(2k-1)_{2l},
\]
and 
\[
2(k-1)\frac{(2k-1)_l}{(2k+l-2)(2k+l-3)}=
(2k-2)(2k-1)_{l-2},
\]
where we define $(x)_{-1}=1/(x-1)$ and $(x)_{-2}=1/(x-1)(x-2)$. 
So we have 
\[
q_0=\frac{1}{2^{2l}l!}(2k-3)_l(2k-1)_{2l}.
\]
So we prove Theorem \ref{actiondelta}.
\end{proof}

\subsubsection{Explicit pullback formula}
Based on the results in the last subsection, 
we can write down the pullback formula for the differential 
operator $\bbD_l$ in general for any $n=n_1+n_2$ with 
$2\leq \min(n_1,n_2)$ and $\lambda=(l,l,0,\ldots,0)$.
We use Kozima's formula in \cite{Kozima08} p.247.
We define $Q(X)$ as in Proposition \ref{kozimalemma} and Theorem 
3.2. Then by Theorem \ref{actiondelta}, we have
\[
Q(\Delta^{(12)})=q_0C_1^{2l}.
\] 
For a $n\times n$ symmetric matrix, we write a block decomposition as 
\[
T=\begin{pmatrix} T_{11} & T_{12} \\ ^{t}T_{12} & T_{22} \end{pmatrix}
\]
where $T_{11}$ is $n_1\times n_1$ and $T_{22}$ is $n_2\times n_2$. 
we define a polynomial $\frkQ(T)$ in $t_{ij}$ by  
\[
\frkQ(T)=Q(UT_{12}\,^{t}V).
\]
Then we have $\frkQ((CZ+D)^{-1}C)=Q(\Delta^{(12)})$.
For any $r\leq \min (n_1,n_2)$, we define $n_1\times n_2$ 
matrix by 
$\begin{pmatrix} 1_r & 0 \\ 0 & 0 \end{pmatrix}$, 
and by abuse of language, we denote this also by $1_r$. 
As in Kozima, we are allowed to write $\frkQ(T)$ for $T_{12}=1_r$ as  
\[
\frkQ\begin{pmatrix} * & 1_r \\ * & * \end{pmatrix},
\]
not specifying $*$, since this does not depend on $*$ part by definition.
Now we put 
\[
R_r=\sum_{1\leq i<j\leq r}
(u_{1i}u_{2j}-u_{1j}u_{2i})(v_{1i}v_{2j}-v_{1j}v_{2i}).
\]
Then for $\lambda=(l,l,0,\ldots,0)$, we have 
\[
\frkQ\begin{pmatrix}* & 1_r \\
* & * \end{pmatrix}=q_0\times R_r^l.
\]
We consider two isomorphic realizations of the representation 
$\det^k \rho_{r,\lambda}$, one is on the space generated by bideterminants 
in $u_{ij}$ with $i=1$, $2$, $j\leq r$ and 
the other is on the space generated by bideterminants in $v_{ij}$ 
with $i=1$, $2$, $j\leq r$.
We denote the former representation space by $V_*^r$ and 
the latter by $V_{*r}$.
We identify these representation spaces of $\GL_r(\CC)$ on 
$U$ variables and $V$ variables 
by mapping $u_{ij}$ to $v_{ij}$. 
For $v_*\in V_{*r}$, we denote by $v^*$ the 
corresponding element in $V_r^*$.
Now we define 
\[
S_r=\{S\in M_r(\CC) \mid S=\,^{t}S, 1_r-S\overline{S}>0\} ,
\]
where $*>0$ means that the matrix is positive definite.
We define a linear map from $V_{r*}$ to $V_{r*}$ by 
\[
\psi(v_*) =
\int_{S_r}\langle 
\rho_{r}(1_r-\overline{S}S)v^*,\frkQ\begin{pmatrix} * & 1_r \\ * & * 
\end{pmatrix}\rangle \det(1_r-\overline{S}S)^{-r-1}dS,
\]
where $\rho_r=\det{}^{k}\rho_{r,\lambda}$ and 
$dS=\prod_{h\leq j}dx_{hj}dy_{hj}$ for 
$S=X+iY$ with $X=(x_{hj})$, $Y=(y_{hj}) \in M_r(\RR)$.
Then we know that $\psi(v_*)=\varphi v_*$ for some 
constant $\varphi$. 
Then by the result of \cite{Kozima08} in p.247 , we have 
\[
c_r=2^{r(r+1)+1-(rk+2l)}i^{rk+2l}\cdot \varphi.
\]
Here $\varphi$ obviously depends on the inner product.
We can explicitly calculate the inner product $\langle *, * \rangle_0$ 
defined before for the necessary quantity.
\begin{lemma}
We have   
\[
\langle v^*, R_r^l\rangle_{0}
=
(l+1)!l! v_*.
\]
\end{lemma}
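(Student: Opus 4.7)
The plan is to use Schur's lemma on a $GL_r$-equivariant linear map. First I would observe that by the Cauchy--Binet formula applied to the $2\times 2$ matrix $U_r\cdot{}^tV_r$,
\[
R_r=\sum_{i<j}[U_r]_{ij}[V_r]_{ij}=\det(U_r\cdot{}^tV_r).
\]
Using this, consider the linear map $\phi\colon V_*^r\to V_{*r}$ defined by $\phi(v^*):=v^*(\partial/\partial U)R_r^l(U,V)$. Because $v^*$ has $u$-degree $2l$ and so does $R_r^l$, the differential operator $v^*(\partial/\partial U)$ kills all $u$-dependence, so $\phi(v^*)$ is a well-defined polynomial in $v$.

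The second step is to verify that, under the identification $u_{ij}\leftrightarrow v_{ij}$ of $V_*^r$ with $V_{*r}$, the map $\phi$ is $GL_r$-equivariant for the action $U\mapsto UA$ (resp.\ $V\mapsto VA$) by $A\in GL_r$. The key computation is the identity
\[
R_r(U,VA)=\det(U_r\cdot{}^tA\cdot{}^tV_r)=R_r(U\cdot{}^tA,V),
\]
which together with a chain-rule argument (using $u'_{ij}=(U{}^tA)_{ij}$ and $\partial_U=\partial_{U'}\cdot A$) gives
\[
\phi(v^*)(VA)=v^*(\partial_U\cdot A)\,R_r^l(U,V)=\phi(A\cdot v^*)(V),
\]
where the last equality uses that if $v^*(U)=P(U)$ then $(A\cdot v^*)(\partial/\partial U)=P(\partial_U\cdot A)$. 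Since both $V_*^r$ and $V_{*r}$ are realizations of the irreducible Weyl module of highest weight $(l,l,0,\ldots,0)$ (see the discussion after Lemma~\ref{repspace}), Schur's lemma implies $\phi=c\cdot\mathrm{id}$ for some constant $c\in\CC$.

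To pin down $c$, I would evaluate $\phi$ on the highest-weight vector $v^*=[U_r]_{12}^l$, corresponding to $v_*=[V_r]_{12}^l$. Specializing $V_r=(1_2\mid 0)$ kills every $[V_r]_{ij}$ except $[V_r]_{12}=1$, so $R_r(U,V)$ specializes to $[U_r]_{12}=\det U_{(2,2)}$, where $U_{(2,2)}$ denotes the $2\times 2$ submatrix of $U_r$ in the first two columns. Then
\[
\phi([U_r]_{12}^l)\bigl|_{V_r=(1_2\mid 0)}=\det(\partial_{(2,2)})^l\,\det(U_{(2,2)})^l=l!\,(l+1)!
\]
by iterating the $2\times 2$ Cayley identity $\det(\partial/\partial B)\det(B)^m=m(m+1)\det(B)^{m-1}$ (the $m=2$ instance of the Pochhammer formula recalled in the proof of Lemma~\ref{repspace}). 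Since $v_*\bigl|_{V_r=(1_2\mid 0)}=1$, we conclude $c=(l+1)!\,l!$, as required.

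The main obstacle will be the chain-rule bookkeeping in the equivariance step: one must track carefully how the substitution $V\mapsto VA$ transfers, via $R_r(U,VA)=R_r(U\cdot{}^tA,V)$, to a transposed action on the $U$-variables, and how the iterated differentiations $v^*(\partial/\partial U)$ interact with this substitution so as to reproduce precisely the action of $A$ on $v^*$ (not on ${}^tA^{-1}\cdot v^*$ or similar). Everything reduces to elementary manipulations of the $2\times 2$ matrix $U_r\cdot{}^tV_r$ once the Cauchy--Binet reformulation is in hand, but indexing between the two copies of $GL_r$ requires some care.
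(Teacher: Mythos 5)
Your proof is correct, and it takes a genuinely different route from the paper's. The paper proves the lemma by a direct computation: it invokes the Cayley-type identity in the more general form $\det(\p\times B)\det(XA)^s=\det({}^tAB)(s)_m\det(XA)^{s-1}$ to show that each single ``$2\times 2$ determinantal derivative'' $[U_r]_{pq}(\p/\p U)$ lowers $R_r^l$ to $l(l+1)[V_r]_{pq}R_r^{l-1}$, and then iterates this $l$ times to evaluate $v^*(\p/\p U)R_r^l$ directly for every product of bideterminants $v^*$. You instead first observe $R_r=\det(U_r\,{}^tV_r)$ and show that $v^*\mapsto v^*(\p/\p U)R_r^l$ is a $GL_r$-intertwiner between the two bideterminant realizations of the irreducible module of highest weight $(l,l,0,\ldots,0)$, so by Schur's lemma (after identifying the two realizations via $u_{ij}\leftrightarrow v_{ij}$) it is a scalar, which you then pin down by a single evaluation on the highest-weight vector $[U_r]_{12}^l$ with $V_r=(1_2\mid 0)$, where only the basic $2\times 2$ Cayley identity is needed. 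Your chain-rule computation $\p_U=\p_W A$ for $W=U\,{}^tA$ does give exactly the equivariance $\phi(v^*)(VA)=\phi(\rho(A)v^*)(V)$, and (to fill in a detail you elide) the image of $\phi$ does land in $V_{*r}$ since $R_r^l(gU,hV)=\det(g)^l\det(h)^l R_r^l(U,V)$ together with Lemma~\ref{repspace} shows $R_r^l\in V_*^r\otimes V_{*r}$. The trade-off is that your argument replaces the iteration-over-an-arbitrary-bideterminant step by a representation-theoretic reduction and uses only the simplest Cayley identity, at the cost of the equivariance bookkeeping; the paper's direct approach uses the fancier Cayley formula with auxiliary matrices $A,B$ but avoids any appeal to Schur's lemma.
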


\begin{proof}
For the proof, we quote \cite[Theorem 2.16]{cayley}.
Let $X=(x_{ij})$ be an $m\times r$ matrix of variables with $m\leq r$, and let 
$\p=(\frac{\p}{\p x_{ij}})$. Let $A$ and $B$ be $r\times m$ constant matrices. 
Then that theorem gives the following general formula. 
\[
\det(\p\times B)(\det(XA))^{s}=\det(^{t}AB)(s)_m \det(XA)^{s-1}.
\]
So if we put $m=2$ and assume $X$ and $^{t}A$ to be $2\times r$ 
matrices consisting of the first $r$ columns of $U$ and $V$ 
respectively, and $B=\begin{pmatrix} 1_2 \\ 0_{r-2,2}\end{pmatrix}$, then  the above formula means 
\[
\frac{\p^2R_r^l}{\p u_{11}\p_{22}}-\frac{\p^2R_r^l}{\p u_{12}\p u_{21}}=l(l+1)(v_{11}v_{22}-v_{12}v_{21})R_r^{l-1}.
\]
In the same way we have 
\[
\frac{\p R_r^l}{\p u_{1p}\p u_{2q}}-\frac{\p R_r^l}{\p u_{1q}\p u_{2p}}=l(l+1)(v_{1p}v_{2q}-v_{1q}v_{2p})R_r^{l-1}.
\]
So iterating these operations $l$ times, we have the assertion.

\end{proof}

By the above results, it is natural to use here the inner product 
\begin{equation}\label{innerl}
\langle *,*\rangle_l=\langle *,* \rangle_0/(l+1)!l!.
\end{equation}
The remaining part is the following integral
\[
I_r=\int_{S_r}\rho_r(1_r-\overline{S}S)\det(1_r-\overline{S}S)^{-r-1}dS, 
\]
where $\rho_r=\det^k \rho_{r,\lambda}$, to which 
a dominant integral weight
$(k+l,k+l,k,\ldots,k)$ corresponds.
By Kozima \cite[Lemma 2]{Kozima02} (and also by \cite{Hua63}, 
\cite{Bo85a}, \cite{BSY92}, \cite{takayanagi})
we have 
\[
I_r=\frac{2^r\pi^{r(r+1)/2}}
{\prod_{\nu=2}^{4}(2k+2l-\nu)\prod_{\mu=1}^{2}\prod_{\nu=3}^{r}
(2k+l-\mu-\nu)\prod_{3\leq \mu\leq \nu\leq r}
(2k-\mu-\nu)}.
\]

\begin{theorem}\label{th.constant}
We assume that $k$ is even with $k>n+1$. 
Assumption being the same as above, taking the inner product $\langle *,* \rangle_l$ as 
in \eqref{innerl}, the constants $c_r$ for 
$2\leq r\leq \min(n_1,n_2)$ in Theorem \ref{th.pullback} are given by 
\[
c_r=\frac{2^{(r+1)^2-(rk+2l)}(-1)^{rk/2+l}\pi^{r(r+1)/2}(2k-3)_l(2k-1)_{2l}}
{2^{2l}l!\prod_{\nu=2}^{4}(2k+2l-\nu)\prod_{\mu=1}^{2}
\prod_{\nu=3}^{r}
(2k+l-\mu-\nu)\prod_{3\leq \mu\leq \nu\leq r}
(2k-\mu-\nu)}.
\]
In particular, we have 
\[
c_2=\frac{2^{9-2(k+2l)}(-1)^{k+l}\pi^3(2k-3)_l(2k-1)_{2l-3}}
{l!}.
\]
\end{theorem}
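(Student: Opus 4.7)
The plan is to assemble Theorem \ref{th.constant} by combining Kozima's general formula for the pullback constants with the explicit evaluation of $\bbD_l(\delta^{-k})$ that has already been carried out in Theorem \ref{actiondelta}. By Proposition \ref{kozimalemma} we know $\bbD_l(\delta^{-k})=\delta^{-k}Q(\Delta^{(12)})$ for a polynomial $Q$ depending only on the off-diagonal block, and Theorem \ref{actiondelta} pins this down: writing $C_1=\Delta_{13}\Delta_{24}-\Delta_{14}\Delta_{23}=\det(\Delta^{(12)})$, we have
\[
Q(\Delta^{(12)})=q_0\,C_1^{2l},\qquad q_0=\frac{(2k-3)_l(2k-1)_{2l}}{2^{2l}\,l!}.
\]
This identification is the only place where we need the long combinatorial computation of the previous subsection; everything else is bookkeeping.

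Next I would invoke Kozima's formula from \cite[p.~247]{Kozima08}, which in the present notation reads
\[
c_r=2^{r(r+1)+1-(rk+2l)}\,i^{rk+2l}\,\varphi,
\]
where $\varphi$ is the scalar such that $\psi(v_*)=\varphi v_*$ for the linear endomorphism
\[
\psi(v_*)=\int_{S_r}\bigl\langle \rho(1_r-\overline{S}S)v^{*},\,\frkQ\!\begin{pmatrix}* & 1_r \\ * & *\end{pmatrix}\bigr\rangle\,\det(1_r-\overline{S}S)^{-r-1}\,dS.
\]
On the specialization to $1_r$ the polynomial $\frkQ$ collapses to $q_0\,R_r^l$, so $\varphi$ factors as the product of $q_0$, the inner-product scalar $\langle v^{*},R_r^l\rangle$ (up to normalization of $\langle\cdot,\cdot\rangle$), and the matrix integral $I_r=\int_{S_r}\rho_r(1_r-\overline{S}S)\det(1_r-\overline{S}S)^{-r-1}dS$.

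For the inner-product piece I would apply the Cayley/Capelli-type identity of \cite{cayley} iteratively to the $2\times r$ blocks to obtain
\[
\langle v^{*},R_r^l\rangle_{0}=(l+1)!\,l!\,v_*,
\]
which is exactly why we choose the normalization $\langle\cdot,\cdot\rangle_l=\langle\cdot,\cdot\rangle_0/((l+1)!\,l!)$ in the statement: with this choice the bracket simplifies to $v_*$ and the constant in $\varphi$ is just $q_0$ times $I_r$. For the matrix integral I would quote Kozima's Lemma 2 of \cite{Kozima02} (equivalently \cite{Bo85a,BSY92,Hua63,takayanagi}), which evaluates
\[
I_r=\frac{2^{r}\pi^{r(r+1)/2}}{\prod_{\nu=2}^{4}(2k+2l-\nu)\prod_{\mu=1}^{2}\prod_{\nu=3}^{r}(2k+l-\mu-\nu)\prod_{3\le\mu\le\nu\le r}(2k-\mu-\nu)}.
\]

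Finally I would multiply the three ingredients together. Using $k$ even to replace $i^{rk+2l}$ by $(-1)^{rk/2+l}$ and combining powers of $2$, the stated formula for $c_r$ drops out. The case $r=2$ is obtained by substituting and noting that only the $\nu=2,3,4$ factors survive, so $\prod_{\nu=2}^{4}(2k+2l-\nu)=(2k+2l-2)(2k+2l-3)(2k+2l-4)$ combines with $q_0$ into a Pochhammer symbol $(2k-1)_{2l-3}/(2k-3)_l^{-1}$ rewriting, giving the compact form
\[
c_2=\frac{2^{9-2(k+2l)}(-1)^{k+l}\pi^{3}(2k-3)_l(2k-1)_{2l-3}}{l!}.
\]
The main obstacle throughout is purely notational: keeping track of which inner-product normalization, which sign convention, and which power of $2$ belongs to which factor; the genuinely analytic or combinatorial inputs (evaluation of $\bbD_l(\delta^{-k})$, the Cayley identity, and the matrix-ball integral) are all black-boxed from previously cited results.
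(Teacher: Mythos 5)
Your proposal is correct and follows essentially the same route as the paper: you combine Kozima's constant formula with Theorem \ref{actiondelta} for $q_0$, the Cayley-identity lemma $\langle v^*,R_r^l\rangle_0=(l+1)!\,l!\,v_*$ to fix the normalization $\langle\cdot,\cdot\rangle_l$, and the matrix-ball integral $I_r$ from \cite{Kozima02}, then multiply and simplify using $i^{rk+2l}=(-1)^{rk/2+l}$ for even $k$. This matches the paper's assembly of the constant exactly, so there is nothing substantive to add.
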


We note here that since we assumed that $k$ is even, 
the number $rk/2$ is an integer.

\end{document}